\theoremstyle{plain}
\newtheorem{theorem}{Theorem}[section]
\newtheorem{corollary}[theorem]{Corollary}
\newtheorem{proposition}[theorem]{Proposition}
\newtheorem{lemma}[theorem]{Lemma}
\newtheorem{remark}[theorem]{Remark}
\numberwithin{theorem}{section}
\numberwithin{equation}{section}
\newcommand{\average}{{\mathchoice {\kern1ex\vcenter{\hrule height.4pt
width 6pt depth0pt} \kern-9.7pt} {\kern1ex\vcenter{\hrule
height.4pt width 4.3pt depth0pt} \kern-7pt} {} {} }}
\def\R{\mathbb{R}}
\def\loc{\text{loc}}
\renewcommand{\a }{\alpha }
\renewcommand{\b }{\beta }
\renewcommand{\d}{\delta }
\newcommand{\D }{\Delta }
\newcommand{\e }{\varepsilon }
\newcommand{\g }{\gamma}
\newcommand{\G }{\Gamma}
\renewcommand{\l }{\lambda }
\renewcommand{\L }{\Lambda }
\newcommand{\n }{\nabla }
\newcommand{\vp }{\varphi }
\newcommand{\rh }{\rho }
\newcommand{\s }{\sigma }
\renewcommand{\t }{\tau }
\renewcommand{\th }{\theta }
\renewcommand{\o }{\omega }
\renewcommand{\O }{\Omega }
\newcommand{\ov}{\overline}
\newcommand{\be}{\begin{equation}}
\newcommand{\ee}{\end{equation}}
\newcommand{\de}{\partial}
\newcommand{\ti}{\widetilde}
\newcommand{\ra}{{\rangle}}
\newcommand{\la}{{\langle}}
\renewcommand{\k}{\kappa}
\newcommand{\scrK }{\mathscr{K}}
\newcommand{\N}{\mathbb{N}}
\newcommand{\cH}{{\mathcal H}}
\newcommand{\cK}{{\mathcal K}}
\newcommand{\cL}{{\mathcal L}}
\newcommand{\cM}{{\mathcal M}}
\newcommand{\cS}{{\mathcal S}}
\newcommand{\cV}{{\mathcal V}}
\newcommand{\M}{\cM}
\renewcommand{\epsilon}{\varepsilon}
\newcommand{\Ds}{ (-\D)^s}
\begin{document}

\title 
{Regularity estimates  for nonlocal  Schr\"odinger equations}

\author[Mouhamed  Moustapha Fall]
{Mouhamed  Moustapha Fall}
\address{M.M.F.: African Institute for Mathematical Sciences in Senegal, 
KM 2, Route de Joal, B.P. 14 18. Mbour, S\'en\'egal}
\email{mouhamed.m.fall@aims-senegal.org, mouhamed.m.fall@gmail.com}

\thanks{ 
 The  author's work is supported by the Alexander 
von Humboldt foundation. Part of this work was done while he was visiting
the Goethe University in Frankfurt am Main during   August-September 2017 and he thanks the Mathematics department for the kind hospitality. The author is  grateful to Xavier Ros-Oton, Tobias Weth and Enrico Valdinoci for  their availability and for the many useful discussions  during the preparation of this work.     }


 \begin{abstract}
   \noindent
We are concerned with    H\"older regularity estimates   for weak solutions $u$ to  nonlocal    Schr\"odinger equations subject to exterior Dirichlet conditions in an open set $\Omega\subset \mathbb{R}^N$. The class of nonlocal operators considered here  are  defined, via Dirichlet forms,  by  symmetric kernels $K(x,y)$ bounded from above and below by $|x-y|^{N+2s}$, with $s\in (0,1)$.   The entries in the equations are in some  Morrey spaces and the underline domain $\Omega$ satisfies some mild regularity assumptions. In the particular case of the fractional Laplacian, our results are new.  When $K$ defines a nonlocal operator with sufficiently regular coefficients,   we obtain H\"older estimates, up to the boundary of $ \Omega$,  for $u$ and  the ratio $u/d^s$, with $d(x)=\textrm{dist}(x,\mathbb{R}^N\setminus\Omega)$.  If the kernel $K$ defines a nonlocal operator with H\"older continuous coefficients and the entries are H\"older continuous, we obtain interior $C^{2s+\b}$ regularity estimates of  the weak solutions $u$.
Our argument is based on blow-up analysis  and compact Sobolev embedding. 
 \end{abstract}

\maketitle


%
 \section{Introduction}
We consider $s\in (0,1)$, $N\geq 1$ and   $\O$   an open set   in $\R^N$ of class $C^{1,\g}$, for some $\g>0$. We are interested in  interior and  boundary H\"older  regularity estimates  for  functions $u $ solution   to the equation 
\be  \label{eq:Main-problem}
\cL_K u+ Vu  = f \qquad\textrm{ in $ \O$}\qquad \textrm{ and } \qquad u= 0 \qquad\textrm{ in $  \O^c$}.
\ee 
where $\O^c:=\R^N\setminus \O$ and   $\cL_K$ is a nonolocal operator defined by a symmetric kernel $K\asymp |x-y|^{-N-2s}$. We refer to  Section \ref{ss:Non-loc-cont} below   for more details.  Our model operator is  $\cL_K= \Ds_a$, the so    called   anisotropic fractional Laplacian, up to a sign multiple. It  is    defined, for all  $\vp\in C^2_c(\R^N)$,  by
$$
\Ds_a\vp(x)=PV\int_{\R^N}\frac{\vp(x)-\vp(x-y)}{|y|^{N+2s}}a\left({y}/{|y|}\right) \, dy,
$$ 
with $a: S^{N-1}\to \R$ satisfying
\be\label{eq:def-a-anisotropi}
a(-\theta)=a(\theta) \qquad\textrm{ and } \qquad\L \leq a(\theta)\leq   \frac{1}{\L}\ \qquad\textrm{ for all $\theta\in S^{N-1}$},
\ee 
for some constant $\L>0$.  Here, the entries $V,f$ in \eqref{eq:Main-problem} belongs to some  Morrey spaces.\\
 In the recent years the study of nonlocal equations have attracted a lot of interest due to their manifestations in  the modeling of real-world phenomenon and their rich structures in the mathematical point of view. In this respect,  regularity theory remains central  questions.  
 Interior regularity and  Harncak inequality have been intensiveley  investigated in  last decades, see e.g. \cite{FK,Hoh,BL,Kassm,Barrios,CS1,CS2,CS3,BC,KM1,KM2,Jin-Xiong,Kriventsov,SS,Cozzi} and the references therein. On the other hand, boundary regularity and  Harnack inequalities was studied in  \cite{Bogdan-bdr,Chen-Song,BCI,BKK}.  
 
Results which are, in particular, most  relevent to the content of this paper   concern  those dealing  with  nonlocal operator in "divergence form" with measurable coefficient, i.e. $K$ is symmetric on $\R^N\times \R^N$ and    $K(x,y)\asymp |x-y|^{-N-2s}$, see \cite{KS}.    In this case the de Giorgi-Nash-Moser energy methods  were used to obtain interior  Harnack inequality and H\"older estimates, see \cite{Dicastro, KMS, KRS, Cozzi}.  We note that  the papers  \cite{KRS,KS} deal also with more general kernels  than those satisfying  $K(x,y)\asymp |x-y|^{-N-2s}$ on $\R^N\times \R^N$ only.  We also mention the work of  Kuusi, Mingione and Sire in \cite{KMS} who obtained local pointwise behaviour of solutions to   quasilinear nonlocal elliptic equations with measurable coefficients, provided the Wolff potential of the right hand side satisfies some  qualitative properties. 

 In this paper, we are concerned in both  interior and boundary regularity of nonlocal equations   with "continuous coefficient".   Let us recall that in the classical case of operators in divergence form with continuous coefficients, after scaling, the limit operator is given by the Laplace operator $\D$.  The meaning of "continuous coefficient" in the nonlocal framework is not immediate due to the singularity of the kernel $K$ at the diagonal points $x=y$.  However,  under nonrestrictive continuity assumptions, detailed in Section \ref{ss:Non-loc-cont} below, we find out that the limiting nonlocal operator   is  the anisotropic fractional Laplacian $-\Ds_a$ in many situations. \\
    
 %
 %
  Letting $d(x):=\textrm{dist}(x,  \O^c )$,  the boundary regularity we are interested in here is the    H\"older regularity estimates of $u/d^s$ for  the nonlocal operator $\cL_K$.   Such regularity results for solutions to \eqref{eq:Main-problem} has been studied long time ago  when $\cL_K=(-\D)^{1/2}$. They are  of  interest e.g. in fracture mechnics, see  \cite{Costabel} and the referenes therein. The general case for $\Ds$,  $s\in (0,1)$,    has been considered only in the recent  years and  it is by now merely  well understood when $u,V,f\in L^\infty(\R^N)$ and $\O$ a domain of class $C^{1,\g}$, for some $\g>0$. Indeed,   in the case of the fractional Lapalcian ($a\equiv 1$) and $\g=1$, the first   H\"older regularity estiamte of $u/d^s$ in $\ov \O$ was obtained by Ros-Oton and Serra in \cite{RS1}. They sharpened and generalized this result to translation invariant operators, even to fully nonlinear equations,     in their subsequent papers \cite{RS2,RS3,RS4}.  We refer the reader to the recent survey paper \cite{RS-survey} for a detailed list of existing results.  
 In the case where $V $,  $a$, $f$ and $\O$ are of class $C^\infty$, we quote the   works of Grubb, \cite{Grubb1,Grubb2,Grubb3}, where it is proved that $u/d^s$ is of class $C^\infty$, up to the closure of $\O$. Especially in   \cite{Grubb1},   the enteries $V,f$ are also allowed to belong to some  $  L^p$ spaces, for some large $p$. More precisely, when   $V\in C^\infty(\ov \O)$ and  $f\in L^p(\O)$, for some $p>N/s$, then provided $\O$ and $a$ are of class $C^\infty$, Grubb proved in \cite{Grubb1} that  $u/d^s$ is of class $C^{s-N/p}(\ov\O)$.  \\
 %
Here,  we prove sharp   H\"older regularity estimates of $u/d^s$, for  $\O$ an open set of class $C^{1,\g}$ and $V,f$ are in some   spaces  containing the Lebesgue space $L^p$ for $p>N/s$. Let us now recall the Morrey   space which  will be  considered in the following of this paper.   For $\b\in [0,2s)$,  we define the Morrey space $\cM_\b$ by    the set of functions $f \in L^1_{loc} (\R^N)\ $ such that 
$$
 \|f\|_{\cM_\b}:= \sup_{\stackrel{x\in \R^N}{r\in (0,1)}} r^{\b-N}\int_{  B_r(x)} |f(y)| \, dy<\infty,
$$
with  $\cM_0:=L^\infty(\R^N)$. Such spaces   introduced by Morrey in \cite{Morrey},  are suitable for getting H\"older regularity in the study of partial differential equations.

 Let us now explain in an abstract form the insight  in our  consideration  of  the Morrey space.  Indeed,  given a function $g\in L^1_{loc}(\R^N)$, we  put $g_{r,x_0}(x):=r^{2s} g(r x+ x_0)$, for $x_0\in\R^N$ and $r>0$.\\
  For $\b\geq0$,  we say that $g$ satisfies a Rescaled Translated Coercivity Property  (RTCP, for short) of order $\b$,    if there exists a constant  $C:=C(g,N,s,\b)>0$ such that for all  $x_0\in \R^N$ and $r\in (0,1)$, we have 
\be\label{eq:resc-coerciv-int}
C \int_{\R^N} | g_{r,x_0}(x)| v^2(x)\, dx \leq  r^{2s-\b} \|v\|^2_{H^s(\R^N)} \qquad\textrm{ for all $v\in H^s(\R^N)$.}
\ee
     Then what we will prove, for solutions  $u$ to \eqref{eq:Main-problem} when $\cL_K$ is  (up to a scaling)  close to $\Ds_a$,   are the following implications:
\be \label{eq:implic-first-int}
\begin{aligned}
f,V \textrm{ satisfy a   RTCP of order  $\b\in [0, 2s)$}   \qquad\Longrightarrow \qquad & u\in C^{\min (1,2s-\b)-\e}_{loc}(\O),\\
  f,V \textrm{ satisfy  a RTCP of order  $\b\in [0,2s)$}  \qquad\Longrightarrow \qquad& u\in C^{\min (s,2s-\b)-\e}_{loc} (\ov \O),
\end{aligned}
\ee
for every $\e>0$ and $\O$ an open set with   $C^1$ boundary.  For higher order   regularity, under some regularity assumptions on $K$ and $\O$, quantified by some parameter  $\b'\in [0, 2s)$,  we obtain
\be \label{eq:implic-int}
\begin{aligned}
f,V \textrm{ satisfy  a RTCP of order  $\b\in (0, 2s)$}  \qquad\Longrightarrow \qquad & u\in C^{2s-\max(\b,\b')}_{loc}(\O),\\
  f,V \textrm{ satisfy a RTCP of order  $\b\in (0,s)$   }  \qquad\Longrightarrow \qquad& u/d^s\in C^{s-\max(\b,\b')}_{loc} (\ov \O),
\end{aligned}
\ee
provided $2s-\max(\b,\b')\not=1 $.
   Here and in the following, it will be understood that   $C^{\nu}:=C^{1,\nu-1}$ if $\nu\in (1,2)$. 
It is not difficult to see that   functions $g$ satisfying a  RTCP of order $\b$ belongs to $\cM_\b$. On the other hand the converse, which is not trivial, also  holds true, and  in fact,  we will prove a more general inequality for the Kato class of functions which could be of independent interest, see Lemma \ref{lem:coerciv} below. \\
%

%

Since our results are  already new for $\Ds_a$, we   state first   simpler versions of our main results, and postponed the generalization  to  $\cL_K$ in  Section \ref{ss:Non-loc-cont} below. To do so, we need to recall the distributional domain of the operator $\Ds_a$. It is given by   $\cL^1_s$, the set  of functions $u\in L^1_{loc}(\R^N)$ such that   $\|u\|_{\cL^1_s}:=\int_{\R^N}\frac{|u(x)|}{1+|x|^{N+2s}}\,dx<\infty$.

\begin{theorem}\label{th:mainth}
Let $s\in (0,1)$, $\b\in (0,2s)$ and $a$ satisfy \eqref{eq:def-a-anisotropi}. Let 
$\O\subset\R^N$ be an open set of class  $ C^{1,\g}$,   $\g>0$, in a neighborhood of  $0\in \de\O$.   
Let $u\in H^s(B_1)\cap \cL^1_s $ and $f,V\in \cM_\b$  be such that
$$
\Ds_a u+ Vu  = f \qquad\textrm{ in $  \O$}.
$$
\begin{enumerate}
\item[(i)] Then  for every $\O_1\subset\subset \O\cap B_1$,   there  exists a     constant $C >0$ such that 
$$
\|u\|_{C^{2s-\b}(\O_1)} \leq C\left(\|u\|_{L^2(B_1)}+ \|u\|_{\cL^1_s} +  \|f\|_{\cM_\b}  \right).
 $$
 \item[(ii)] If   $\b\in(0,s)$,  and  $u\equiv 0$ in $  \O^c $,   then    there  exist  some constants $C,\varrho>0$ such that 
$$
 \|u/ d^s\|_{C^{\min(\g,s-\b)} ( B_{\varrho}\cap \ov\O)}\leq C\left(\|u\|_{L^2(B_1)}+ \|u\|_{\cL^1_s} +  \|f\|_{\cM_\b}  \right),
 $$
\end{enumerate}
where   $d(x)=\textrm{dist}(x, \O^c)$.  The constants $C $ and $\varrho$ above, only depend   on $s,N,\b,\g,\L,\O ,\O_1$ and $  \|V\|_{\cM_\b}$.
\end{theorem}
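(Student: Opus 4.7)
The overall strategy is a contradiction/blow-up argument exploiting the scaling properties of the Morrey norm together with $H^s$-coercivity provided by the Rescaled Translated Coercivity Property and the compact embedding $H^s \hookrightarrow L^2_{\loc}$. Throughout, I will use Lemma \ref{lem:coerciv} to translate the bound on $\|V\|_{\cM_\beta}$ into control of the quadratic form $v \mapsto \int V v^2$ in terms of $\|v\|_{H^s}^2$, with constants that behave favorably under rescaling.

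For part (i), assume the estimate fails. Then there are sequences $u_k$, $V_k$, $f_k$ satisfying the hypotheses with the right-hand side bounded by $1$ and $\|V_k\|_{\cM_\beta} \leq C_0$, while $\|u_k\|_{C^{2s-\beta}(\Omega_1)} \to \infty$. Applying a Campanato-type iteration (subtracting the best polynomial approximation of degree $\leq \lfloor 2s-\beta\rfloor$), I produce points $x_k \in \Omega_1$ and radii $r_k \to 0$ realizing a normalized blow-up. I then rescale by setting
\[
\tilde u_k(x) = \frac{1}{M_k\, r_k^{2s-\beta}}\bigl(u_k(x_k + r_k x) - P_k(r_k x)\bigr),
\]
with $M_k \to \infty$ the blow-up constant, so that the $C^{2s-\beta}$-seminorm of $\tilde u_k$ on $B_1$ equals $1$. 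The rescaled equation reads $\Ds_a \tilde u_k + \tilde V_k \tilde u_k = \tilde f_k$ on expanding balls, and the scaling of the Morrey norm gives $\|\tilde V_k\|_{\cM_\beta} \leq C_0$ and $\|\tilde f_k\|_{\cM_\beta} \leq M_k^{-1} \to 0$. Using Lemma \ref{lem:coerciv} to bound the $H^s$-norm of $\tilde u_k$ on every ball, compact Sobolev embedding extracts a subsequence converging in $L^2_{\loc}$ to some $v \in \cL^1_s$ solving $\Ds_a v = 0$ on $\R^N$ with growth $|v(x)| \leq C(1+|x|)^{2s-\beta}$. A Liouville-type argument (based on testing against rescaled cut-offs of the explicit kernel or differentiating and iterating) forces $v$ to be a polynomial of degree $\leq \lfloor 2s-\beta \rfloor$; the normalization (that the best polynomial approximation has been subtracted) implies $v \equiv 0$ on $B_1$, contradicting the unit seminorm.

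For part (ii), I flatten the boundary locally using the $C^{1,\gamma}$ chart and run the analogous blow-up for the quantity $u/d^s$ instead of $u$ itself. Assuming the estimate fails, I find sequences $u_k$ and points $x_k \in \ov\Omega \cap B_\varrho$ with the $C^{\min(\gamma,s-\beta)}$-seminorm of $u_k/d_k^s$ on $B_{r_k}(x_k)$ (after subtracting a constant) diverging. There are two regimes to treat depending on whether $d(x_k)/r_k \to +\infty$ (an interior blow-up, handled as in part (i) once one shows $d^s$ scales trivially) or $d(x_k)/r_k$ stays bounded (a boundary blow-up). In the latter case the appropriate rescaling $\tilde u_k(x) = M_k^{-1} r_k^{-s-\min(\gamma,s-\beta)}\bigl(u_k(x_k + r_k x) - c_k\, d_k(x_k+r_k x)^s\bigr)$ converges, using the $C^{1,\gamma}$ regularity of $\partial\Omega$ (which is where the $\gamma$ in the exponent $\min(\gamma,s-\beta)$ enters) and the RTCP, to a limit $v$ solving $\Ds_a v = 0$ in a half-space $\{x \cdot e > 0\}$ with $v \equiv 0$ on the complement and growth $|v(x)| \leq C(1+|x|)^{s+\min(\gamma,s-\beta)}$.

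The main obstacle is the Liouville-type classification in the half-space: I need to show that the only such $v$ is a multiple of $(x\cdot e)_+^s$, so that $v/d^s$ is constant, which contradicts the normalization. This step is where the anisotropic nature of $\Ds_a$ enters seriously: one can reduce to the one-dimensional problem, classify the homogeneous solutions of degree $\in (0,2s)$, and use the growth bound to rule out all but the $s$-homogeneous one. A secondary difficulty is uniformity of the constants in the rescaling as $x_k$ approaches the boundary at different rates, which requires a careful dichotomy between the interior and boundary blow-up regimes and matching up their outcomes.
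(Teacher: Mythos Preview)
Your overall architecture---blow-up plus Liouville---matches the paper, but two points deserve comment.

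First, the paper does \emph{not} run the blow-up on the $C^{2s-\beta}$-seminorm as you do; it runs it on $L^2$ mean-oscillation quantities such as $\sup_{z}\|u-(u)_{B_r(z)}\|_{L^2(B_r(z))}$ (interior) and $\sup_{z\in\partial\Omega}\|u-P_{r,z}(u)\|_{L^2(B_r(z))}$ (boundary, with $P_{r,z}(u)$ the $L^2$-projection onto $\R d^s$). This is not cosmetic: with $V,f$ only in $\cM_\beta$ you do not know a priori that $u_k\in C^{2s-\beta}$, so your contradiction hypothesis $\|u_k\|_{C^{2s-\beta}}\to\infty$ is not obviously well-posed, and you cannot appeal to Arzel\`a--Ascoli. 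The $L^2$-Campanato framework together with the compact embedding $H^s\hookrightarrow L^2_{\loc}$ avoids this circularity; the $C^{2s-\beta}$ regularity is an \emph{output}, not an input. Relatedly, when you subtract the polynomial $P_k$ the rescaled equation is not $\Ds_a\tilde u_k+\tilde V_k\tilde u_k=\tilde f_k$ but picks up an extra term $\tilde V_k\cdot(\text{subtracted constant/mean})$; the paper handles this by a two-step bootstrap (Proposition~\ref{prop:bound-Kato-abstract}, Step~1) that first proves $|(u)_{B_r(z)}|\le Cr^{-\varrho}$ for every $\varrho>0$, so that this extra term is harmless under the Morrey scaling.

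Second, for part~(ii) you say the $C^{1,\gamma}$ regularity ``is where the $\gamma$ in $\min(\gamma,s-\beta)$ enters,'' but you do not name the mechanism. In the paper the key computation is that $|\Ds_a d^s(x)|\le C\,d(x)^{(\gamma-s)_+}$ near $\partial\Omega$ (this is quoted from Ros-Oton--Serra), which places $g_{\Omega,\mu_a}:=\Ds_a(\vp_2 d^s)$ in $\cM_{\beta'}$ with $\beta'=(s-\gamma)_+$. After subtracting $Q_{u,z}(r)\,d^s$ in the boundary blow-up (Lemma~\ref{lem:boundary-reg-Morrey}), the residual right-hand side contains $Q_{u,z}(r)\,g_{\Omega,\mu_a}$, whose Morrey scaling produces exactly the exponent $2s-\max(\beta,\beta')=s+\min(\gamma,s-\beta)$. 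No flattening chart is used for Theorem~\ref{th:mainth}; the paper works directly with $d$ and this estimate on $\Ds_a d^s$, and then combines the resulting $L^2$-growth with the interior estimate via a standard scaling from the boundary. Your interior/boundary dichotomy on $d(x_k)/r_k$ is therefore unnecessary here.
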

Provided $u,V,f\in L^\infty(\R^N)$, by letting $\b\searrow0$, we recover the boundary regularity in  \cite{RS3} for $C^{1,\g}$ domains  and partly the one in \cite{RS2} for $C^{1,1}$ domains. We mention that   in \cite{RS2}, a weaker   ellipticity assumption (second condition in \eqref{eq:def-a-anisotropi}) was considered.\\
Obviously if $f\in L^p(\R^N)$, with $p>1$, then $f\in \cM_{\frac{N}{p}} $. For the strict inclusion of Lebesgue spaces  in Morrey spaces, see e.g. \cite{Fazio}.   An immediate consequence of Theorem \ref{th:mainth} is therefore the following result.
\begin{corollary}\label{cor:maincor}
Let $s\in (0,1)$ and $a$ satisfy \eqref{eq:def-a-anisotropi}. Let  $\O\subset\R^N$ be an    open set    of class  $ C^{1,\g}$,   $\g>0$, in a neighborhood of   $0\in \de\O$.   
Let   $f, V\in L^p(B_1) $, for some $p>\frac{N}{s}$, and    $u\in H^s (B_1)\cap\cL^1_s$ satisfy 
$$
\Ds_a u + V u= f \qquad\textrm{ in $ \O$} \qquad\textrm{ and } \qquad u= 0 \qquad\textrm{ in $  \O^c $}.
$$
Then there  exist positive  constants $C,\varrho>0$ such that 
$$
   \|u/ d^s\|_{C^{\min(\g,s-N/p)} (B_{\varrho}  \cap \ov\O)}\leq C\left(\|u\|_{L^2(B_1)}+\|u\|_{\cL^1_s} +  \|f\|_{L^p(B_1)}   \right).
 $$
The constants $C$ and $\varrho  $  depend only on $s,N,\g,p,\L,\O$ and $  \|V\|_{L^p(  B_1)}$.
\end{corollary}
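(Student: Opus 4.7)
The plan is to obtain the statement as an immediate consequence of Theorem \ref{th:mainth}(ii) by means of the standard inclusion of Lebesgue spaces into Morrey spaces. The hypothesis $p>N/s$ is precisely what makes the resulting Morrey exponent fall strictly below $s$, which is exactly the range in which Theorem \ref{th:mainth}(ii) yields H\"older regularity of $u/d^s$ up to the boundary.

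First I would extend $V$ and $f$ by zero outside $B_1$; this does not affect the equation on $\O\cap B_1$ and hence does not affect the local boundary regularity near $0\in\de\O$. For arbitrary $x\in\R^N$ and $r\in(0,1)$, H\"older's inequality with the exponent pair $(p,p')$ gives
\[
\int_{B_r(x)}|f(y)|\,dy\leq \|f\|_{L^p(B_1)}\,|B_r(x)|^{1-1/p}=c_N\|f\|_{L^p(B_1)}\,r^{N-N/p}.
\]
Multiplying by $r^{N/p-N}$ and taking the supremum yields $\|f\|_{\cM_{N/p}}\leq c_N\|f\|_{L^p(B_1)}$, and the same bound holds with $f$ replaced by $V$. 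Setting $\b:=N/p$, the assumption $p>N/s$ gives $\b\in(0,s)$, so we are in the setting of Theorem \ref{th:mainth}(ii).

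Applying that theorem to $u$, $V$ and $f$ (all with finite $\cM_\b$ norm by the previous step) produces constants $C,\varrho>0$, depending only on $s,N,\b,\g,\L,\O$ and $\|V\|_{\cM_\b}$, with
\[
\|u/d^s\|_{C^{\min(\g,s-\b)}(B_\varrho\cap\ov\O)}\leq C\bigl(\|u\|_{L^2(B_1)}+\|u\|_{\cL^1_s}+\|f\|_{\cM_\b}\bigr).
\]
Substituting $\b=N/p$ and bounding $\|V\|_{\cM_\b}\leq c_N\|V\|_{L^p(B_1)}$ and $\|f\|_{\cM_\b}\leq c_N\|f\|_{L^p(B_1)}$ gives exactly the inequality and the dependence of constants asserted in the corollary.

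I do not anticipate any genuine obstacle here, since the only ingredients are the elementary Morrey embedding and an invocation of Theorem \ref{th:mainth}(ii); the single point worth checking carefully is that the parameter dependence of $C$ and $\varrho$ coming from Theorem \ref{th:mainth}(ii) collapses, via the Morrey bound, to the list $s,N,\g,p,\L,\O,\|V\|_{L^p(B_1)}$ stated in the corollary.
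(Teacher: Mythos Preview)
Your proof is correct and takes essentially the same approach as the paper: the corollary is stated there as an ``immediate consequence'' of Theorem~\ref{th:mainth} via the elementary inclusion $L^p\hookrightarrow \cM_{N/p}$, which is exactly what you carry out in detail.
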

As mentioned earlier, we recall that the boundary regularity in Corollary \ref{cor:maincor}   was known only when $a\in C^\infty(S^{N-1})$ and $\O$ of class $C^\infty$, see \cite{Grubb1}.
In the classical case of the Laplace operator,   the corresponding result of   Corollary \ref{cor:maincor} is that $u$ is of class $ C^{1,\min(\g,1-N/p)}$ up to the boundary,  see \cite{GT}.
We note that interior and  boundary Harnack inequalities for the operator  $\Ds+V$, with   $V$ in the Kato class of potentials (larger than the Morrey space) and $\O$ a Lipschitz domain have been proven   in \cite{Bogdan,Song}. In \cite{Fall-Oton}, we shall   provide an explicit modulus of   continuity for  solutions to \eqref{eq:Main-problem}, when $V$ and $f$ belong to  the Kato class of potentials. \\
%

\subsection{Nonlocal operators with possibly continuous coefficients}\label{ss:Non-loc-cont}
In the following, for a function $b\in L^\infty(S^{N-1}) $, we define $\mu_b(x,y)=|x-y|^{-N-2s} b((x-y)/|x-y|))$ for every  $x\not=y\in\R^N$.

 Let $\k>0$ be a positive constant and   $\l:\R^{N}\times\R^N\to [0,\k^{-1}] $. We  consider the class of  kernels 
 $K: \R^N\times \R^N\to [0, +\infty]$  satisfying the following properties:  
 \be \label{eq:Kernel-satisf-cont}
 \begin{aligned}
(i)\,& K(x,y)=K(y,x) \qquad\textrm{ for all $x\not=y\in\R^N$,}\\
(ii)\,& \k \mu_1(x,y)\leq K(x,y)\leq \frac{1}{\k} \mu_1(x,y) \qquad\textrm{ for all $x\not=y\in\R^N$,} \\
 (iii) \,&\left| K( x,y)-\mu_b(x,y)  \right| \leq    \l(x,y)   \mu_1(x,y) \qquad\textrm{ for all $x\not=y\in B_2$.}\hspace{4cm}
 \end{aligned}
 \ee
The class of kernels satisfying  \eqref{eq:Kernel-satisf-cont} is denoted by $\scrK(\l,b,\k)$.\\  
Let   $\O\subset\R^N$ be an open set and let $f,V\in L^1_{loc}(\R^N)$.
For $K$ satisfying  \eqref{eq:Kernel-satisf-cont}(i)-(ii),    we say that $u\in H^s_{loc}(\O)\cap \cL^1_s $ is  a (weak) solution to 
$$ 
 \cL_{K} u+ Vu  = f \qquad\textrm{ in $  \O$} 
$$
if $uV\in L^1_{loc}(\O)$ and for all $\psi\in C^\infty_c( \O)$, we have 
\begin{align*}
\frac{1}{2}\int_{\R^{2N}}(u(x)-u(x+y))(\psi(x)-\psi(x+y))K(x,x+y)dxdy+\int_{\R^N} V(x)u(x)\psi(x)dx=\int_{\R^N}f(x)\psi(x)dx.
\end{align*}
%
%
The class of operator $\cL_K$ corresponding to the kernels $K$ satisfying  \eqref{eq:Kernel-satisf-cont}(i)-(ii)  can be seen as the nonlocal version of  operators in divergence form in the classical case. Here we obtain regularity estimates for $K\in\scrK(\l,a,\k)$  provided $\l$ is small and $a$ satisfies \ref{eq:def-a-anisotropi}. We thus include, in particular,  nonlocal operators with    "continuous" coefficients. The meaning of continuous coefficients for nonlocal operators   might be awkward, since one is dealing with kernels which are not finite at the points $x=y$. Using polar coordinates, we can depict an encoded limiting operator which is nothing but   the anisotropic fractional Laplacian. \\
In view of  Remark \ref{rem:Kernel-compact-supp} below, all  results stated below remains valid if we consider    kernels    $\ti K: \R^N\times\R^N\to [0,\infty]$ (with possibly   compact support)  satisfying
 \be \label{eq:locl-elliptic-int}
 \begin{aligned}
(i)\,&\ti  K(x,y)=\ti K(y,x) \qquad\textrm{ for all $x\not=y\in\R^N$,}\\
(ii')\,& \k \mu_1(x,y)\leq \ti K(x,y)  \qquad\textrm{ for all $x\not=y\in B_2$} \\
 (ii'') \,&    \ti K(x,y)  \leq \frac{1}{\k}    \mu_1(x,y) \qquad\textrm{ for all $x\not=y\in \R^N$.}   \hspace{6.5cm}
 \end{aligned}
 \ee
 This  is due to the fact  that the regularity theory of the   operators $\cL_{\ti K}$ is included in those   of the form $\cL_K+V$, with $K$ satisfying  \eqref{eq:Kernel-satisf}$(i)$-$(ii) $, for some potential $V$ of class $ C^\infty$.   \\
 
%
We introduce $\ti \scrK(\k)$,  the class of kernels    $K$, satisfying \eqref{eq:locl-elliptic-int}  and   such that    the map 
$$
\R^N\times (0,\infty)\times S^{N-1}\to \R, \qquad (x,r,\th)\mapsto r^{N+2s}K( x,x+r\th )
$$ 
has an extension $\ti\l_K: \R^N\times [0,\infty)\times S^{N-1}\to \R$ that is continuous in the variables $x,r$ on $B_2\times \{0\}$. That is, for every $x_0\in B_2$, we have  $\left| \ti\l_{K}(x,r,\th)-\ti\l_{K}(x_0,0,\th) \right |\to 0$     {as  $|x-x_0|+r\to 0$. }\\
   
 Now for $K\in  \ti \scrK(\k)$ and $x_0\in B_2$ let us suppose that 
$$
\begin{aligned}
\sup_{\th \in  S^{N-1}}  \left| \ti\l_{K}(x,r,\th)-\ti\l_{K}(x_0,0,\th) \right |\to 0   \qquad& \textrm{as  $|x-x_0|+r\to 0$ }    
%
%
 %
%
%
\end{aligned}
$$
and consider    the rescaled kernel around $x_0$, given by  $K_{\rho,x_0}(x,y):=\rho^{N+2s} K(\rho x+x_0,\rho y+ x_0)$. Then     we can show that, provided $\rho$ is small,  $K_{\rho,x_0}$ satisfies \eqref{eq:Kernel-satisf-cont}$(iii)$ with $b(\theta)=  \ti\l_{K}(x_0,0,\th)  $,  for some function function $\l_\rho$, satisfying   $\|\l_\rho\|_{L^\infty(B_1\times B_1)}\to 0$ as $\rho\to 0$. Obviously,   to expect    the limiting kernel to be  symmetric, we need to require that $ \ti\l_{K}(x_0,0,\th)= \ti\l_{K}(x_0,0,-\th) $, for all $\th\in S^{N-1}$. 
From this,  it is   natural to expect that $\cL_K$ inherits certain  regularity properties  of  $\Ds_a$ whenever $K\in\scrK(\l,a,\k)$, provided $\l$ is small, in the spirit  of  Caffarelli \cite{Caffarelli} and Caffarelli-Silvestre \cite{CS2}. This is the purpose of the next results, under mild regularity assumptions on $K$ and $\O$. \\

It is worth to  mention that the  kernels in $\ti\scrK(\k)$ appear for instance  in the study of nonlocal mean curvature operator about a smooth hypersurface, see e.g. \cite{Barrios,Fall-Reg-Appl}. More generally,  a typical example is when considering a  $C^{1}$-change of coordinates $\Phi$ e.g.  in the kernel $|x-y|^{-N-2s} $ (could be defined on the product of   hypersurfaces $\cM\times\cM$). The singular part of the new kernel is then  given by $K_{\Phi}(x,y)=|\Phi(x)-\Phi(y)|^{-N-2s}$, for some local  diffeomorphism $\Phi\in C^1(B_4;\R^N)$. In this case,     
$$
\ti\l_{K_\Phi}(x,r,\th)=\left|\displaystyle\int_0^1D\Phi(x+ r \t \th )\th  \,d\t \right|^{-N-2s}
$$
and thus $ K_{\Phi}\in \ti \scrK(\k) $, for some $\k>0$. 
 As a consequence,     $\ti\l_{K_\Phi}(x,0,\th)=\left|D\Phi(x  )\th   \right|^{-N-2s},$ which is even in $\th$. We refer to Section \ref{ss:proof-bdr-reg-change-var} below in a more general setting.   Thanks to the \`a priori estimates that we are about to state below,   we shall  prove in \cite{Fall-Reg-Appl} optimal  regularity results   paralleling the regularity theory for elliptic equations in divergence form, with regular coefficients,  and provide applications in the study of nonlocal geometric problems.  \\

To obtain interior regularity in \eqref{eq:implic-int}, we need to care on the kernels $K$ for which the action of $\cL_K$ on affine functions  can be quantified. In this respect,  some regularity on $K$ is required. More precisely, for  $K\in \scrK(\l,a,\k)$ and $x\in \R^N$, we define     
 \be\label{eq:def-small-joK}
 j_{o,K}(x):=(2s-1)_+ PV\int_{\R^N}y \left\{K(x,x+y)-K(x,x-y)  \right\}\,dy,
\ee
where $\ell_+:=\max(\ell,0)$, for $\ell\in \R$. Letting  
\be\label{eq:def-ti-lambda-o-K-intro}
  \ti\l_{o,K}(x,r,\th)= \frac{1}{2} \left\{ \ti\l_{K}(x,r,\th)- \ti\l_{K}(x,r,-\th)  \right\},
\ee 
we  see that 
$$
 j_{o,K}(x)=2(2s-1)_+\int_{S^{N-1}}\th  \left\{  PV\int_0^\infty r^{-2s}\ti\l_{o,K}(x,r,\th)\,dr  \right\}\, d\th.
$$
The   main    regularity assumption we make on $K$ is that $j_{o,K}$ is locally in $\cM_{\b'}$, in the sense that     $\vp_2 j_{o,K}\in  \cM_{\b'}$, for some $\b'=\b'(K)\in [0,2s)$.  Here and in the following,  $\vp_R\in C^\infty_c(B_{2R})$, with $\vp_R\equiv 1$ on $B_R$. Since, $\b'$ depends on $K$,   the main point  will be to obtain   regularity estimate  by constants independent in $\b'$.  We observe that, for example, for a kernel $K $  such that  $\ti \l_{o,K}(x,r,\th)\leq  r^{\a+(2s-1)_+}$, for $r\in (0,1)$ and for some $\a>0$, then  $\vp_2 j_{o,K}\in L^\infty(\R^N)=\cM_{0}$.  Of course if $s\in (0,1/2]$ such additional regularity assumption on $K$ is unnecessary. This is also the case if $\cL_K$ is a translation invariant, i.e. $K(x,y)=k(x-y)$, for some even function $k:\R^N\to \R$. 
  
Our main result for interior regularity reads as follows.
\begin{theorem}\label{th:int-reg-Morrey-intro}
Let    $s\in (0,1)$ and  $\b,\d\in (0,2s)$.  Let $a$ satisfy \eqref{eq:def-a-anisotropi}.  Let $K\in \scrK(\l,a,\k)$  and assume that $j_{o,K}$ defined in \eqref{eq:def-small-joK},  satisfies
$$
\| \vp_2 j_{o,K} \|_{\cM_{\b'}}\leq c_0 ,
$$
for some $c_0$ and   $\b'=\b'(K)\in [0,2s-\d)$.
  Let $f,V\in \cM_\b$ and  $u\in H^s(B_2)\cap \cL^1_s$   be such that
$$
\cL_{K} u+Vu= f \qquad\textrm{ in $B_2$}.
$$
Then, provided $2s-\max(\b,\b')\not=1$,   there exist $C,\e_0>0$, only   depending  only on $N,s,\b, \L,\k,c_0,\d$ and $\|V\|_{\cM_\b}$,   such that if $\|\l\|_{L^\infty(B_2\times B_2)}\leq \e_0$, we have 
  $$
  \|u\|_{C^{2s-\max(\b,\b')}(B_{1})}\leq C\left( \| u\|_{L^2(B_{2})}+\|u\|_{\cL^1_s}+\|f\|_{\cM_\b} \right).
  $$
  Moreover if $2s\leq1$, then we can let $\b'=0$.
\end{theorem}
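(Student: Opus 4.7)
The strategy is a blow-up/compactness argument combined with the rescaled translated coercivity property (RTCP) encoded in Lemma \ref{lem:coerciv}. I would first reduce the $C^{2s-\max(\b,\b')}$ estimate to a pointwise polynomial approximation claim: for each $x_0\in B_1$, there exists a polynomial $P_{x_0}$ of degree strictly less than $2s-\max(\b,\b')$ (a constant if this exponent lies in $(0,1)$, an affine function if in $(1,2)$) such that
$$
\|u-P_{x_0}\|_{L^\infty(B_r(x_0))}\leq C\, r^{2s-\max(\b,\b')}\bigl(\|u\|_{L^2(B_2)}+\|u\|_{\cL^1_s}+\|f\|_{\cM_\b}\bigr)
$$
for all small $r>0$, with $C$ uniform in $x_0$ and in $\b'\in[0,2s-\d)$. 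A standard Campanato-type argument then upgrades this to the claimed Hölder seminorm estimate on $B_1$.

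To prove the pointwise bound, I argue by contradiction. Assume there exist sequences $u_n$, $K_n\in\scrK(\l_n,a_n,\k)$ with $\|\l_n\|_{L^\infty(B_2\times B_2)}\to0$, potentials $V_n$ and data $f_n$ bounded in $\cM_\b$, and $j_{o,K_n}$ with $\|\vp_2 j_{o,K_n}\|_{\cM_{\b'_n}}\leq c_0$, for which the approximation fails at some $x_n\in B_1$ at scales $r_n\to 0$ by factors $M_n\to\infty$. Taking $P_n$ to be the best such polynomial at scale $r_n$, the rescaled function
$$
\tilde u_n(x):=\frac{u_n(x_n+r_nx)-P_n(r_nx)}{M_n\, r_n^{2s-\max(\b,\b'_n)}}
$$
solves $\cL_{\tilde K_n}\tilde u_n+\tilde V_n\tilde u_n=\tilde f_n+\tilde g_n$, where $\tilde K_n$ is the rescaled kernel centred at $x_n$, and $\tilde g_n$ absorbs the action of $\cL_{\tilde K_n}$ on $P_n$. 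The term $\tilde g_n$ is nontrivial only when $2s>1$ (so $P_n$ is truly affine), and it reduces to a first-order expression governed by $j_{o,K_n}$; this is precisely where the exponent $\b'$ enters. Applying the RTCP to $\tilde V_n$, $\tilde f_n$ and $j_{o,K_n}$ yields uniform bounds on $\tilde u_n$ in $H^s_{loc}(\R^N)$, a vanishing right-hand side in $H^{-s}_{loc}$, and uniform $\cL^1_s$ control with tail growth bounded by $|x|^{2s-\max(\b,\b'_n)}$.

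Compact Sobolev embedding plus a standard tail truncation produces a limit $\tilde u_\infty\in H^s_{loc}(\R^N)\cap\cL^1_s$. Since $\|\l_n\|_\infty\to 0$, the symmetric kernels $\tilde K_n$ converge along a subsequence to an anisotropic kernel $\mu_{a_\infty}$, so that $\Ds_{a_\infty}\tilde u_\infty=0$ in $\R^N$ with growth bounded by $|x|^{2s-\max(\b,\b'_n)}$, an exponent strictly below $2s$. A Liouville theorem for the translation-invariant operator $\Ds_{a_\infty}$ then forces $\tilde u_\infty$ to be an affine (or constant) function, contradicting the normalization, which was chosen so that $\tilde u_\infty$ has unit oscillation relative to any such approximation on $B_1$.

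The main obstacle I expect is the uniform control of the drift term $\tilde g_n$ when $s>1/2$: the odd part of $K_n$ produces a first-order term weighted by $j_{o,K_n}\in\cM_{\b'_n}$, and passing this to the limit requires the sharp form of the RTCP together with a careful scaling count to verify that $r_n^{2s-\max(\b,\b'_n)-\b'_n}\to 0$ uniformly in $\b'_n\in[0,2s-\d)$, using crucially that $\d>0$ is fixed. A secondary point is the passage to the limit in the nonlocal equation itself: tail convergence of $\tilde u_n$ must not be lost, which is handled by a dominated convergence argument based on the uniform $\cL^1_s$ tail bound. Finally, the necessity of the symmetry of the limiting kernel (which would reduce to $\ti\l_K(x_0,0,\th)=\ti\l_K(x_0,0,-\th)$ in the $C^{0,0}$ regime) is circumvented here because hypothesis $(i)$ of \eqref{eq:Kernel-satisf-cont} makes all rescaled $\tilde K_n$ symmetric to begin with.
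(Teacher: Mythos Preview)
Your proposal captures the paper's strategy faithfully: blow-up plus compactness, with the RTCP (Lemma~\ref{lem:coerciv}) controlling the rescaled potential and data, the drift term produced by the affine part governed by $j_{o,K}$, and the Liouville theorem (Lemma~\ref{lem:Liouville}) closing the contradiction. This is precisely what underlies Lemma~\ref{lem:bound-Morrey} and Corollary~\ref{cor:int--Holder-reg-Morrey-fin}.

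One structural point you gloss over, however, is important. The paper does \emph{not} run the affine blow-up directly on $u\in H^s\cap\cL^1_s$; it first applies Corollary~\ref{cor:int--Holder-reg-Morrey} (built on Proposition~\ref{prop:bound-Kato-abstract}) to obtain an $L^\infty$ bound, and only then invokes Lemma~\ref{lem:bound-Morrey}, which is stated under the hypothesis $\|u\|_{L^\infty(\R^N)}+\|f\|_{\cM_\b}\leq 1$. That hypothesis is not cosmetic: it is what gives $|T_n|\leq r_n^{-1}$ for the gradient of the $L^2$-projection $\textbf{P}_{r_n,z_n}(u_n)$, and hence the bound $\|h_n\|_{\cM_{\b'_n}}\leq C\,r_n^{2s-\b_n}$ on the rescaled drift. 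Without this a~priori control on $T_n$, your scaling count for $\tilde g_n$ does not close---the exponent you wrote, $2s-\max(\b,\b'_n)-\b'_n$, is not the one that actually appears, and in fact the argument relies on the factor $\Theta_n(r_n)^{-1}\to 0$ once $|r_nT_n|\leq 1$ is known. So your single-step scheme would need either this preliminary $L^\infty$ step or an internal bootstrap (as in Step~1 of Proposition~\ref{prop:bound-Kato-abstract}) to control the affine coefficients before the main blow-up. Once that is in place, your outline matches the paper.
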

As a consequence, we have the following result.
\begin{corollary}\label{cor:int-reg-Morrey-intro}
Let    $s\in (0,1)$,  $\b,\d\in (0,2s)$ and $\k>0$.     Let $K\in\ti\scrK(\k) $    satisfy:  for every $x_1,x_2\in B_2$, $r\in (0,2)$, $\th \in S^{N-1}$,
\begin{itemize}
\item   $   \left| \ti\l_{K}(x_1,r,\th)-\ti\l_{K}(x_2,0,\th) \right |\leq \t(|x_1-x_2|+r)  ; $
 \item  $ \ti\l_{K}(x_1,0,\th)= \ti\l_{K}(x_1,0,-\th), $
\end{itemize}
 for some modulus of continuity $\t\in L^\infty(\R_+)$, with  $\t(t)\to 0$ as $t\to 0$.  Assume  that $j_{o,K}$ defined in \eqref{eq:def-small-joK},  satisfies
$$
\| \vp_2 j_{o,K} \|_{\cM_{\b'}}\leq c_0 ,
$$
for some $c_0$ and   $\b'=\b'(K)\in [0,2s-\d)$.
  Let $f,V\in \cM_\b$ and  $u\in H^s(B_2)\cap \cL^1_s$   be such that
\be \label{eq:cLK-uVu-f}
\cL_{K} u+Vu= f \qquad\textrm{ in $B_2$}.
\ee
Then, provided $2s-\max(\b,\b')\not=1$,   there exists $C>0$, only   depending   on $N,s,\b, \d,\k,c_0,\t$ and $\|V\|_{\cM_\b}$,   such that 
  $$
  \|u\|_{C^{2s-\max(\b,\b')}(B_{1})}\leq C\left( \| u\|_{L^2(B_{2})}+\|u\|_{\cL^1_s}+\|f\|_{\cM_\b} \right).
  $$
   Moreover if $2s\leq1$, then we can let $\b'=0$.
\end{corollary}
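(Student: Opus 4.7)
The plan is to reduce Corollary \ref{cor:int-reg-Morrey-intro} to Theorem \ref{th:int-reg-Morrey-intro} via a localization and rescaling argument that, around any $x_0 \in B_1$, exploits the continuity hypothesis on $\ti\l_K$ to produce a rescaled kernel lying in $\scrK(\l, a_{x_0}, \k)$ with $\|\l\|_\infty$ as small as desired, and an even, elliptic symbol $a_{x_0}$. Concretely, for $x_0 \in B_1$ and $\rho \in (0, 1/4)$ to be chosen, define
\begin{align*}
u_\rho(x) &:= u(\rho x + x_0), \qquad V_\rho(x) := \rho^{2s} V(\rho x + x_0), \qquad f_\rho(x) := \rho^{2s} f(\rho x + x_0),\\
K_{\rho,x_0}(x,y) &:= \rho^{N+2s}\, K(\rho x + x_0, \rho y + x_0), \qquad a_{x_0}(\theta) := \ti\l_K(x_0, 0, \theta),
\end{align*}
so that a direct change of variables in the weak formulation shows that $u_\rho$ solves $\cL_{K_{\rho,x_0}} u_\rho + V_\rho u_\rho = f_\rho$ in $B_2$.

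The symmetry of $K$ and the two-sided bound (ii')--(ii'') of \eqref{eq:locl-elliptic-int} scale to (i)--(ii) of \eqref{eq:Kernel-satisf-cont} for $K_{\rho,x_0}$ with the same constant $\k$, and passing to the limit in (ii'), together with the evenness bullet $\ti\l_K(x_0,0,\th) = \ti\l_K(x_0,0,-\th)$, implies that $a_{x_0}$ satisfies \eqref{eq:def-a-anisotropi} with $\L = \k$. Writing $K(x, x+r\th) = r^{-N-2s} \ti\l_K(x,r,\th)$ one computes
$$
\bigl|K_{\rho,x_0}(x,y) - \mu_{a_{x_0}}(x,y)\bigr| = |x-y|^{-N-2s}\, \bigl|\ti\l_K(\rho x + x_0,\, \rho|x-y|,\, \omega) - \ti\l_K(x_0, 0, \omega)\bigr|
$$
with $\omega = (y-x)/|y-x|$, and the continuity bullet bounds the second factor by $\t(C\rho)$ for $x,y \in B_2$. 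Hence $K_{\rho,x_0} \in \scrK(\l_\rho, a_{x_0}, \k)$ with $\|\l_\rho\|_{L^\infty(B_2 \times B_2)} \leq \t(C\rho) \to 0$ as $\rho \to 0$. Standard Morrey scaling gives $\|V_\rho\|_{\cM_\b} \leq \rho^{2s-\b}\|V\|_{\cM_\b}$ and $\|f_\rho\|_{\cM_\b} \leq \rho^{2s-\b}\|f\|_{\cM_\b}$; the identity $j_{o,K_{\rho,x_0}}(x) = \rho^{2s-1}\, j_{o,K}(\rho x + x_0)$, valid for $\rho \leq 1/4$ (so that $\supp\vp_2$ scales inside the preimage of $B_2$), yields $\|\vp_2\, j_{o,K_{\rho,x_0}}\|_{\cM_{\b'}} \leq \rho^{2s-1-\b'}\, c_0$.

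Now pick $\rho_0 \in (0, 1/4)$ depending only on $\t$ and the threshold $\e_0$ of Theorem \ref{th:int-reg-Morrey-intro} so that $\t(C\rho_0) \leq \e_0$; that threshold depends only on the parameters listed in the Corollary. Applying Theorem \ref{th:int-reg-Morrey-intro} to $u_\rho$ on $B_2$ and undoing the rescaling give
$$
\|u\|_{C^{2s-\max(\b,\b')}(B_{\rho_0}(x_0))} \leq C\bigl(\|u\|_{L^2(B_2)} + \|u\|_{\cL^1_s} + \|f\|_{\cM_\b}\bigr),
$$
with $C$ independent of $x_0 \in B_1$. Covering $B_1$ by finitely many such balls and summing gives the conclusion. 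The main technical point is the uniformity of all constants in $x_0$ and in $\b' \in [0, 2s-\d)$: the former follows from the globality of the modulus $\t$, the latter because $\rho_0$ is a fixed number, so $\rho_0^{2s-1-\b'}$ is controlled by $\rho_0^{\d-1}$, a quantity depending only on $s, \d, \k, c_0, \t$. The case $s \leq 1/2$ is immediate since $j_{o,K} \equiv 0$, allowing $\b'=0$ throughout, as stated in the Theorem.
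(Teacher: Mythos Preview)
Your argument is correct and follows essentially the same route as the paper's own proof, which reads in full: ``Using a scaling and a covering argument as in the proof of Theorem \ref{th:int--Holder-reg-Morrey} together with Theorem \ref{th:int-reg-Morrey-intro}, we get the result.'' You have supplied precisely the details behind that sentence: rescale about $x_0$ so that the continuity hypothesis forces $K_{\rho,x_0}\in\scrK(\l_\rho,a_{x_0},\k)$ with $\|\l_\rho\|_{L^\infty(B_2\times B_2)}$ below the threshold $\e_0$ of Theorem \ref{th:int-reg-Morrey-intro}, verify that $V_\rho$, $f_\rho$ and $\vp_2 j_{o,K_{\rho,x_0}}$ scale with the right powers, apply Theorem \ref{th:int-reg-Morrey-intro}, and cover. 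One small imprecision: you assert that (ii$'$)--(ii$''$) scale to the \emph{global} two-sided bound (ii) of \eqref{eq:Kernel-satisf-cont}, but the lower bound is only inherited on $B_2\times B_2$; the passage to a globally elliptic kernel is handled, as in the paper, via Remark \ref{rem:Kernel-compact-supp}, which adds a smooth potential without affecting the estimate.
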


It is natural to expect that under some H\"older regularity assumption on $\ti\l_K$ and on the entries,    solutions are in   fact  classical. 
Indeed we have.
\begin{theorem}\label{th:Small-Shauder}
Let    $s\in (0,1)$, $\k>0$ and      let $K\in\ti\scrK(\k) $    satisfy: 
\begin{itemize}
\item   for every $x_1,x_2\in \R^N$, $r_1,r_2\in [0,2)$, $\th \in S^{N-1}$,
 $$
  \left| \ti\l_{K}(x_1,r_1,\th)-\ti\l_{K}(x_2,r_2,\th) \right |\leq c_0 (|x_1-x_2|^\a+|r_1-r_2|^\a)  ;
 $$
 \item  for every $x_1,x_2\in B_2$, $r \in [0,2)$, $\th \in S^{N-1}$, we have $ \ti\l_{o, K}(x_1,0,\th)=0$ and 
 $$
  \left| \ti\l_{o, K}(x_1,r,\th)- \ti\l_{o,K}(x_2,r,\th) \right |\leq  c_0\min(|x_1-x_2|^{\a+(2s-1)_+} ,r^{\a+(2s-1)_+} ),
  $$
\end{itemize}
for some constants $\a,c_0>0$, where $  \ti\l_{o, K}$ is given by \eqref{eq:def-ti-lambda-o-K-intro}.   Let $f\in {C^\a(B_2)}$ and  $v\in H^s(B_2)\cap C^\a(\R^N)$   be such that
$$
\cL_{K} v= f \qquad\textrm{ in $B_2$}.
$$
 Then there exists $\ov \a>0 $ only depending on $s,N,c_0,\k$ and $\a$,  such that  for all $\b\in (0,\ov \a)$, with $2s+\b\not\in\N$,
\be \label{eq:Shauder-estim-intr}
  \|v\|_{C^{2s+\b  }(B_{1})}\leq C\left( \| v\|_{C^\b(\R^N)}+ \|f\|_{C^\b(B_2)} \right),
\ee
for some constant $C$ depending   only on  $s,N,c_0,\k,\a$ and $\b$.
\end{theorem}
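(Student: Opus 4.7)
I would prove the estimate by the blow-up and compactness scheme used throughout the paper, augmented with polynomial subtraction in order to reach the higher regularity exponent $2s+\beta$. As a preparatory step, Corollary~\ref{cor:int-reg-Morrey-intro} applied with the modulus $\tau(t)=c_0 t^\alpha$ (provided by the Hölder assumption on $\tilde\lambda_K$), $V\equiv 0$, and $f\in C^\alpha\subset\cM_\beta$ for small $\beta>0$, already yields a preliminary interior estimate $v\in C^{2s-\delta}_{\rm loc}(B_2)$. This gives pointwise meaning to $\cL_K v$, uniform compactness of blow-up sequences, and lets us treat $v$ as a classical solution.

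Fix $\beta\in(0,\bar\alpha)$ with $2s+\beta\notin\N$, and set $m:=\lfloor 2s+\beta\rfloor$. Suppose \eqref{eq:Shauder-estim-intr} fails. Then one produces sequences of kernels $K_n$, data $f_n$, and solutions $v_n$ satisfying the hypotheses with normalized right-hand side but $\|v_n\|_{C^{2s+\beta}(B_1)}\to\infty$. By a standard Campanato-type selection in the style of Simon and Serra, one extracts points $x_n\in B_1$, radii $r_n\downarrow 0$, polynomials $P_n$ of degree $\leq m$ that best $L^2$-approximate $v_n$ on $B_{r_n}(x_n)$, and normalizing constants $\theta_n\to\infty$, such that
\begin{equation*}
w_n(x):=\frac{v_n(x_n+r_nx)-P_n(x_n+r_nx)}{\theta_n\, r_n^{2s+\beta}}
\end{equation*}
is $L^2$-orthogonal on $B_1$ to polynomials of degree $\leq m$, satisfies $\|w_n\|_{L^2(B_1)}=1$, and grows at most like $C(1+|x|^{2s+\beta})$ on $\R^N$. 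Moreover $w_n$ solves $\cL_{K_n^{r_n,x_n}} w_n=g_n$ on balls of radius $\to\infty$, where $K_n^{r_n,x_n}(x,y):=r_n^{N+2s}K_n(x_n+r_nx,\,x_n+r_ny)$ and $g_n\to 0$ locally uniformly. Since $\tilde\lambda_{o,K}(\cdot,0,\theta)\equiv 0$, the frozen limit $K_*(x,y)=\tilde\lambda_K(x_\infty,0,(x-y)/|x-y|)\,|x-y|^{-N-2s}$ is symmetric and defines an anisotropic fractional Laplacian $\Ds_{a_\infty}$ with $a_\infty$ satisfying \eqref{eq:def-a-anisotropi}. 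The Hölder continuity of $\tilde\lambda_K$ yields $K_n^{r_n,x_n}\to K_*$ off the diagonal, while the baseline $C^{2s-\delta}$ regularity, rescaled suitably and combined with tail bounds coming from the $(2s+\beta)$-growth of $w_n$, provides Arzelà--Ascoli compactness. The limit $w_\infty$ then satisfies $\Ds_{a_\infty} w_\infty=0$ in $\R^N$ with $(2s+\beta)$-subpolynomial growth and orthogonality to polynomials of degree $\leq m$ on $B_1$. The Liouville theorem for translation-invariant stable operators (Ros-Oton--Serra) forces $w_\infty$ to be a polynomial of degree $\leq m$, and the orthogonality then gives $w_\infty\equiv 0$, contradicting $\|w_\infty\|_{L^2(B_1)}=1$.

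The main obstacle lies in the regime $s>1/2$, where the principal-value quantity $j_{o,K_n^{r_n,x_n}}$ from \eqref{eq:def-small-joK} arises in the rescaled equation because $\cL_{K_n^{r_n,x_n}}$ acts nontrivially on the linear part of $P_n$. The hypotheses $\tilde\lambda_{o,K}(\cdot,0,\theta)\equiv 0$ together with the quantitative bound on $\tilde\lambda_{o,K}$ are precisely what is needed to convert this into a smallness $\|\vp_2\,j_{o,K_n^{r_n,x_n}}\|_{L^\infty}=O(r_n^\alpha)$, which, paired with the $r_n^{2s+\beta-1}$ size of $\nabla P_n$ after rescaling and the normalization $\theta_n\to\infty$, becomes negligible in the limit. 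The admissible exponent $\bar\alpha=\bar\alpha(s,N,c_0,\kappa,\alpha)$ is fixed by this balance together with the comparable Hölder scaling of $f$ and uniform control of the nonlocal tails. Finally, the restriction $2s+\beta\notin\N$ rules out the critical exponents where the Liouville classification would require logarithmic corrections.
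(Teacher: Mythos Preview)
Your approach is genuinely different from the paper's. The paper does \emph{not} run a direct blow-up with polynomial subtraction for Theorem~\ref{th:Small-Shauder}. Instead it freezes the radial variable at $r=0$: near each $x_0\in\overline{B_1}$ it uses the H\"older assumption to show $K$ is $\varepsilon$-close to the translation-invariant kernel $\mu_a$ with $a(\theta)=\tilde\lambda_K(x_0,0,\theta)$; it then rescales, applies Corollary~\ref{cor:int--Holder-reg-Morrey} for a preliminary $C^{2s-\varrho}$ bound, localizes via Lemma~\ref{lem:cat-off-sol}, mollifies the kernel near the diagonal to $\mu_a$, solves an auxiliary Dirichlet problem $w_{\varepsilon,\delta}$ (using Theorem~\ref{th:bound-reg-alpha} for global $C^{s-\varrho}$ control), rewrites the equation as $L_{K_{\varepsilon,\delta}}(x,w_{\varepsilon,\delta})=F_{\varepsilon,\delta}$ with the $x$-dependent even part, and finally applies Serra's Schauder estimate \cite[Theorem~1.1]{Serra} as a black box. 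The limit $\delta\to 0$ plus a covering concludes. In short, the paper reduces to an existing Schauder theorem; you are essentially re-deriving that theorem from scratch.

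Your outline has a real gap, however, precisely at the point you gloss over: once you subtract the polynomial and normalize, your blow-up $w_n$ grows like $|x|^{2s+\beta}$ with $2s+\beta>2s$. Hence $w_n\notin\cL^1_s$, the nonlocal tail $\int_{|y|>M}|w_n(y)|\,|y|^{-N-2s}\,dy$ diverges, and neither the weak formulation \eqref{eq:nonition-of-sol} nor the interior estimates of this paper (which require $\cL^1_s$-control, see Corollary~\ref{cor:int--Holder-reg-Morrey} and Lemma~\ref{lem:from-caciopp-ok}) apply to $w_n$. Your claim that ``tail bounds coming from the $(2s+\beta)$-growth'' give Arzel\`a--Ascoli compactness is exactly backwards: that growth makes the tails infinite. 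Likewise, the Liouville theorem available in this paper (Lemma~\ref{lem:Liouville}) requires growth exponent $\gamma<2s$, so it does not cover your limit $w_\infty$. The standard remedies---working with discrete differences to lower the growth back below $2s$, or keeping the equation at the level of the bounded function $v_n(x_n+r_n\,\cdot)$ and explicitly controlling the operator on $P_n$ while handling the supercritical tail by cut-off and error terms---are substantial and are precisely the content of \cite{Serra}; they are not carried out in your sketch. Your accounting of the linear term is also off: after rescaling and normalization the linear coefficient of the subtracted polynomial has size $|\nabla P_n|\,r_n^{\,1-2s-\beta}/\theta_n$, not $r_n^{2s+\beta-1}$, and this blows up unless further structure is used. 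The paper sidesteps all of this by never leaving the $\cL^1_s$ framework and invoking \cite{Serra} only after the reduction to a setting where that theorem directly applies.
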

We note that the $C^\b(\R^N)$-norm of $v$ in  \eqref{eq:Shauder-estim-intr} can be replaced with $ \|v\|_{L^2(B_{2})}+ \|v\|_{\cL^1_s}$, provided, we require H\"older regularity of $\ti\l_K$ in the variable $\th$ i.e.  $\|\ti\l_K\|_{C^\a(\R^N\times [0,2)\times S^{N-1})}\leq c_0$.    \\

We  now turn to  our boundary regularity estimates in \eqref{eq:implic-int}. In this case, it is important to consider those kernels $K$ for which $\cL_K d^s$ can be quantified. Here our assumption is that $\cL_K d^s$ is given by   a function in $\cM_{\b'}$, for some $\b'\in [0,s)$. To be more precise, we consider all kernel $K$ for which,   there exist $\b'=\b'(\O,K)\in [0,s)$  and  a   function $g_{\O,K}\in \cM_{\b'}$  such that 
\be\label{eq:def-Ds-ds-g-int}
 \cL_K   (\vp_2 d^s)=g_{\O,K} \qquad\textrm{  in the weak sense in $B_{ r_0}\cap \O$,}
\ee
where $r_0>0$, only depends on $\O$, is such that $\vp_2 d^s \in H^s (B_{r_0} )\cap \cL^1_s$.  We note that   $g_{\O,K}$ might  be  singular near  the  boundary, since we are considering only domains of class $C^{1,\g}$. In fact, see  \cite{RS3}, for  $\g\not=s$ then    $|\Ds_a  d^s(x)|\leq C d^{(\g-s)_+}(x)$ for every  $x\in B_{r_0} \cap \O$, for some $r_0$, only depending on $\O$. This, in particular,  shows  that there  exists a  $g_{\O, \mu_a}\in \cM_{(s-\g)_+}$ satisfying \eqref{eq:def-Ds-ds-g-int}.   We note that \eqref{eq:def-Ds-ds-g-int} encode both the regularity of $K$ and of $\O$.

 Our next main result is the following.
\begin{theorem}\label{th:mainth-gen}
Let   $s\in (0,1)$,    $\b,\d\in (0,s)$ and $\O$ an open set of class $C^{1,\g}$, $\g>0$, near $0\in \de\O$. Let $a$ satisfy \eqref{eq:def-a-anisotropi}, for some $\L>0$.  Let $K\in \scrK(\l,a,\k)$  satisfy   \eqref{eq:def-Ds-ds-g-int}, with    $$\|g_{\O,K} \|_{ \cM_{\b'}}\leq  c_0,$$   for some $\b'=\b'(\O, K)\in [0,s-\d)$ and $c_0>0$.  
Let  $f, V\in \cM_\b$,  and $u\in H^s(B_2) \cap \cL^1_s$ be such that
$$
\cL_K u+ V   u= f \qquad\textrm{ in $   \O$}\qquad\textrm{ and }\qquad u=0 \qquad\textrm{ in  $   \O^c$. }
$$
Then    there exist $C,\varrho>0$, only   depending  only on $N,s,\b, \L,\k,\O,c_0,\d$ and $\|V\|_{\cM_\b}$, such that if $\|\l\|_{L^\infty(B_2\times B_2)}\leq \e_0$, we have 
$$
 \|u/ d^s\|_{C^{s-\max(\b,\b')} ( B_{\varrho}\cap \ov\O)}\leq C\left(\|u\|_{L^2(B_{2})}+ \|u\|_{\cL^1_s}+ \|f\|_{\cM_\b}  \right).
 $$
\end{theorem}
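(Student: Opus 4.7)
The plan is to prove the theorem by a blow-up/contradiction argument against a half-space Liouville theorem for the limit operator $\Ds_a$, in the same spirit as the interior estimate in Theorem \ref{th:int-reg-Morrey-intro}. Write $\alpha:=s-\max(\beta,\beta')\in(0,s)$. Since the goal is a H\"older estimate of $w:=u/d^s$ near $0\in\de\O$, I would first flatten $\de\O\cap B_{\varrho_0}$ by a $C^{1,\g}$ diffeomorphism $\Phi$ which, by the computation on $\ti\l_{K_\Phi}$ from Section \ref{ss:proof-bdr-reg-change-var}, transports $\cL_K$ into an operator of the same type, with a possibly different (but still small) $\l$ and a modified $g_{\O,K}\in\cM_{\b'}$. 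Next, invoking the converse part of Lemma \ref{lem:coerciv}, reformulate the hypotheses $f,V\in\cM_\b$ and $g_{\O,K}\in\cM_{\b'}$ as RTCPs of orders $\b$ and $\b'$, so that after any rescaling $x\mapsto \rho x+x_0$ with $\rho\in(0,1)$, the rescaled data control the quadratic form $\int|\cdot|v^2$ by $\rho^{2s-\max(\b,\b')}\|v\|_{H^s}^2$.

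The central object is a Campanato-type seminorm, tailored to the boundary: for $x_0\in \de\O$ near $0$ and $\rho\in(0,\varrho_0)$ set
$$
\Theta(\rho):=\sup_{x_0\in\de\O\cap B_{\varrho_0}}\ \sup_{\rho'\geq\rho}\,(\rho')^{-(s+\alpha)}\,\inf_{q\in\R}\,\|u-q\,(\vp_2 d^s)\|_{L^\infty(B_{\rho'}(x_0)\cap\O)}.
$$
Standard arguments reduce the statement of the theorem to the uniform bound $\Theta(0^+)<\infty$. Assume by contradiction this fails. Extract sequences $x_k\in\de\O\cap B_{\varrho_0}$, $\rho_k\downarrow 0$ realizing the supremum up to a factor $2$, choose the almost-optimal $q_k\in\R$, and define the blow-up
$$
v_k(x):=\frac{u(\rho_k x+x_k)-q_k\,\rho_k^s\,d^s(\rho_k x+x_k)}{\rho_k^{s+\alpha}\Theta(\rho_k)}.
$$
By optimality of $\rho_k$ and of $q_k$, $v_k$ has a quadratic polynomial-type growth $|v_k(x)|\leq C(1+|x|^{s+\alpha})$ and is normalized to satisfy a non-degeneracy condition that will prevent it from converging to $0$ or to a multiple of $(x_N)_+^s$. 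By scaling, $v_k$ solves
$$
\cL_{K_k} v_k+V_k v_k=f_k-q_k\,g_{\O,K,k}\qquad\text{in a set $U_k\uparrow\{x_N>0\}$,}
$$
and $v_k=0$ outside, where $K_k$ is the rescaled kernel (which, by smallness of $\l$ and the hypothesis $K\in\scrK(\l,a,\k)$, converges to $\mu_a$ on $\R^N\times\R^N$) and where $V_k,f_k,g_{\O,K,k}$ are the rescalings of the original data. Using the RTCPs established in step one, together with the uniform $H^s$-bound obtained by testing the rescaled equation against a cutoff of $v_k$ itself, the Sobolev compact embedding $H^s\embed L^2_{\loc}$ yields a subsequential limit $v_\infty$ on $\{x_N>0\}$ with the polynomial growth above.

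Passing to the limit in the weak formulation, $v_\infty$ solves $\Ds_a v_\infty=0$ in $\{x_N>0\}$ and vanishes on $\{x_N\leq 0\}$. Now I would invoke (and if not already in the paper, prove as a separate lemma) a Liouville-type classification: any such $v_\infty$ with growth at infinity of order $s+\alpha<2s$ must be of the form $c(x_N)_+^s$. Since the non-degeneracy built into the choice of $q_k$ and $\rho_k$ forces $v_\infty/(x_N)_+^s$ to be non-constant at scale $1$, this gives the contradiction. The first part of the estimate (the bound on $w(x_0)$ and a universal constant in place of $\|u\|_{L^2}+\|u\|_{\cL^1_s}+\|f\|_{\cM_\b}$) follows from $\Theta(0^+)<\infty$ by a standard decomposition: the almost-optimal $q_k$ converges, to order $\rho^\alpha$, to a well-defined boundary value $Q(x_0)$, which is then shown to be $\alpha$-H\"older in $x_0\in\de\O$ by comparing the Campanato functionals at two nearby boundary points.

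The main obstacle will be the control, uniform in $k$, of the contribution of the $g_{\O,K}$-term, since $\b'$ is only assumed to be in $[0,s-\d)$ and not $0$: one has to be careful that the rescaling of $g_{\O,K}$ does not blow up faster than the normalization $\rho_k^\alpha\Theta(\rho_k)$ permits, and this is exactly why $\alpha\leq s-\b'$ is forced and why the constants in the final estimate depend only on $c_0$ and $\d$ (not on $\b'$ itself). A secondary technical point, shared with the interior result, is to verify that the limit equation is genuinely $\Ds_a v_\infty=0$ and that the nonlocal tails of $v_k$ pass to the limit correctly given only the polynomial growth $(1+|x|)^{s+\alpha}$; this is where the assumption $\b<s$ is crucial, because it guarantees $s+\alpha<2s$ and hence the tails remain integrable against $(1+|x|^{N+2s})^{-1}$.
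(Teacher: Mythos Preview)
Your proposal shares the paper's core idea---blow-up plus a half-space Liouville classification---but differs in organization, and there is one substantive gap.

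The paper's proof of this theorem is one line: invoke Theorem~\ref{th:bound-reg-alpha} to absorb the potential $V$ into the right-hand side via an $L^\infty$ bound, then apply Corollary~\ref{cor:unif-cont-f-Morrey}. The real content is in Lemma~\ref{lem:boundary-reg-Morrey} and Lemma~\ref{lem:unif-cont-f-Morrey}, and these do \emph{not} flatten the boundary: the blow-up is performed directly at $z_n\in\de\O$, with $\O_n=r_n^{-1}(\O-z_n)$ converging to a half-space only in the limit. Your flattening step is a detour, and Section~\ref{ss:proof-bdr-reg-change-var} is specifically for $C^{1,1}$ domains; the Lipschitz estimate on $A(x,r,\th)$ and the computation $\cL_{K_\rho}d_+^s\in L^\infty$ there genuinely use $D\Phi\in C^{0,1}$. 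For $C^{1,\g}$ with $\g<1$ you would have to redo those estimates to keep the transported $g$ in the right Morrey class, which is exactly what the hypothesis~\eqref{eq:def-Ds-ds-g-int} on the original $\O$ already encodes.

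The paper also uses the $L^2(B_r(z))$-projection onto $\R\,d_2^s$ (the objects $P_{r,z}(u)$ and $Q_{u,z}(r)$), not an $L^\infty$ infimum. This matches the compactness mechanism you cite (Caccioppoli then $H^s\hookrightarrow L^2_{loc}$): the non-degeneracy $\|v_n\|_{L^2(B_1)}\geq c$ together with the orthogonality $\int_{B_1}v_n\,\mathrm{dist}(\cdot,\O_n^c)^s=0$ pass to the limit under $L^2_{loc}$ convergence. With your $L^\infty$-based $\Theta$ you would need uniform $C^0$ convergence near the moving boundary $\de\O_k$ to retain the non-degeneracy, which is an extra circular step.

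The genuine gap is the control of $q_k$, which you correctly flag but do not resolve. A priori, Theorem~\ref{th:bound-reg-alpha} only yields $|q_k|\lesssim\rho_k^{\a'-s}$ for some $\a'<s$, and $\Theta(\rho_k)\to\infty$ carries no rate; so the rescaled forcing $q_k\,g_{\O,K,k}/(\rho_k^{s+\a}\Theta(\rho_k))$ is not obviously bounded. The paper handles this by an explicit two-step bootstrap in Lemma~\ref{lem:unif-cont-f-Morrey}: first run Lemma~\ref{lem:boundary-reg-Morrey} under the weak assumption $|Q_{u,z}(r)|\leq c_0 r^{-\varrho}$ for a small $\varrho>0$ (supplied by Proposition~\ref{prop:bdr-estim-Kato}), obtaining an $L^2$ decay of order $r^{N/2+2s-\max(\b,\b')-\varrho}$; this already forces $|Q_{u,z}(r)|\leq C$ uniformly, and a second pass of Lemma~\ref{lem:boundary-reg-Morrey} with $\varrho=0$ gives the sharp exponent. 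Your single blow-up would need an analogous device.
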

In the case of uniformly continuous coefficient also, we have  the following boundary regularity estimates.
\begin{corollary}\label{cor:mainth-gen}
Let   $s\in (0,1)$,    $\b,\d\in (0,s)$ and $\O$ an open set of class $C^{1,\g}$, $\g>0$, near $0\in \de\O$.  For  $\k>0$,      let $K\in\ti\scrK(\k) $    satisfy: for every $x_1,x_2\in B_2$, $r\in (0,2)$, $\th \in S^{N-1}$,
\begin{itemize}
\item   $   \left| \ti\l_{K}(x_1,r,\th)-\ti\l_{K}(x_2,0,\th) \right |\leq \t(|x_1-x_2|+r)  ; $
 \item  $ \ti\l_{K}(x_1,0,\th)= \ti\l_{K}(x_1,0,-\th), $
\end{itemize}
 for some function $\t\in L^\infty(\R_+)$, with  $\t(t)\to 0$ as $t\to 0$. Suppose also that     $$\|g_{\O,K} \|_{ \cM_{\b'}}\leq  c_0,$$   for some $\b'=\b'(\O, K)\in [0,s-\d)$ and $c_0>0$.  
Let  $f, V\in \cM_\b$,  and $u\in H^s(B_2) \cap \cL^1_s$ be such that
$$
\cL_K u+ V   u= f \qquad\textrm{ in $   \O$}\qquad\textrm{ and }\qquad u=0 \qquad\textrm{ in  $   \O^c$. }
$$
Then    there exist $C,\varrho>0$, only   depending   on $N,s,\b, \t,\k,\O,c_0,\d$ and $\|V\|_{\cM_\b}$, such that  
$$
 \|u/ d^s\|_{C^{s-\max(\b,\b')} ( B_{\varrho}\cap \ov\O)}\leq C\left(\|u\|_{L^2(B_{2})}+ \|u\|_{\cL^1_s}+ \|f\|_{\cM_\b}  \right).
 $$
\end{corollary}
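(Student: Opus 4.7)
My plan is to deduce Corollary \ref{cor:mainth-gen} from Theorem \ref{th:mainth-gen} by rescaling around the boundary point $0\in\partial\Omega$, with the role of the rescaling being to convert the uniform modulus of continuity $\tau$ into the smallness hypothesis on $\|\lambda\|_{L^\infty(B_2\times B_2)}$ required by the theorem.

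First, using the reduction mentioned just after \eqref{eq:locl-elliptic-int} (i.e.\ the content of Remark \ref{rem:Kernel-compact-supp}), I may assume that $K$ satisfies the global two-sided ellipticity \eqref{eq:Kernel-satisf-cont}$(i)$-$(ii)$ at the cost of replacing $V$ by $V+W$ with $W\in C^\infty$; the new potential is still in $\cM_\beta$ with a controlled norm. Set $a(\theta):=\widetilde\lambda_K(0,0,\theta)$. The symmetry hypothesis gives $a(-\theta)=a(\theta)$, and the ellipticity forces $\kappa\leq a(\theta)\leq 1/\kappa$ in the limit $r\to 0^+$, so $a$ satisfies \eqref{eq:def-a-anisotropi} with $\Lambda=\kappa$. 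For $\rho\in(0,1]$, consider the rescaled kernel $K_{\rho,0}(x,y):=\rho^{N+2s}K(\rho x,\rho y)$; writing
\begin{equation*}
K_{\rho,0}(x,y)=|x-y|^{-N-2s}\,\widetilde\lambda_K\bigl(\rho x,\,\rho|x-y|,\,(x-y)/|x-y|\bigr),
\end{equation*}
the uniform continuity hypothesis yields, for $x\neq y\in B_2$, the bound $|K_{\rho,0}(x,y)-\mu_a(x,y)|\leq\tau(6\rho)\,\mu_1(x,y)$, so that $K_{\rho,0}\in\scrK(\tau(6\rho),a,\kappa)$.

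It remains to rescale the other data. Setting $u_\rho(x):=u(\rho x)$, $V_\rho(x):=\rho^{2s}V(\rho x)$, $f_\rho(x):=\rho^{2s}f(\rho x)$ and $\Omega_\rho:=\rho^{-1}\Omega$, we have $\cL_{K_{\rho,0}}u_\rho+V_\rho u_\rho=f_\rho$ in $\Omega_\rho$ with $u_\rho\equiv 0$ on $\Omega_\rho^c$. Using the Morrey scaling $\|g(\rho\,\cdot)\|_{\cM_\alpha}\leq\rho^{-\alpha}\|g\|_{\cM_\alpha}$ (for $\rho\leq 1$), both $V_\rho$ and $f_\rho$ stay in $\cM_\beta$ with controlled norms. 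A short computation using $d_{\Omega_\rho}(x)=\rho^{-1}d_\Omega(\rho x)$ yields $g_{\Omega_\rho,K_{\rho,0}}(x)=\rho^{s}g_{\Omega,K}(\rho x)$ modulo smooth corrections from the cutoff $\varphi_2$, whence $\|g_{\Omega_\rho,K_{\rho,0}}\|_{\cM_{\beta'}}\leq\rho^{s-\beta'}c_0$; the factor $\rho^{s-\beta'}$ is bounded since $\beta'<s$. Finally, $\Omega_\rho$ inherits the $C^{1,\gamma}$ regularity of $\Omega$ near $0$.

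Choosing now $\rho_0>0$ so small that $\tau(6\rho_0)\leq\varepsilon_0$, where $\varepsilon_0$ is the smallness threshold of Theorem \ref{th:mainth-gen}, the theorem applies to the rescaled problem and provides the estimate on $B_\varrho\cap\overline{\Omega_{\rho_0}}$ for $u_{\rho_0}/d_{\Omega_{\rho_0}}^s$; rescaling back gives the desired bound on $B_{\varrho\rho_0}\cap\overline\Omega$. The main point requiring care is that $\rho_0$, and hence the final constants $C$ and $\varrho$, may be chosen to depend only on the allowed data $(N,s,\beta,\tau,\kappa,\Omega,c_0,\delta,\|V\|_{\cM_\beta})$: this is so because $\varepsilon_0$ in Theorem \ref{th:mainth-gen} depends exactly on these quantities (with $\Lambda=\kappa$), and $\tau$ is a fixed modulus independent of the point around which one zooms.
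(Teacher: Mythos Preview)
Your argument is correct and follows the same route the paper indicates: rescale so that the uniform continuity of $\widetilde\lambda_K$ forces the rescaled kernel into $\scrK(\lambda,a,\kappa)$ with $\|\lambda\|_{L^\infty(B_2\times B_2)}$ below the threshold $\varepsilon_0$, then invoke Theorem~\ref{th:mainth-gen}. The paper's one-line proof refers to ``a scaling and a covering argument as in the proof of Theorem~\ref{th:int--Holder-reg-Morrey}'', i.e.\ one rescales at each relevant base point $x_0$ to the model kernel $a_{x_0}(\theta)=\widetilde\lambda_K(x_0,0,\theta)$ and then covers; since the conclusion here only asks for the estimate on \emph{some} small $B_\varrho$ around $0$, your single rescaling at the origin already suffices and the covering step is unnecessary.

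Two small points worth tightening. First, the correction coming from the mismatch of cutoffs in the computation of $g_{\Omega_\rho,K_{\rho,0}}$ is not ``smooth'' but merely \emph{bounded}: for $x$ in $B_{\rho/2}\cap\Omega$ one has, using $d_\Omega(y)\le|y|$ since $0\in\partial\Omega$,
\[
\Bigl|\cL_K\bigl[(\varphi_{2\rho}-\varphi_2)d_\Omega^s\bigr](x)\Bigr|\le C\int_{2\rho\le|y|\le 4}|y|^s|x-y|^{-N-2s}\,dy\le C\rho^{-s},
\]
and after multiplying by the factor $\rho^{2s}\cdot\rho^{-s}=\rho^{s}$ from the rescaling this contributes $O(1)$ to $g_{\Omega_\rho,K_{\rho,0}}$, which is what you need. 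Second, the threshold $\varepsilon_0$ in Theorem~\ref{th:mainth-gen} depends on the domain, hence a priori on $\Omega_{\rho_0}$ rather than $\Omega$; this is harmless because dilating $\Omega$ only improves its $C^{1,\gamma}$ character near $0$, so $\varepsilon_0(\Omega_{\rho_0})$ is bounded below uniformly in $\rho_0\in(0,1]$ in terms of the data of $\Omega$, and there is no circularity in choosing $\rho_0$.
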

As an application of the above result together with a global diffeomorphism   that locally  flatten the boundary $\de\O$ near $0$, we get the following 
\begin{theorem}\label{th:bdr-reg-C11-domain}
Let   $s\in (0,1)$,    $\b\in (0,s)$ and $\O$ an open set of class $C^{1,1}$,  near $0\in \de\O$.  For  $\k>0$,      let $K\in\ti\scrK(\k) $    satisfy: 
\begin{itemize}
\item   $ \| \ti\l_{K}  \|_{C^{s+\d}\left(B_2\times[0,2)\times S^{N-1} \right)}\leq c_0  ;$
 \item $   \ti\l_{K}(x,0,\th)= \ti\l_{K}(x,0,-\th)   $,   for every $x\in B_2$, and  $\th \in S^{N-1}$,  
\end{itemize}
 for some $\d,c_0>0$.    
Let  $f, V\in \cM_\b$,  and $u\in H^s(B_2) \cap \cL^1_s$ be such that
$$
\cL_K u+ V   u= f \qquad\textrm{ in $   \O$}\qquad\textrm{ and }\qquad u=0 \qquad\textrm{ in  $   \O^c$. }
$$
Then    there exist $C,\varrho>0$, only   depending   on $N,s,\b, \t,\k,\O,c_0,\d$ and $\|V\|_{\cM_\b}$, such that  
$$
 \|u/ d^s\|_{C^{s-\b} ( B_{\varrho}\cap \ov\O)}\leq C\left(\|u\|_{L^2(B_{2})}+ \|u\|_{\cL^1_s}+ \|f\|_{\cM_\b}  \right).
 $$
\end{theorem}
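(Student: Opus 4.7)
\noindent\emph{Strategy.} The plan is to reduce the claim to Corollary~\ref{cor:mainth-gen}, applied with $\b'(\Om,K)=0$, so that $s-\max(\b,\b')=s-\b$ matches the conclusion. The hypothesis $\|\ti\l_K\|_{C^{s+\d}(B_2\times[0,2)\times S^{N-1})}\leq c_0$ delivers the first bullet of that Corollary with modulus $\t(t)=2c_0 t^{s+\d}$, and the second bullet (evenness at $r=0$) is part of the hypotheses. The only missing ingredient is therefore
$$
\cL_K(\vp_2 d^s)\in L^\infty(B_{r_0}\cap \Om).
$$

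\noindent\emph{Flattening the boundary.} Pick a global $C^{1,1}$-diffeomorphism $\Phi:\R^N\to\R^N$, equal to the identity outside some compact set, with $\Phi(0)=0$, $\Phi(\Om\cap B_{r_1})\subset \R^N_+:=\{y_N>0\}$ and $\Phi(\Om^c\cap B_{r_1})\subset \{y_N\leq 0\}$. Set $v:=u\circ\Phi^{-1}$. The change of variables in the Dirichlet form yields $\cL_{\ti K}v+\ti V v=\ti f$ on $\Phi(\Om\cap B_{r_1})$, with the symmetric transformed kernel
$$
\ti K(y_1,y_2):=K(\Phi^{-1}(y_1),\Phi^{-1}(y_2))\,J(y_1)J(y_2),\qquad J:=|\det D\Phi^{-1}|,
$$
and with $\ti V,\ti f\in \cM_\b$ of comparable norms. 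Because $\Phi\in C^{1,1}$ is bi-Lipschitz and $K\in\ti\scrK(\k)$, we have $\ti K\in\ti\scrK(\k')$, and a direct calculation gives
$$
\ti\l_{\ti K}(y,0,\th)=|D\Phi^{-1}(y)\th|^{-N-2s}\,\ti\l_K\!\bigl(\Phi^{-1}(y),0,\tfrac{D\Phi^{-1}(y)\th}{|D\Phi^{-1}(y)\th|}\bigr)J(y)^2,
$$
which inherits evenness in $\th$ at $r=0$ and lies in $C^{s+\d'}$ jointly in $(y,r,\th)$ for some $\d'>0$ (composition of a $C^{s+\d}$ function with the $C^{0,1}$ map $D\Phi^{-1}$). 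Since $d_\Om\asymp (\Phi(\cdot))_N$ near $0$, it suffices to show $\cL_{\ti K}(\vp_2 (y_N)_+^s)\in L^\infty$ near $0$ in $\R^N_+$, then invoke Corollary~\ref{cor:mainth-gen} on the half-space side and pull back via the bi-Lipschitz $\Phi$.

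\noindent\emph{Key bound via freezing.} Fix $y_0\in B_{r_0/2}\cap \R^N_+$ and freeze the coefficients at $y_0$:
$$
K_0(z):=|z|^{-N-2s}\,b_0(z/|z|),\qquad b_0(\th):=\ti\l_{\ti K}(y_0,0,\th).
$$
Since $b_0$ is even, $K_0$ is a symmetric, translation-invariant, stable-like kernel. Writing $\cL_{K_0}(y_N)_+^s$ in polar coordinates $z=r\th$ via second-order differences and using the classical one-dimensional identity $(-\D)^s_{1D}(t)_+^s\equiv 0$ on $\{t>0\}$, one obtains $\cL_{K_0}(y_N)_+^s\equiv 0$ on $\R^N_+$. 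Both $\ti K(y_0,\cdot)$ and $K_0(y_0-\cdot)$ decay like $|y_0-\cdot|^{-N-2s}$, so the difference is absolutely integrable against $|y_{0,N}^s-(y_N)_+^s|$, and
$$
\cL_{\ti K}\bigl((y_N)_+^s\bigr)(y_0)=\int_{\R^N}\bigl[y_{0,N}^s-(y_N)_+^s\bigr]\bigl(\ti K(y_0,y)-K_0(y_0-y)\bigr)dy.
$$
For $|y-y_0|\leq 1$ the $C^{s+\d}$-smoothness of $\ti\l_{\ti K}$ at $(y_0,0,\cdot)$ gives $|\ti K(y_0,y)-K_0(y_0-y)|\leq C|y-y_0|^{-N-s+\d}$, while the $s$-H\"older bound $|y_{0,N}^s-(y_N)_+^s|\leq C|y-y_0|^s$ produces an integrand $\leq C|y-y_0|^{-N+\d}$, integrable. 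For $|y-y_0|>1$ the decay $|y-y_0|^{-N-2s}$ of both kernels paired with $(y_N)_+^s\lesssim 1+|y|^s$ yields an integrable tail $\lesssim |y|^{-N-s}$. The contribution of $(1-\vp_2)(y_N)_+^s$, being an integral of $\ti K(y_0,\cdot)(y_N)_+^s$ outside $B_2$, is directly bounded. Summing, $\|\cL_{\ti K}(\vp_2(y_N)_+^s)\|_{L^\infty(B_{r_0/2}\cap\R^N_+)}\leq C$, that is, $\|g_{\Phi(\Om),\ti K}\|_{\cM_0}\leq C$.

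\noindent\emph{Conclusion and main obstacle.} Corollary~\ref{cor:mainth-gen} applied to $v$ on $\Phi(\Om)$ with $\b'=0$ yields $\|v/y_N^s\|_{C^{s-\b}(B_\varrho\cap\ov{\R^N_+})}\leq C(\|v\|_{L^2(B_2)}+\|v\|_{\cL^1_s}+\|\ti f\|_{\cM_\b})$; pulling back through $\Phi$ and using $d_\Om\asymp(\Phi(\cdot))_N$ (together with the $C^{1,1}$ regularity of $\Phi$, which preserves $C^{s-\b}$-H\"older norms since $s-\b<1$) completes the proof. The main technical obstacle is the $L^\infty$ bound on $\cL_{\ti K}(\vp_2(y_N)_+^s)$: it relies crucially on the frozen-coefficient identity $\cL_{K_0}(y_N)_+^s\equiv 0$ (for which the evenness of $\ti\l_{\ti K}(\cdot,0,\cdot)$ must be inherited from $\ti\l_K$ through the diffeomorphism), and on the fact that the $C^{s+\d}$-regularity of $\ti\l_K$ is exactly sharp enough to produce the integrable singularity $|y-y_0|^{-N+\d}$ at the diagonal.
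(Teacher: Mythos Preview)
Your approach coincides with the paper's: both flatten the boundary via a $C^{1,1}$ diffeomorphism, establish $\cL_{\ti K}(y_N)_+^s\in L^\infty$ near the origin by subtracting the frozen even kernel (using $\cL_{K_0}(y_N)_+^s\equiv 0$ on $\R^N_+$) and bounding the remainder through the $C^{s+\d}$-regularity of $\ti\l_K$, and then invoke Corollary~\ref{cor:mainth-gen} with $\b'=0$. The only refinement in the paper is the choice of a specific volume-preserving diffeomorphism (Lemma~\ref{lem:glob-diff}) satisfying $d(\Phi_\rho(x))=x_N$ exactly, which removes the Jacobian factors in your $\ti K$ and makes the final pull-back $u/d^s=(U/x_N^s)\circ\Phi_\rho^{-1}$ immediate, rather than relying on the quotient $d_\Om/\Phi_N$ being suitably regular.
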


 
  
The proof of Theorem \ref{th:mainth-gen} and Theorem \ref{th:int-reg-Morrey-intro}  are based on some  blow-up analysis argument, where   normalized, rescaled and translated sequence of a solution to a PDE satisfy certain growth control and converges to a solution on a symmetric space, so that Liouville-type results allow to calssify the limiting solutions. Here, we are  inpired by the work of   Serra in \cite{Serra-OK},  see also \cite{RS2,RS3,RS4,Serra} for boundary regularity estimates for translation invariant nonlocal operators. Note that in the aforementioned papers, since   entries  and solutions are in $L^\infty$, the use of barriers to get \`a priori pointwise estimates and   Arzel\`a-Ascoli compactness theorems were the main tools to carry out  their blow-up analysis.   In our situation, it is clear that there is no hope of using  such tools.
Our  argument will be  based on the  estimate of  the $L^2$-average mean oscillation  of $u$ to get \`a priori pointwise estimates.  Indeed, to prove  \eqref{eq:implic-first-int}, we show the growth estimates
\be \label{eq:estim-mean-oscilla-interior-intro}
 \sup_{z\in B_1}\|u-(u)_{B_r(z)}\|_{L^2(B_r(z))}\leq C r^{N/2+\min (1,2s-\b)-\e},
\ee
with $(u)_{B_r(z)}:=\frac{1}{|B_r|}\int_{B_r(z)}u(x)\,dx$,  and 
\be  \label{eq:estim-mean-oscilla-bdr-intro}
   \sup_{z\in B_1 \cap \de\O}\|u \|_{L^2(B_r(z))}\leq C r^{N/2+\min (s,2s-\b)-\e},
\ee
for interior regularity  and    boundary regularity,  respectively. 
 The use of  Caccioppoli-type estimates, the rescaled-translated-coercivity condition \eqref{eq:resc-coerciv-int}  and    Liouville-type theorems for semi-bounded nonlocal operators  are crucial to carry out the argument. Note that \eqref{eq:estim-mean-oscilla-interior-intro} always implies $C^{\min (1,2s-\b)-\e}$  estimates. On the other hand coupling \eqref{eq:estim-mean-oscilla-bdr-intro} with interior estimates yield $C^{\min (s,2s-\b)-\e}$ estimate up to the boundary. \\
  To prove   Theorem \ref{th:int-reg-Morrey-intro} we show   an  expansion    of the form 
\be\label{eq:estim-L-E2-introduc-int}
|u(x)-u(z)-(2s-1)_+T(z)\cdot (x-z)| \leq C |x-z|^{2s-\max(\b,\b')} \qquad\textrm{ for every $x,z\in B_{1} $,}
\ee
with $\|T\|_{L^\infty(B_1)}\leq C$, while for  Theorem  \ref{th:mainth-gen},  
\be\label{eq:estim-L-E2-introduc}
\|u-\psi(z) d^s\|_{L^2(B_r(z))} \leq C r^{N/2+2s-\max(\b,\b')} \qquad\textrm{ for every $z\in B_{1}\cap \de\O$ and $r\in (0,r_0)$},
\ee
with $\|\psi\|_{ L^\infty(B_{1}\cap\de\O)}\leq C$ and the constant $C$ does not depend on $\b'$.   Recall that  $\ell_+:=\max(\ell,0)$.
 Now using appropriate  interior   regularity estimates (\eqref{eq:implic-first-int} is enough), we  translate the $L^2$ estimates in \eqref{eq:estim-L-E2-introduc} to a pointwise  estimate which yields the conclusion of the theorem.    
The proof of  \eqref{eq:estim-L-E2-introduc} uses    blow-up argument that allows to estimate the growth, in $r>0$, of the difference between $u$ and its $L^2(B_r(z))$-projection on  $\R d^s$, the one-dimensional space generated by $d^s$. Similarly  the proof of \eqref{eq:estim-L-E2-introduc-int} is achieved by estimating  the growth, in $r>0$, of the difference between $u$ and its $L^2(B_r(z))$-projection on the finite dimensional space of affine functions $\left\{t+(2s-1)_+ T\cdot (x-z)\,:\,  \textrm{$t\in \R$ and $T\in \R^N$} \right\}$.    \\
We obtain 
 Theorem \ref{th:Small-Shauder} by freezing the radial variable $r$ at $r=0$ and by using the Shauder estimates for nontranslation invariant  nonlocal operators of Serra \cite{Serra}. For that, we use   our   lower order term estimates Corollary \ref{cor:int-reg-Morrey-intro} together with some approximation procedure and boundary regularity. \\ 
Related to  this   work is the one of Monneau  in \cite{Monneau} where     blow-up arguments were used  to estimate  the modulus of mean oscillation (in $L^p$ average) for  solutions to the Laplace equation with Dini-continuous right hand sides.   \\
Sharp boundary regularity   in $ C^{1,\g}$ domains and refined Harnack inequalities   in $C^1$ domains  are        useful tools to obtain sharp regularity of the free boundaries in the study of nonlocal obstacle problems, see e.g.   \cite{CRS}. We believe that our result and arguments  might be of interest in the study of obstacle problems with non smooth  obstacles and for parabolic problems.\\
For the organization of the paper, we put in   Section \ref{eq:s-Not-Prem}   some notations and preliminary  results related to Kato class of potentials. Section \ref{s:L2-growths} is devoted to interior and boundary $L^2$-growth estimates of solutions to  \eqref{eq:Main-problem} in $C^1$ domains. Statement  \eqref{eq:implic-first-int},  is proved in   Section \ref{s:int-and-ndr-reg}. 
  Higher order boundary and interior regularity are proved in Section \ref{s:high-int-reg} and Section \ref{s:Higher-reg-int}, respectively. The proof of the main results (in particular \eqref{eq:implic-int}) are gathered in Section \ref{s:proof-main-results}.   Finally,  we prove the   Liouville theorems in Appendix \ref{s:append-Liouville-them} and we put  some useful technical results in Appendix  \ref{s:appnd-2-techinacality}. 
\section{Notations and Preliminary results}\label{eq:s-Not-Prem}
In this paper, the ball centered at  $z\in\R^N$ with radius $r>0$ is denoted by $B(z,r)$ and $B_r:=B_r(0)$. 
Here and in the following, we let $\vp_1 \in C^\infty_c(B_2)$ such that  $\vp_1 \equiv 1$ on $B_{1}$ and $0\leq \vp_1\leq  1$ on $\R^N$.  We put  $\vp_R(x):=\vp(x/R)$.  For $b\in L^\infty(S^{N-1})$,   we define   $
\mu_b(x,y)= |x-y|^{-N-2s} b\left(\frac{x-y}{|x-y|} \right).$\\

Recall  that  (see e.g. \cite{FW}), if $b$ is even, there exists $C=C(N,s,\|b\|_{L^\infty(S^{N-1})})$, such that for all $\psi\in C^\infty_c(\R^N)$ and for every  $x\in \R^N$, we have 
\be \label{eq:Dsa-abs-estim-C-infty-c}
\left|PV \int_{\R^{N}}(\psi(x)-\psi(y) )\mu_b(x,y)\,dy \right|\leq C   \frac{ \|\psi\|_{C^2(\R^N)} }{1+|x|^{N+2s}},
\ee
where $PV$ means that the integral is understood in the principle value sense. 
Throughout this paper, for the   seminorm of  the fractional Sobolev spaces, we adopt the notation
$$
[u]_{H^s(\O)}:=\left(\int_{\O\times\O} {|u(x)-u(y)|^2}\mu_1(x,y)\, dxdy\right)^{1/2}
$$
and  for  the   H\"older seminorm, we write
$$
[u]_{C^\a(\O)}:=\sup_{x\not=y\in\O} \frac{|u(x)-u(y)|}{|x-y|^\a},
$$ 
for $\a\in (0,1)$.
Letting $u\in L^1_{loc}(\R^N)$, the mean value of $u$ in $ B_r(z)$ is denoted by 
$$
u_{B_r(z)}=(u)_{B_r(z)}:=\frac{1}{|B_r|}\int_{B_r(z)}u(x)\, dx. 
$$
 \subsection{The class of operators}
In the following, it will be crucial  to consider certain  class of operators which we describe next.
\subsubsection{Symmetric operators with bounded measurable coefficients}\label{eq:op-meas-coeff}
 Firstly  we will consider kernels 
 $K: \R^N\times \R^N\to (0,\infty]$  satisfying the following properties:
 \be \label{eq:Kernel-satisf}
 \begin{aligned}
(i)\,& K(x,y)=K(y,x) \qquad\textrm{ for all $x\not =y\in\R^N$, }\\
(ii)\,& \k  \mu_1(x,y)\leq K(x,y)\leq \k^{-1} \mu_1(x,y) \qquad\textrm{ for all $x\not = y\in\R^N$, for some constant $\k>0$.}
 %
 \end{aligned}
 \ee
%
%
%
Let   $\O\subset\R^N$ be an open set and let $f,V\in L^1_{loc}(\R^N)$.
For $K$ satisfying  \eqref{eq:Kernel-satisf},    we say that $u\in H^s_{loc}(\O)\cap \cL^1_s $ is  a (weak) solution to 
$$ 
 \cL_{K} u+ Vu  = f \qquad\textrm{ in $  \O$} 
$$
if $uV\in L^1_{loc}(\R^N)$ and for all $\psi\in C^\infty_c( \O)$, we have 
\be \label{eq:nonition-of-sol}
\frac{1}{2}\int_{\R^{2N}}(u(x)-u(y))(\psi(x)-\psi(y)K(x,y)\, dxdy+\int_{\R^N} V(x)u(x)\psi(x)\,dx=\int_{\R^N}f(x)\psi(x)\, dx.
\ee
Note, in fact, that  for  the first   term in \eqref{eq:nonition-of-sol} to be  finite, it is enough that  $K$ satisfies only the upper bound in   \eqref{eq:Kernel-satisf}$(ii)$.
\begin{remark}\label{rem:Kernel-compact-supp} [Kernels with possible compact support]
In many applications, it  is important to consider    kernels $K'$ with possible compact support. This allows to treat kernels which are only locally symmetric and locally elliptic (\eqref{eq:locl-elliptic} below).  As a matter of fact,  we note that the regularity theory of the   operators $\cL_{K'}$ is included in those   of the form $\cL_K+V$, with $K$ satisfying  \eqref{eq:Kernel-satisf}, for some potential $V$ of class $ C^\infty$. Indeed,  consider a   kernel $\ti K: \R^N\times \R^N\to [0, +\infty]$ satisfying
 \be \label{eq:locl-elliptic}
 \begin{aligned}
(i)\,& \ti K(x,y)=\ti K(y,x) \qquad\textrm{ for all $x\not=y\in\R^N$,}\\
(ii')\,& \k \mu_1(x,y)\leq \ti K(x,y)  \qquad\textrm{ for all $x\not=y\in B_2$} \\
 (ii'') \,&    \ti K(x,y)  \leq \frac{1}{\k}    \mu_1(x,y) \qquad\textrm{ for all $x\not=y\in \R^N$.}   \hspace{6.5cm}
 \end{aligned}
 \ee
We define $\eta_1(x):=1-\vp_1(x)$ and $\eta(x,y)=\eta_1(x)+ \eta_1(y)$,  which satisfies
$$
\eta(x,y)\geq 
\begin{cases} 
1&  \textrm{ if $(x,y)\in \R^N\times\R^N\setminus (B_2\times B_2)$}\\ 
0&  \textrm{ if $(x,y)\in B_2\times B_2$.}
\end{cases}
$$  
Then letting $u\in H^s(B_2)\cap\cL^1_s$  be a weak solution  (in the sense of \eqref{eq:nonition-of-sol}) to the equation
$$
\cL_{\ti K} u+ \ti Vu  = \ti f \qquad\textrm{ in $   B_{2}$},
$$
we then  have  that
$$
\cL_{K} u+  Vu  =   f \qquad\textrm{ in $ B_{1/2} $},
$$
where $K(x,y)=\ti K(x,y)+\eta(x,y) \mu_1(x,y) $, 
$
 V(x)=\ti  V(x) -\int_{|y|\geq 1}\eta_1(y) \mu_1(x,y)  dy 
$
and  $  f(x)= \ti f(x) -\int_{|y|\geq 1}u(y)\eta_1(y) \mu_1(x,y)  dy  .$
 It is clear that $ K$ satisfies  \eqref{eq:Kernel-satisf}, for a new  constant $\k>0$. In addition $\|V-\ti V\|_{C^k(B_{1/2})}\leq C(N,s,k) $ and $\|f-\ti f\|_{C^k(B_{1/2})}\leq C(N,s,k) \|u\|_{\cL^1_s} $, for all $k\in\N$. 
\end{remark}
%
%
\subsubsection{Symmetric  translation invariant operators with semi-bounded measurable coefficients}
The class of operators we will consider next appears as limit of rescaled operators $\cL_K$, for $K\in \scrK(\l,b,\k)$ (see Section  \ref{ss:Non-loc-cont}).   
Let $(a_n)_n$ be a sequence  of functions, satisfying \eqref{eq:def-a-anisotropi}. Then, up to a subsequence,  it converges, in the weak-star topology of $L^\infty(S^{N-1})$, to some $b\in L^\infty(S^{N-1})$.  It follows that  $b$ is even on $S^{N-1}$ and  satisfies 
\be\label{eq:coerciv-RS}
0< \L\int_{S^{N-1}} |e_1\cdot \theta|^{2s} \,d\th \leq \inf_{\eta\in S^{N-1}} \int_{S^{N-1}} |\eta\cdot \theta|^{2s}b(\th)\,d\th\qquad \textrm{ and } \qquad \|b\|_{L^\infty(S^{N-1})}\leq \frac1\L.
\ee
For such function $b$, we denote by $L_b$ the corresponding operator, which is given by 
\be\label{eq:def-Lb}
L_b\psi(x):=PV\int_{\R^{N}}(\psi(x)-\psi(y) )\mu_b(x,y)\, dy\qquad\textrm{ for every $\psi\in C^\infty_c(\R^N)$,}
\ee
where $PV$ means that  the integral is   in the principle value sense. 
Here also solutions $u\in H^s_{loc}(\O)\cap\cL^1_s$ to the equation  $L_b u+Vu=f$ in an open set $\O$ are functions satisfying  \eqref{eq:nonition-of-sol} ---replacing $K$ with $\mu_b$.\\
%
%
%
%
%
 The  following result  is concerned with limiting  of  a sequence  of operators which are close to a translation invariant operator.    
\begin{lemma} \label{lem:Ds-a-n--to-La}
Let $(a_n)_n$ be a sequence  of functions, satisfying \eqref{eq:def-a-anisotropi} and converging in the weak-star sense    to some $b\in L^\infty(S^{N-1})$.   Let $\l_n:\R^{2N}\to [0,\k^{-1}] $, with $\l_n\to 0 $ pointwise on $\R^N\times \R^N$. Let  $K_n$ be a symmetric kernel satisfying 
$$
|K_n(x,y)-\mu_{a_n}(x,y)|\leq \l_n(x,y)\mu_1(x,y)  \qquad \textrm{  for all  $x\not=y\in \R^N $ and for all $n\in \N$. }
$$
If   $(v_n)_n$ is  a bounded  sequence in $\cL^1_s\cap H^s_{loc}(\R^N)$ such that $v_n\to v$ in $\cL^1_s$,   then 
$$
\int_{\R^{N}}v(x)L_b\psi(x)\,dx=\frac{1}{2} \lim_{n\to\infty}\int_{\R^{2N}}(v_n(x )-v_n(y))(\psi(x)-\psi(y)) {K_n(x,y)}\,dxdy    \quad\textrm{ for all  $\psi\in C^\infty_c(\R^N)$.}
$$
\end{lemma}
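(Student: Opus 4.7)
My plan is to split the bilinear form via a smooth radial cutoff of the diagonal, show that the near-diagonal piece is uniformly small in $n$ (using the $H^s_{loc}$ bound on $v_n$), and pass to the limit in the far piece (where $K_n$ is bounded and the weak-$\ast$ convergence of $a_n$ can be exploited). Fix $\psi\in C^\infty_c(\R^N)$ with support in $B_R$, a cutoff $\chi\in C^\infty([0,\infty))$ with $\chi\equiv 0$ on $[0,1]$ and $\chi\equiv 1$ on $[2,\infty)$, and put $\chi^\delta(t):=\chi(t/\delta)$. I decompose
$$
B_n(v_n,\psi):=\frac12\int_{\R^{2N}}(v_n(x)-v_n(y))(\psi(x)-\psi(y))K_n(x,y)\,dxdy=B_n^{\rm f}(\delta)+B_n^{\rm n}(\delta),
$$
where ``${\rm f}$'' carries the factor $\chi^\delta(|x-y|)$ and ``${\rm n}$'' the factor $1-\chi^\delta(|x-y|)$, and apply the analogous decomposition $\int v\,L_b\psi=I^{\rm f}(\delta)+I^{\rm n}(\delta)$.

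For the near-diagonal part, Cauchy--Schwarz together with $K_n\leq \k^{-1}\mu_1$ gives
$$
|B_n^{\rm n}(\delta)|\leq C[v_n]_{H^s(B_{R+2\delta})}\Bigl(\int_{\R^{2N}}|\psi(x)-\psi(y)|^2\mu_1(x,y)(1-\chi^\delta(|x-y|))\,dxdy\Bigr)^{1/2}\leq C'\delta^{1-s},
$$
uniformly in $n$, using $|\psi(x)-\psi(y)|\leq C|x-y|$ on the support of $1-\chi^\delta$ together with the $H^s_{loc}$ bound on $(v_n)$; the limit $v$ inherits this bound by weak compactness. For $I^{\rm n}(\delta)$, symmetrizing in $z=y-x$ (exploiting evenness of $b$) yields $|L_b^{{\rm near},\delta}\psi(x)|\leq C\delta^{2-2s}\mathbf{1}_{B_{R+2\delta}}(x)$, and integrating against $v\in L^1_{loc}$ gives $|I^{\rm n}(\delta)|\to 0$ as $\delta\to 0$.

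For the far-diagonal part, the symmetry $K_n(x,y)=K_n(y,x)$ yields
$$
B_n^{\rm f}(\delta)=\int_{\R^N} v_n(x)\,\cB_n^\delta\psi(x)\,dx,\qquad \cB_n^\delta\psi(x):=\int_{\R^N}(\psi(x)-\psi(y))K_n(x,y)\chi^\delta(|x-y|)\,dy,
$$
where the cutoff makes the defining integral absolutely convergent (no PV needed), with the uniform pointwise bound $|\cB_n^\delta\psi(x)|\leq C_\delta(1+|x|^{N+2s})^{-1}$. Pointwise convergence $\cB_n^\delta\psi(x)\to L_b^{{\rm far},\delta}\psi(x)$ follows by splitting $K_n-\mu_b=(K_n-\mu_{a_n})+(\mu_{a_n}-\mu_b)$: the first difference is dominated by $\lambda_n\mu_1$ with $\lambda_n\to 0$ pointwise and $\lambda_n\leq \k^{-1}$, so dominated convergence applies; the second, in polar coordinates $y-x=r\omega$, reads $\int_{S^{N-1}}(a_n-b)(\omega)g_{x,\delta}(\omega)\,d\omega$ with $g_{x,\delta}(\omega):=\int_0^\infty(\psi(x)-\psi(x+r\omega))r^{-1-2s}\chi^\delta(r)\,dr$ bounded and continuous in $\omega$, which vanishes by the weak-$\ast$ convergence of $a_n$ in $L^\infty(S^{N-1})$. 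Combining with $v_n\to v$ in $\cL^1_s$, the identity
$$
\int v_n\cB_n^\delta\psi-\int v\,L_b^{{\rm far},\delta}\psi=\int(v_n-v)\cB_n^\delta\psi+\int v(\cB_n^\delta\psi-L_b^{{\rm far},\delta}\psi)
$$
tends to zero: the first summand by $C_\delta\|v_n-v\|_{\cL^1_s}\to 0$, the second by dominated convergence with integrable dominant $2C_\delta|v(x)|/(1+|x|^{N+2s})$.

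Assembling the three steps, $\limsup_n|B_n(v_n,\psi)-\int v\,L_b\psi|\leq C\delta^{1-s}+|I^{\rm f}(\delta)-\int v\,L_b\psi|$; letting $\delta\to 0$ concludes (the last term vanishes by another dominated convergence against $v\in\cL^1_s$ since $L_b^{{\rm far},\delta}\psi\to L_b\psi$ pointwise and is uniformly bounded by $C(1+|x|^{N+2s})^{-1}$). The main obstacle is the pointwise convergence of $\cB_n^\delta\psi$: the cutoff is essential because, without it, the linear Taylor contribution $-\nabla\psi(x)\cdot(r\omega)r^{-1-2s}$ is not absolutely integrable near $r=0$ when $s\geq 1/2$, so one cannot use the weak-$\ast$ convergence of $a_n$ directly on the angular sphere; the $H^s_{loc}$ bound on $v_n$ is precisely what absorbs the resulting cutoff error in the near-diagonal piece.
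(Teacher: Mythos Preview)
Your proof is correct and takes a genuinely different route from the paper's. The paper does not cut off the diagonal; instead it writes the difference as
\[
\int v\,(L_b-L_{a_n})\psi \;+\; \tfrac12\!\int\!\!\int (v(x)-v(y))(\psi(x)-\psi(y))(\mu_{a_n}-K_n) \;-\; \tfrac12\!\!\int\!\!\int (w_n(x)-w_n(y))(\psi(x)-\psi(y))K_n,
\]
where $w_n=v_n-v$. The first term is handled by symmetrizing the PV via evenness of $a_n,b$ (second differences), then using weak-$\ast$ convergence and the uniform decay $|L_{a_n}\psi(x)|\le C(1+|x|)^{-N-2s}$; the second by dominated convergence ($\lambda_n\to 0$ pointwise, the dominant coming from $v\in H^s_{loc}$); the third by splitting $K_n=\mu_{a_n}+(K_n-\mu_{a_n})$, using Cauchy--Schwarz on the $\lambda_n$ piece (this is where the $H^s_{loc}$ bound on $w_n$ enters) and the identity $\tfrac12\int\!\!\int(w_n(x)-w_n(y))(\psi(x)-\psi(y))\mu_{a_n}=\int w_n L_{a_n}\psi$ together with $w_n\to 0$ in $\cL^1_s$.

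What each approach buys: the paper's algebraic splitting avoids the auxiliary scale $\delta$ and the double limit, at the price of relying on the symmetrized (second-difference) form of $L_{a_n}\psi$ to make the ``full'' identity $\int w_n L_{a_n}\psi$ available. Your cutoff is more elementary in that on the far piece the kernel is bounded and no PV is ever needed---in particular you never have to rewrite the $K_n$-bilinear form as $\int v_n\,\cL_{K_n}\psi$, which would be delicate since $K_n$ need not have the evenness that makes a pointwise PV well-defined. A minor remark: your aside that ``$v$ inherits the $H^s_{loc}$ bound by weak compactness'' is true but unused---your $I^{\rm n}(\delta)$ estimate only needs $v\in L^1_{loc}$ via the second-difference bound $|L_b^{{\rm near},\delta}\psi|\le C\delta^{2-2s}$.
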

 
\begin{proof}
Letting $w_n= v_n-v$, then direct computations give
\begin{align}\label{eq:dist-L-b-L-k-n}
&   \int_{\R^N }v (x)  L_b \psi(x)\, dx- \frac{1}{2}\int_{\R^{2N}}(v_n(x )-v_n(y))(\psi(x)-\psi(y)) {K_n(x,y)}\,dxdy \nonumber\\
&  = \int_{\R^N } v (x)( L_{b}-L_{a_n})\psi(x)\, dx +  \frac{1}{2}\int_{\R^{2N} } (v (x)-v(y)) (\psi(x)-\psi(y))(\mu_{a_n}(x,y)-K_n(x,y))\, dxdy \nonumber\\
   &-  \frac{1}{2}\int_{\R^{2N}}(w_n(x )-w_n(y))(\psi(x)-\psi(y)) {K_n(x,y)}\,dxdy .
\end{align}
By eveness of $a_n$ and $b$,   Fubini's theorem and a change of variable, we can write
\begin{align*}
( L_{b}-L_{a_n})\psi(x)= \int_{S^{N-1}} \left[\int_{0}^\infty (\psi(x-t\th)+\psi(x+t\th)-2\psi(x)) t^{-1-2s} \, dt \right] (b(\th)-a_n(\th)) d\th .
\end{align*}
Clearly, the function $\th\mapsto  \int_{0}^\infty (\psi(x-t\th)+\psi(x+t\th)-2\psi(x)) t^{-1-2s} \, dt$ is bounded on $S^{N-1}$ and thus belongs to  $L^1(S^{N-1})$. Therefore the sequence of functions  $h_n(x):= ( L_{b}-L_{a_n})\psi(x)$ converges pointwise to zero on $\R^N$. Moreover by \eqref{eq:Dsa-abs-estim-C-infty-c},  we have that $|h_n(x)|\leq C_\psi(1+|x|)^{-N-2s}$. Since $v\in \cL ^1_s$,  it follows from the dominated convergence theorem that 
\be \label{eq:v-Lb-La-n-psi}
\int_{\R^N } v (x)( L_{b}-L_{a_n})\psi(x)\, dx=o(1) \qquad\textrm{  as $n\to \infty$.} 
\ee
 Next, we put $V_n(x,y):= |v (x)-v(y)| |\psi(x)-\psi(y)| |\mu_{a_n}(x,y)-K_n(x,y)| $.  Pick $R>0$ such that $\textrm{Supp}\psi   \subset B_{R/2}$. For $n$ large enough so that $  B_{R}  \subset B_{1/(2r_n)}$, we have 
 \begin{align*}
\int_{\R^{2N}}V_n(x,y)\, dxdy&\leq \int_{ B_{R}\times B_{R}} |v (x)-v(y)| |\psi(x)-\psi(y)| |\mu_{a_n}(x,y)-K_n(x,y)|  \,dxdy\\
&+ 2  \int_{ B_{R}}|\psi(y)| \left[\int_{  \R^N \setminus B_{R}} |v (x)-v(y)|  |\mu_{a_n}(x,y)-K_n(x,y)|  \,dx \right]\,dy.
%
%
%
 \end{align*}
 Note that $y\mapsto  |\psi(y)|  \int_{  \R^N \setminus B_{R}} |v (x)-v(y)|   |\mu_{a_n}(x,y)-K_n(x,y)|  \,dx$ is bounded and converges pointwise to zero, as $n\to \infty$. By the dominated convergence theorem, we then have that  $\int_{\R^{2N}}V_n(x,y)\, dxdy=o(1)$, as $n\to \infty$, so that
\be \label{eq:vx-vy-K-an-Kn}
 \int_{\R^{2N} } (v (x)-v(y)) (\psi(x)-\psi(y))(\mu_{a_n}(x,y)-K_n(x,y))\, dxdy=o(1) \qquad\textrm{  as $n\to \infty$.} 
\ee
Since $w_n=v_n-v$ is bounded in $ H^s_{loc}\cap \cL^1_s$ and $w_n\to 0$ in $ \cL^1_s$, by similar arguments as above, we get 
$$
 \int_{\R^{2N} } (w_n (x)-w_n(y)) (\psi(x)-\psi(y))(\mu_{a_n}(x,y)-K_n(x,y))\, dxdy=o(1) \qquad\textrm{  as $n\to \infty$.} 
$$
In addition, since $w_n\to  0$ in $\cL^1_s$,  by   \eqref{eq:Dsa-abs-estim-C-infty-c},
$$
\frac{1}{2} \int_{\R^{2N} } (w_n (x)-w_n(y)) (\psi(x)-\psi(y))\mu_{a_n}(x,y) \, dxdy  =\int_{\R^N}w_n(x) L_{a_n}\psi (x)\, dx =o(1) \qquad\textrm{  as $n\to \infty$.} 
$$
Combining the two estimates above, we conclude that
$$
 \int_{\R^{2N} } (w_n (x)-w_n(y)) (\psi(x)-\psi(y)) K_n(x,y)\, dxdy=o(1) \qquad\textrm{  as $n\to \infty$.} 
$$
 Using this, \eqref{eq:vx-vy-K-an-Kn} and \eqref{eq:v-Lb-La-n-psi} in  \eqref{eq:dist-L-b-L-k-n}, we get the conclusion in the lemma.
\end{proof}
%
%
%
\subsection{Coercivity and Caccioppoli type inequality with Kato class  potentials}
For $s>0$, we let  $\G_s:=(-\D)^{-s}$ be  the Riesz potential, which satisfy  $\Ds \G_s=\d_0$ in $\R^N$. Recall that for $N\not= 2s$, $\G_s(z)=c_{N,s}|z|^{2s-N}$  and for $N=2s$,  $\G_s(z)=c_{N,s} \log(|z|)$, for some normalization constant $c_{N,s}$.
We consider the Kato class of functions  given by 
\be\label{eq:Kato-class-potential}
\cK_s :=\left\{V \in L^1_{loc} (\R^N)\,:\,  \sup_{x\in \R^N}\int_{  B_1(x)} |V(y)|\o_s(|x-y|)\, dy<\infty \right\},
\ee
where for $N\geq 2s$,   $  \o_s(|z|)=|\G_s(z)| $ and if  $2s>N$, we set $\o_s\equiv 1$. 
Here and in the following, for every $V\in \cK_s$ and  $r\in (0,1]$, we define
\be \label{eq:def-eta_f}
\eta_V(r):=\sup_{x\in \R^N}\int_{  B_r(x)} |V(y)|\o_s(|x-y|)\, dy.
\ee
%
%
%
The following compactness result will be useful  in the following. We also note that it holds for all $s>0$, and in this case $ [u]_{{H}^s(\R^N)}^2:=\int_{\R^N}|\xi|^{2s}|\widehat{u}(\xi)|^2\, d\xi$,  for $u\in H^s(\R^N)$ with Fourier transform $\widehat u$.
\begin{lemma} \label{lem:coerciv}
Let  $s>0$ and $V\in \cK_s $.   
Then, there exists a constant $c=c(N,s)>0$ such that  for every $\d\in (0,1]$,   there exists an other constant $c_\d=c(N,s,\d)>0$  such that  for every $u\in H^s(\R^N)$,
\be \label{eq:Schetcher2} 
\||V|^{1/2} u\|_{L ^2(\R^N)}^2 \leq \eta_V(\d) \left(c [u]_{{H}^s(\R^N)}^2+c_\d \| u\|_{L ^2(\R^N)}^2 \right).
\ee
\end{lemma}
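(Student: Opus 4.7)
The plan is to establish a Schechter-type bound for the fractional Kato class via a pointwise decomposition at scale $\d$ followed by Fubini and the Kato inequality. First I would reduce to $V \geq 0$ (using $\eta_{|V|} = \eta_V$) and, by density, to $u \in C^\infty_c(\R^N)$.

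The starting point is the pointwise estimate, valid for every $x \in \R^N$ and $\d \in (0,1]$:
\[
u(x)^2 \leq 2\,u_{B_\d(x)}^2 + C\,\d^{2s} \int_{B_\d(x)} (u(x)-u(y))^2 \mu_1(x,y)\,dy,
\]
which follows from Jensen applied to $u(x) - u_{B_\d(x)} = |B_\d|^{-1}\int_{B_\d(x)}(u(x)-u(y))\,dy$ together with the trivial bound $|x-y|^{N+2s} \leq \d^{N+2s}$ on $B_\d(x)$. Multiplying by $V(x) \geq 0$ and integrating, the average term yields, via Fubini and the Kato-type bound $\int_{B_\d(y)} V(x)\,dx \leq c\,\d^{N-2s}\eta_V(\d)$ (which uses $\omega_s(|x-y|) \geq c\,\d^{2s-N}$ on $B_\d(y)$), a contribution of order $\eta_V(\d)\,c_\d\,\|u\|_{L^2}^2$, which fits inside the $c_\d\,\eta_V(\d)\,\|u\|_{L^2}^2$ piece on the right-hand side.

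The remaining energy term requires the estimate
\[
\d^{2s}\int V(x)\int_{B_\d(x)}(u(x)-u(y))^2\mu_1(x,y)\,dy\,dx \leq c\,\eta_V(\d)\,[u]_{H^s(\R^N)}^2,
\]
with $c$ independent of $\d$. My approach is to first symmetrize $(x,y)$ so that $V(x)$ is replaced by $\tfrac12(V(x) + V(y))$, and then apply a weighted Schur-type argument to the resulting bilinear form in $u$, matching the localized $\d^{2s}\mu_1$-kernel against the Kato weight $\omega_s$ through the homogeneities $\mu_1(r) = r^{-N-2s}$ and $\omega_s(r) = r^{2s-N}$. Combining the two estimates yields the lemma.

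The hard part will be this last energy estimate: since the naive pointwise comparison $\d^{2s}\mu_1(r) \leq C\,\omega_s(r)$ fails on $B_\d$, one cannot bound things purely pointwise, and the Schur step has to be calibrated so as to extract the factor $\eta_V(\d)$ without picking up a $\d$-dependent constant in front of $[u]_{H^s}^2$. The degenerate cases $N = 2s$ (where $\omega_s$ is logarithmic) and $2s > N$ (where $\omega_s \equiv 1$) are simpler: the latter reduces directly to the Sobolev embedding $H^s \embed L^\infty$, and the former is handled by straightforward modifications of the above estimates.
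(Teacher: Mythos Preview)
Your energy estimate
\[
\d^{2s}\int_{\R^N} V(x)\int_{B_\d(x)}(u(x)-u(y))^2\mu_1(x,y)\,dy\,dx \;\leq\; c\,\eta_V(\d)\,[u]_{H^s(\R^N)}^2
\]
is \emph{false} in general, even with $c$ allowed to depend on $\d$, so the proposed decomposition cannot close. Here is a counterexample at $\d=1$: take $N>2s$, $V(x)=|x|^{-\a}1_{B_1}(x)$ with $0<\a<2s$ (so $V\in\cK_s$ and $\eta_V(1)$ is a fixed finite number), and $u_\e(x)=\psi(x/\e)$ with $\psi\in C^\infty_c(B_1)$, $\psi\equiv 1$ on $B_{1/2}$. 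Then $[u_\e]_{H^s}^2=\e^{N-2s}[\psi]_{H^s}^2$, while for $|x|<\e/2$ one has $u_\e(x)=1$ and
\[
\int_{B_1(x)}(u_\e(x)-u_\e(y))^2\mu_1(x,y)\,dy \;\geq\; \int_{\e<|y|<1/2}|x-y|^{-N-2s}\,dy \;\gtrsim\; \e^{-2s}.
\]
Hence the left-hand side is at least $c\,\e^{-2s}\int_{|x|<\e/2}|x|^{-\a}\,dx \sim \e^{N-\a-2s}$, whereas the right-hand side is $\sim \e^{N-2s}$, and the ratio blows up like $\e^{-\a}$ as $\e\to 0$. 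Adding a term $c_\d\,\eta_V(\d)\|u\|_{L^2}^2\sim \e^N$ on the right does not help either. The point is that your pointwise splitting $u(x)^2\le 2 u_{B_\d(x)}^2 + C\d^{2s}G_\d(x)$ badly overshoots when $u$ oscillates at scales much smaller than $\d$: the energy piece $G_\d$ then carries a factor $\sim(\textrm{scale of }u)^{-2s}$ that the Kato bound cannot absorb.

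The paper circumvents this by working in Fourier/operator language rather than in physical space. It introduces the Bessel potential $G_{s,\d}=(-\D+\d^{-2})^{-s}$ and the operator $L v = |V|^{1/2}(-\D+\d^{-2})^{-s/2}v$, and proves the Schechter-type bound $\|LL^*\|\le c\,\eta_V(\d)$ via the kernel estimate $G_{s,1}(x)\lesssim \o_s(|x|)$ for $|x|\le 1/2$ together with a dyadic summation over annuli for $|x|\ge 1/2$ using the exponential decay of $G_{s,1}$. Since $\|L\|^2=\|LL^*\|$, this gives $\||V|^{1/2}(-\D+\d^{-2})^{-s/2}v\|_{L^2}^2\le c\,\eta_V(\d)\|v\|_{L^2}^2$, and plugging $v=(-\D+\d^{-2})^{s/2}u$ yields the lemma with the correct dependence $c$ on $[u]_{H^s}^2$ and $c_\d\sim\d^{-2s}$ on $\|u\|_{L^2}^2$. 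The $TT^*$ structure is what makes the $\d$-independence of the leading constant automatic; your real-space splitting destroys exactly this structure.
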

\begin{proof}
For $r>0$, we consider  the Bessel potential $G_{s,r}=(-\D+r^{-2})^{-s/2}$. See e.g. \cite[Section 6.1.2]{Grafakos}, there exists a constant $c=c(N,s)>0$  such that 
 \be\label{eq:estim-Bessel}
G_{s,1}(x)\leq c    \o_s(|x|) \quad\textrm{ for  $|x|\leq 1/2$} \quad \textrm{ and } \quad G_{s,1}(x)\leq c   |x|^{-N-1+s} \exp(-|x|/2) \quad\textrm{ for   $|x|\geq 1/2$},
\ee
where $\o_s$ is  defined in the beginning of this section.

\noindent
\textbf{Step 1:} We assume that $V\in L^\infty(\R^N)$.\\
\noindent
For $\d>0$, we consider the operator $ L: L^2(\R^N)\to L^2(\R^N)$ given by $$ L v= |V|^{1/2}(-\D+\d^{-2})^{-s/2} v.$$  We note that the adjoint of $L$ is given by $L^*= (-\D+\d^{-2})^{-s/2}|V|^{1/2}$.\\

\noindent
\textbf{Claim:} There exists $c=c(N,s)>0$ such that   for every $\d\in (0,1/2]$,  
\be \label{eq:claim-coerciv}
 \| LL^* v \|_{L ^2(\R^N)}^2  \leq  c \eta_V(\d)^2      \| v \|_{L ^2(\R^N)}^2    .
\ee
By H\"older's inequality and using the fact that $ G_{s,r}(x)=G_{s,1}(x/r)$, we obtain
\begin{align*}
 \| LL^* v \|_{L ^2(\R^N)}^2&=\| | V|^{1/2} (-\D+\d^{-2})^{-s} |V|^{1/2} v \|_{L ^2(\R^N)}^2    =\|  |V|^{1/2} G_{s,\d}\star(V^{1/2} v) \|_{L ^2(\R^N)}^2\\
 &\leq \int_{\R^N}| V (x) | \left(\int_{\R^N}|V|^{1/2}(y)| v(y)| G_{s,\d}((x-y)/\d)dy\right)^2 \,dx\\
 &\leq  \int_{\R^N}| V (x) |  \left(\int_{\R^N}|V(y)| G_{s,\d}((x-y)/\d)dy \int_{\R^N}|v(z)|^2 G_{s,\d}((x-z)/\d)dz \right) \,dx.
\end{align*}
Using a change of variable and \eqref{eq:estim-Bessel}, for $x\in \R^N$, we get
\begin{align*}
\int_{\R^N}|V(y)| G_{s,\d}(x-y)dy& \leq  c \eta_V(\d)+ \int_{\d\leq |x-y|}|V(y)| G_{s,1}((x-y)/\d)\,dy \\
&\leq c \eta_V(\d)+\sum_{i=1}^\infty \int_{i\d\leq   |x-y|\leq (i+1) \d}|V(y)| G_{s,1}((x-y)/\d)\,dy\\
&\leq  c \eta_V(\d)+\sum_{i=1}^\infty \d^{N} \int_{i\leq   |z|\leq i+1 }|V(\d z+ x)| G_{s,1}(z)\,dz\\
&\leq c  \eta_V(\d)+ C\sum_{i=1}^\infty i^{-N-1+s} \exp(-i/2)  \d^{N} \int_{i\leq   |z|\leq i+1 }|V(\d z+ x)|  \,dz.
\end{align*}
For every fixed $i$, we cover the annulus $A_i:= \{i \leq |y|\leq {i+1} \}$ by $  {n(i)}$ balls $B_1(z_j)$, with $z_j\in A_i $,  $n(i)\leq C i^{N-1}$ and $C$ a positive constant only depending on $N$. Letting $\rho_i:= i^{-N-1+s} \exp(-i/2) $, for every $x\in \R^N$ and $\d\in (0,1/2]$, we then have
\begin{align*}
\int_{\R^N}|V(y)| G_{s,\d}(x-y)dy
&\leq c  \eta_V(\d)+ C \sum_{i=1}^\infty \rho_i \sum_{j=1}^{n(i)} \d^{N}\int_{|z-z_j|\leq 1 }|V(\d z+ x)|  \,dz\\
&= c \eta_V(\d)+C \sum_{i=1}^\infty \rho_i\sum_{j=1}^{n(i)}   \int_{|y-x-\d z_j|\leq \d }|V(y)|  \,dz\\
&\leq  c  \eta_V(\d)+ C \sum_{i=1}^\infty \rho_i \sum_{j=1}^{n(i)}   \o_s(\d)^{-1} \int_{|y-x-\d z_j|\leq \d }|V(y)| \o_s(|y-x-\d z_j|)  \,dz\\
&\leq   c  \eta_V(\d)\left( 1+ C \sum_{i=1}^\infty i^{-2+s} \exp(-i/2)  \right) \o_s(\d)^{-1} \leq c \eta_V(\d).
%
\end{align*}
We then get, for every $\d\in (0,1/2]$
\begin{align*}
 \| LL^* v \|_{L ^2(\R^N)}^2 
 &\leq  c \eta_V(\d)   \int_{\R^N}| V (x) |    \int_{\R^N}|v(z)|^2 G_{s,1}((x-z)/\d)dz   \,dx \leq  c^2 \eta_V(\d)^2       \int_{\R^N}|v(z)|^2  dz  .
\end{align*}
 That is \eqref{eq:claim-coerciv} as claimed.\\ 
Since $\|L L^*\|=\|L\|^2$,  it then follows that
 \begin{align}\label{eq:L-continu-L2}
 \| Lv \|_{L ^2(\R^N)}^2   &\leq  c \eta_V(\d)      \| v \|_{L ^2(\R^N)}^2   .
\end{align}
Now given $u\in H^s(\R^N)$ and $\d\in (0,1]$, we plug $v= (-\D+(\d/2)^{-2})^{s/2} u\in L^2(\R^N)$ in \eqref{eq:L-continu-L2}, and noting that $ \| v \|_{L ^2(\R^N)}^2\leq c  \left( [u]_{{H}^s(\R^N)}^2+(\d/2)^{-2}\| u\|_{L ^2(\R^N)}^2 \right) $, for some positive constant $c=c(N,s)$. This with the fact that that $\eta_V(\d/2)\leq \eta_V(\d)$ give
 \eqref{eq:Schetcher2}, if $V\in L^\infty(\R^N)$.\\

\noindent
\textbf{Step 2:} For $V\in  L ^1_{loc}(\R^N)$, we consider $V_k=\min(|V|, k)$, for $k\in \N$. Thence since $\eta_{V_k}\leq \eta_{V}$, we get  \eqref{eq:Schetcher2} by Fatou lemma.
\end{proof}
 
 The following energy estimate is a consequence of the above  coercivity result and a nonlocal Caccioppoli-type inequality proved in an   appendix, Section \ref{s:appnd-2-techinacality}.
\begin{lemma} \label{lem:from-caciopp-ok}
We consider $\O$ an open set with $0\in \de\O$ and $K$ satisfying  \eqref{eq:def-a-anisotropi}.
Let $v\in H^s (\R^N)  $  and $V,f\in \cK_s$ satisfy
\be\label{eq:Dsv-eq-V-f}
\cL_{K}  v+ Vv=  f \qquad\textrm{ in $  B_{2R}\cap\O$}\qquad\textrm{ and }\qquad v=0 \qquad\textrm{ in  $  B_{2R}\cap\O^c$. }
\ee
Then there exists   $\ov C=\ov C(N,s, \k)>0$  such that    for every $\e>0$ and every $\d\in (0, 1]$, there exists $C=C(\e,\d, s,N, \k)$ such that   
\begin{align*}
\left\{\k-\right.& \e  \ov C (1+ \left. \eta_f(\d))- \ov C\eta_V(\d) \right\}\int_{\R^{2N}}(v(x)-v(y))^2\vp^2_R(y) \mu_1(x,y)\,dxdy\leq   C  \eta_f(1)  \|\vp_R\|^2_{H^s(\R^N)}\\    
&+ C \left( \eta_V(1)+  \eta_f(1) + 1 \right)   \int_{\R^N}  \frac{ R^N |v(x)|^2}{R^{N+2s}+|x|^{N+2s}}\,dx +    C \left( \eta_V(1)+ \eta_f(1) \right)   \|v\vp_R\|^2_{L^2(\R^N)}.
\end{align*}
\end{lemma}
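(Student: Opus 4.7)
The plan is to combine a nonlocal Caccioppoli-type estimate (whose proof is deferred to Section~\ref{s:appnd-2-techinacality}) with the Kato-class coercivity from Lemma~\ref{lem:coerciv}, and then absorb the critical contributions into the left-hand side using the smallness parameters $\d$ and $\e$.

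First, I would test the weak formulation of \eqref{eq:Dsv-eq-V-f} against $\psi = v\vp_R^2$, which is admissible since $v\vp_R^2$ is supported in $B_{2R}\cap\O$ (using $v=0$ in $B_{2R}\cap\O^c$). The standard symmetrization identity
$$(v(x)-v(y))\bigl(v(x)\vp_R^2(x)-v(y)\vp_R^2(y)\bigr)\geq c(v(x)-v(y))^2\vp_R^2(y)-C\bigl(v(x)^2+v(y)^2\bigr)(\vp_R(x)-\vp_R(y))^2,$$
together with $K\geq\k\mu_1$, reduces the problem to an estimate of the form
\begin{align*}
\k\!\!\int_{\R^{2N}}(v(x)-v(y))^2\vp_R^2(y)\mu_1(x,y)\,dx\,dy\leq C\mathcal{R}+C\!\int_{\R^N}|V|v^2\vp_R^2+C\!\int_{\R^N}|f||v|\vp_R^2,
\end{align*}
where the remainder $\mathcal{R}$ coming from the cross term is bounded, by the standard cutoff manipulations detailed in the appendix, in terms of $\|\vp_R\|_{H^s(\R^N)}^2$, the weighted tail $\int_{\R^N}\frac{R^N|v|^2}{R^{N+2s}+|x|^{N+2s}}\,dx$, and $\|v\vp_R\|_{L^2(\R^N)}^2$.

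Next, I would apply Lemma~\ref{lem:coerciv} to handle the two Kato-class integrals. For the potential, taking $u=v\vp_R$ at scale $\d$ yields
$\int|V|v^2\vp_R^2\leq \eta_V(\d)\bigl(c\,[v\vp_R]_{H^s(\R^N)}^2+c_\d\|v\vp_R\|_{L^2(\R^N)}^2\bigr)$.
For the source, I would first use Cauchy--Schwarz followed by Young's inequality with parameter $\e$ to write $\int|f||v|\vp_R^2\leq \e\int|f|v^2\vp_R^2+C_\e\int|f|\vp_R^2$, and then apply Lemma~\ref{lem:coerciv} to $v\vp_R$ (at scale $\d$) and to $\vp_R$ (at scale $1$), obtaining
$$\int|f||v|\vp_R^2\leq \e\eta_f(\d)\bigl(c\,[v\vp_R]_{H^s(\R^N)}^2+c_\d\|v\vp_R\|_{L^2}^2\bigr)+C_\e\,\eta_f(1)\bigl(c\|\vp_R\|_{H^s(\R^N)}^2+c_1\|\vp_R\|_{L^2}^2\bigr).$$
Since $[v\vp_R]_{H^s(\R^N)}^2$ is itself controlled, up to an error of the same type as $\mathcal{R}$, by the left-hand side $\int(v(x)-v(y))^2\vp_R^2(y)\mu_1\,dx\,dy$, the coefficients $\ov C\,\eta_V(\d)$ and $\e\ov C(1+\eta_f(\d))$ in front of the seminorm can be moved to the left, yielding the stated inequality with $\ov C=\ov C(N,s,\k)$ structural and $C=C(\e,\d,s,N,\k)$ collecting the remaining constants.

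The main obstacle is the bookkeeping: one must keep the small coefficients $\eta_V(\d)$ and $\e\eta_f(\d)$ attached precisely to the seminorm that is to be absorbed, while routing the potentially large constants $\eta_V(1)$, $\eta_f(1)$, $c_\d$, and $C_\e$ so that they only multiply terms that already appear on the right-hand side of the statement---namely $\|v\vp_R\|_{L^2}^2$, the weighted tail, and $\|\vp_R\|_{H^s(\R^N)}^2$. The choice to split the $f$-term using Young's inequality is what produces the additive $\e\ov C$ in the coefficient of the seminorm, as opposed to a purely multiplicative factor, which is exactly the form needed for the statement.
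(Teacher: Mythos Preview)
Your proposal is correct and follows essentially the same route as the paper: start from the Caccioppoli inequality of Lemma~\ref{lem:caciopp} (with the cross term controlled via \eqref{eq:Dsa-abs-estim-C-infty-c} by the weighted tail), apply Lemma~\ref{lem:coerciv} at scale $\d$ to $\int|V|(v\vp_R)^2$, split $\int|f||v|\vp_R^2$ by Young's inequality with parameter $\e$ and apply Lemma~\ref{lem:coerciv} to each piece, then absorb the resulting $[v\vp_R]_{H^s}^2$ terms using the elementary bound $[v\vp_R]_{H^s}^2\le C\int(v(x)-v(y))^2\vp_R^2(y)\mu_1+C\int v^2(\vp_R(x)-\vp_R(y))^2\mu_1$. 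The only cosmetic difference is that you invoke ``Cauchy--Schwarz followed by Young'' for the $f$-splitting, whereas the paper applies Young's inequality directly to $|f|^{1/2}|v\vp_R|\cdot|f|^{1/2}\vp_R$; the effect is identical.
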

\begin{proof}
By Lemma \ref{lem:caciopp} and \eqref{eq:Dsa-abs-estim-C-infty-c},  we  get
\begin{align}\label{eq:from-cacciop-to-coerciv}
 (\k-\e)\int_{\R^{2N}}(v(x)-v(y))^2&\vp^2_R(y) \mu_1(x,y)\,dxdy=  \int_{\R^N} | V(x)| |v(x) \vp_R(x)|^2\, dx \nonumber\\
& +      \int_{\R^N} |f(x)| |v(x)| \vp_R^2(x)\, dx  + C_\e  \int_{\R^N}  \frac{ R^N |v(x)|^2}{R^{N+2s}+|x|^{N+2s}}\,dx. 
\end{align}
Thanks to Lemma \ref{lem:coerciv},  \eqref{eq:Dsa-abs-estim-C-infty-c} and the fact that $\int_{\R^N}(\vp_1(x)-\vp_1(y))^2\mu_1(x,y)\,dy\leq C(1+|x|^{N-2s})$ for every $x\in \R^N$, we have  
\begin{align}\label{eq:used-coerciv-V}
 \int_{\R^N} |  V(x)| |v(x)\vp_R(x)|^2  \,dx   & \leq \ov C \eta_V(\d)    \int_{\R^{2N}}((v\vp_R)(x)-(v\vp_R)(y))^2  \mu_1(x,y)\,dxdy \nonumber\\
 &\quad+   C \eta_V(\d)    \|v\vp_R\|^2_{L^2(\R^N)} \nonumber \\
&\leq  \ov C \eta_V(\d)    \int_{\R^{2N}}(v(x)-v(y))^2\vp^2_R(y) \mu_1(x,y)\,dxdy \nonumber\\
&\quad+C \eta_V(\d)   \int_{\R^N}  \frac{ R^N |v(x)|^2}{R^{N+2s}+|x|^{N+2s}}\,dx +   C \eta_V(\d)    \|v\vp_R\|^2_{L^2(\R^N)}  .
\end{align}
Similarly,  by Young's inequality,  \eqref{eq:Schetcher2} and \eqref{eq:Dsa-abs-estim-C-infty-c}, we get 
\begin{align*}
&  \int_{\R^N} |  f(x)| |\vp_R(x)|^2|v(x)| \,dx  \leq \e     \int_{\R^N} | f(x)|   | v(x) \vp_R(x)|^2   dx+C_\e  \int_{\R^N} | f(x)| \vp_R(x)^2  \,dx \nonumber\\
&\leq   \e   \ov C \eta_f(\d)   \int_{\R^{2N}}((v\vp_R)(x)-(v\vp_R)(y))^2 \mu_1(x,y)\,dxdy+  C_\e   \eta_f(\d)  \|v\vp_R\|^2_{L^2(\R^N)}\nonumber \\
&\quad+  C_\e     \eta_f(\d)  \|\vp_R\|^2_{H^s(\R^N)} \nonumber\\
&\leq    \e  \ov C   \eta_f(\d)   \int_{\R^{2N}}(v(x)-v(y))^2\vp^2_R(y) \mu_1(x,y)\,dxdy +C    \eta_f(\d)   \int_{\R^N}  \frac{ R^N |v(x)|^2}{R^{N+2s}+|x|^{N+2s}}\,dx \nonumber\\
&\quad +   C   \eta_f(\d)    \|v\vp_R\|^2_{L^2(\R^N)}  + C_\e    \eta_f(\d)  \|\vp_R\|^2_{H^s(\R^N)}.
\end{align*}
Using the above estimate and  \eqref{eq:used-coerciv-V} in \eqref{eq:from-cacciop-to-coerciv} and using the monotonicity of $\eta_V$ and $\eta_f$, we get the result. 

\end{proof}
We close this section with the following result.
 \begin{lemma} \label{lem:convergence-very-weak}
We consider $\O$ an open set with $0\in \de\O$ and $K$   satisfying  \eqref{eq:Kernel-satisf}.
Let $v\in H^s_{loc}(B_{2R})\cap \cL^1_s $  and $V,f\in \cK_s$ satisfy
\be\label{eq:Dsv-eq-V-f-veary-weak}
\cL_{K}  v+ Vv=  f \qquad\textrm{ in $  B_{2R}\cap\O$}\qquad\textrm{ and }\qquad v=0 \qquad\textrm{ in  $  B_{2R}\cap\O^c$. }
\ee
Then for every $\psi\in C^\infty_c(B_{R}\cap\O )$, we have 
\begin{align*} 
\left|\int_{\R^{2N}}(v(x)-v(y))(\psi(x)-\psi(y))K(x,y)\, dxdy \right| &\leq  C \eta_V(1) \left( \|v\vp_R\|_{H^s(\R^N)}^2 +   \|\psi\|_{H^s(\R^N)}^2     \right)\\
& + C  \eta_f(1)    \left(  \|\vp_R\|_{H^s(\R^N)}^2 + \|\psi\|_{H^s(\R^N)}^2     \right),
%
\end{align*}
where $C>0$ is a constant,  only depending on $N$ and $s$.
\end{lemma}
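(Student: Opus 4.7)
The starting point is the weak formulation of the equation. Since $\psi\in C^\infty_c(B_R\cap\O)\subset C^\infty_c(B_{2R}\cap\O)$ is an admissible test function, identity \eqref{eq:nonition-of-sol} gives
$$
\frac12\int_{\R^{2N}}(v(x)-v(y))(\psi(x)-\psi(y))K(x,y)\,dxdy=\int_{\R^N}f\psi\,dx-\int_{\R^N}Vv\psi\,dx,
$$
so the proof reduces to bounding $\left|\int f\psi\,dx\right|$ and $\left|\int Vv\psi\,dx\right|$ in the right form. The key elementary observation that I will use throughout is that $\vp_R\equiv 1$ on $B_R\supset\supp\psi$, so we may freely factor $\psi=\vp_R\psi$ and $v\psi=(v\vp_R)\psi$. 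This lets me pair $V$ and $f$ separately against $\vp_R$ and $\psi$, which is what produces the two terms in the right-hand side of the claimed inequality.

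First I handle the potential term by plain Cauchy--Schwarz:
$$
\Bigl|\int Vv\psi\,dx\Bigr|=\Bigl|\int V(v\vp_R)\psi\,dx\Bigr|\leq \||V|^{1/2}(v\vp_R)\|_{L^2}\,\||V|^{1/2}\psi\|_{L^2}.
$$
I then apply Lemma~\ref{lem:coerciv} with $\d=1$ twice, once to $u=v\vp_R$ and once to $u=\psi$, which are both in $H^s(\R^N)$ (the latter trivially, the former because $v\in H^s_{loc}(B_{2R})\cap\cL^1_s$ and $\vp_R$ is smooth with compact support in $B_{2R}$). Each factor is then dominated by $C\,\eta_V(1)^{1/2}$ times the corresponding $H^s$-norm, and Young's inequality delivers the bound
$$
\Bigl|\int Vv\psi\,dx\Bigr|\leq C\,\eta_V(1)\bigl(\|v\vp_R\|_{H^s(\R^N)}^2+\|\psi\|_{H^s(\R^N)}^2\bigr).
$$

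For the source term I use the same factorization $\psi=\vp_R\psi$ together with a second Cauchy--Schwarz that splits $|f|^{1/2}$ symmetrically across the two cutoffs:
$$
\Bigl|\int f\psi\,dx\Bigr|=\Bigl|\int f\vp_R\psi\,dx\Bigr|\leq\left(\int|f|\vp_R^{\,2}\,dx\right)^{1/2}\left(\int|f|\psi^{2}\,dx\right)^{1/2}.
$$
Lemma~\ref{lem:coerciv} applied (with $\d=1$) to $u=\vp_R$ and $u=\psi$ bounds each integral by $C\,\eta_f(1)$ times the appropriate $H^s(\R^N)$-norm squared, and one more application of Young's inequality yields the contribution $C\,\eta_f(1)\bigl(\|\vp_R\|_{H^s(\R^N)}^2+\|\psi\|_{H^s(\R^N)}^2\bigr)$. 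Adding the two bounds and absorbing the factor $1/2$ into $C$ gives the conclusion. There is no genuine obstacle in this argument; the only point that requires a moment's attention is the verification that $v\vp_R\in H^s(\R^N)$ so that the Kato-class coercivity Lemma~\ref{lem:coerciv} is applicable, but this follows from $v\in H^s_{loc}(B_{2R})\cap\cL^1_s$ by a standard splitting of $[v\vp_R]_{H^s(\R^N)}$ into the local seminorm on $B_{2R}$ and a tail controlled by $\|v\|_{\cL^1_s}$.
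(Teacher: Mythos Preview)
Your proof is correct and follows essentially the same approach as the paper: test the equation with $\psi$, insert the factor $\vp_R\equiv 1$ on $\supp\psi$, split into the $V$-term and the $f$-term, and control each via Lemma~\ref{lem:coerciv}. The only cosmetic difference is ordering---the paper applies Young's inequality first (writing $|v\psi|\vp_R\le |v\vp_R|^2+\psi^2$ and $|\psi|\vp_R\le \psi^2+\vp_R^2$) and then invokes Lemma~\ref{lem:coerciv} on each resulting square, whereas you first use Cauchy--Schwarz, then Lemma~\ref{lem:coerciv}, then Young's; these are equivalent manipulations.
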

\begin{proof}
Testing the equation \eqref{eq:Dsv-eq-V-f-veary-weak} with $\psi \in C^\infty_c(B_{R}\cap\O )$ and using Young's inequality, we get
\begin{align*} 
&\frac12\left|\int_{\R^{2N}}(v(x)-v(y))(\psi(x)-\psi(y))K(x,y)\, dxdy\right|\\
&\leq    \int_{\R^N  } |  V (x)| | v(x)\psi(x)| \vp_R (x) \, dx +  \int_{\R^N  } | f(x)| |\psi(x)|  \vp_R(x) \, dx \\
&\leq 2 \int_{\R^N  } | V(x)| |v (x)\vp_R (x)|^2\,dx +  2 \int_{\R^N  } | V(x)| \psi^2(x)   \, dx+    2   \int_{\R^N  } | f(x)| \psi^2(x)  \, dx+ 2      \int_{\R^N  } | f(x)| \vp_R^2(x)    \, dx  .
\end{align*}
Hence using  Lemma  \ref{lem:coerciv}, we conclude that
\begin{align*} 
\left|\int_{\R^{2N}}(v(x)-v(y))(\psi(x)-\psi(y))K(x,y)\, dxdy \right| &\leq  C  \eta_V(1) \left( \|v\vp_R\|_{H^s(\R^N)}^2 +   \|\psi\|_{H^s(\R^N)}^2     \right)\\
& + C  \eta_f(1)  \left( \|\psi\|_{H^s(\R^N)}^2 +   \|\vp_R\|_{H^s(\R^N)}^2     \right),
\end{align*}
which finishes the proof.
\end{proof} 
\section{Interior and boundary growth estimates}\label{s:L2-growths}
We recall the Morrey  space already introduced in the first section, for $\b\in [0,2s)$, defined as   
$$
\cM_\b :=\left\{f \in L^1_{loc} (\R^N)\,:\,  \|f\|_{ \cM_\b}:=\sup_{r\in (0,1), x\in \R^N} r^{\b-N}\int_{  B_r(x)} |f(y)| \, dy<\infty\right\}.
$$
Let $f\in \cM_\b$ and define $f_{r,x_0}(x)=r^{2s}f(rx+x_0)$ for $x_0\in \R^N$ and $r>0$. Recalling \eqref{eq:def-eta_f},  an important property of $\eta_f$  we will use frequently in the following is that, for every $x_0\in\R^N$ and $r\in (0,1]$, we have 
\be\label{eq:eta-f-scale-translate} 
 \eta_{f_{r,x_0}}(1)\leq C    \|f_{r,x_0} \|_{\cM_\b} \leq  C r^{2s-\b}  \|f\|_{\cM_\b} ,
 \ee
 with $C$ a positive constant, only depending on $N,s,$ and $\b$. The  first inequality in  \eqref{eq:eta-f-scale-translate} can be easily checked by change of variables and using   summations over annuli with small thickness.   We note that \eqref{eq:eta-f-scale-translate} and Lemma \ref{lem:coerciv} show  that functions $V,f\in \cM_\b$ satisfy   RTCP of order $\b$ (see  \eqref{eq:resc-coerciv-int}) as  mentioned in the first section.
\subsection{Interior growth estimates for solutions to Schr\"odinger equations}
The next  result is merely classical but we add the proof for the sake of completeness.
\begin{lemma}\label{lem:L2grothw-mean}
Let $u\in L^2_{loc}(\R^N)$ and $\a>0$.
\begin{enumerate}
\item  Suppose that
\be \label{eq:assum-L2growth-up}
 \|u-u_{B_{\rho  } }\|_{L^2(B_{\rho  })} \leq \ov C\rho^{N/2+\a}  \qquad\textrm{ for every $\rho\in[1,\infty)$.}
\ee
Then 
$$
   \| u-u_{B_{1  }} \|_{L^2(B_{\rho})}\leq  C  \ov C \rho^{N/2+\a} \qquad\textrm{ for every $\rho\in[1,\infty)$,}
$$
with $C$  depends only  on $N$ and $\a$.   \\
\item Suppose that $0$ is a Lebesgue point of $u$ and 
\be  \label{eq:assum-L2growth-down}
 \|u-u_{B_{\rho } }\|_{L^2(B_{\rho  })} \leq \ov C \rho^{N/2+\a}  \qquad\textrm{ for every $\rho\in(  0,1)$.}
\ee
Then 
$$
   \| u-u(0) \|_{L^2(B_{\rho})}\leq   C  \ov C  \rho^{N/2+\a}         \qquad\textrm{ for every $\rho\in (0,1)$},
$$
  with $C$ depends only  on $N$ and $\a$.
\end{enumerate}

\end{lemma}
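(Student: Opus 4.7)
The plan is to prove both parts via a standard Campanato-style telescoping argument based on estimating the oscillation of averages $u_{B_{2^k r}}$ across successive dyadic radii.

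The key lemma, used throughout, is the one-step estimate
\begin{equation*}
|u_{B_{\rho/2}} - u_{B_{\rho}}|^2 \;\leq\; \frac{1}{|B_{\rho/2}|}\int_{B_{\rho/2}} |u - u_{B_\rho}|^2\,dx \;\leq\; \frac{2^N}{|B_{\rho}|}\,\|u - u_{B_\rho}\|_{L^2(B_\rho)}^{2},
\end{equation*}
which, combined with the hypothesis \eqref{eq:assum-L2growth-up} (resp.\ \eqref{eq:assum-L2growth-down}) and $|B_\rho| = \omega_N \rho^N$, yields $|u_{B_{\rho/2}} - u_{B_\rho}| \leq C(N)\,\ov C\,\rho^{\a}$ for every $\rho$ in the appropriate range. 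This is where the exponent $\a$ is produced.

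For part (i), I would fix $\rho \geq 1$ and pick the unique integer $k \geq 0$ with $2^k \leq \rho < 2^{k+1}$. Telescoping gives $|u_{B_{2^{k+1}}} - u_{B_1}| \leq \sum_{j=0}^{k} |u_{B_{2^{j+1}}} - u_{B_{2^{j}}}| \leq C\,\ov C \sum_{j=0}^{k} 2^{(j+1)\a} \leq C'(N,\a)\,\ov C\,\rho^{\a}$, and the same bound holds for $|u_{B_\rho} - u_{B_1}|$ by inserting one more comparison step between $B_\rho$ and $B_{2^{k+1}}$ via the same one-step estimate (using $B_\rho \subset B_{2^{k+1}}$ and $|B_{2^{k+1}}|/|B_\rho| \leq 2^N$). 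Then
\begin{equation*}
\|u - u_{B_1}\|_{L^2(B_\rho)} \;\leq\; \|u - u_{B_\rho}\|_{L^2(B_\rho)} + |B_\rho|^{1/2}\,|u_{B_\rho} - u_{B_1}| \;\leq\; C(N,\a)\,\ov C\,\rho^{N/2+\a}.
\end{equation*}

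For part (ii), the argument is symmetric, telescoping in the other direction. For $\rho \in (0,1)$ I would write, for each $k \geq 0$,
\begin{equation*}
|u_{B_{2^{-k-1}\rho}} - u_{B_{2^{-k}\rho}}| \;\leq\; C(N)\,\ov C\,(2^{-k}\rho)^{\a},
\end{equation*}
using \eqref{eq:assum-L2growth-down} at radius $2^{-k}\rho < 1$. Since $0$ is a Lebesgue point of $u$, the sequence $u_{B_{2^{-k}\rho}}$ converges to $u(0)$, so summing the telescoping series yields $|u_{B_\rho} - u(0)| \leq C(N,\a)\,\ov C\,\rho^{\a}$. Combining this with the hypothesis via the triangle inequality, exactly as in part (i), gives the desired estimate $\|u - u(0)\|_{L^2(B_\rho)} \leq C\,\ov C\,\rho^{N/2+\a}$.

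There is no real obstacle here; the only point requiring a little care is the passage from dyadic radii to a general $\rho$ (handled by a single comparison between $B_\rho$ and the surrounding dyadic ball) and, in part (ii), the use of the Lebesgue-point hypothesis to justify passage to the limit in the telescoping sum. All constants depend only on $N$ and $\a$, as stated.
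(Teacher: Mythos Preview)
Your proposal is correct and follows essentially the same Campanato-style dyadic telescoping argument as the paper; the only cosmetic differences are that the paper obtains the one-step bound $|u_{B_{\rho}}-u_{B_{2\rho}}|\leq C\ov C\rho^{\a}$ via the triangle inequality $\|u-u_{B_\rho}\|_{L^2(B_\rho)}+\|u-u_{B_{2\rho}}\|_{L^2(B_{2\rho})}$ rather than your Jensen-type estimate, and handles non-dyadic $\rho$ in part~(i) by passing directly to the enclosing dyadic ball $B_{2^m}$ rather than estimating $|u_{B_\rho}-u_{B_1}|$.
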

\begin{proof}
First, to prove $(i)$, we note that, for  every $\rho\geq 1$, 
$$
|{B_{\rho} } |^{1/2} | u_{B_{\rho} } -u_{B_{2\rho} } |\leq \| u-u_{B_{\rho} }\|_{L^2(B_{\rho})}+ \|u-u_{B_{2\rho} } \|_{L^2(B_{2\rho})}\leq 2 \ov C  \rho^{N/2+\a} .
$$
 Therefore, for $\rho= 2^m $, with $m\geq 1$, we get 
 \begin{align*}
| u_{B_{\rho }} -u_{B_{1 }} | \leq \sum_{i=0}^{m-1} | u_{B_{2^{i}  } } -u_{B_{2^{i+1}  } } |& \leq  2 \ov C   \sum_{i=0}^{m-1}      2^{i\a} \leq  C \ov C \rho^{\a} ,
 \end{align*}
  where $C$ is independent on $m,  \rho$ and $u$.  Next, if $m$ is the smallest integer for which, $2^{m-1}\leq \rho\leq 2^m$, then  using \eqref{eq:assum-L2growth-up} and the above estimate,  we conclude that
  $$
  \| u-u_{B_1}\|_{L^2(B_{\rho})}\leq  \| u-u_{B_{2^m}} \|_{L^2(B_{2^m})}+|{B_{2^m} } |^{1/2} | u_{B_{2^m} } -u_{B_1}  |\leq   C \ov C  \rho^{N/2+\a}.
 $$ 
For $(ii)$,   by assumption, we have 
$$
|{B_{\rho/2} } |^{1/2} | u_{B_\rho } -u_{B_{\rho/2} } |\leq \| u-u_{B_\rho }\|_{L^2(B_{\rho})}+ \|u-u_{B_{\rho/2} } \|_{L^2(B_{\rho/2})}\leq 2 \ov C   \rho^{N/2+\a}.
$$
 Therefore 
 \begin{align*}
| u_{B_{\rho }} -u(0)  | \leq \sum_{i=0}^{\infty} | u_{B_{2^{-i} \rho} } -u_{B_{2^{-i} \rho/2} } |& \leq  2 \ov C   \sum_{i=1}^{\infty}  (2^{-i} \rho)^{\a}   \leq C \ov C  \rho^{\a}    . 
 \end{align*}
Using this and  \eqref{eq:assum-L2growth-down}, we obtain.
 $$
  \| u-u(0)\|_{L^2(B_{\rho})}\leq  \| u-u_{B_{\rho}} \|_{L^2(B_{\rho})}+|{B_{\rho} } |^{1/2} | u_{B_{\rho} } -u(0)  |\leq   C \ov C  \rho^{N/2+\a},
 $$
 where $C$ depends only on $N$ and $\a$. 
\end{proof}
Let   $a$ satisfy \eqref{eq:def-a-anisotropi} and $K\in \scrK(\l,a,\k)$ (satisfy \eqref{eq:Kernel-satisf})   $V,f \in  \cM_\b $ , we define  the set of  solutions to the Schr\"odinger equations with entries $V$ and $f$ by
$$
\cS_{K, V,f}:=\left\{ u\in   {H}^s (B_2)\cap L^2(\R^N)\,:\,  \cL_{K} u + Vu = f \quad\textrm{in $B_2$}   \right\},
$$
and we  note that this set is nonempty thanks to Lemma \ref{lem:coerciv} and a   direct minimization argument. In fact  this set is nonempty for all $f,V\in \cK_s$ for the same reason.\\
We consider the class of (normalized)   potentials    
\be \label{eq:def-cF-gamma}
\cV_{\b}:=\left\{V\in \cM_\b\,:\, \|V\|_{\cM_\b}\leq 1 \right\}.
\ee
Having these notations in mind, we now state the following  result.
 \begin{proposition} \label{prop:bound-Kato-abstract}
Let  $s\in (0,1)$, $\b\in [0,2s)$,       $\a\in (0,\min (1,2s-\b)) $ and $\L,\k>0$. Then there exists $\e_0>0$  and $C  >0$ such that  for every $\l:\R^{2N}\to [0,\k^{-1}] $  satisfying  $\|\l\|_{L^\infty(B_2\times B_2)}<\e_0$,  $a$ satisfying \eqref{eq:def-a-anisotropi},  $K\in \scrK(\l,a,\k)$,  $V\in \cV_\b$, $f\in \cM_\b$, $u\in \cS_{K,V,f}$ and   for every $r>0$,   we have 
\be\label{eq:abs-res-Propo}
     \sup_{x\in B_1} \|u-u_{B_r(x)}\|_{L^2(B_r(x))}^2\leq  C r^{N+2\a }(  \|u\|_{  L^2(\R^N) }+ \|f\|_{\cM_\b})^2.
\ee
\end{proposition}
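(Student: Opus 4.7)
The plan is to argue by contradiction using a blow-up / compactness scheme combined with a Liouville-type rigidity result at the limit. By homogeneity, I may assume $M_n := \|u_n\|_{L^2(\R^N)} + \|f_n\|_{\cM_\beta} = 1$. Suppose the conclusion fails. Then there exist sequences $\lambda_n \to 0$ in $L^\infty(B_2\times B_2)$, $a_n$ satisfying \eqref{eq:def-a-anisotropi}, kernels $K_n \in \scrK(\lambda_n,a_n,\kappa)$, data $V_n \in \cV_\beta$, $f_n \in \cM_\beta$, and solutions $u_n \in \cS_{K_n,V_n,f_n}$ for which the monotone non-increasing quantity
\[
\theta_n(r) := \sup_{x \in B_1,\, \rho \geq r} \rho^{-N/2-\alpha}\,\|u_n - (u_n)_{B_\rho(x)}\|_{L^2(B_\rho(x))}
\]
tends to $+\infty$ as $r \to 0$. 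Since $\theta_n(1)$ is uniformly controlled by $\|u_n\|_{L^2(\R^N)}\le 1$, a standard selection procedure produces $r_n \to 0$ and $x_n \in B_1$ for which the supremum is essentially attained at scale $r_n$, i.e.\ $\|u_n-(u_n)_{B_{r_n}(x_n)}\|_{L^2(B_{r_n}(x_n))}\geq \tfrac{1}{2}\theta_n(r_n)r_n^{N/2+\alpha}$ with $\theta_n(r_n)\to\infty$.

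I then introduce the blow-up
\[
v_n(x) := \frac{u_n(x_n + r_n x) - (u_n)_{B_{r_n}(x_n)}}{r_n^\alpha\, \theta_n(r_n)}.
\]
By construction $(v_n)_{B_1}=0$ and $\|v_n\|_{L^2(B_1)} \geq \tfrac12$, while the definition of $\theta_n$ combined with Lemma \ref{lem:L2grothw-mean}(i) yields the a priori growth $\|v_n\|_{L^2(B_\rho)} \leq C\rho^{N/2+\alpha}$ for all $1 \leq \rho \leq 1/(2r_n)$. A direct rescaling of the equation shows that $v_n$ satisfies
\[
\cL_{\tilde{K}_n} v_n + V_n^{r_n,x_n} v_n = g_n,\qquad g_n := \frac{f_n^{r_n,x_n} - c_n V_n^{r_n,x_n}}{r_n^{\alpha}\theta_n(r_n)},
\]
on $B_{1/r_n}$, with $\tilde{K}_n(x,y)=r_n^{N+2s}K_n(x_n+r_nx,x_n+r_ny)$ still in $\scrK(\tilde\lambda_n,a_n,\kappa)$ and $\|\tilde\lambda_n\|_{L^\infty(B_2\times B_2)}\leq \|\lambda_n\|_{L^\infty}\to 0$. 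Since $\alpha<2s-\beta$, the scaling identity $\|V_n^{r_n,x_n}\|_{\cM_\beta}\leq r_n^{2s-\beta}\|V_n\|_{\cM_\beta}$ (similarly for $f_n$) together with \eqref{eq:eta-f-scale-translate} gives $\eta_{V_n^{r_n,x_n}}(1)\to 0$ and $\|g_n\|_{\cM_\beta}\le C r_n^{2s-\beta-\alpha}/\theta_n(r_n)\to 0$.

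The next step is to produce a limit. Applying Lemma \ref{lem:from-caciopp-ok} on each fixed ball $B_R$ (the small Kato mass of $V_n^{r_n,x_n}$ can be absorbed into the leading coercivity) gives a uniform $H^s(B_R)$-bound. Combined with the explicit polynomial growth of $\|v_n\|_{L^2(B_\rho)}$, a dyadic decomposition (using $\alpha<2s$ on the annuli inside $B_{1/r_n}$ and the crude bound $\|v_n\|_{L^2(\R^N)}\lesssim r_n^{-N/2-\alpha}\theta_n(r_n)^{-1}$ beyond) yields uniform control of $\|v_n\|_{\cL^1_s}$. Extracting a subsequence $v_n\rightharpoonup v$ in $H^s_{loc}$, strongly in $L^2_{loc}$ and in $\cL^1_s$, and $a_n \overset{*}{\rightharpoonup} b$ in $L^\infty(S^{N-1})$ with $b$ satisfying \eqref{eq:coerciv-RS}, I pass to the limit in the weak formulation using Lemma \ref{lem:Ds-a-n--to-La}; the terms $\langle V_n^{r_n,x_n}v_n,\psi\rangle$ and $\langle g_n,\psi\rangle$ vanish by Lemma \ref{lem:convergence-very-weak} since their Kato/Morrey masses tend to zero. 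Hence $L_b v = 0$ in $\R^N$, and $v$ inherits $(v)_{B_1}=0$, $\|v\|_{L^2(B_1)}\geq\tfrac12$, and $\|v\|_{L^2(B_\rho)}\leq C\rho^{N/2+\alpha}$ for all $\rho\geq 1$.

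The final step invokes a Liouville-type theorem for translation-invariant operators $L_b$ (proved in Appendix \ref{s:append-Liouville-them}): since $\alpha<1$, any entire weak solution with this $L^2$-growth must be an affine function, and the coercivity \eqref{eq:coerciv-RS} together with eveness of $b$ forces $v$ to be constant; combined with $(v)_{B_1}=0$ this gives $v\equiv 0$, contradicting $\|v\|_{L^2(B_1)}\geq\tfrac12$. The main obstacle in making this scheme rigorous is the simultaneous control of the local energy and the $\cL^1_s$-tail of the blow-up sequence: the Caccioppoli estimate of Lemma \ref{lem:from-caciopp-ok} must be quantitatively calibrated so that the (small but nonzero) Kato mass of $V_n^{r_n,x_n}$ is absorbed while the dyadic tail, which barely converges thanks to $\alpha<2s$, remains uniformly bounded; only then do Lemma \ref{lem:Ds-a-n--to-La} and the Liouville rigidity close the contradiction.
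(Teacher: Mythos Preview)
Your overall blow-up and Liouville strategy matches the paper's, but you have skipped a step that the paper treats as essential and that your argument cannot do without. When you rescale and subtract the mean, the equation for $v_n$ acquires the right-hand side
\[
g_n = \frac{f_n^{r_n,x_n} - c_n\, V_n^{r_n,x_n}}{r_n^{\alpha}\theta_n(r_n)},\qquad c_n=(u_n)_{B_{r_n}(x_n)},
\]
and you assert $\|g_n\|_{\cM_\beta}\le C\,r_n^{2s-\beta-\alpha}/\theta_n(r_n)\to 0$. This bound is only valid if $|c_n|\le C$ uniformly in $n$. From $\|u_n\|_{L^2(\R^N)}\le 1$ alone you have merely $|c_n|\le C r_n^{-N/2}$, so the term $c_nV_n^{r_n,x_n}$ contributes a Morrey mass of order $r_n^{2s-\beta-\alpha-N/2}/\theta_n(r_n)$, which need not vanish (there is no quantitative relation between $\theta_n(r_n)\to\infty$ and $r_n\to 0$). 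The same uncontrolled constant $c_n$ also spoils your $\cL^1_s$-tail estimate: for $|x|$ large, $v_n(x)$ behaves like $-c_n/(r_n^\alpha\theta_n(r_n))$, which contributes $C|c_n|/(r_n^\alpha\theta_n(r_n))$ to $\|v_n\|_{\cL^1_s}$ and is again not uniformly bounded.

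The paper handles exactly this obstruction by inserting a preliminary blow-up argument (its Step~1): one first proves, by the \emph{same} contradiction/compactness scheme but without subtracting a mean, that for every $\varrho\in(0,1/2)$ there is $C$ with $\sup_{x\in B_1}\|u\|_{L^2(B_r(x))}^2\le C\,r^{N-2\varrho}$. This yields $|c_n|\le C r_n^{-\varrho}$ for any $\varrho>0$; choosing $\varrho<2s-\beta-\alpha$ then makes $\eta_{F_n}(1)\le C r_n^{2s-\beta-\varrho}$ with $F_n=-\Theta_n^{-1/2}c_n\bar V_n+\bar f_n$, so that in Step~2 the right-hand side carries the factor $r_n^{2s-\beta-\alpha-\varrho}\to 0$ and the limit equation is $L_b v=0$. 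The paper also avoids your tail difficulty entirely: since $\Theta_n(r)$ is defined for all $r>0$ and is monotone, the growth bound $\|v_n\|_{L^2(B_R)}^2\le C R^{N+2\alpha}$ in fact holds for \emph{every} $R\ge 1$ (via Lemma~\ref{lem:L2grothw-mean}(i)), not only for $R\le 1/(2r_n)$, so $\|v_n\|_{\cL^1_s}\le C$ follows directly from $\alpha<2s$ without any splitting at scale $1/r_n$. You should add the Step~1 estimate (or an equivalent a priori control on the averages $(u_n)_{B_{r_n}(x_n)}$) before running the main blow-up; without it the argument does not close.
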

 \begin{proof}
 The proof of \eqref{eq:abs-res-Propo} will be divided into two steps. Due to the presence of the potential $V$, the set $\cS_{K,V,f}$ might not be invariant   when  adding constants to its elements. As a way out to this difficulty,    we prove first a uniform estimate of the form $|u_{B_r(x)}|\leq C r^{-\varrho}( \|u\|_{  L^2(\R^N) }+ \|f\|_{\cM_\b})$,   for all   $\varrho>0$. Once we get this,  we complete the proof of \eqref{eq:abs-res-Propo} in the second step.\\
 
 \noindent
 \textbf{Step 1:}  We claim that for every $\varrho\in(0,1/2)$, there exist  $C>0$ and a  small number $\e_0>0$ such that for every  $\l:\R^{2N}\to [0,\k^{-1}] $  satisfying  $\|\l\|_{L^\infty(B_2\times B_2)}<\e_0$, every  function   $a $ satisfying \eqref{eq:def-a-anisotropi}, $K\in \scrK(\l,a,\k)$, $V\in \cV_\b$, $f\in \cM_\b$, $u\in \cS_{K,V,f}$, and $r>0$, we have 
 \be\label{first-estim-V}
\sup_{x\in B_1}     \|u \|_{L^2(B_r(x))}^2\leq  C r^{N-2\varrho } (  \|u\|_{  L^2(\R^N) }+ \|f\|_{\cM_\b})^2.
\ee
Assume that  \eqref{first-estim-V} does not hold, then there exists $\varrho\in (0,1/2)$ such that for every   $n\in \N$, we can find  $\l_n:\R^{2N}\to [0,\k^{-1}] $  satisfying  $\|\l_n\|_{L^\infty(B_2\times B_2)}<\frac{1}{n}$,  $a_n$ satisfying \eqref{eq:def-a-anisotropi}, $K_{a_n}\in  \scrK(\l_n,a_n,\k)$,  $V_n\in \cV_\b$,  $f_n\in \cM_\b$, $u_n\in \cS_{K_{a_n}, V_n,f_n}$, with $ \|u_n\|_{  L^2(\R^N) }+ \|f_n\|_{\cM_\b}\leq 1$ and  $\ov r_n>0$,  such that 
\be\label{eq:ov-r-n-un-larger-n-eps}
\ov r^{-N+2\varrho }_n  \sup_{x\in B_1}  \|u_n \|_{L^2(B_{\ov r_n}(x))}^2>n.
\ee
We consider the (well defined, because $\|u_n\|_{ L^2(\R^N)}\leq 1$) nonincreasing function $\Theta_n: (0,\infty)\to [0,\infty)$ given by
\be\label{eq:un-fn-normal}
\Theta_n(\ov r)=\sup_{ r\in [ \ov r,\infty)}   r^{-N+2\varrho } \sup_{x\in B_1}   \|u_n \|_{L^2(B_{ r}(x))}^2.
\ee
Obviously by \eqref{eq:ov-r-n-un-larger-n-eps},   
\be\label{eq:The-n-geq-n}
  \Theta_n(\ov r_n)> n .
\ee
Clearly,  there exists $r_n\in [\ov r_n,\infty)$ such that 
$$
\Theta_n( r_n)\geq   r^{-N+2\varrho }_n  \sup_{x\in B_1}  \|u_n \|_{L^2(B_{ r_n}(x))}^2 \geq (1-1/n)\Theta_n(\ov r_n)\geq (1-1/n)\Theta_n( r_n),
$$
where we used the monotonicity of $\Theta_n$ for the last inequality, while  the first inequality  comes from the definition of $\Theta_n$.
In particular, thanks to  \eqref{eq:The-n-geq-n},  $\Theta_n( r_n)\geq (1-1/n)n$. Now  since $ \|u_n \|_{L^2(\R^N)}\leq 1$,  we have that $  r^{-N+2\varrho }_n \geq (1-1/n)n$, so that $r_n\to 0$ as $n\to \infty$.  Moreover by \eqref{eq:The-n-geq-n}, it is clear that, there exists $x_n\in B_1$ such that 
\be\label{eq:rn-norm-u-n geq-eps}
  r^{-N+2\varrho }_n   \|u_n \|_{L^2(B_{ r_n}(x_n))}^2  \geq   (1-1/n-1/2)\Theta_n( r_n) .
\ee
 We now   define the blow-up sequence  of functions
$$
 {w}_n(x)=  \Theta_n(r_n)^{-1/2} r_n^{\varrho}   u_n(r_n x+x_n), 
$$
which, by \eqref{eq:rn-norm-u-n geq-eps},   satisfy
\be \label{eq:w-n-nonzero}
 \|w_n\|_{L^2(B_{1})}^2\geq \frac{3}{4} \qquad\textrm{ for every $n\geq2$.}
\ee
In view of  \eqref{eq:un-fn-normal}, we have  that
 \begin{align*}
 \|w_n\|_{L^2(B_{R})}^2&=   \Theta_n(r_n)^{-1}   r_n^{-N+2\varrho}   \|u_n\|_{L^2(B_{r_nR}(x_n) )}^2\\
&\leq  \Theta_n(r_n)^{-1}   r_n^{-N+2\varrho}   \Theta_n(r_nR)   (r_nR)^{N-2\varrho}  
\leq R^{N-2\varrho},
\end{align*}
where we have used the monotonicity of $\Theta_n$ for the last inequality.
Consequently, 
\be\label{eq:groht-w-n-eps}
 \|w_n\|_{L^2(B_{R})}^2\leq    R^{N-2\varrho}  \qquad\textrm{ for every $R\geq 1$ and $n \geq 2$.}
\ee
We define
$$
\ov f_{n} (x) := \Theta_n(r_n)^{-1/2}      r_n^{2s} f_n(r_n x+x_n) \qquad\textrm{ and } \qquad \ov V_{n} (x) :=  r_n^{2s} V_n(r_n x+x_n).
$$
Because $u_n\in \cS_{K_{a_n}, V_n,f_n} $, it is plain that 
$$
\cL_{K_{n}} w_n   +\ov  V_n w_n=   r_n^\varrho \ov f_n   \qquad\textrm{ in $B_{1/2{r_n}} $,}
$$
where
\be\label{eq:K_nnnnnnn}
K_n(x,y)=r_n^{N+2s}K_{a_n}(r_nx+x_n,r_ny+x_n). 
\ee
Clearly $K_n$ satisfies \eqref{eq:Kernel-satisf}. 
Therefore  applying Lemma \ref{lem:from-caciopp-ok}  and using  \eqref{eq:groht-w-n-eps}, for every  $1<M<\frac{1}{2r_n}$,  we  get
\begin{align*}
&\left\{\k-  \e  \ov C (1+ r_n^\varrho \eta_{\ov f_n}(1))- \ov C  \eta_{\ov V_n}(1) \right\}[w_n]_{H^s(B_M)}^2\leq       C  r_n^\varrho \eta_{\ov f_n}(1)  \|\vp_M\|^2_{H^s(\R^N)}\\    
&+ C \left(  \eta_{\ov V_n}(1)+r_n^\varrho \eta_{\ov f_n}(1) + 1 \right)   \int_{\R^N}  \frac{ M^N |w_n(x)|^2}{M^{N+2s}+|x|^{N+2s}}\,dx +    C \left( \eta_{\ov V_n}(1)+r_n^\varrho \eta_{\ov f_n}(1)\right)   \|w_n\|^2_{L^2(B_{2M})}.
\end{align*}
By   \eqref{eq:eta-f-scale-translate},  we have  that $ \eta_{\ov V_n}(1)+ \eta_{\ov f_n}(1) \leq   C r_n^{2s-\b }$   (recalling that $ \Theta_n(r_n)^{-1}\leq 1$). Hence, there exists a constant $C(M)$ independent on $n\geq 2$ such that 
\begin{align}\label{eq:semi-norm-w-n-bounded-eps}
\left(\k -\e \ov C(1 + r_n^{2s-\b+\varrho }) - \ov C r_n^{2s-\b}  \right) &[w_n]_{H^s(B_M)}^2\leq       C(M) .
\end{align}
Therefore   provided $\e$ is small and $n$ is   large enough, we deduce that $w_n$ is bounded in $H^s_{loc}(\R^N)$. Hence by Sobolev embedding, up to a subsequence,  $w_n$ converges strongly,  in $L^2_{loc}(\R^N)$,  to some  $w\in H^s_{loc}(\R^N)$.     In addition by \eqref{eq:groht-w-n-eps}, we have that $v_n \to v$ in $\cL^1_s$.
Moreover, by \eqref{eq:w-n-nonzero}  and \eqref{eq:groht-w-n-eps},   we deduce that 
\be\label{eq:w-nonzero-eps}
  \|w\|_{L^ 2(B_1)}^2 \geq \frac{3}{4} \qquad\textrm{ and } \qquad  \|w\|_{L^2(B_{R})}^2\leq    R^{N-2\varrho}  \qquad\textrm{ for every $R\geq 1$.}
\ee
We let
  $\psi\in C^\infty_c( B_M)$, with $M<\frac{1}{2r_n}$. By Lemma \ref{lem:convergence-very-weak}, \eqref{eq:semi-norm-w-n-bounded-eps}    and \eqref{eq:groht-w-n-eps}, we get
\begin{align}\label{eq:first-estim-to-Liou} 
&\left| \int_{\R^{2N} } (w_n (x)-w_n(y))(\psi(x)   -\psi(y)) {K_{n}(x,y)}\, dxdy \right| \nonumber\\
&\leq  C r_n^{2s-\b  } \left( \|w_n\vp_M\|_{H^s(\R^N)}^2 +   \|\psi\|_{H^s(\R^N)}^2     \right)   + r_n^{2s-\b +\varrho}   \left( \|\psi\|_{H^s(\R^N)}^2 +   \|\vp_M\|_{H^s(\R^N)}^2     \right)  \nonumber\\
& \leq r_n^{2s-\b }  C (M)  .
\end{align}
Next, we observe that  $K_n\in \scrK(\ov \l_n,a_n,\k)$, with $\ov \l_n(x,y)=\l_n (r_nx+x_n,r_ny+x_n)$ (see  \eqref{eq:Kernel-satisf}). On the other hand 
$$
 \|\ov \l_n\|_{L^\infty(B_{1/r_n}\times B_{1/r_n})}= \|\l_n\|_{L^\infty(B_{1}(x_n)\times B_{1}(x_n))}\leq  \|\l_n\|_{L^\infty(B_2\times B_2)}\leq \frac{
 1}{n}.
$$
In view of this and \eqref{eq:first-estim-to-Liou},  by  Lemma \ref{lem:Ds-a-n--to-La}, as $n\to \infty$,  we have that   
$$
\frac{1}{2} \int_{\R^{2N}}(w_n(x)-w_n(y))(\psi(x)-\psi(y)K_n(x,y)\, dxdy\to \int_{\R^N} w L_b \psi(x)\,dx=0, 
$$
 where $b$ is the weak-star limit of $a_n$, which satisfy \eqref{eq:coerciv-RS}.
We then conclude that $L_b w =0 \quad\textrm{ in $\R^N$}.$
Now    Lemma \ref{lem:Liouville} implies that  $w$ is an affine function. This is clearly in contradiction   with       \eqref{eq:w-nonzero-eps} since $\varrho>0$.\\
 
  \bigskip 
 \noindent
 \textbf{Step 2: } 
Assuming that \eqref{eq:abs-res-Propo} does not hold true,  then as in the first step, we can find sequences $\l_n:\R^{2N}\to [0,\k^{-1}] $  satisfying  $\|\l_n\|_{L^\infty(B_2\times B_2)}<\frac{1}{n}$,  $a_n$ satisfying \eqref{eq:def-a-anisotropi},  $K_{a_n}\in \scrK(\l_n,a_n,\k)$,  $x_n\in B_1$,  $ V_n\in \cV_\b $, $f_n\in \M_\b$,  $u_n\in \cS_{K_{a_n},V_n,f_n}$, with $ \|u_n\|_{  L^2(\R^N) }+\|f_n\|_{\cM_\b}\leq 1$ and $r_n\to 0$,   such   that 
 \be\label{eq:resclae-situation-pwb}
    r^{-N-2\a }_n     \|u_n-(u_n)_{B_{r_n}(x_n)}\|_{L^2(B_{r_n}(x_n))}^2\geq  \frac{1}{8} \Theta_n(r_n) .
\ee
Here, for every $n\geq 2$,     $ \Theta_n:(0,\infty)\to [0,\infty)$ is a  nonincreasing function   satisfying 
\be \label{eq:estim-u-min-u-n-by-Theta}
\Theta_n( r)  r^{N+2\a }  \geq      \|u_n-(u_n)_{B_r(x_n)}\|_{L^2(B_r(x_n))}^2 \qquad\textrm{ for every $r>0$}
\ee
and   $\Theta_n(r_n)\geq n/2$.
We define   
$$
 {v}_n(x)=  \Theta_n(r_n)^{-1/2}r_n^{-\a}   u_n(r_n x+x_n)- \Theta_n(r_n)^{-1/2} r_n^{-\a}  \frac{1}{|B_1|}\int_{B_1}  u_n(r_n x+x_n)\,dx, 
$$
so that 
\be \label{eq:v-n-nonzero-abs}
 \|v_n\|_{L^2(B_{1})}^2\geq \frac{1}{8} \qquad \textrm{ and} \qquad \int_{B_1} v_n(x)\, dx=0. 
\ee
\bigskip
\noindent
\textbf{Claim:} There exists $C= C(  s,  N,\a)>0$     such that  
\be\label{eq:groht-v-n-abs}
 \|v_n\|_{L^2(B_{R})}^2\leq  C   R^{N+2\a}  \qquad\textrm{ for every $R\geq 1$ and $n \geq 2$.}
\ee
To prove this claim, we   note that by a change of variable, we have
\begin{align}\label{eq:v-n-diff-u-BrR}
 \|v_n\|_{L^2(B_{R})}^2&=   \Theta_n(r_n)^{-1}     r_n^{-N-2\a}  \|u_n-(u_n)_{B_{r_n}(x_n)  }\|_{L^2(B_{r_nR}(x_n) )}^2.
\end{align}
Since, by    \eqref{eq:estim-u-min-u-n-by-Theta} and the monotonicity of $\Theta_n$,  
$$
\| u_n  -(u_n)_{B_{Rr_n}(x_n)  }\|_{L^2(B_{Rr_n}(x_n) )}^2\leq  (r_nR)^{N+2\a}    \Theta_n(R r_n)   \leq  r_n^{N+2\a}  \Theta_n( r_n) R^{N+2\a}     \qquad\textrm{ for every $R\geq 1$, }
$$
 it follows from Lemma \ref{lem:L2grothw-mean}$(i)$ that
$$
\| u_n  -(u_n)_{B_{r_n}(x_n)  }\|_{L^2(B_{Rr_n}(x_n) )}^2\leq  C r_n^{N+2\a}    \Theta_n( r_n)   R^{N+2\a}.
$$
Using this in \eqref{eq:v-n-diff-u-BrR},  we get \eqref{eq:groht-v-n-abs}
as  claimed.\\
Thanks to the choice of $\a<2s$, by    \eqref{eq:groht-v-n-abs}, we get  
\be \label{eq:good-growth-of-v_n-Kato-pw-0}
\|v_n\|_{\cL^1_s}\leq C.
\ee
Using the same notations as in \textbf{Step 1} for $\ov V_n$, $\ov f_n$ and $K_n\in \scrK(\ov \l_n,a_n,\k)$, we see that 
$$
\cL_{K_n} v_n   +\ov  V_n v_n=- \Theta_n(r_n)^{-1/2}  r_n ^{-\a}  \ov  V_n  A_n(r_n)    + r_n ^{-\a}   \ov f_n   \qquad\textrm{ in $B_{1/{2r_n}} $,}
$$
where  $A_n(r_n):= \frac{1}{|B_{r_n}|}\int_{B_{r_n}(x_n)} u_n(x)\, dx$. Note that from  \textbf{Step 1} and H\"older's inequality,      for every $\varrho\in (0, (2s-\b-\a)/2)$, we can find a  constant $C>0$ such that for every $n\geq 2$,
 \be\label{eq:mean-u-estim}
|A_n(r_n)|\leq  \frac{1}{|B_{r_n}|}\int_{B_{r_n}(x_n)} |u_n(x)|\, dx\leq C r^{-\varrho}_n.
 \ee 
We then define   
$$
F_n(x):=- \Theta_n(r_n)^{-1/2}     A_n(r_n)\ov V_n(x)+\ov f_n(x) .
$$
As above, we observe  that $ \eta_{\ov V_n}(1) \leq C r_n^{2s-\b }$, while by \eqref{eq:mean-u-estim}, we have    $ \eta_{F_n}(1)\leq C  r_n^{2s-\b-\varrho}$. 
On the other hand
\be \label{eq:v_n-solves-right-eq}
\cL_{K_n} v_n   +\ov  V_n v_n=  r_n ^{-\a}  F_n   \qquad\textrm{ in $B_{1/{2r_n}} $.}
\ee
By Lemma \ref{lem:cat-off-sol}, for $1<M<\frac{1}{2r_n}$,  we have 
$$
\cL_{K_n} ( \vp_Mv_n )  +\ov  V_n ( \vp_Mv_n ) =  r_n ^{-\a}   \vp_M F_n+ G_{v_n,M}   \qquad\textrm{ in $B_{M/{2}} $,}
$$
where $ \|G_{v_n,M} \|_{L^\infty(\R^N)}\leq C \|v_n\|_{\cL ^1_s}\leq C $, by  \eqref{eq:good-growth-of-v_n-Kato-pw-0}.
In view of \eqref{eq:groht-v-n-abs}, \eqref{eq:good-growth-of-v_n-Kato-pw-0} and Lemma \ref{lem:from-caciopp-ok}, we then get 
\begin{align*}
\left(  \k - \e \ov C(1+ r_n^{2s-\b-\varrho-\a} )-\ov C r_n^{2s-\b }  \right) &[\vp_M v_n]_{H^s(B_{M/4})}^2\leq       C(M) \qquad\textrm{ whenever  $1<M<\frac{1}{2r_n}$.}
\end{align*}
Consequently, provided $n$ is large enough and $\e$ small, we obtain
\be\label{eq:v-n-bnd-seminorm}
 [v_n]_{H^s(B_{M/4})}^2\leq       C(M) .
\ee
This with  \eqref{eq:groht-v-n-abs} imply that   $v_n$ is bounded in $H^s_{loc}(\R^N)$  and, up to a subsequence,  converges strongly, in $L^2_{loc}(\R^N)\cap \cL^1_{s }$,  to some $v\in H^s_{loc}(\R^N)$. Since $v_n$ satisfy \eqref{eq:v_n-solves-right-eq},   by Lemma \ref{lem:convergence-very-weak}, \eqref{eq:good-growth-of-v_n-Kato-pw-0} and \eqref{eq:v-n-bnd-seminorm},   we have
 \begin{align*} 
\Bigl|\int_{\R^{2N}}(v_n(x)  &    -v_n(y))(\psi(x)-\psi(y))K_n(x,y)\, dxdy \Bigr|\\
 &\leq  C r_n^{2s-\b-\a-\varrho} \left( \|v_n\vp_M\|_{H^s(\R^N)}^2 +    \|\psi\|_{H^s(\R^N)}^2 +   \|\vp_M\|_{H^s(\R^N)}^2     \right)\\
&\leq r_n^{2s-\b-\a-\varrho}  C (M)  
\end{align*}
for every $\psi\in C^\infty_c(B_M)$, with $M<\frac{1}{2r_n}$.
Letting $n\to \infty$ in the above inequality and using Lemma \ref{lem:Ds-a-n--to-La}, we find that  $L_b v=0$ in $ \R^N$, with $b$ the limit of $a_n$ in the weak-star topology of $L^\infty(S^{N-1})$. Moreover, from \eqref{eq:groht-v-n-abs}, we get 
$$
\int_{B_R} |v(x)|^2\, dx\leq   C    R^{N+2\a} .
$$
By  Lemma \ref{lem:Liouville},   $v$ is a constant function (because $\a<1$), which leads to a contradiction after passing to the limit in 
  \eqref{eq:v-n-nonzero-abs}.
\end{proof} 
\subsection{Uniform growth estimates at the boundary for solutions to Schr\"odinger equations}

Let $\O$ be an open subset of $\R^N$ such that $\de\O\cap B_2$ is a  $C^1$ hypersurface. We will assume that $0\in \de\O$ and that $\de\O$ separates $B_2$ into two domains.  As before,
for $K\in \scrK(\l,a,\k)$, $V\in \cV_\b$ and $f\in \cM_{\b}$,
we consider the (nonempty) set of solutions:
\be \label{eq:set-of-sol-Omega}
\cS_{K, V,f;\O}:=\left\{ u\in   {H}^s(B_2)\cap L^2(\R^N)\,:\, \cL_{K} u+ Vu = f \quad\textrm{in $B_2\cap \O$,  $\quad u =0$ \text{ in  $B_2\cap\O^c$}  } \right\}.
\ee
We have the following result.
 \begin{proposition} \label{prop:bdr-estim-Kato}		
Let  $s\in (0,1)$, $\b\in [0,2s)$,     $\a\in (0,\min (s,2s-\b)) $ and $\L,\k>0$.  Then   there exist $\e_1>0$ small and  $C>0$ such that for every $\l:\R^{2N}\to [0,\k^{-1}]$ satisfying $\|\l\|_{L^\infty(B_2\times B_2)}<\e_1$,    $a$ satisfying \eqref{eq:def-a-anisotropi}, ${K}\in \scrK(\l,a,\k) $,  $V\in \cV_\b$, $f\in \cM_{\b}$,  $u\in \cS_{K,V,f;\O}$  and for every $r>0$,    we have 
\be\label{eq:asser-inProp-bdr}
  \sup_{z\in B_{1} \cap\de\O}  \|u \|_{L^2(B_r(z))}^2\leq  C r^{N+2\a }(  \|u\|_{  L^2(\R^N) }+ \|f\|_{\cM_\b})^2.
\ee
\end{proposition}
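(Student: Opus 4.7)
The plan is to argue by contradiction via a blow-up procedure, in exact analogy with Step 1 of the proof of Proposition~\ref{prop:bound-Kato-abstract}, but with the blow-up center on the boundary $\de\O$ and a half-space as the limiting domain. First I would assume the conclusion fails: there exist $\a \in (0,\min(s,2s-\b))$ and sequences $\l_n, a_n, K_{a_n}, V_n, f_n, u_n$ with the normalization $\|u_n\|_{L^2(\R^N)} + \|f_n\|_{\cM_\b} \leq 1$, $\|\l_n\|_{L^\infty(B_2 \times B_2)} < 1/n$, together with radii $\ov r_n > 0$ and boundary points $\ov z_n \in B_1 \cap \de\O$ for which $\ov r_n^{-N-2\a}\|u_n\|_{L^2(B_{\ov r_n}(\ov z_n))}^2 > n$. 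Introduce the nonincreasing quantity
$$
\Theta_n(\ov r) := \sup_{r \in [\ov r,\infty)} r^{-N-2\a}\sup_{z \in B_1 \cap \de\O} \|u_n\|_{L^2(B_r(z))}^2,
$$
extract an almost-maximizing radius $r_n$ and boundary point $z_n \in B_1 \cap \de\O$ with $\Theta_n(r_n)\gtrsim n \to \infty$, and observe (since $\|u_n\|_{L^2(\R^N)}\leq 1$) that $r_n \to 0$.

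Define the blow-up sequence $w_n(x) := \Theta_n(r_n)^{-1/2} r_n^{-\a} u_n(r_n x + z_n)$ on the rescaled domain $\O_n := (\O - z_n)/r_n$. A change of variables and the definition of $\Theta_n$ give, for $n$ large, $\|w_n\|_{L^2(B_1)}^2 \geq \tfrac34$ and the growth estimate $\|w_n\|_{L^2(B_R)}^2 \leq R^{N+2\a}$ for all $R \geq 1$; the choice $\a < s$ then ensures $(w_n)$ is bounded in $\cL^1_s$. Moreover $w_n \equiv 0$ in $\O_n^c$ and $w_n$ solves
$$
\cL_{K_n} w_n + \ov V_n w_n = \Theta_n(r_n)^{-1/2} r_n^{-\a} \ov f_n  \qquad\text{in } B_{1/(2r_n)} \cap \O_n,
$$
where $K_n(x,y) = r_n^{N+2s} K_{a_n}(r_n x + z_n, r_n y + z_n) \in \scrK(\ov\l_n, a_n, \k)$ with $\|\ov\l_n\|_{L^\infty(B_{1/r_n}\times B_{1/r_n})} \leq 1/n$, and $\ov V_n, \ov f_n$ are the natural parabolic rescalings. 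By \eqref{eq:eta-f-scale-translate}, the normalizations $\|V_n\|_{\cM_\b}, \|f_n\|_{\cM_\b} \leq 1$ yield $\eta_{\ov V_n}(1) + \eta_{\ov f_n}(1) \leq C r_n^{2s-\b}$, so Lemma~\ref{lem:from-caciopp-ok} applied on each $B_M \subset B_{1/(4r_n)}$, with $\e$ fixed small, delivers a uniform $H^s(B_{M/2})$ bound on $w_n$. Passing to a subsequence, $w_n \to w$ strongly in $L^2_{\loc}(\R^N) \cap \cL^1_s$ with $w \in H^s_{\loc}(\R^N)$ inheriting the growth $\|w\|_{L^2(B_R)}^2 \leq C R^{N+2\a}$.

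Because $\de\O$ is of class $C^1$ in a neighbourhood of $0$ and $z_n \to 0$, the rescaled domains $\O_n$ converge locally, in the Hausdorff sense and $C^1$ on compacts, to a half-space $H = \{x\cdot\nu > 0\}$, where $\nu$ is a subsequential limit of the interior unit normals to $\de\O$ at $z_n$. The Dirichlet condition $w_n \equiv 0$ in $\O_n^c$ passes to the limit to give $w = 0$ in $H^c$. Next, testing the rescaled equation against $\psi \in C^\infty_c(\R^N)$ and using Lemma~\ref{lem:convergence-very-weak}, the right-hand side is estimated by a constant multiple of $\Theta_n(r_n)^{-1/2} r_n^{-\a} \cdot r_n^{2s-\b} + r_n^{2s-\b}$, which tends to $0$ since $\a < 2s - \b$ and $\Theta_n(r_n)\to\infty$. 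Applying Lemma~\ref{lem:Ds-a-n--to-La} to the left-hand side, with $b$ the weak-star limit of $a_n$ satisfying \eqref{eq:coerciv-RS}, one concludes
$$
L_b w = 0 \quad \text{in } H, \qquad w \equiv 0 \quad \text{in } H^c.
$$
Since $\a < s$, the Liouville theorem on the half-space (Appendix~\ref{s:append-Liouville-them}) forces $w \equiv 0$, contradicting $\|w\|_{L^2(B_1)}^2 \geq \tfrac34$.

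The main obstacle will be the limiting step: one must justify that the Dirichlet exterior condition on the varying domains $\O_n$ survives in the limit (so the limit problem is posed on the half-space $H$) and that the non-translation-invariant kernels $K_n$, which have no uniform continuity, nevertheless converge in the sense required by Lemma~\ref{lem:Ds-a-n--to-La} to the kernel $\mu_b$. The $C^1$ regularity of $\de\O$ near $0$ controls $\O_n \triangle H$ on compact sets and hence guarantees that test functions in $C^\infty_c(H)$ are eventually compactly supported in $\O_n$, while the smallness $\|\ov\l_n\|_{L^\infty} \leq 1/n$ supplies the kernel comparison against $\mu_{a_n}$ needed to apply Lemma~\ref{lem:Ds-a-n--to-La}.
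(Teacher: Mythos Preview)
Your proposal is correct and follows essentially the same blow-up argument as the paper's proof: the contradiction setup, the monotone quantity $\Theta_n$, the rescaled sequence $w_n$, the $L^2$ growth bound $R^{N+2\a}$, the $H^s_{\loc}$ compactness via Lemma~\ref{lem:from-caciopp-ok}, the passage to the limit through Lemmas~\ref{lem:convergence-very-weak} and~\ref{lem:Ds-a-n--to-La}, and the half-space Liouville theorem all match. The only cosmetic differences are the value of the lower-bound constant and the fact that $z_n$ need not converge to $0$ but rather to some $\ov z\in \ov{B_1}\cap\de\O$ (so the limiting half-space is tangent at $\ov z$); this does not affect the argument.
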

 \begin{proof}
 As in the proof of Proposition \ref{prop:bound-Kato-abstract}, if   \eqref{eq:asser-inProp-bdr} does not hold, then we can a find sequence  of real numbers $r_n\to 0$, sequence of points    $z_n\in B_{1} \cap\de\O$,  sequences of   functions $\l_n$   satisfying $\|\l_n\|_{L^\infty(B_2\times B_2)}<\frac{1}{n}$,  $a_n$ satisfying \eqref{eq:def-a-anisotropi},  ${K_{a_n}} \in \scrK(\l_n,a_n,\k)$,  $V_n\in \cV_\b$, $f_n\in \cM_\b$ $u_n\in \cS_{K_{a_n},V_n,f_n;\O}$, with $ \|u_n\|_{  L^2(\R^N) }+\|f_n\|_{\cM_\b}\leq 1$,  such   that 
$$
    r^{-N-2\a }_n        \|u_n \|_{L^2(B_{r_n}(z_n))}^2\geq  \frac{1}{8} \Theta_n(r_n) ,
$$
where,  $ \Theta_n:(0,\infty)\to [0,\infty)$, is a  nonincreasing function   satisfying 
\be \label{eq:estim-u-n-by-Theta}
\Theta_n( r)  r^{N+2\a }  \geq        \|u_n \|_{L^2(B_r(z_n))}^2 \qquad\textrm{ for every $r>0$ and $n \geq 2$}
\ee
and $\Theta_n(r_n)\geq n/2$ for all integer  $n\geq 2$.
 We   define  
$$
 {v}_n(x)=  \Theta_n(r_n)^{-1/2}  r_n^{-\a} u_n (r_n x+z_n)     ,
$$
so that 
\be\label{eq:low-bnd-v-vn-Kato}
  \|v_n  \|_{L^2(B_1)}^2\geq \frac{1}{8}.
\ee
We also let 
$$
\O_n:=\frac{1}{r_n}(\O-z_n), 
$$
$$
\ov f_{n} (x) :=   \Theta_n(r_n)^{-1/2}    r_n^{2s} f_n (r_n x+z_n) \qquad \textrm{ and } \qquad \ov V_{n} (x) :=  r_n^{2s} V_n (r_n x+z_n).
$$
By   \eqref{eq:eta-f-scale-translate}, we get
\be\label{eq:et-of-n-ov-V-n}
\eta_{ \ov f_{n} } (1)+ \eta_{ \ov V_{n} } (1) \leq C r_n^{2s-\b },
\ee
 with    a constant $C=C(N,s,\b,\a )$.
It is clear that
\begin{align}\label{eq:v-n-satisf-system-bdr}
\begin{cases}
\cL_{K_n} v_n  +\ov V_nv_n= r_n^{-\a} \ov f_n&   \qquad\textrm{ in $B_{1/{2 r_n}}(-z_n)\cap \O_n$}\\
v_n=0&   \qquad\textrm{ in $B_{1/{2r_n}}(-z_n)\cap \O_n^c$},
\end{cases}
\end{align}
where 
$$
K_n(x,y)=r_n^{N+2s}K_{a_n}(r_nx+z_n,r_ny+z_n).
$$
Next, by the monotonicity of $\Theta_n$ and \eqref{eq:estim-u-n-by-Theta}, we get
\begin{align*}
 \|v_n  \|_{L^2(B_R)}^2 &=   \Theta_n(r_n)^{-1}   r_n^{-N-2\a}   \|u_n  \|_{L^2(B_{r_n R}(z_n))}^2\\
& \leq   \Theta_n(r_n)^{-1}  r_n ^{-N-2\a}    \Theta_n(R r_n)   (R r_n)^{N+2\a} .
\end{align*}
Hence,    for every $n\geq 2$,
\be\label{eq:growth-v-n}
 \|v_n  \|_{L^2(B_R)}^2 \leq     R^{N + 2\a}   \qquad\textrm{ for every $R\geq 1$.}
\ee
This with H\"older's inequality imply that   
\be \label{eq:good-growth-of-v_n-Kato-pw}
   \int_{B_R}|v_n(x)|dx \leq C   R^{N+\a} \qquad\textrm{ for every $R\geq 1$.}
\ee
From now on, we let $n_0$ large, so that $B_{1/(2r_n)}\subset B_{1/r_n}(-z_n)$ for every $n\geq n_0$.
Since $v_n$ satisfies \eqref{eq:v-n-satisf-system-bdr} and $K_n$ satisfies \eqref{eq:Kernel-satisf}$(i)$-$(ii)$,  by   Lemma \ref{lem:from-caciopp-ok}, \eqref{eq:et-of-n-ov-V-n} and \eqref{eq:growth-v-n},    there exists a constant $C(M)$ independent on $n\geq n_0$ such that 
  \begin{align}
[v_n]_{H^s(B_M)}^2\leq      C(M) ,
\label{eq:apply-cacciop-bdr1}
\end{align}
whenever $1\leq M\leq \frac{1}{2r_n}$.
We then  deduce that $v_n$ is bounded in $H^s_{loc}(\R^N)$. Hence by Sobolev embedding, \eqref{eq:good-growth-of-v_n-Kato-pw} and since $\a<2s$, we may assume that  the sequence  $v_n$ converges strongly,  in $L^2_{loc}(\R^N)\cap\cL^1_s$,  to some  $v\in H^s_{loc}(\R^N) $.    
Moreover, by \eqref{eq:low-bnd-v-vn-Kato}, we deduce that 
\be\label{eq:groth-v-and-low-bnd-Kato-pw}
  \|v\|_{L^2(B_1)}^2 \geq \frac{1}{8}.
\ee
Next, we note that $1_{\O_n\cap B_{1/(2r_n)}}\to 1_{H}$ in $L^1_{loc}(\R^N)$ as $n\to \infty$, where $H$ is a half-space, with $ 0\in  H$.  In fact $H=\{x\in \R^N\,:\, (x-\ov z)\cdot \nu(\ov z)>0\}$, where  $\ov z=\lim_{n\to \infty} z_n\in \de H$ and $\nu$ is the unit interior normal vector of $\de H$.
Now, we pick $\psi\in C^\infty_c( H\cap B_M)$, with $M<\frac{1}{2r_n}$. Since $\O$ is of class $C^1$, provided $n$ is large enough, we have that  $\psi \in  C^\infty_c( \O_n)$.  Therefore by Lemma \ref{lem:convergence-very-weak},  \eqref{eq:apply-cacciop-bdr1}, \eqref{eq:growth-v-n} and \eqref{eq:et-of-n-ov-V-n}, we obtain 
\begin{align*} 
\left| \int_{\R^{2N}}(v_n(x)-v_n(y))(\psi(x)-\psi(y))K_n(x,y)\, dxdy \right| &\leq  r_n^{2s-\b-\a } C(M) ,
\end{align*}
with $C(M)$ a constant not depending on $n\geq n_0$ and large. Denoting by $b$ the weak-star  limit of $a_n$, then by  Lemma \ref{lem:Ds-a-n--to-La},   we  get
$$
L_b v=0 \qquad\textrm{ in $H$}  \qquad \textrm{ and }\qquad v=0 \qquad \textrm{ on $\R^N\setminus  H$.}
$$
Furthermore by \eqref{eq:growth-v-n},    $ \|v\|_{L^2(B_R)}^2\leq C R^{N+2\a} $, for every $R\geq 1$. Since   $\a<s$,  it follows from  Lemma \ref{lem:Liouville}  that   $v= 0$,    which is impossible by \eqref{eq:groth-v-and-low-bnd-Kato-pw}.
\end{proof} 
\section{Interior and boundary H\"older regularity estimates}\label{s:int-and-ndr-reg}
\subsection{Interior H\"older  regularity}
We have the following regularity estimates.
\begin{corollary}\label{cor:int--Holder-reg-Morrey}
Let  $s\in (0,1)$, $\b\in [0,2s)$,     $\a\in (0,\min (1,2s-\b)) $, $\k,\L>0$. Let  $a$ satisfy \eqref{eq:def-a-anisotropi} and   $K\in \scrK(\l,a,\k)$.
  Let $f,V\in \cM_\b$ and  $u\in H^s (B_2)\cap \cL^1_s$   satisfy
$$
\cL_{K} u+Vu= f \qquad\textrm{ in $B_2$}.
$$
Then there exists $\e_0,C>0$, only depending on $ N,s,\b,\k,\a,\|V\|_{\cM_\b}$ and $\L$ such that   if $\|\l\|_{L^\infty(B_2\times B_2)}<\e_0$, then 
  $$
  \|u\|_{C^{\a}(B_{1})}\leq C \left( \| u\|_{L^2(B_{2})}+  \|u\|_{\cL^1_s}+ \|f\|_{\cM_\b} \right).
  $$
%
%
%
%
\end{corollary}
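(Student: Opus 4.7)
The strategy is to reduce the statement to Proposition \ref{prop:bound-Kato-abstract} and then convert its $L^2$-mean oscillation bound into pointwise Hölder regularity via the classical Campanato characterization. There are three moves.

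\textit{Step 1 (Normalizing the potential).} Proposition \ref{prop:bound-Kato-abstract} applies only when $V\in\cV_\b$, i.e. $\|V\|_{\cM_\b}\leq 1$, whereas in our setting only a finite bound on $\|V\|_{\cM_\b}$ is available. I would perform a dilation $\tilde u(x)=u(\rho x)$ with $\rho\in (0,1]$. A direct computation shows that $\tilde u$ satisfies an equation of the same type with rescaled kernel $K_\rho(x,y)=\rho^{N+2s}K(\rho x,\rho y)$, potential $\rho^{2s}V(\rho\,\cdot\,)$ and right hand side $\rho^{2s}f(\rho\,\cdot\,)$. The Morrey scaling yields
\[
\|\rho^{2s}V(\rho\,\cdot\,)\|_{\cM_\b}\leq \rho^{2s-\b}\|V\|_{\cM_\b},
\qquad
\|\rho^{2s}f(\rho\,\cdot\,)\|_{\cM_\b}\leq \rho^{2s-\b}\|f\|_{\cM_\b},
\]
and one checks that $K_\rho\in\scrK(\l_\rho,a,\k)$ with $\|\l_\rho\|_{L^\infty(B_2\times B_2)}\leq\|\l\|_{L^\infty(B_2\times B_2)}$. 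Choosing $\rho$ so that $\rho^{2s-\b}\|V\|_{\cM_\b}\leq 1$ (possible since $\b<2s$) reduces us to the case $V\in\cV_\b$.

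\textit{Step 2 (Truncation of the tail).} Proposition \ref{prop:bound-Kato-abstract} requires $u\in L^2(\R^N)$, but we only have $u\in H^s(B_2)\cap\cL^1_s$. Fix $\eta\in C^\infty_c(B_2)$ with $\eta\equiv 1$ on $B_{7/4}$, and set $v=u\eta\in H^s(\R^N)\cap L^2(\R^N)$. A direct manipulation of the PV integral shows that on $\{\eta=1\}$,
\[
\cL_K v(x)=\cL_K u(x)+G(x),\qquad G(x):=\int_{\R^N}u(y)(1-\eta(y))K(x,y)\,dy,
\]
the integral being non-singular since $1-\eta$ vanishes near $x$. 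For $x$ in any slightly smaller ball, $|x-y|\gtrsim 1+|y|$ uniformly on the support of $1-\eta$, hence by the upper bound on $K$ one obtains $|G(x)|\leq C\|u\|_{\cL^1_s}$, so $\|G\|_{\cM_\b}\leq C\|u\|_{\cL^1_s}$. Therefore $v$ satisfies
\[
\cL_K v+Vv=f+G\quad\text{in a concentric ball},\qquad \|v\|_{L^2(\R^N)}+\|f+G\|_{\cM_\b}\leq C(\|u\|_{L^2(B_2)}+\|u\|_{\cL^1_s}+\|f\|_{\cM_\b}).
\]
After a further harmless rescaling so that the perturbed equation holds on $B_2$, Proposition \ref{prop:bound-Kato-abstract} applied to $v$ gives
\[
\sup_{x_0\in B_1}\|v-v_{B_r(x_0)}\|_{L^2(B_r(x_0))}\leq C\,r^{N/2+\a}\bigl(\|u\|_{L^2(B_2)}+\|u\|_{\cL^1_s}+\|f\|_{\cM_\b}\bigr)
\]
for every $r>0$. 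Since $v=u$ on $B_{7/4}$, the same estimate holds for $u$ on balls contained in $B_{7/4}$.

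\textit{Step 3 (Campanato).} Dividing by $|B_r|^{1/2}\sim r^{N/2}$ yields the Campanato-type bound $|B_r|^{-1/2}\|u-u_{B_r(x_0)}\|_{L^2(B_r(x_0))}\leq C r^{\a}$ uniformly for $x_0\in B_1$ and $r>0$. Because $\a<1$, the classical Campanato embedding $\mathcal{L}^{2,N+2\a}\hookrightarrow C^\a$ identifies this with the Hölder seminorm, giving $\|u\|_{C^\a(B_1)}\leq C(\|u\|_{L^2(B_2)}+\|u\|_{\cL^1_s}+\|f\|_{\cM_\b})$, which is the desired estimate.

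The main obstacle is the bookkeeping in Step 2: the perturbed equation for $v$ only holds on a ball smaller than $B_2$, so one must either rescale before invoking Proposition \ref{prop:bound-Kato-abstract} or, equivalently, perform the argument at each point $x_0\in B_1$ after translation, verifying that the constants (in particular those arising from $\ti\l$ and the bound on $V$) are uniform in $x_0$ because the relevant quantities are translation-invariant under the hypothesis $\|\l\|_{L^\infty(B_2\times B_2)}\leq\e_0$.
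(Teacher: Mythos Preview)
Your proposal is correct and follows essentially the same route as the paper: rescale to normalize $V$ into $\cV_\b$, cut off to pass from $\cL^1_s$ to $L^2(\R^N)$ (your Step~2 is exactly the paper's Lemma~\ref{lem:cat-off-sol}), apply Proposition~\ref{prop:bound-Kato-abstract}, and convert the $L^2$ mean-oscillation bound to a $C^\a$ bound. The only organizational difference is that the paper carries out the translation--rescaling around each $x_0\in B_{3/2}$ from the outset (choosing $\d$ small to normalize $V$) and then covers $B_1$ by finitely many balls $B_{\d/4}(x_0)$, whereas you scale globally first and defer the translation to the final bookkeeping remark; these are equivalent. The paper also derives the Campanato-to-H\"older step by hand via Lemma~\ref{lem:L2grothw-mean}(ii) rather than citing the embedding, but that is the same argument.
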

\begin{proof}
We let $x_0\in B_{3/2}$ and  $\d\in (0,1/8)$ and   we define  $\l_{\d}(x,y)={\l} (\d x+ x_0,\d y+ x_0)$ and 
 $K_{\d}(x,y)=\d^{N+2s}K(\d x+ x_0,\d y+ x_0)$. Then  $K_{\d}\in \scrK( \l_{\d}, a,\k)$. 
For $x\in B_{2}$, we define $u_\d(x)=u(\d x+ x_0)$, $f_\d(x)=\d^{2s}f(\d x+ x_0)$ and $V_\d(x)=\d^{2s} V(\d x+ x_0)$. Since $\d\in (0,1/8)$,  by direct computations, we get
\be \label{eq:u-delta-satisf-cor}
\cL_{K_{\d}} u_\d+V_\d u_\d= f_\d \qquad\textrm{ in $B_{8}$}.
\ee
By Lemma \ref{lem:cat-off-sol}, letting $v_\d:=\vp_{4}u_\d$ we have
\be\label{eq:v-delta-satisf-cor} 
\cL_{K_{\d}} v_\d+{V}_\d(x) v_\d= \ti {f}_\d \qquad\textrm{ in $B_{2}$},
\ee
with $ \|\ti f_\d\|_{\cM_\b}\leq \|f_\d\|_{\cM_\b}+ C_0  \|u_\d\|_{\cL^1_s} $ and 
$$
 \| V_\d\|_{\cM_\b}\leq   C  \d^{2s-\b}\| V\|_{\cM_\b} ,
 $$
 with $C_0$ depending only on $N,s,\k,\L$ and $\b$.
Hence, there exists $\d_0\in (0,1/8)$, only depending on $N,s,\k,\L,\b$ and $\|V\|_{\cM_\b}$,  such that   $ \|V_\d\|_{\cM_\b}\leq 1  $ for every $\d\in (0,\d_0)$. 
Obviously  $\|\l_{\d}\|_{L^\infty(B_2\times B_2)} \leq \|\l \|_{L^\infty(B_2\times B_2)}$ for every  $\d\in (0,\d_0)$.   Hence   by Proposition \ref{prop:bound-Kato-abstract}, there exists $\e_0>0$  and $C>0$ such that if $  \|\l \|_{L^\infty(B_2\times B_2)}<\e_0$, then     for every $x\in B_{1}$, $r>0 $ and $\d\in (0,\d_0)$, we have 
$$
 \| v_\d-(v_\d)_{B_r ( x)}\|_{L^2(B_r( x))}^2   \leq C   r^{N  +2\a}\left(  \|v_\d\|_{L^2(\R^N)}+ \|\ti f_\d\|_{\cM_\b} \right)^2  ,
 $$
where, the constant $C$ and $\e_0$  only depend  on $ N,s,\b,\a,\L,\|V\|_{\cM_\b}$ and $\k$.
By    Lemma \ref{lem:L2grothw-mean}$(ii)$,    for  almost all $ x\in B_{1}$ and for  all $r\in (0,1]$, we have 
\be\label{eq:est-sem-u-proof-cor}
  \| v_\d-v_\d ( x)\|_{L^2(B_r( x))}^2   \leq C   r^{N  +2\a}   \left(   \|v_\d\|_{L^2(\R^N)}+ \|\ti f_\d\|_{\cM_\b} \right)^2   .
\ee
In particular,
\be \label{eq:L-infty-bound-u-cor}
\|v_\d\|_{L^\infty(B_{1})}\leq C\left(   \|v_\d\|_{L^2(\R^N)}+ \|\ti f_\d\|_{\cM_\b} \right)  .
\ee
Let $x,y\in B_{1/4}$ be two Lebesgue points of $u$, and take  $\rho=  |x-y|/2 $. Then  $B_\rho(x)\subset B_{3\rho}(y)\subset B_1$. Therefore,   by \eqref{eq:est-sem-u-proof-cor}, we get
\begin{align*}
|B_\rho|^{N/2}|v_\d(x)-v_\d(y)|=\|v_\d(x)-v_\d(y)\|_{L^2(B_{\rho}(x))}&\leq \|v_\d(x)-v_\d\|_{L^2(B_{\rho}(x))}+  \|v_\d(y)-v_\d\|_{L^2(B_{3\rho}(y))}\\
&\leq C \rho^{N/2+\a }  \left(  \|v_\d\|_{L^2(\R^N)}+ \|\ti f_\d\|_{\cM_\b} \right)  .
\end{align*}
That is 
$$
 |v_\d(x)-v_\d(y)|\leq C |x-y|^\a   \left(   \|v_\d\|_{L^2(\R^N)}+ \| \ti f_\d\|_{\cM_\b} \right) \qquad\textrm{ for almost every $x,y\in B_{1/4}$. }
$$
We then conclude, from \eqref{eq:L-infty-bound-u-cor},    that 
\begin{align*}
\|v_\d\|_{C^\a(B_{1/4})} &\leq C  \left(  \|v_\d\|_{L^2(\R^N)}+ \|\ti f_\d \|_{\cM_\b} \right).
%
\end{align*}
It follows that
$$
\|u_\d\|_{C^\a(B_{1/4})}   \leq   C  \left(    \|  u_\d\|_{L^2(B_8)}+  \|\ti f_\d\|_{\cM_\b}+   \|u_\d\|_{\cL^1_s} \right) . 
$$
Scaling and translating back, we get
$$
\|u\|_{C^\a(B_{\d/4}(x_0))}   \leq   C  \left(    \|  u\|_{L^2(B_2)}+  \|f\|_{\cM_\b}+   \|u\|_{\cL^1_s} \right) ,
$$
where   $C$ depends only on $N,s,\b,\a, \L,\k,\d$ and $\|V\|_{\cM_\b}$. Since $B_1$ can be covered by a finite number of such balls $B_{\d/4}(x_0)$, with $x_0\in B_{3/4}$,   we get the desired estimate.
\end{proof}
 
As a consequence of  Corollary \ref{cor:int--Holder-reg-Morrey}, we  obtain regularity estimates for nonlocal operators with "uniformly continuous" coefficient.  For   $K\in \ti\scrK(\k)$,    we define  the functions 
\be \label{eq:def-ti-lambda-e-o}
\begin{aligned}
\ti\l_{e,K}(x,r,\th)&=\frac{1}{2} \left\{ \l_{K}(x,r,\th)+ \l_{K}(x,r,-\th) \right\}\\
 \ti\l_{o,K}(x,r,\th)&= \frac{1}{2} \left\{ \l_{K}(x,r,\th)- \l_{K}(x,r,-\th) \right\}.
\end{aligned}
\ee
If there is no ambiguity, we will simply write $ \ti\l_e$ and $\ti\l_o$ in the place of $\ti\l_{e,K}$ and $\ti\l_{o, K}$, respectively.
%
\begin{theorem}\label{th:int--Holder-reg-Morrey} 
Let  $s\in (0,1)$, $\b\in [0,2s)$ and     $\a\in (0,\min (1,2s-\b)) $.  Let $K\in  \ti\scrK(\k)$  and suppose that  $\ti\l_{e}$ and $\ti\l_{o}$   (defined in \eqref{eq:def-ti-lambda-e-o})  satisfy 
\begin{itemize}
\item   for every $x_1,x_2\in B_2$, $r\in (0,2)$, $\th \in S^{N-1}$,
 $$
  \left| \ti\l_{e}(x_1,r,\th)-\ti\l_{e}(x_2,0,\th) \right |\leq \t(|x_1-x_2|+r)  ;
 $$
 \item  for every $x\in B_2$, $r\in (0,2)$, $\th \in S^{N-1}$,
 $$
  \left| \ti\l_{o}(x,r,\th) \right |\leq \t( r),
  $$
\end{itemize}
 for some function $\t\in L^\infty(\R_+)$ and $\t(t)\to 0$ as $t\to 0$.
  Let $f,V\in \cM_\b$ and  $u\in H^s_{\loc}(B_2)\cap \cL^1_s$   satisfy
$$
\cL_{K} u+Vu= f \qquad\textrm{ in $B_2$}.
$$
Then    there exists $C>0$, only   depending   on $N,s,\b,\a,\k,\t$ and $\|V\|_{\cM_\b}$, such that 
  $$
  \|u\|_{C^{\a}(B_{1})}\leq C\left( \| u\|_{L^2(B_{2})}+\|u\|_{\cL^1_s}+\ \|f\|_{\cM_\b} \right).
  $$
\end{theorem}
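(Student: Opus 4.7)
The approach is to reduce to Corollary~\ref{cor:int--Holder-reg-Morrey} through a local freezing-and-rescaling argument that exploits the common modulus of continuity $\t$ supplied by the two hypotheses on $\ti\l_{e,K}$ and $\ti\l_{o,K}$. Although $K$ depends on both the base point $x$ and the radial variable $r$, the hypotheses force this dependence to vanish uniformly as $(x-x_0,r)\to 0$ for any $x_0\in B_{3/2}$, producing in the limit a translation-invariant anisotropic fractional Laplacian $L_{a_{x_0}}$ of the type to which Corollary~\ref{cor:int--Holder-reg-Morrey} directly applies.

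Concretely, fix $x_0\in B_{3/2}$ and $\rho\in(0,1/8)$. Set $u_\rho(x):=u(\rho x+x_0)$,
\[
K_{\rho,x_0}(x,y):=\rho^{N+2s}K(\rho x+x_0,\rho y+x_0),\qquad a_{x_0}(\th):=\ti\l_K(x_0,0,\th),
\]
together with the rescaled potential $V_\rho$ and datum $f_\rho$ as in the proof of Corollary~\ref{cor:int--Holder-reg-Morrey} (so that $\|V_\rho\|_{\cM_\b}\le 1$ once $\rho$ is small). The scaling identity $\ti\l_{K_{\rho,x_0}}(x,r,\th)=\ti\l_K(\rho x+x_0,\rho r,\th)$ combined with $\ti\l_K=\ti\l_{e,K}+\ti\l_{o,K}$ gives, for all $x\in B_2$, $r\in(0,4)$ and $\th\in S^{N-1}$,
\[
\bigl|\ti\l_{K_{\rho,x_0}}(x,r,\th)-a_{x_0}(\th)\bigr|\le \t(\rho|x|+\rho r)+\t(\rho r)\le 2\t(6\rho),
\]
where we used $\ti\l_{o,K}(x_0,0,\cdot)\equiv 0$ (by continuity of the extension and $\t(r)\to 0$ as $r\to 0$). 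This translates into $|K_{\rho,x_0}(x,y)-\mu_{a_{x_0}}(x,y)|\le 2\t(6\rho)\,\mu_1(x,y)$ on $B_2\times B_2$. Moreover $a_{x_0}$ is even in $\th$ (again from $\ti\l_{o,K}(x_0,0,\cdot)\equiv 0$) and the two-sided bound in $\ti\scrK(\k)$ yields $\k\le a_{x_0}(\th)\le 1/\k$, so \eqref{eq:def-a-anisotropi} holds with $\L=\k$.

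A technical point is that $K_{\rho,x_0}$ inherits the two-sided pointwise bound only on $B_{1/(2\rho)}\times B_{1/(2\rho)}$, whereas Corollary~\ref{cor:int--Holder-reg-Morrey} requires membership in $\scrK(\l_\rho,a_{x_0},\k')$ with the global bound of \eqref{eq:Kernel-satisf-cont}(ii). I would handle this via a variant of Remark~\ref{rem:Kernel-compact-supp} in which the cut-off is chosen supported in $B_3$ and identically $1$ on $B_2$; then the smooth correction $\eta(x,y)\mu_1(x,y)$ added to $K_{\rho,x_0}$ vanishes identically on $B_2\times B_2$, preserving the closeness property \eqref{eq:Kernel-satisf-cont}(iii) while supplying the global lower bound. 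The correction contributes only a bounded $C^\infty$ potential absorbed into $V_\rho$ and an $\cL^1_s$-controlled smooth term added to $f_\rho$, both compatible with the $\cM_\b$-hypothesis. With $\e_0$ the constant produced by Corollary~\ref{cor:int--Holder-reg-Morrey} (depending on $N,s,\b,\a,\k,\|V\|_{\cM_\b}$ through $\L=\k$), I then fix $\rho_0=\rho_0(N,s,\b,\a,\k,\t,\|V\|_{\cM_\b})$ small enough that $2\t(6\rho_0)<\e_0$ and apply that Corollary to the rescaled function $u_{\rho_0}$ on $B_1$.

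Unwinding the rescaling produces
\[
\|u\|_{C^\a(B_{\rho_0/4}(x_0))}\le C\bigl(\|u\|_{L^2(B_2)}+\|u\|_{\cL^1_s}+\|f\|_{\cM_\b}\bigr),
\]
with constant $C=C(N,s,\b,\a,\k,\t,\|V\|_{\cM_\b})$ independent of $x_0$. Covering $B_1$ by finitely many such balls $B_{\rho_0/4}(x_0)$ with $x_0\in\ov B_1$ then gives the desired $C^\a(B_1)$ estimate. The single subtlety is to ensure that the threshold $\rho_0$ is chosen uniformly in $x_0$, which is immediate since the modulus $\t$ and the constant $\e_0$ are both independent of $x_0\in B_{3/2}$; this uniformity is what makes the freezing-and-covering argument close.
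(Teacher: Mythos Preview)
Your proposal is correct and follows essentially the same freezing--rescaling--covering scheme as the paper's own proof: both fix $x_0\in B_{3/2}$, use the hypotheses on $\ti\l_e,\ti\l_o$ to show the rescaled kernel $K_{\rho,x_0}$ is $\e$-close to $\mu_{a_{x_0}}$ with $a_{x_0}(\th)=\ti\l_e(x_0,0,\th)$, apply Corollary~\ref{cor:int--Holder-reg-Morrey}, and then cover $\ov B_1$. Your treatment is in fact slightly cleaner, since you exploit the uniformity of the modulus $\t$ to get a single $\rho_0$ independent of $x_0$ from the outset (the paper writes $\d_{x_0,\e}$ and later takes a finite minimum, though its $\d$ is in fact also uniform), and you address explicitly via Remark~\ref{rem:Kernel-compact-supp} the passage from the local lower bound in $\ti\scrK(\k)$ to the global one required by $\scrK(\l,a,\k)$, a point the paper leaves implicit.
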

\begin{proof}
Pick $x_0\in B_{3/2}$.   By assumption,  for every $x\in B_{2}, r\in (0,2)$ and $\th\in S^{N-1}$, 
$$
  \left| \ti\l_{o}(x,r,\th) \right |+   \left| \ti\l_{e}(x,r,\th)-\ti\l_{e}(x_0,0,\th) \right |\leq \t(r)+\t(|x-x_0|+r).
$$
Then  for every $\e>0$ there exists $\d=\d_{x_0,\e}\in (0,1/100)$ such that,  for every $x\in B_{4\d}(x_0)$ and $ r\in (0, 4\d)$, we have 
%
\begin{align*}
 \left| K(x,x+r\th )-\ti\l_{e}(x_0,0,\th)  r^{-N-2s} \right |\leq \e r^{-N-2s} .
\end{align*}
Therefore,   for every $x\in B_{4\d}(x_0)$ and   $0<|z|<4\d$,
\begin{align*}
 \left| K(x,x+z)- \ti \l_{e}(x_0,0,z/|z|)   |z|^{-N-2s} \right |\leq  \e   |z|^{-N-2s} 
\end{align*}
and thus,   for every $x,y\in B_{2\d}(x_0)$, with $x\not=y$,
\begin{align*}
 \left| K(x,y)-  \mu_{ a}(x,y)  \right |\leq  \e   \mu_{ 1}(x,y),
\end{align*}
where $  a(\th):=   \ti \l_{e}(x_0,0,\th)  $.  It is clear that $  a$ is even on $S^{N-1}$.
By changing variables, we find that for every  $x,y\in B_{2}$,  with $x\not =y$,   
\begin{align}\label{eq:near-srcK-proof-cor}
 \left| \d^{N+2s} K(\d x+ x_0,\d y+  x_0)- \mu_{ a}(x,y)  \right |\leq  \e \mu_1(x,y),
\end{align}
 In addition, 
$$
\k\leq  a(\th) \leq \k^{-1}\qquad\textrm{ for every $\th\in S^{N-1}$.}
$$
We    define
\begin{align*}
\begin{cases}
 \l (x,y)=   \e ,& \qquad\textrm{ for $x,y\in B_{2}$,}\\
 \l (x,y)=   \k^{-1}&  \qquad\textrm{ elsewhere. }
 \end{cases}
\end{align*}
We now let  $K_{\d}(x,y)=\d^{N+2s}K( \d x+ x_0,\d y+ x_0)$, which,  by \eqref{eq:near-srcK-proof-cor},  clearly satisfies  $K_{\d}\in \scrK( \l , a,\k)$.

For $x\in B_{2}$, we define $u_\d(x)=u(\d x+ x_0)$, $f_\d(x)=\d^{2s}f(\d x+ x_0)$ and $V_\d(x)= \d^{2s} V(\d x+ x_0)$. Since $\d\in (0,1/16)$,  by direct computations, we get
\be \label{eq:u-delta-satisf-final}
\cL_{K_{\d}} u_\d+V_\d u_\d= f_\d \qquad\textrm{ in $B_{8}$}.
\ee
Recall that $\| V_\d\|_{\cM_\b}\leq C \d^{2s-\b}\|V\|_{\cM_\b}$ and thus, decreasing $\d$ if necessary, we get $\| V_\d\|_{\cM_\b}\leq 1$.  
 Since  $\| \l \|_{L^\infty(B_2\times B_2)}= \e $, then  provided $\e>0$ small,    by  Corollary \ref{cor:int--Holder-reg-Morrey} and a change of variable,   we get
$$
\|u\|_{C^{\a}(B_{\d_{x_0,\e}}(x_0))}   \leq   C(x_0)  \left(    \|  u\|_{L^2(B_{2})}+  \|f\|_{\cM_\b}+   \|u\|_{\cL^1_s} \right) ,
$$
where $C(x_0)$ is a constant, only depending on $N,s,c_0,\d,\k,\t, x_0$ and $\|V\|_{\cM_\b}$.  Next, we cover $\ov B_{1}$ by  a finite number of balls $B_{\frac{1}{2}\d_{x_i,\e}}(x_i)$, for $i=1,\dots,n$, with $x_i\in \ov B_{1}$.  Put $ C:=\max_{1\leq i\leq n}C(x_i)$ and $\varrho=\frac{1}{2}\min_{1\leq i\leq n} \d_{x_i,\e}$. Then  on any  ball $B_{\varrho }(\ov x)$, with $\ov x\in \ov B_{1}$,  we have the estimate
$$
\|u\|_{C^{\a}(B_{\varrho}(\ov x))}   \leq   C  \left(    \|  u\|_{L^2(B_{2})}+  \|f\|_{\cM_\b}+   \|u\|_{\cL^1_s} \right) ,
$$
where $\varrho$ and $C$ depend only on $N,s,c_0,\k,\t$ and $\|V\|_{\cM_\b}$. Since $\ov B_1$ can be covered by a finite number  of balls  $B_{\varrho }(\ov x)$, with $\ov x\in \ov B_{1}$,     we get the result.
\end{proof}
\begin{remark}\label{eq:rem-scale-modulus-of-cont-}
We note that the conclusion of Theorem \ref{th:int--Holder-reg-Morrey} remains unchanged if we considered, say, a "better" modulus of continuity $\t$. More precisely, in Theorem \ref{th:int--Holder-reg-Morrey},  we could choose    $\t_\rho(r)=\t(\rho r)$ for some $\rho\in (0,1)$ and $\t$ as in the theorem.  In this case, the constant $C$ in the theorem will not depend on $\rho$.
\end{remark}
\subsection{ H\"older regularity estimates up to  the boundary }
H\"older regularity up to the boundary for the linear second order partial differential equations with coefficients in Morrey space was obtained in \cite{Chen}.
Coupling the interior regularity in Corollary \ref{cor:int--Holder-reg-Morrey} and the uniform $L^2$ growth estimates up to  the boundary given by Proposition \ref{prop:bdr-estim-Kato} together with some scaling arguments, we get the following result.
\begin{theorem}\label{th:bound-reg-alpha}
Let $\O$ be an open set such that $\de\O\cap B_{2}$ is a  $C^1$ hypersurface.  Suppose that $0\in \de\O$ and that $\de\O$ separates $B_2$ into two domains.  Let  $s\in (0,1)$, $\b\in [0,2s)$,   $\a\in (0,\min (s,2s-\b)) $ and $a$ satisfy \eqref{eq:def-a-anisotropi}.  Let $K\in \scrK(\l,a,\k)$.
 Let  $V, f \in \cM_\b$ and $u\in H^s (B_2)\cap \cL^1_s$ satisfy 
$$
\cL_{K} u+ Vu = f \qquad\textrm{ in $B_2\cap \O$} \qquad \textrm{ and } \qquad u =0 \qquad \text{ in  $B_2\cap\O^c$}.
$$
Then   there exists $C,\ov \e_0, r_0>0$ such that  if $\|\l  \| _{L^\infty(B_{2}\times B_{2}) }<\ov \e_0,$ we have 
  $$
  \|u\|_{C^{\a}(B_{r_0})}\leq C(\|u\|_{L^2(B_2)}+ \|u\|_{\cL^1_s}+ \|f\|_{\cM_\b}),
  $$
  with $C,\ov\e_0,r_0$ depending only on $N,s,\b,\a,\O, \|V\|_{\cM_\b},\k$ and  $\L$.
\end{theorem}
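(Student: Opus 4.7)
The plan is to combine the interior H\"older estimate (Corollary~\ref{cor:int--Holder-reg-Morrey}) with the boundary $L^2$-growth estimate (Proposition~\ref{prop:bdr-estim-Kato}), glue them via a Campanato-type mean-oscillation decay, and convert the decay to pointwise $C^\a$ regularity using Lemma~\ref{lem:L2grothw-mean}. I first reduce to a compactly supported solution: applying Lemma~\ref{lem:cat-off-sol} to $w := \vp_{r_1} u$ for a fixed $r_1 \in (0,1)$ produces a function in $L^2(\R^N)$ that agrees with $u$ on $B_{r_1/2}$, still vanishes on $\O^c$, and satisfies $\cL_K w + V w = \tilde f$ on $B_{r_1/2} \cap \O$ with $\|\tilde f\|_{\cM_\b} + \|w\|_{L^2(\R^N)} \le C M$, where $M := \|u\|_{L^2(B_2)} + \|u\|_{\cL^1_s} + \|f\|_{\cM_\b}$. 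Exactly as in the proof of Corollary~\ref{cor:int--Holder-reg-Morrey}, a preliminary dilation by a factor depending only on $\|V\|_{\cM_\b}$ reduces to $\|V\|_{\cM_\b} \le 1$ and preserves the smallness of $\|\l\|_{L^\infty(B_2 \times B_2)}$; I then fix $\ov\e_0 := \min(\e_0,\e_1)$, where $\e_0, \e_1$ are the thresholds from Corollary~\ref{cor:int--Holder-reg-Morrey} and Proposition~\ref{prop:bdr-estim-Kato}.

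\textbf{Mean-oscillation decay.} The core step is to prove
$$
\sup_{x_0 \in B_{r_0}} \|w - (w)_{B_r(x_0)}\|_{L^2(B_r(x_0))} \le C r^{N/2 + \a} M \qquad \text{for all } r > 0,
$$
for $r_0$ small enough that $B_{2r_0} \subset B_1$. For $x_0 \in B_{r_0} \setminus \O$, Proposition~\ref{prop:bdr-estim-Kato} applied at $x_0 \in \de\O$, or at the nearest boundary point combined with $w \equiv 0$ on $\O^c$, yields $\|w\|_{L^2(B_r(x_0))} \le C r^{N/2 + \a} M$, which suffices. For $x_0 \in B_{r_0} \cap \O$, set $d := \dist(x_0, \O^c) \le r_0$ and let $z_0 \in \de\O$ be the closest boundary point. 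In the boundary-dominated range $r \ge d/2$ one has $B_r(x_0) \subset B_{3r}(z_0)$ with $z_0 \in B_{2r_0}$, so Proposition~\ref{prop:bdr-estim-Kato} again applies directly. In the interior-dominated range $r < d/2$, I rescale by $d/2$: the function $\tilde w(y) := w(x_0 + \tfrac{d}{2} y)$ solves a rescaled Schr\"odinger equation on $B_2$ whose kernel still lies in $\scrK$ with small $\l$ (since $B_d(x_0) \subset B_2$), whose Morrey data scale as $(d/2)^{2s - \b}$, and whose $L^2(B_2)$ norm obeys $\|\tilde w\|_{L^2(B_2)} \le C d^\a M$ by the boundary bound at scale $2d$ (using $B_d(x_0) \subset B_{2d}(z_0)$). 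Analogous scalings, exploiting $\a < s < 2s$ and $\a < 2s-\b$, give $\|\tilde w\|_{\cL^1_s} + \|\tilde f_{d/2}\|_{\cM_\b} \le C d^\a M$. Corollary~\ref{cor:int--Holder-reg-Morrey} then produces $[\tilde w]_{C^\a(B_1)} \le C d^\a M$, which scales back to $[w]_{C^\a(B_{d/2}(x_0))} \le C M$ and hence to the claimed oscillation bound in this range.

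\textbf{Passage to pointwise H\"older, and main obstacle.} Given the mean-oscillation bound, Lemma~\ref{lem:L2grothw-mean}(ii) shows that every $x_0 \in B_{r_0}$ is a Lebesgue point of $w$ and $\|w - w(x_0)\|_{L^2(B_r(x_0))} \le C r^{N/2 + \a} M$. For $x,y \in B_{r_0/2}$ with $\rho := |x-y|/2$, the inclusion $B_\rho(x) \subset B_{3\rho}(y)$ gives
$$
|B_\rho|^{1/2} |w(x) - w(y)| \le \|w - w(x)\|_{L^2(B_\rho(x))} + \|w - w(y)\|_{L^2(B_{3\rho}(y))} \le C \rho^{N/2 + \a} M,
$$
so $|w(x) - w(y)| \le C |x - y|^\a M$. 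Together with the corresponding $L^\infty$ bound, this yields $\|w\|_{C^\a(B_{r_0/2})} \le CM$, and since $w = u$ on $B_{r_1/2}$ the theorem follows after relabeling $r_0$. The main obstacle is the uniformity in $d$ of the interior scaling argument: the $d^{-\a}$ loss when pulling back the H\"older seminorm of $\tilde w$ to $w$ has to be matched by a $d^\a$ gain on the data side, and this compensation is precisely what Proposition~\ref{prop:bdr-estim-Kato} provides, because the hypothesis $\a < \min(s, 2s - \b)$ supplies simultaneously the correct boundary decay rate and the subcritical Morrey scaling needed to apply Corollary~\ref{cor:int--Holder-reg-Morrey} with constants independent of $d$.
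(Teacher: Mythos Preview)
Your proof is correct and follows essentially the same approach as the paper's: both combine the boundary $L^2$-growth from Proposition~\ref{prop:bdr-estim-Kato} with the interior estimate of Corollary~\ref{cor:int--Holder-reg-Morrey} via the rescaling $x\mapsto x_0+\tfrac{d(x_0)}{2}x$, and both rely on the dyadic summation to control the $\cL^1_s$-norm of the rescaled function by $Cd^\a$. The only difference is packaging: the paper first extracts the pointwise bound $|u(x_0)|\le C d(x_0)^\a$ and then invokes the ``classical scaling argument'' for $C^\a$, whereas you organize the same information as a uniform Campanato-type mean-oscillation decay and pass to pointwise H\"older via Lemma~\ref{lem:L2grothw-mean}(ii); the ingredients and their roles are identical.
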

\begin{proof}
By similar scaling and cut-off  argument as in the proof of  Corollary \ref{cor:int--Holder-reg-Morrey} and using    Proposition \ref{prop:bdr-estim-Kato}, we get 
\be\label{eq:bdr-estim-Kato} 
 \sup_{z\in  B_{1}\cap\de\O}\|u\|_{L^2(B_{r}(z) )} ^2 \leq C r^{N+2\a} \left(    \|  u\|_{L^2(B_2)}+   \|u\|_{\cL^1_s}  +  \|f\|_{\cM_\b}\right) \qquad\textrm{ for every $r>0$}
\ee
with $C$ a constant depending only on $N,s,\b,\k,\L,\a,\O,   \|V\|_{\cM_\b}$.  
We assume in the following that $ \|  u\|_{L^2(B_2)}+   \|u\|_{\cL^1_s}  +  \|f\|_{\cM_\b}\leq 1$, up to dividing the equation by this quantity. Moreover by Corollary \ref{cor:int--Holder-reg-Morrey},   $u$ is continuous in $\O$, provided $ \|\l  \| _{L^\infty(B_{2}\times B_{2}) }$ is small.\\
Let $r_0>0$, only depending on $\O$, be such that every point $x_0\in \O\cap B_{r_0}$, with  $d(x_0)\leq r_0$, has a unique projection $z$ on $\de\O\cap B_1$.  
For such  $x_0 \in \O\cap B_{r_0} $,  we let $z\in \de\O\cap B_{1}$ be such that $|x_0-z|=d(x_0)$. Put $\rho=\frac{d(x_0)}{2}$, so that $B_\rho(x_0)\subset B_{3\rho}(z)\cap  \O. $  
Next, we define $v_\rho(x):=u(\rho x+ x_0)$, $V_\rho(x):= \rho^{2s} V (\rho x+ x_0)$  and $f_\rho(x):= \rho^{2s} f (\rho x+ x_0)$. It is plain that $v\in H^s_{loc}(B_2)\cap\cL^1_s$ and 
\be\label{eq:v-rho-solves-B1}
\cL_{K_{\rho,x_0}} v_\rho+ V_\rho v_\rho=f_\rho \qquad \textrm{ in $B_1$},
\ee
with
$$
K_{\rho,x_0}(x,y)=\rho^{N+2s}K(\rho x+x_0,\rho y+x_0).
$$
Recall from  \eqref{eq:eta-f-scale-translate} that $  \|  f_\rho \|_{\cM_\b}\leq \rho^{2s-\b} \|f\|_{\cM_\b} $ and $  \|  V_\rho \|_{\cM_\b}\leq \rho^{2s-\b} \|V\|_{\cM_\b} $. Hence decreasing $r_0$ if necessary, we may assume that $ \|  V_\rho \|_{\cM_\b}\leq 1 $.
 We note that $K_{\rho,x_0}\in \scrK(\l_{\rho,x_0},a,\k)$, where $\l_{\rho,x_0}(x,y)=\l(\rh x+ x_0,\rho y+y_0)$. In particular, decreasing $r_0$ if necessary,  
$$
\|\l_{\rho,x_0} \| _{L^\infty(B_2\times B_2) }\leq \|\l  \| _{L^\infty(B_{2}\times B_{2}) }.
$$
Therefore,     by Theorem \ref{th:int--Holder-reg-Morrey}, we can find $\ov \e_0>0$ small, independent on $\rho$, such that,  if  $\|\l  \| _{L^\infty(B_{2}\times B_{2}) }<\ov \e_0$,   we have  
\be \label{eq:estime-v-scale}
\|v_\rho\|_{L^\infty(B_{1/2})}\leq C\left(   \|v_\rho\|_{L^2(B_1)}+ \|v_\rho\|_{\cL^1_s}+  \|f_\rho\|_{\cM_\b}  \right)  .
\ee
By  \eqref{eq:bdr-estim-Kato}  and H\"older's inequality,  we have 
\begin{align}\label{eq:norm-vL2-cL1s-rho}
\|v_\rho\|_{L^2(B_1)}\leq \rho^{-N/2} \|u\|_{L^2(B_{3\rho}(z))}\leq C \rho^\a \qquad \textrm{ and  } \qquad  \|u\|_{L^1(B_{r}(z))}\leq C r^{N+\a}\quad
  \textrm{  for all $r>0$.}
\end{align}
Using   the second estimate in \eqref{eq:norm-vL2-cL1s-rho}, we get
\begin{align*}
\int_{|x|\geq 1}|x|^{-N-2s}|v_\rho(x)|\, dx&=\rho^{2s}\int_{|y-x_0|\geq \rho }|y-x_0|^{-N-2s}|u(y)|\, dy\\
&\leq \rho^{2s}\sum_{k=0}^\infty\int_{\rho 2^{k+1}\geq |y-x_0|\geq \rho2^k}|y-x_0|^{-N-2s}|u(y)|\, dy\\
&\leq \rho^{2s}\sum_{k=0}^\infty (2^k\rho)^{-N-2s}  \int_{\rho2^{k+1} \geq |y-x_0| } |u(y)|\, dy\\
&\leq  \rho^{2s}\sum_{k=0}^\infty (2^k\rho)^{-N-2s}\|u\|_{L^1(B_{\rho 2^{k+3} }(z))}  \\
&\leq  C \rho^{2s}\sum_{k=0}^\infty(2^k\rho)^{-N-2s} (\rho 2^{k})^{N+\a} \leq C \rho^{\a} \sum_{k=0}^\infty 2^{-k(2s-\a)}\leq C \rho^{\a}. 
\end{align*}
We then conclude that $\|v_\rho\|_{\cL^1_s}\leq C \rho^{\a}$. It follows from \eqref{eq:v-rho-solves-B1}, \eqref{eq:norm-vL2-cL1s-rho} and  \eqref{eq:estime-v-scale}, that 
 $$
  \|v_\rho\|_{L^\infty(B_{1/2})}\leq C(\|v_\rho\|_{L^2(B_1)}+ \|v_\rho\|_{\cL^1_s}+ \|f_\rho\|_{\cM_\b})\leq C (\rho^{\a}+  \rho^{2s-\b} ).
  $$
Scaling back, we get
 $$
  \|u\|_{L^\infty(B_{\rho/2}(x_0))}\leq C (\rho^{\a}+  \rho^{2s-\b}  ),
  $$
 which, in particular, yields
  $$
  |u(x_0)|\leq C  \rho^{\a}\leq {C} d(x_0)^\a \qquad\textrm{  for every $x_0\in B_{r_0}\cap\O$.}
  $$
  Now by a classical scaling argument as above and using the interior regularity estimates in Theorem \ref{th:int--Holder-reg-Morrey}, we get $u\in C^\a(B_{r_0}\cap\ov\O )$, with  $\|u\|_{C^{\a}(B_{r_0/2})}\leq   C$.\\
\end{proof}
 As a consequence, we have 
 \begin{theorem}\label{th:bdr-first-Holder-reg-Morrey} 
Let  $s\in (0,1)$, $\b\in [0,2s)$ and     $\a\in (0,\min (s,2s-\b)) $.  Let $K\in \ti\scrK(\k)$  and suppose that  $\ti\l_{e}$ and $\ti\l_{o}$   (defined in \eqref{eq:def-ti-lambda-e-o})  satisfy 
\begin{itemize}
\item   for every $x_1,x_2\in B_2 $, $r\in (0,2)$, $\th \in S^{N-1}$,
 $$
  \left| \ti\l_{e}(x_1,r,\th)-\ti\l_{e}(x_2,0,\th) \right |\leq \t(|x_1-x_2|+r)  
 $$
 \item  for every $x\in B_2$, $r\in (0,2)$, $\th \in S^{N-1}$,
 $$
  \left| \ti\l_{o}(x,r,\th) \right |\leq \t( r),
  $$
\end{itemize}
 for some function $\t\in L^\infty(\R_+)$ and $\t(t)\to 0$ as $t\to 0$.
  Let $f,V\in \cM_\b$ and  $u\in H^s_{\loc}(B_2)\cap \cL^1_s$   satisfy
$$
\cL_{K} u+ Vu = f \qquad\textrm{ in $B_2\cap \O$} \qquad \textrm{ and } \qquad u =0 \qquad \text{ in  $B_2\cap\O^c$}.
$$
Then    there exist $C,r_0>0$, only   depending  only on $N,s,\b,\a, \L,\k,\t,\O$ and $\|V\|_{\cM_\b}$, such that 
  $$
  \|u\|_{C^{\a}(B_{r_0})}\leq C\left( \| u\|_{L^2(B_{1})}+\|u\|_{\cL^1_s}+\ \|f\|_{\cM_\b} \right).
  $$
\end{theorem}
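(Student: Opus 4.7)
The plan is to deduce Theorem~\ref{th:bdr-first-Holder-reg-Morrey} from Theorem~\ref{th:bound-reg-alpha} by a localization-and-scaling argument, in exact parallel with the way Theorem~\ref{th:int--Holder-reg-Morrey} is extracted from Corollary~\ref{cor:int--Holder-reg-Morrey}. Theorem~\ref{th:bound-reg-alpha} already produces the boundary $C^\a$ estimate for any $K\in\scrK(\l,a,\k)$ with $a$ even and $\|\l\|_{L^\infty(B_2\times B_2)}<\ov\e_0$; the continuity hypotheses on $\ti\l_e$ and the vanishing of $\ti\l_o$ at $r=0$ are tailor-made to put the rescaled kernel in this class with an arbitrarily small $\l$, after zooming in sufficiently.

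Fix $x_0\in \overline{B_{r_1}}\cap\overline\O$ for $r_1>0$ to be chosen, set $a_{x_0}(\th):=\ti\l_{e,K}(x_0,0,\th)$, and define for $\d\in(0,1/16)$
\[
v_\d(x)=u(\d x+x_0),\qquad K_\d(x,y)=\d^{N+2s}K(\d x+x_0,\d y+x_0),
\]
together with $V_\d,f_\d,\O_\d$ analogously. The relation $\ti\l_K=\ti\l_{e,K}+\ti\l_{o,K}$ and a change of variable give, for $x\ne y\in B_2$,
\[
\bigl| K_\d(x,y)-\mu_{a_{x_0}}(x,y)\bigr|
\le |x-y|^{-N-2s}\bigl(\,\t(|\d x|+\d|y-x|)+\t(\d|y-x|)\,\bigr)
\le 2\,\t^*(4\d)\,\mu_1(x,y),
\]
where $\t^*(t):=\sup_{0<r\le t}\t(r)\to 0$ as $t\to 0$. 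The evenness of $a_{x_0}$ is forced by the evenness of $\ti\l_e(x,r,\cdot)$ (seen from its definition), and $\k\le a_{x_0}\le\k^{-1}$ by $K\in\ti\scrK(\k)$. Hence $K_\d\in\scrK(\l_\d,a_{x_0},\k)$ with $\|\l_\d\|_{L^\infty(B_2\times B_2)}\le 2\t^*(4\d)$, while $\|V_\d\|_{\cM_\b}\le C\d^{2s-\b}\|V\|_{\cM_\b}$. Choosing $\d=\d_{x_0}>0$ small enough (depending only on $\t,\ov\e_0,\b,\|V\|_{\cM_\b}$), we simultaneously achieve $\|\l_\d\|_{L^\infty(B_2\times B_2)}<\ov\e_0$ and $\|V_\d\|_{\cM_\b}\le 1$.

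If $x_0\in\de\O\cap\overline{B_{r_1}}$, the $C^1$ regularity of $\de\O$ in a neighborhood of $0$ implies that, for $r_1$ and $\d$ sufficiently small, $\de\O_\d\cap B_2$ is a $C^1$ hypersurface separating $B_2$ into two components, with $C^1$ modulus bounded uniformly by that of $\de\O$. Thus $v_\d$ satisfies $\cL_{K_\d} v_\d+V_\d v_\d=f_\d$ in $B_2\cap\O_\d$ with $v_\d=0$ in $B_2\cap\O_\d^c$, and Theorem~\ref{th:bound-reg-alpha} applied to $v_\d$ on $\O_\d$ gives $\|v_\d\|_{C^\a(B_{r_0'})}\le C\bigl(\|v_\d\|_{L^2(B_2)}+\|v_\d\|_{\cL^1_s}+\|f_\d\|_{\cM_\b}\bigr)$. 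Scaling back and bounding the tail term $\|v_\d\|_{\cL^1_s}$ by $\|u\|_{L^2(B_1)}+\|u\|_{\cL^1_s}$ (by the same dyadic estimate used in the proof of Theorem~\ref{th:bound-reg-alpha}) produces a $C^\a$-estimate for $u$ on a ball $B_{\d_{x_0} r_0'}(x_0)$. For interior points $x_0\in\overline{B_{r_1}}\cap\O$, the identical scaling applied in conjunction with Theorem~\ref{th:int--Holder-reg-Morrey} (which is already proved for the full class considered here) yields an analogous local $C^\a$-estimate. A finite covering of $\overline{B_{r_0}}$ for $r_0>0$ small enough by such balls concludes the proof.

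The main technical point is to verify that the constants $\ov\e_0$, $r_0'$ and $C$ delivered by Theorem~\ref{th:bound-reg-alpha} remain uniform in $x_0\in\de\O\cap\overline{B_{r_1}}$ and in $\d\in(0,\d_0]$, despite the fact that those constants are formally declared to depend on the underlying domain $\O_\d$. Inspection of the proofs of Proposition~\ref{prop:bdr-estim-Kato} and Theorem~\ref{th:bound-reg-alpha} shows that the domain enters only through a blow-up to a half-space governed by the $C^1$ modulus of $\de\O$ near $0$. Since this modulus provides a uniform bound on the $C^1$ modulus of $\de\O_\d$ near $0$ for all $x_0\in\de\O\cap\overline{B_{r_1}}$ and $\d$ small, the constants are indeed uniform, which is precisely what is needed to close the covering argument.
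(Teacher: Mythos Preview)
The proposal is correct and follows exactly the approach indicated by the paper: the paper's own proof is the single sentence ``Adapting the scaling arguments as in Theorem~\ref{th:int--Holder-reg-Morrey}, together with Theorem~\ref{th:bound-reg-alpha}, we get the result,'' and your write-up spells out precisely this scaling-plus-covering procedure. Your additional remark on the uniformity of the constants in Theorem~\ref{th:bound-reg-alpha} with respect to the rescaled domains $\O_\d$ (via the common $C^1$ modulus of $\de\O$) is a legitimate point that the paper leaves implicit.
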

\begin{proof}
Adapting the scaling arguments as in the  Theorem \ref{th:int--Holder-reg-Morrey},  together with Theorem \ref{th:bound-reg-alpha}, we get the result.
\end{proof}
 \section{Higher   regularity estimates up to  the boundary}\label{s:Higher-reg}\label{s:high-int-reg}
In this section, we let $\O$ be an    open set such that $\de\O\cap B_{2}$ is a  $C^{1,\g}$ hypersurface,  with $0\in \de\O$ and $\g>0$.  We will assume that  $\de\O$ separates $B_2$ into two domains.    
We note that  there exists $r_0\in (0,1/2)$ small only depending on $\O$ such that, for all   $r\in (0,2r_0)$, $z\in \de\O\cap B_{3/2}$ and $\d>0$,
\be\label{eq:integ-dist-tubular-neigh}
 {C} r^{N+\d}\leq \int_{B_{r}(z)}d^{\d}(y)\, dy,
\ee
for some constant $C=C(N,s,\d,\O)>0$. On the other  hand since $d(y)\leq |y-z|$ for all $z\in \de\O$, for every $r>0$, we have 
\be\label{eq:integ-dist-tubular-neigh-up}
 \int_{B_{r}(z)}d^{\d}(y)\, dy\leq  Cr^{N+\d},
\ee
with $ C= C(N,s,\d)$.\\
We consider the cut-off of the distance function $d$ denoted by  $ d_2^s:= \vp_{2} d^s$.
For $u\in L^2(\R^N)$, $z\in \de\O\cap B_1$ and $r>0$, we let $P_{r,z}(u)$ be the $L^2_{loc}(B_r(z))$-projection of $u$ on $\la  d^s_2\ra=\R  d^s_2$,  the one-dimensional space  spanned by $d^s_2$.  Therefore 
\be\label{eq:def-project-dist}
\int_{B_r(z)}(u(y)-P_{r,z}(u)(y))  d^s_2(y)\, dy=0 \qquad  \textrm{ and } \qquad P_{r,z}(u)(x)=  d^s_2(x) \frac{\int_{B_r(z)} u(y)  d^s_2(y)\,dy}{\int_{B_r(z)}  d^{2s}_2(y)\, dy} .
\ee
For $z\in B_1\cap\de\O$ and $r>0$, we define
\be \label{eq:def-Qu-z-r}
Q_{u,z}(r):=\frac{\int_{B_{r}(z)} u(y) d^s_2(y)\,dy}{\int_{B_{r}(z)} d^{2s}_2(y)\, dy} .
\ee
Before going on, we explain the arguments in the next two main results  of this section.
Observe that by H\"older's inequality, for every $r\in (0,r_0]$ and $z\in B_1\cap \de\O$,
$$
 |Q_{u,z}(r)|\leq \|{d}^s_2\|_{L^2(B_r(z))}^{-1} \|u\|_{L^2(B_r(z))}   =  \| d^s\|_{L^2(B_r(z))}^{-1} \|u\|_{L^2(B_r(z))}  .
$$
Hence  by Proposition \ref{prop:bdr-estim-Kato} and \eqref{eq:integ-dist-tubular-neigh}, for every $\d_0\in (0,\min(s,2s-\b))$, there exist  constants $C,\ov \e_0>0$ such that  for every  $\l\in L^\infty(B_2\times B_2)$ satisfying $\|\l\|_{L^\infty(B_2\times B_2)}<\ov \e_0$,    for every $f\in \cM_\b$ and $u\in \cS_{K,0,f;\O}$ (recall the notation \eqref{eq:set-of-sol-Omega}), $r\in (0,r_0]$ and $z\in B_{1}\cap\de\O$, we have 
\be \label{eq:estim-Quz-r-d0}
 |Q_{u,z}(r)|\leq C r^{\d_0-s} \left(\|u\|_{L^2(\R^N)}+ \|f\|_{\cM_\b} \right)   .
\ee
Our objective is to get $u/d^s\in C^{s-\b}(B_{r_0}\cap\ov\O)$, whenever $\b\in (0,s)$ and $\O$ regular enough. This requires, at least,  we already know that $|u|\leq d^s$,  or equivalently $|Q_{u,z}(r)|\leq C$. For  this purpose,   we will use a  bootstrap argument in two steps to obtain \eqref{eq:estim-Quz-r-d0} with $\d_0=s$, as long as  $\b<s$ and under more regularity assumption on $K$ and $\de\O$. This will be  the content of the next two results.\\

In order to get the sharp boundary regularity, it will be crucial to quantify  the action of the operators $\cL_K$ on $d^s$ for  $K\in \scrK(\l,a,\k)$.  To this end, we first note that by Lemma \ref{lem:ds-in-Hs-loc}, up to decreasing $r_0$ if necessary, we may assume that $d^s_2\in H^s(B_{2 r_0})\cap\cL^1_s$. Next,  we introduce  $\scrK(\l,a,\k,\O)$,  the  class of kernels  $K\in \scrK(\l,a,\k) $ such that:  there exist $\b'=\b'(\O,K)\in [0,s)$  and  a   function $g_{\O,K}\in \cM_{\b'}$  such that 
\be\label{eq:def-Ds-ds-g}
\cL_K d^s_2=\cL_K ( \vp _2d^s)=g_{\O,K} \qquad\textrm{  in the weak sense, in $B_{2 r_0}\cap \O$.}
\ee
We note that  the class of kernels $\scrK(\l,a,\k,\O)$ is not empty.  This is the case for $K=\mu_a$, with $a$ satisfying \eqref{eq:def-a-anisotropi}, see  Section \ref{s:proof-main-results} below.
We  have  the following result.
 \begin{lemma} \label{lem:boundary-reg-Morrey}
 Let   $\O\subset\R^N$ be a $C^{1,\g}$ domain as above for some $\g>0$.  
 Let    $\b\in (0,2s)$, $\varrho\in [0,s)$ and  $c_0,\L,\k>0$.   Then    there  exist $C>0$ and $\e_1>0$ with the properties that    if
 \begin{itemize}
 \item  $a$ satisfies  \eqref{eq:def-a-anisotropi}, 
 \item  $\l: \R^N\times \R^N\to [0, k^{-1}]$ satisfies  $\|\l\|_{L^\infty(B_2\times B_2)}<\e_1$,
 \item ${K}\in \scrK(\l,a,\k,\O) $ with     $\|g_{\O,K}\|_{  \cM_{\b'}}\leq 1 $, for some $\b' \in [0,s)$,
 \item    $f\in \cM_\b$ and $u\in \cS_{K,0,f;\O}$ satisfies    
\be \label{eq:sup-z-Qzur}
 \sup_{z\in B_1\cap\de\O} |Q_{u,z}(r)|\leq c_0 r^{-\varrho} (\|u\|_{L^2(\R^N)}+\|f\|_{\cM_\b})  \qquad\textrm{ for every $r\in (0,r_0]$,}
\ee
\end{itemize}    
then, we have  
\be\label{eq:sharp-estim-bdr}
\sup_{z\in B_1\cap \de\O}  \| u- P_{r,z}(u)\|_{L^2(B_r(z))}^2\leq C r^{N +2(2s-\max(\b,\b')-\varrho) }(\|u\|_{L^2(\R^N)}+\|f\|_{\cM_\b})^2   \qquad\textrm{ for every $r>0$.}
\ee
\end{lemma}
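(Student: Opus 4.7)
The plan is a blow-up by contradiction in the spirit of Propositions \ref{prop:bound-Kato-abstract} and \ref{prop:bdr-estim-Kato}, where the role of constants (or of zero) is now played by the $L^2(B_r(z))$-projection $P_{r,z}(u)$ onto $\R d^s_2$. Suppose the asserted estimate fails. After normalisation, extract sequences $a_n$ verifying \eqref{eq:def-a-anisotropi}, $\l_n$ with $\|\l_n\|_{L^\infty(B_2\times B_2)}<1/n$, kernels $K_n\in\scrK(\l_n,a_n,\k,\O)$ with $\|g_{\O,K_n}\|_{\cM_{\b'_n}}\le 1$ for some $\b'_n\in[0,s)$, $f_n\in\cM_\b$ and $u_n\in\cS_{K_n,0,f_n;\O}$ with $\|u_n\|_{L^2(\R^N)}+\|f_n\|_{\cM_\b}\le 1$, such that $r^{-N-2\a'_n}\|u_n-P_{r,z}(u_n)\|^2_{L^2(B_r(z))}$ can be made arbitrarily large, where $\a'_n:=2s-\max(\b,\b'_n)-\varrho$. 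The crucial uniform bound, used repeatedly, is $\max(s,\a'_n)\le 2s-\eta$ with $\eta:=\min(s,\b)>0$. Up to subsequences, $\b'_n\to \b'_*\in[0,s]$ and $a_n\to b$ in the weak-$*$ topology of $L^\infty(S^{N-1})$, with $b$ even and satisfying \eqref{eq:coerciv-RS}.

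Mimicking Proposition \ref{prop:bound-Kato-abstract}, introduce the nonincreasing function
\[
\Theta_n(r):=\sup\bigl\{(r')^{-N-2\a'_n}\|u_n-P_{r',z}(u_n)\|^2_{L^2(B_{r'}(z))}\,:\, r'\ge r,\; z\in B_1\cap\de\O\bigr\}
\]
and select $r_n\to 0$ and $z_n\in B_1\cap\de\O$ with $\Theta_n(r_n)\to\infty$ that almost realise the supremum. Define the blow-up
\[
v_n(x):=\Theta_n(r_n)^{-1/2}r_n^{-\a'_n}\bigl\{u_n(r_nx+z_n)-P_{r_n,z_n}(u_n)(r_nx+z_n)\bigr\},
\]
so that $\|v_n\|^2_{L^2(B_1)}\ge 3/4$ and, by a change of variables in the defining orthogonality of $P_{r_n,z_n}$, $\int_{B_1}v_n(x)\tilde d_n^s(x)\,dx=0$, where $\tilde d_n(x):=r_n^{-1}d(r_nx+z_n)$. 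Telescoping $P_{2^k r_n,z_n}(u_n)$ across dyadic scales, combined with the two-sided bounds \eqref{eq:integ-dist-tubular-neigh}--\eqref{eq:integ-dist-tubular-neigh-up} for $\|d^s_2\|_{L^2(B_\rho(z))}$ and the monotonicity of $\Theta_n$, produces the uniform growth estimate $\|v_n\|^2_{L^2(B_R)}\le C R^{N+2\max(s,\a'_n)}\le CR^{N+2(2s-\eta)}$ for all $R\ge 1$, which keeps $\|v_n\|_{\cL^1_s}\le C$ uniformly.

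Because $\cL_{K_n}d^s_2=g_{\O,K_n}$ in $B_{2r_0}\cap\O$ and $P_{r_n,z_n}(u_n)=Q_{u_n,z_n}(r_n)d^s_2$, the difference satisfies $\cL_{K_n}\{u_n-P_{r_n,z_n}(u_n)\}=f_n-Q_{u_n,z_n}(r_n)g_{\O,K_n}$ there. Rescaling to unit size, $v_n$ solves $\cL_{\tilde K_n}v_n=F_n$ in $\O_n\cap B_{R_n}$ with $v_n=0$ outside $\O_n$, where $\tilde K_n(x,y):=r_n^{N+2s}K_n(r_nx+z_n,r_ny+z_n)$, $\O_n:=(\O-z_n)/r_n$ and $R_n\to\infty$. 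Combining \eqref{eq:eta-f-scale-translate}, the hypothesis \eqref{eq:sup-z-Qzur} and the identity $2s-\max(\b,\b'_n)-\a'_n=\varrho$, one verifies the key cancellation $\|F_n\|_{\cM_{\max(\b,\b'_n)}}\le C\Theta_n(r_n)^{-1/2}\to 0$. Lemma \ref{lem:from-caciopp-ok} coupled with Lemma \ref{lem:coerciv} then yields, for $\e_1$ small and $n$ large, a uniform $H^s_{\loc}(\R^N)$ bound on $v_n$. Extracting a subsequence, $v_n\to v$ strongly in $L^2_{\loc}(\R^N)\cap\cL^1_s$, $\O_n\to H$ for a half-space $H\ni 0$ determined by the normal to $\de\O$ at the limit of $z_n$, and $\tilde d_n\to d_H$ locally uniformly. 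Passing to the limit via Lemma \ref{lem:Ds-a-n--to-La} produces $L_bv=0$ in $H$, $v=0$ outside $H$, with $\|v\|^2_{L^2(B_R)}\le CR^{N+2(2s-\eta)}$ and $\int_{B_1}v\,d_H^s\,dx=0$. The Liouville theorem (Lemma \ref{lem:Liouville}) then forces $v=cd_H^s$ for some $c\in\R$, and the orthogonality relation gives $c=0$, contradicting $\|v\|^2_{L^2(B_1)}\ge 3/4$.

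The main obstacle is the growth estimate in the second paragraph: tracking the drift of $P_{r,z}(u)$ across dyadic scales requires estimating each $P_{2^{k+1}r_n,z_n}(u_n)-P_{2^k r_n,z_n}(u_n)=(Q_{u_n,z_n}(2^{k+1}r_n)-Q_{u_n,z_n}(2^k r_n))d^s_2$ by comparing with $d^s_2$, using $\|d^s_2\|_{L^2(B_{2^k r_n}(z_n))}\sim (2^k r_n)^{N/2+s}$ via \eqref{eq:integ-dist-tubular-neigh}--\eqref{eq:integ-dist-tubular-neigh-up}; the resulting telescoping sum is a geometric series of ratio $2^{\a'_n-s}$ whose behaviour toggles with the sign of $\a'_n-s$, but in either case produces the effective growth exponent $\max(s,\a'_n)<2s$, which is precisely what places $v$ in $\cL^1_s$ and lets Liouville classify it.
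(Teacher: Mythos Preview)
Your proposal is correct and follows the same blow-up/Liouville contradiction scheme as the paper's proof: the same normalised sequence $v_n$, the same derivation of the limiting equation via $\cL_{K_n}d^s_2=g_{\O,K_n}$ and Lemma~\ref{lem:Ds-a-n--to-La}, and the same conclusion from Lemma~\ref{lem:Liouville} together with the orthogonality $\int_{B_1}v\,d_H^s=0$. The only noticeable difference is in the telescoping growth step: you keep track of the dichotomy on the sign of $\a'_n-s$ and state the exponent as $\max(s,\a'_n)$, whereas the paper writes the growth as $R^{N+2\a_n}$ with $\a_n=\a'_n+\varrho$; either bound is strictly below $2s$ (your $2s-\eta$ with $\eta=\min(s,\b)$ makes this uniform), which is all that Lemma~\ref{lem:Liouville} requires.
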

 \begin{proof}
As in the proof of Proposition \ref{prop:bound-Kato-abstract}, if \eqref{eq:sharp-estim-bdr} does not hold, then we can find a sequence $r_n\to 0$,  points $z_n\in B_1\cap\de\O$ and sequences of functions, $a_n$ satisfying \eqref{eq:def-a-anisotropi}, $\l_n$ with $\|\l_n\|_{L^\infty(B_2\times B_2)}<\frac{1}{n}$, $K_{a_n}\in \scrK(\l_n,a_n,\k,\O)$ with $\b'_n\in [0,s)$ and     $\|g_{\O,K_{a_n}}\|_{  \cM_{\b'_n}}\leq 1 $,    $f_n\in  \cM_\b$  and $u_n\in \cS_{K_{a_n}, 0, f_n;\O}$, with $\|u_n\|_{L^2(\R^N)}+\|f_n\|_{\cM_\b}\leq 1 $ satisfying 
\be \label{eq:sup-z-Qzu_n-r}
 |Q_{u_n,z_n}(r_n)|\leq c_0 r^{-\varrho}_n,    
\ee
while, letting $\a_n:=2s-\max(\b,\b'_n)\in (s,2s)$, we have that
\be\label{eq:resclae-situation-pwb-Morrey}
      r^{-N-2(\a_n-\varrho)}_n  \|u_n -P_{r_n,z_n}(u_n) \|_{L^2(B_{r_n} (z_n))}^2\geq  \frac{1}{16} \Theta_n(r_n) \geq  \frac{n}{32} ,
\ee
for all $n\geq 2$. Here also $\Theta_n$ is a nonincreasing function on $(0,\infty)$ satisfying
\be \label{eq:sup-r-ok-Morrey-bdr}
\Theta_n( r)\geq  r^{-N -2(\a_n-\varrho)} \sup_{z\in B_1\cap\de\O}  \| u_n- P_{r,z}(u_n)\|_{L^2(B_{r}(z))}^2   \qquad\textrm{ for every $r>0$ and $n\geq 2$.}
\ee
 We   define  
$$
 {v}_n(x)=  \Theta_n(r_n)^{-1/2} r^{-(\a_n-\varrho)}_n \left\{ u_n (r_n x+z_n) -P_{r_n,z_n}(u_n)(r_n x+z_n)  \right\}    .
$$
Since for $r_n\leq 1/2$, we have $d_2^s=d^s$ on $B_{r_n}(z_n)$,  making a change of variable in \eqref{eq:resclae-situation-pwb-Morrey} and in \eqref{eq:def-project-dist}, we get
\be\label{eq:lower-bnd-vn-Morrey}
  \|v_n  \|_{L^2(B_1)}^2\geq \frac{1}{16}\qquad\textrm{ and }\qquad \int_{B_1}v_n(x)\textrm{dist}(x, \O_n^c)^s\, dx=0,
\ee
where 
$$
\O_n:=\frac{1}{r_n}(\O-z_n) .
$$
We further define $K_n(x,y):=r_n^{N+2s}K_{a_n}(r_n x+ z_n, r_n y+ z_n)$ for every $x,y\in \R^N$, 
\be\label{eq:def-hat-hat-g}
\widehat f_{n} (x) :=   r_n^{2s} f_n (r_n x+z_n)\qquad \textrm{ and } \qquad  g_{n} (x) :=    r_n^{2s} g_{\O,K_n} (r_n x+z_n).
\ee
Since $u_n\in \cS_{K_{a_n},0,f;\O}$, by \eqref{eq:def-Ds-ds-g},  it is plain that
\be \label{eq:of-vn-Morrey}
\begin{cases}
\cL_{K_n} v_n (x) =    r^{-(\a_n-\varrho)}_n  \Theta_n(r_n)^{-1/2} \left(  \widehat f_n(x)-        Q_{u_n,z_n}(r_n)   g_n(x) \right) & \qquad\textrm{ in $B_{2r_0/{r_n}}(-z_n)\cap \O_n$}\\
v_n=0  & \qquad\textrm{ in $B_{2 r_0/{r_n}}(-z_n)\cap \O_n^c$}.
\end{cases}
\ee
%
%
\bigskip
\noindent
\textbf{Claim:} There exists $C= C(  s,  N,\b,\varrho,r_0,c_0)>0$     such that  
\be\label{eq:growth-v-n-Morrey}
 \|v_n  \|_{L^2(B_R)}^2 \leq  C  R^{N + 2 \a_n  }   \qquad\textrm{ for every $R\geq 1$.}
\ee
 Let us  put $\a'=\a_n-\varrho>0$, and we note that $0<s-\varrho<\a'<2s-\b$, for every $n\geq 2$. By a change of variable, we have
\begin{align*}
 \|v_n\|_{L^2(B_{R})}^2&=   \Theta_n(r_n)^{-1}    r_n^{-N-2\a'}  \|u- P_{r_n,z_n}(u_n)\|_{L^2(B_{r_nR}(z_n))}^2\\
& \leq 2  \Theta_n(r_n)^{-1}     r_n^{-N-2\a'}   \|u- P_{r_n R,z_n}(u_n)\|_{L^2(B_{r_nR}(z_n))}^2\\
&+ 2  \Theta_n(r_n)^{-1}     r_n^{-N-2\a'}   \|P_{r_n R,z_n}(u_n)- P_{r_n,z_n }(u_n)\|_{L^2(B_{r_nR}(z_n))}^2.
\end{align*}
Hence by \eqref{eq:sup-r-ok-Morrey-bdr} and the monotonicity of $\Theta_n$, we get
\begin{align}\label{eq:first-estim-v-n}
 \|v_n\|_{L^2(B_{R})}^2&  \leq 2  R^{2\a'}+ 2  \Theta_n(r_n)^{-1}     r_n^{-N-2\a'}   \|P_{r_n R,z_n}(u_n)- P_{r_n,z_n }(u_n)\|_{L^2(B_{r_nR}(z_n))}^2.
\end{align}
Now by \eqref{eq:integ-dist-tubular-neigh},    for all $r\in (0, r_0]$ and $z\in B_1\cap\de\O$, we have 
\begin{align*}
C |Q_{u_n,z}(2r)-Q_{u_n,z}(r)||B_{r}(z)|^{\frac{1}{2}} r^{s}&\leq  \|P_{2r,z }(u_n)- P_{r,z }(u_n)\|_{L^2(B_{ r}(z))}\\
& \leq  \|P_{2r,z }(u)-  u\|_{L^2(B_{2r}(z) )}+ \|P_{r ,z}(u_n)- u_n\|_{L^2(B_{r}(z) )}\\
&\leq   (2 r)^{\a'}    \Theta_n(2 r)^{1/2}   (2 r) ^{N/2}+    r^{\a'}      \Theta_n( r)^{1/2}    r ^{N/2}.
\end{align*}
Now  using  the monotonicity of $\Theta_n$, we then deduce that there exists a constant $C>0$ such that  for every $n\geq 2$,  $r\in (0, r_0]$ and $z\in B_1\cap\de\O$,
$$
|Q_{u_n,z}(2r)-Q_{u_n,z}(r)| \leq C  r ^{\a'-s}  \Theta_n( r)^{1/2} .
$$
Hence, for $m\geq 0$, with $2^m\leq \frac{r_0}{ r}$, using \eqref{eq:integ-dist-tubular-neigh-up} and the monotonicity of $\Theta_n$,  we get 
\begin{align*}
\|P_{2^m r }(u_n)- P_{r }(u_n)\|_{L^2(B_{2^m r}(z))}&\leq \sum_{i=0}^m  \|P_{2^i r }(u_n)-  P_{2^{i-1} r }(u_n)\|_{L^2(B_{2^i r}(z))}\\
&\leq C \sum_{i=0}^m   |Q_{u_n,z}(2^{i}r)-Q_{u_n,z}(2^{i-1}r)||B_{2^i r}|^{\frac{1}{2}} (2^i r)^s\\
%
%
%
&\leq  C r^{N/2+\a'}   \Theta_n(  r )^{1/2}     \sum_{i=0}^m 2^{i(N/2+ \a')}.
%
%
\end{align*}
As a consequence, we find that
\begin{align*}
\|P_{2^m r,z }(u_n)- P_{r,z }(u_n)\|_{L^2(B_{2^m r}(z))}&\leq    C  r^{N/2+\a'}   \Theta_n(  r )^{1/2}  2^{m (N/2+ \a')}.
\end{align*}
Now using this in  \eqref{eq:first-estim-v-n}, we then get, for $2^m\leq \frac{r_0}{ r_n}$,
\begin{align*}
 \|v_n\|_{L^2(B_{2^{m}})}^2& \leq 2 2^{m(N+2\a')}+  2  \Theta_n(r_n)^{-1}     r_n^{-N-2\a'}  \|P_{2^m r ,z_n}(u_n)- P_{r,z_n }(u_n)\|_{L^2(B_{2^m r}(z))}\\
%
%
&\leq 2 2^{m(N+2\a')}+ C 2^{mN} \Theta_n(r_n)^{-1}     r_n ^{-2\a'}  (r_n2^m)^{2\a}    \Theta_n(2^m r_n)    \\
&\leq  2 2^{m(N+2\a')}+ C      2^{m(N+2\a')}   \Theta_n(r_n)^{-1}   \Theta_n(2^m r_n)\\
&\leq  C 2^{m(N+2\a')},
\end{align*}
with $C$ is a positive constant   depending neither on  $n$ nor on $m$.  We then conclude that
\be\label{eq:estim-L2-v-R-rrn-small}
 \|v_n\|_{L^2(B_{R})}^2 \leq C R^{N+2\a'} \qquad \textrm{  for every $R\geq 1$, with  $R r_n\leq r_0$.}
\ee
We now consider the case $R\geq 1$ and  $R r_n\geq r_0$. Using the fact  that   $ \Theta_n(r_n)^{-1}   \leq 1$  and  $\a'=\a_n-\varrho>0$ together with  \eqref{eq:def-project-dist} and \eqref{eq:integ-dist-tubular-neigh-up},    we obtain 
\begin{align*}
 \|v_n\|_{L^2(B_{R})}^2&=   \Theta_n(r_n)^{-1}     r_n^{-N-2\a'}  \|u_n- P_{r_n,z_n}(u_n)\|_{L^2(B_{r_nR}(z_n))}^2  \leq    r_n^{-N-2\a'} \|u_n \|_{L^2(\R^N)}^2 \leq     (R r_0^{-1})^{N+2\a'}  .
\end{align*}
This with \eqref{eq:estim-L2-v-R-rrn-small} give \eqref{eq:growth-v-n-Morrey}, since $\a'=\a_n-\varrho $. This finishes the proof of the  claim.\\

Now by \eqref{eq:growth-v-n-Morrey} and H\"older's inequality, we get
\begin{align}\label{eq:growth-v-n-Morrey-L1}
 \|v_n\|_{L^1(B_{R})} &\leq C R^{N+\a_n } \qquad\textrm{ for all $R\geq 1$ and $n\geq 2$.}
\end{align}
Since $g_{\O,K_{a_n}}\in \cM_{\b'_n}$     (recall \eqref{eq:def-hat-hat-g} and  \eqref{eq:eta-f-scale-translate}), we have $\eta_{g_n}(1)\leq C r_n^{2s-\b'_n} \|g_{\O,K_{a_n}}\|_{\cM_{\b'_n}}$.
Therefore  by  \eqref{eq:sup-z-Qzu_n-r},  we deduce that 
\be\label{eq:bound-eta-Qn-gn}
   r^{-(\a_n -\varrho)}_n |Q_{u_n,z_n}(r_n)|   \eta_{  g_n}(1)\leq c_0 C r^{-(\a_n -\varrho)+2s-\b'_n}_n   \|g_{\O,K_{a_n}}\|_{\cM_{\b'_n}}\leq C \|g_{\O,K_{a_n}}\|_{\cM_{\b'_n}}\leq C,
\ee
with $C>0$ independent on $n$.
From now on, we let $n$ large, so that $B_{r_0/(2r_n)}\subset B_{2r_0/r_n}(-z_n)$. Since $v_n$ satisfy \eqref{eq:of-vn-Morrey}, then letting $v_{n,M}=\vp_{M} v_n$,  we can apply  Lemma \ref{lem:cat-off-sol}, for $1<M<\frac{r_0}{2 r_n}$, to   get 
$$
\begin{cases}
\cL_{K_n} v_{n,M}  = \Theta_n(r_n)^{-1/2} r^{-(\a_n-\varrho)}_n \left(     \widehat f_n-       Q_{u_n,z_n}(r_n)  \widehat g_n(x)\right)+ F_n  & \quad\textrm{ in $B_{M/2}\cap \O_n$}\\
v_{n,M}  =0  & \quad\textrm{ in $B_{M/2} \cap \O_n^c$},
\end{cases}
$$
where    $\|F_n\|_{L^\infty(\R^N)}\leq C_0 \|v_n\|_{\cL^1_s}  $.  Using \eqref{eq:growth-v-n-Morrey} and H\"older's inequality, we get  $\|F_n\|_{L^\infty(\R^N)}\leq C$.    It then follows that 
\be\label{eq:eta-newterms-delta}
  \eta_{F_{v_n}}(1)\leq C \quad\textrm{ for every $n\geq 2$.}
\ee
 In addition   (recalling  \eqref{eq:def-hat-hat-g})  by    \eqref{eq:eta-f-scale-translate},  
\be \label{eq:up-eta-hatfn}
\eta_{ \widehat f_n }(1)\leq C   r^{\a_n}_n.
\ee
Now by  \eqref{eq:growth-v-n-Morrey}, 
   Lemma \ref{lem:from-caciopp-ok},   \eqref{eq:bound-eta-Qn-gn},   \eqref{eq:eta-newterms-delta} and \eqref{eq:up-eta-hatfn}, we obtain
  \begin{align*}
\left\{\k- \e\ov C \left(1+ \Theta_n(r_n)^{-1/2}r_n^\varrho+\Theta_n(r_n)^{-1/2}    +\eta_{F_{v_n}}(1) \right) \right\}&[v_{n,M}]_{H^s(B_{M/4})}^2\leq     C(M) . 
%
\end{align*}
Since $\Theta_n(r_n)^{-1/2}\to0 $ as $n\to \infty$, by  \eqref{eq:growth-v-n-Morrey}, we then    deduce that $v_n$ is bounded in $H^s_{loc}(\R^N)$. Hence by Sobolev embedding,   $v_n\to  v$ in $L^2_{loc}(\R^N)$,  for some  $v\in H^s_{loc}(\R^N) $.  In addition,  by \eqref{eq:growth-v-n-Morrey-L1} and since $\a_n=2s-\max(\b,\b'_n) <2s$, we deduce that $v_n\to v$ in $\cL^1_s$.   We also have that $1_{\O_n\cap B_{1/(2r_n)}}\to 1_{H}$ in $L^1_{loc}$ as $n\to \infty$, where $H$ is a half-space, with $0\in \de H$.   
Moreover, passing to the limit in \eqref{eq:lower-bnd-vn-Morrey}, we get
\be\label{eq:groth-v-and-low-bnd-Kato-pw-Morrey}
  \|v\|_{L^2(B_1)}^2 \geq \frac{1}{16} \qquad \textrm{ and }  \qquad \int_{B_1}v(x)\textrm{dist}(x,\R^n\setminus  H)^s\, dx=0.
\ee
Now, given  $\psi\in C^\infty_c( H\cap B_M)$, since $\O$ is of class $C^1$, for $n$ large enough, we obtain $\psi \in  C^\infty_c( \O_n)$.  Since $v_n$ satisfy \eqref{eq:of-vn-Morrey}, then by Lemma \ref{lem:convergence-very-weak}, \eqref{eq:up-eta-hatfn} and  \eqref{eq:bound-eta-Qn-gn},  we obtain 
\begin{align*} 
\bigl| \int_{\R^{2N}}(v_n(x)  &   -v_n(y))(\psi(x)-\psi(y))K_n(x,y)\, dxdy \bigr|  \\
& \leq r_n^\varrho   \Theta_n(r_n)^{-1/2} C(M)\left(  1 +  \|g_{\O,K_{a_n}}\|_{\cM_{\b'_n}}\right)\left( \|\psi\|_{H^s(\R^N)}^2 +   \|\vp_M\|_{H^s(\R^N)}^2     \right)\\
&\leq  \Theta_n(r_n)^{-1/2} C(M).
\end{align*}
Thanks to Lemma \ref{lem:Ds-a-n--to-La}, letting $n\to\infty$, we thus get
$$
L_b v=0 \qquad\textrm{ in $H$}  \qquad \textrm{ and }\qquad v=0 \qquad \textrm{ on $\R^N\setminus  H$.}
$$
Here $b$ denotes the weak limit of $a_n$. Letting $\a:=\lim_{n\to \infty} \a_n\in [0,2s)$,  by \eqref{eq:growth-v-n-Morrey}, we have that   $ \|v\|_{L^2(B_{R})}^2 \leq C R^{N+2\a}$ for every $R\geq 1$.  It follows from Lemma \ref{lem:Liouville}  that   $v $ does not change sign on $\R^N$,    which is in contradiction with \eqref{eq:groth-v-and-low-bnd-Kato-pw-Morrey}.
\end{proof} 
The next, result  finalizes the two-step bootstrap argument mentioned earlier.  
\begin{lemma}\label{lem:unif-cont-f-Morrey}
Let $N\geq 1$, $s\in (0,1)$,    $\b,\d\in (0,s)$ and $\O$ a $C^{1,\g}$ domain, with $0\in \de\O$ as above.   Let $K\in \scrK(\l,a,\k,\O)$ with  $\|g_{\O,K}\|_{\cM_{\b'}}\leq 1$, for some $\b'\in [0,s-\d)$  and $\|\l\|_{L^\infty(B_2\times B_2)}<\min(\ov \e_0,\e_1)$, where $\e_1$ and $\ov \e_0$ are given by Lemma \ref{lem:boundary-reg-Morrey} and Proposition \ref{prop:bdr-estim-Kato}, respectively. 
Let  $f\in \cM_\b$,  and $u\in H^s_{loc}(B_2)\cap L^2(\R^N)$ satisfy  
\be \label{eq:u-satf-bdr-reg-fin}
\cL_K  u= f \qquad\textrm{ in $  B_{2}\cap\O$}\qquad\textrm{ and }\qquad u=0 \qquad\textrm{ in  $  B_{2}\cap\O^c$, }
\ee
Then    there exists $C >0$, only depending on $N,s,\b, \L, \k,\e_1,\ov \e_0,\d $ and $\O$,  and a  function    $\psi\in L^\infty(B_{1}\cap\de\O)$, with $\|\psi\|_{L^\infty(B_{1}\cap\de\O )}\leq C$, such that 
$$
 \sup_{z\in {B_{1}\cap\de\O}}  \|u-\psi(z) d^s\|_{L^2(B_r(z))}^2\leq  C r^{N+2(2s-\max(\b,\b'))}(\|u\|_{L^2(\R^N)}+  \|f\|_{\cM_\b})^2   \qquad\textrm{ for all   $r\in (0,r_0/4)$.}
$$
\end{lemma}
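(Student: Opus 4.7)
The plan is to run a two-step bootstrap on the coefficient $Q_{u,z}(r)$ from \eqref{eq:def-Qu-z-r}, using Lemma \ref{lem:boundary-reg-Morrey} as the engine, and then extract $\psi(z):=\lim_{r\to 0^+}Q_{u,z}(r)$.

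\textbf{Seed bound.} First I would invoke Proposition \ref{prop:bdr-estim-Kato} with some $\a\in(0,s)$ (which is admissible since $\b<s$ forces $\min(s,2s-\b)=s$) to obtain $\|u\|_{L^2(B_r(z))}\leq Cr^{N/2+\a}(\|u\|_{L^2(\R^N)}+\|f\|_{\cM_\b})$. Combining this with H\"older's inequality, the upper bound $\|d_2^s\|_{L^2(B_r(z))}\leq Cr^{N/2+s}$ from \eqref{eq:integ-dist-tubular-neigh-up}, and the matching lower bound $\int_{B_r(z)}d_2^{2s}\geq Cr^{N+2s}$ from \eqref{eq:integ-dist-tubular-neigh} (recalling $d_2^s=d^s$ on $B_{r_0}$), yields $|Q_{u,z}(r)|\leq Cr^{\a-s}(\|u\|_{L^2}+\|f\|_{\cM_\b})$. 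Choosing $\a$ sufficiently close to $s$, I fix $\varrho_1\in(0,\min(s-\b,\d))$, so that $|Q_{u,z}(r)|\leq Cr^{-\varrho_1}$ and in particular $\varrho_1<s-\max(\b,\b')$ (since $\b'<s-\d$).

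\textbf{First application of Lemma \ref{lem:boundary-reg-Morrey} and telescoping.} With this hypothesis, Lemma \ref{lem:boundary-reg-Morrey} provides
$$\|u-P_{r,z}(u)\|_{L^2(B_r(z))}^2\leq Cr^{N+2(2s-\max(\b,\b')-\varrho_1)}.$$
Because $P_{r,z}(u)=Q_{u,z}(r)d_2^s$, splitting $P_{2r,z}(u)-P_{r,z}(u)=(Q_{u,z}(2r)-Q_{u,z}(r))d_2^s$ and using \eqref{eq:integ-dist-tubular-neigh} to bound $\|d_2^s\|_{L^2(B_r(z))}$ from below by $cr^{N/2+s}$ gives
$$|Q_{u,z}(2r)-Q_{u,z}(r)|\leq Cr^{s-\max(\b,\b')-\varrho_1}.$$
The positive exponent (by the choice of $\varrho_1$) makes the geometric series $\sum_{k\geq 0}|Q_{u,z}(2^{-k-1}r_0)-Q_{u,z}(2^{-k}r_0)|$ convergent uniformly in $z$. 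Consequently $\psi(z):=\lim_{r\to 0^+}Q_{u,z}(r)$ exists, and since $|Q_{u,z}(r_0/4)|\leq C$ by the seed bound, I deduce $\|\psi\|_{L^\infty(B_1\cap\de\O)}\leq C$ together with $|Q_{u,z}(r)|\leq C$ uniformly in $r\in(0,r_0/4]$.

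\textbf{Closing the bootstrap.} Armed with the uniform bound $|Q_{u,z}(r)|\leq C$, I reapply Lemma \ref{lem:boundary-reg-Morrey} with $\varrho=0$, obtaining the sharp estimate
$$\|u-P_{r,z}(u)\|_{L^2(B_r(z))}^2\leq Cr^{N+2(2s-\max(\b,\b'))}.$$
Telescoping once more with this improved exponent yields $|Q_{u,z}(r)-\psi(z)|\leq Cr^{s-\max(\b,\b')}$. Writing $u-\psi(z)d^s=(u-P_{r,z}(u))+(Q_{u,z}(r)-\psi(z))d_2^s$ on $B_r(z)$ and applying the triangle inequality together with $\|d_2^s\|_{L^2(B_r(z))}\leq Cr^{N/2+s}$ gives exactly the asserted bound.

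\textbf{Main obstacle.} The delicate point is the first telescoping: to define $\psi$ at all one needs the seed bound to start with an exponent $-\varrho_1$ strictly larger than the critical value $-(s-\max(\b,\b'))$, which is only possible thanks to the gap $\b<s$ and $\b'<s-\d$ built into the hypotheses. Once $\psi$ is seen to be bounded, the second pass through Lemma \ref{lem:boundary-reg-Morrey} with $\varrho=0$ is essentially mechanical, and the final sharp exponent $2s-\max(\b,\b')$ emerges automatically.
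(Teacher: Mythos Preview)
Your proposal is correct and follows essentially the same two-step bootstrap as the paper: seed bound on $Q_{u,z}(r)$ from Proposition~\ref{prop:bdr-estim-Kato}, first pass through Lemma~\ref{lem:boundary-reg-Morrey}, telescoping to show $(Q_{u,z}(r))_r$ is Cauchy and uniformly bounded, then a second pass with $\varrho=0$ and a final telescoping. The only cosmetic difference is that the paper records the intermediate estimate $\|u-\psi_0(z)d^s\|_{L^2(B_r(z))}\leq Cr^{N/2+\a-\varrho}$ explicitly before extracting the uniform bound on $Q_{u,z}(r)$, whereas you read off that uniform bound directly from the telescoping series and the value at $r=r_0/4$; both routes are equivalent.
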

\begin{proof}
For simplicity, we assume that $\|u\|_{L^2(\R^N)}+  \|f\|_{\cM_\b}\leq 1$, up to dividing  \eqref{eq:u-satf-bdr-reg-fin} with this quantity. 

Letting  $\a:=2s-\max(\b,\b')\in (s,2s)$, by Proposition \ref{prop:bdr-estim-Kato}, for  every  $\varrho\in (0,\a-s)$, there exists $c_0>0$, only depending on  $N,s,\b,\varrho,\O ,\k $ and $\L$, such that  
\be \label{eq:estim-Quz-r-d0-first}
   |Q_{u,z}(r)|\leq c_0  r^{-\varrho}      \qquad\textrm{ for all $z\in B_{1}\cap\de\O$ and $r\in (0,2r_0)$.}
\ee
 %
We can   apply    Lemma \ref{lem:boundary-reg-Morrey}   to get 
 \begin{align}\label{eq:estim-High-reg-cut}
\sup_{  r>0}  \sup_{z\in {B_{1}\cap \de\O}}  r^{-N-2(\a-\varrho)} \|u- P_{r,z}(u) d^s\|_{L^2(B_r(z))}^2&\leq  C ,
 \end{align}
 with the letter $C$ denoting, here an in the following,   a positive constant which may vary from line to line but will depend only on  $N,s,\b,\varrho,\O, \k $ and $\L$. \\
 
 %
 %
 %
\noindent
\textbf{Claim:}  There exits    $\psi_0\in L^\infty(\de\O\cap B_{1})$, satisfying $\|\psi_0\|_{L^\infty(\de\O\cap B_{1} )}\leq C$, such that
 \begin{align}\label{lem:boundary-reg-Morrey-final}
 \|u-\psi_0(z) d^s\|_{L^2(B_r(z))}&\leq  C r^{N/2+\a-\varrho}   \qquad\textrm{ for all $z\in B_{1}\cap\de\O$ and $r\in (0,r_0/2)$.}
 \end{align} 
Indeed, for  $r\in (0, 2r_0)$ and $z\in B_{1}\cap\de\O$, we define  
$$
Q_{z}(r):=Q_{u,z}(r)=\frac{\int_{B_{r}(z)} u(y) d^s(y)\,dy}{\int_{B_{r}(z)}d^{2s}(y)\, dy} ,
$$
and recalling    \eqref{eq:def-project-dist}, we have  that $P_{r,z}(u)(x)= Q_{z}(r) d^s(x) $ because $d_2^s=d^s$ on $B_{r}(z)$.\\
Let   $0<\rho_2\leq  \rho_1/4\leq r/4 $. Pick $k\in \N$ and   $\s\in [1/4,1/2]$ such that $\rho_2=\s^k\rho_1$.  Then provided $r\in (0,2r_0)$,  by \eqref{eq:integ-dist-tubular-neigh} and \eqref{eq:estim-High-reg-cut},  we get 
\begin{align*}
|Q_z(\rho_1)&-Q_z(\rho_2)|\leq \sum_{i=0}^{k-1}|Q_z(\s^{i+1}\rho_1)-Q_z(\s^i\rho_1)|\\
&\leq C   \sum_{i=0}^{k-1} (\s^{i+1}\rho_1)^{- \frac{N}{2}- s} \left(  \|u-P_{\s^i\rho_1 ,z}(u) \|_{L^2\left(B_{\s^i\rho_1}(z)\right)}+ \|u- P_{\s^{i+1}\rho_1,z }(u)\|_{L^2\left(B_{\s^{i+1}\rho_1}(z) \right)}  \right) \\
&\leq C  \sum_{i=0}^{k-1} (\s^{i+1}\rho_1)^{- \frac{N}{2}- s}  \left( (\s^{i}\rho_1)^{\frac{N}{2}+ \a-\varrho} +(\s^{i+1}\rho_1)^{\frac{N}{2}+ \a-\varrho}     \right)\\
&\leq C \rho_1^{\a-s-\varrho}  \s ^{- \frac{N}{2}- s} \sum_{i=0}^{k-1} \s^{i(\a-s-\varrho)}\leq C  \rho_1^{\a-s-\varrho}  ,
\end{align*}
where we used the fact that $\b'\in (0,s-\d)$, so that $C$ does not depend on $\d$ but only on the quantities mentioned above.
Therefore, there exists $C>0$ such that   for  $0<\rho_2\leq  \rho_1/4\leq r/4 $, with $r\in (0,2r_0)$,  and $z\in B_1\cap \de\O$, we have 
\begin{align}\label{eq:Qz-r-Cauchy}
|Q_z( \rho_1)-Q_z(\rho_2)| \leq    C \rho_1^{\a-s-\varrho}\leq C r^{\a-s-\varrho}.
\end{align}
By \eqref{eq:integ-dist-tubular-neigh} and  \eqref{eq:estim-High-reg-cut}, for $r\in (0, 2r_0)$,   we get 
$$
   \| P_{r_0/4,z}(u)\|_{L^2(B_{r_0/4}(z))}\leq \| u-P_{r_0/4,z}(u)\|_{L^2(B_{r_0/4}(z))}+ \| u\|_{L^2(B_{r_0/4}(z))}\leq C   +1.
$$
Hence there exists $C>0$, such that  for all $  z\in B_{1}\cap  \de \O $, 
\be\label{eq:Q-z-bounded-Morrey} 
  |Q_z(r_0/4)|\leq   C,
\ee
From  \eqref{eq:Qz-r-Cauchy}, we   deduce that, for every fixed $z\in \de B_{1}\cap  \de \O$ and any sequence  $(r_n)_{n\in \N}\subset ( 0,r_0/4]$ tending to zero,    $(Q_z(r_n))_{n\in \N}$ is a Cauchy sequence, which is bounded by  \eqref{eq:Q-z-bounded-Morrey}.     We can thus define  $\psi_0(z):=\lim_{r\to 0} Q_z(r)$. Now by \eqref{eq:integ-dist-tubular-neigh} and  \eqref{eq:Qz-r-Cauchy} (letting $\rho_2\to 0$), for $r\in (0,r_0/2)$,  we get
\begin{align*}
C r^{-N/2-s} \| P_{r,z}(u)-   \psi_0(z) d^s   \|_{L^2(B_r(z))}& \leq |Q_z(r)-\psi_0(z)|\leq C   r^{\a-s-\varrho } .
\end{align*}
This in 
particular yields $|\psi_0(z)|r_0^s\leq C r^{\a-s-\varrho } _0+  Q_z(r_0/4)\leq C   ,$  by \eqref{eq:Q-z-bounded-Morrey}. Consequently $\|\psi_0\|_{L^\infty(\de B_{1}\cap  \de \O)}\leq C$.   Finally using \eqref{eq:estim-High-reg-cut} and the above inequality, we can estimate
 \begin{align*}
 \|u-\psi_0(z) d^s\|_{L^2(B_r(z))}&\leq \|u - P_{r,z}(u) \|_{L^2(B_r(z))}+ \| P_{r,z}(u)-   \psi_0(z) d^s   \|_{L^2(B_r(z))} \\
& \leq C r^{N/2+\a-\varrho}  .
 \end{align*} 
This proves  \eqref{lem:boundary-reg-Morrey-final}, as claimed.  \\
  
 Now  \eqref{lem:boundary-reg-Morrey-final}, implies in particular that   $ \|u\|_{L^2(B_r(z))}\leq C r^{N/2+s}$. Hence by H\"older's inequality and since $\a-\varrho>s$, there exists $c_1=c_1(N,s,\b,\O,\L,\varrho, \k,\d)>0$ such that 
$$
  |Q_{u,z}(r)|\leq c_1  \qquad\textrm{ for every $r\in (0,r_0/2)$ and $z\in B_{1}\cap\de\O$}.
$$
We can therefore apply Lemma \ref{lem:boundary-reg-Morrey} with $\varrho=0$ and thus use   the same argument above starting from \eqref{eq:estim-High-reg-cut}.   We then conclude that there exists    $\psi\in L^\infty(B_{1}\cap\de\O)$, with $\|\psi\|_{L^\infty(B_{1}\cap\de\O)}\leq C$ and such that 
 \begin{align*}
 \|u-\psi(z) d^s\|_{L^2(B_r(z))}&\leq  C r^{N/2+\a}   \qquad\textrm{ for all $z\in B_{1}\cap\de\O$ and $r\in (0,r_0/4)$.}
 \end{align*} 
 \end{proof}
Combining Lemma \ref{lem:unif-cont-f-Morrey} and  the interior estimates in  Theorem \ref{th:int--Holder-reg-Morrey}, we get the following result. %
\begin{corollary}\label{cor:unif-cont-f-Morrey}
Let $N\geq 1$, $s\in (0,1)$,    $\b,\d\in (0,s)$ and $\O$ a $C^{1,\g}$ domain, with $0\in \de\O$  as above. Let $a$ satisfy \ref{eq:def-a-anisotropi} and  $K\in \scrK(\l,a,\k)$.   
Suppose that   $\|g_{\O,K} \|_{ \cM_{\b'}}\leq  c_0$, as defined in \eqref{eq:def-Ds-ds-g}, with $\b'\in [0,s-\d)$.  
Let   $f\in \cM_\b$,  and $u\in H^s(B_2)\cap \cL^1_s$ satisfy
\be \label{eq:u-satf-bdr-reg-fin}
\cL_K  u= f \qquad\textrm{ in $  B_{2}\cap\O$}\qquad\textrm{ and }\qquad u=0 \qquad\textrm{ in  $  B_{2}\cap\O^c$, }
\ee
 Then there exist $C ,\e_2>0$ and  $r_1 >0$, only depending on $N,s,\b, \L,  \k,c_0,\d$ and $\O$, such that if $\|\l\|_{L^\infty(B_2\times B_2)}<\e_2$,
 we have 
$$
 \|u/ d^s\|_{C^{s-\max(\b,\b')} ( B_{r_1}\cap \ov\O)}\leq C\left(\|u\|_{L^2(B_2)}+\|u\|_{\cL^1_s}+  \|f\|_{\cM_\b}  \right).
 $$

\end{corollary}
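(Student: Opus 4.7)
The plan is to lift the $L^2$-growth estimate from Lemma~\ref{lem:unif-cont-f-Morrey} to a pointwise H\"older estimate for $u/d^s$, by a rescaling argument combined with the interior regularity provided by Theorem~\ref{th:int--Holder-reg-Morrey}, and then to separately establish the H\"older continuity of the trace $\psi$ on $\partial\Omega$.

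First, I would obtain a pointwise interior estimate for $w := u - \psi(z)d^s_2$ near the boundary. Fix $x_0\in B_{r_1}\cap\Omega$ with $d(x_0)$ small and let $z\in\partial\Omega$ satisfy $|x_0-z|=d(x_0) =: 2\rho$. By \eqref{eq:def-Ds-ds-g}, $w$ solves $\cL_K w = f - \psi(z)g_{\Omega,K}$ in $B_\rho(x_0)\subset B_{2r_0}\cap\Omega$, with right-hand side in $\cM_{\max(\beta,\beta')}$. After rescaling $w_\rho(y) := w(\rho y + x_0)$ to the unit ball, the rescaled kernel $K_\rho$ still belongs to the class $\scrK(\lambda_\rho,a,\kappa)$ with $\|\lambda_\rho\|_{L^\infty(B_2\times B_2)}\le \|\lambda\|_{L^\infty(B_2\times B_2)}$, and Theorem~\ref{th:int--Holder-reg-Morrey} (choosing $\e_2$ small enough and $\alpha$ below any fixed threshold) yields
\begin{equation}\label{eq:plan-calpha}
\|w_\rho\|_{C^\alpha(B_{1/2})} \le C\bigl(\|w_\rho\|_{L^2(B_1)} + \|w_\rho\|_{\cL^1_s} + \|F_\rho\|_{\cM_{\max(\beta,\beta')}}\bigr).
\end{equation}
The $L^2$-term is bounded via Lemma~\ref{lem:unif-cont-f-Morrey} by $\rho^{-N/2}\|u-\psi(z)d^s\|_{L^2(B_{3\rho}(z))}\le C\rho^{2s-\max(\beta,\beta')}$; the Morrey norm is controlled by scaling of $\|f\|_{\cM_\beta}$ and $\|g_{\Omega,K}\|_{\cM_{\b'}}$ together with $|\psi(z)|\le C$; the $\cL^1_s$-tail is handled by a dyadic decomposition combining the same $L^2$-growth on annuli $B_{2^k\rho}(z)$ with the a priori $L^2(\R^N)$ and $\cL^1_s$ bounds on $u$. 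Scaling back and dividing by $d(x_0)^s\asymp\rho^s$ gives
\begin{equation}\label{eq:plan-pt}
|u(x_0)/d(x_0)^s - \psi(z)| \le C\, d(x_0)^{s-\max(\beta,\beta')}.
\end{equation}

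Next, I would establish $\psi\in C^{s-\max(\beta,\beta')}(\partial\Omega\cap B_1)$. For $z_1,z_2\in\partial\Omega\cap B_1$ with $\rho := |z_1-z_2|$ small, applying Lemma~\ref{lem:unif-cont-f-Morrey} at $z_1$ (on $B_{2\rho}(z_1)\supset B_\rho(z_2)$) and at $z_2$, the triangle inequality yields
\begin{equation*}
|\psi(z_1)-\psi(z_2)|\,\|d^s\|_{L^2(B_\rho(z_2))}\le C\rho^{N/2+2s-\max(\beta,\beta')},
\end{equation*}
and the lower bound $\|d^s\|_{L^2(B_\rho(z_2))}^2\ge c\rho^{N+2s}$ from \eqref{eq:integ-dist-tubular-neigh} produces $|\psi(z_1)-\psi(z_2)|\le C|z_1-z_2|^{s-\max(\beta,\beta')}$.

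Finally, for two interior points $x_1,x_2\in B_{r_1}\cap\Omega$, I would split into the two standard regimes. When $|x_1-x_2|\ge \tfrac12\max(d(x_1),d(x_2))$, combining \eqref{eq:plan-pt} at both points with the boundary H\"older estimate on $\psi$ (using that the respective projections $z_i\in\partial\Omega$ satisfy $|z_1-z_2|\le 3|x_1-x_2|$) gives the desired bound. When $|x_1-x_2|<\tfrac12\max(d(x_1),d(x_2))$, both points lie in $B_{\rho/2}(x_1)$ with $\rho := d(x_1)/2$, where $d\asymp\rho$ and $d^s$ is smooth; taking the permissible exponent $\alpha = s-\max(\beta,\beta')$ in \eqref{eq:plan-calpha} gives $[w]_{C^\alpha(B_{\rho/2}(x_1))}\le C\rho^{s}$, and elementary manipulation of the quotient $w/d^s + \psi(z)$ upgrades this to $[u/d^s]_{C^{s-\max(\beta,\beta')}(B_{\rho/2}(x_1))}\le C$. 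Gluing the boundary and interior estimates yields $u/d^s\in C^{s-\max(\beta,\beta')}(B_{r_1}\cap\overline\Omega)$ with the asserted norm bound. The main technical obstacle is ensuring that the $\cL^1_s$-tail of $w_\rho$ is bounded by $\rho^{2s-\max(\beta,\beta')}$ independently of $\rho$: this requires combining the near-field $L^2$-growth from Lemma~\ref{lem:unif-cont-f-Morrey} along dyadic annuli centered at $z$ with the complementary far-field bound controlled by $\|u\|_{L^2(B_2)}+\|u\|_{\cL^1_s}$.
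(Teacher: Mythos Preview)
Your approach is essentially the same as the paper's: rescale $u-\psi(z)d^s$ around an interior point, apply the interior H\"older estimate (the paper invokes Corollary~\ref{cor:int--Holder-reg-Morrey} rather than Theorem~\ref{th:int--Holder-reg-Morrey}, since the hypothesis is $K\in\scrK(\lambda,a,\kappa)$), control the $L^2$ and $\cL^1_s$ terms via Lemma~\ref{lem:unif-cont-f-Morrey} and a dyadic tail argument, and then glue. The only cosmetic difference is that you carry out the boundary H\"older estimate for $\psi$ and the near/far splitting explicitly, whereas the paper packages the gluing step by citing Proposition~1.1 of \cite{RS1}.
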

\begin{proof}
We assume that $\|u\|_{L^2(\R^N)}+  \|f\|_{\cM_\b}\leq 1$.  Consider $\psi\in L^\infty(B_1\cap\de\O)$ given by Lemma \ref{lem:unif-cont-f-Morrey}.
Let  $x_0\in \O\cap B_{r_0/4}$ and  $z_0\in \de\O\cap B_{1}$ be such that $|x_0-z_0|=d(x_0)\leq r_0/4$. Put $\rho={d(x_0)}/{2}$, so that $B_\rho(x_0)\subset B_{3\rho}(z_0)$ and $B_\rho(x_0)\subset  \subset \O. $ 
We define $v(x)=u(x)-\psi(z_0) d^s_2(x)$ and  $w_\rho(x):=\rho^{-s}   v(\rho x+ x_0)$. Then, letting $f_\rho(x):=\rho^sf(\rho x+ x_0)$ and   $ g_{\rho }(x)=\rho^s g_{\O, K}(\rho x+ x_0)$, we then have 
\be \label{eq:w-rho-solves-B1}
\cL_{K_{\rho,x_0}} w_\rho= f_\rho-\psi(z_0) g_{\rho }\qquad\textrm{ in $B_1$},
\ee
with $K_{\rho,x_0}(x,y)=\rho^{N+2s}K(\rho x+ x_0, \rho y+ x_0).$ We note that $K_{\rho,x_0}\in \scrK(\l_{\rho,x_0},a,\k)$, where $\l_{\rho,x_0}(x,y)=\l(\rh x+ x_0,\rho y+y_0)$. In particular, decreasing $r_0$ if necessary,  
$$
\|\l_{\rho,x_0} \| _{L^\infty(B_1\times B_1) }\leq \|\l  \| _{L^\infty(B_{2}\times B_{2}) }.
$$
Therefore,     by Lemma \ref{lem:unif-cont-f-Morrey}, we can find $\e_2>0$ small, independent on $\rho$ such that,  if  $\|\l  \| _{L^\infty(B_{2}\times B_{2}) }<\e_2$,   we have  
\begin{align}\label{eq:L2-norm-w-rho-B1}
\|w_\rho \|_{L^2(B_1)}=  \rho^{-N/2}  \|v\|_{L^2(B_{\rho}(x_0))}  \leq \rho^{-N/2} \rho^{-s} \|u-\psi(z_0)d^s\|_{L^2(B_{3\rho}(z_0))}\leq C \rho^{s-\max(\b,\b')}.
\end{align}
By  H\"older's inequality, we also  have $ \|v\|_{L^1(B_{r}(z_0))}\leq C r^{N+2s-\max(\b,\b')}$, for every $r>0$. We can thus proceed as in the proof of Theorem \ref{th:bound-reg-alpha}, to get  $\|w_\rho\|_{\cL^1_s}\leq C \rho^{s-\max(\b,\b')}$, with $C$ independent on $\b'$.
We note that by \eqref{eq:eta-f-scale-translate},  $\|f_\rho\|_{\cM_\b}\leq  \rho^{s-\b}$ and $\|g_\rho\|_{\cM_{\b'}}\leq  \rho^{s-\b'}\|g_{\O, K}\|_{\cM_{\b'}}\leq  c_0  $.   
It then  follows from  \eqref{eq:w-rho-solves-B1},  Corollary \ref{cor:int--Holder-reg-Morrey} and \eqref{eq:L2-norm-w-rho-B1}, that 
\begin{align*}
  \|w_\rho\|_{C^{s }(B_{1/2})}&\leq C\left(\|w_\rho \|_{L^2(B_1)}+ \|w_\rho\|_{\cL^1_s}+ \| f_\rho-\psi(z_0) g_{\rho }\|_{\cM_{s}} \right)\\
&\leq  C\left(\|w_\rho \|_{L^2(B_1)}+ \|w_\rho\|_{\cL^1_s}+ \| f_\rho-\psi(z_0) g_{\rho }\|_{\cM_{\max(\b,\b')}} \right) \\
&\leq C \rho^{s-\max(\b,\b')} .
\end{align*}
Hence, 
\begin{align*}
  \|w_\rho\|_{C^{s-\max(\b,\b')}(B_{1/2})}    \leq  C\rho^{s-\max(\b,\b')}.
\end{align*}
Scaling back, and since $d_2^s=d^s$ on $B_{\rho/2}(x_0)$,  we get 
 $$
  \|u-\psi(z_0) d^s\|_{L^\infty(B_{\rho/2}(x_0))}\leq C \rho^{2s-\max(\b,\b')}  \qquad \textrm{ and } \qquad    [u-\psi(z_0) d^s]_{C^{s-\max(\b,\b')} (B_{\rho/2}(x_0))}\leq C \rho^{s}.
  $$
Since $\|\psi\|_{L^\infty(B_{1}\cap\de\O)}\leq C$, the two  inequalities above imply that 
$$
   [u/ d^s]_{C^{s-\max(\b,\b')} (B_{\rho/2}(x_0))}\leq C ,
$$ 
which yields  (see the proof of Proposition 1.1 in     \cite{RS1})  
 $$
  [u/ d^s]_{C^{s-\max(\b,\b')} ( B_{r_1}\cap \ov\O)}\leq C,
 $$
 for some $r_1\leq r_0/4$ and $C>0$, only depending on $\O,N,s,\b, \L,\k$ and $\g$. By Lemma \ref{lem:unif-cont-f-Morrey},  $\|u\|_{L^2(B_r)}\leq C  r^{N/2+s}$.   Then using similar arguments as in the proof of  Theorem \ref{th:bound-reg-alpha}, we find that    $$\|u/d^s\|_{L^\infty( B_{r_1}\cap \O)} \leq C.$$
Finally in the general case $u\in H^s(B_2)\cap \cL^1_s$, we can use similar cut-off arguments as in the proof of Corollary \ref{cor:int--Holder-reg-Morrey} to get  the estimate involving only $\|u\|_{L^2(B_2)}+\|u\|_{\cL^1_s}$ in the place of $\|u\|_{L^2(\R^N)}$.
  The proof of the corollary is thus finished.
\end{proof} 

\section{Higher order interior regularity}\label{s:Higher-reg-int}
For $K$ a kernel satisfying \eqref{eq:Kernel-satisf}, we define the functions 
$$
J_{e,K}(x;y)=\frac{1}{2}(K(x,x+y) + K(x,x-y))  \qquad\textrm{ and }\qquad J_{o,K}(x;y)=\frac{1}{2}(K(x,x+y) - K(x,x-y)) .
$$
We suppose  in the following in this section that, for $2s>1$, the function $x\mapsto PV \int_{\R^N}y J_{o,K}(x;y)\,dy$ belongs to  $L^1_{loc}(B_2;\R^N)$.  We then consider the map  $j_{o,K}: B_2\to \R^N$ defined as 
\be\label{eq:def-joK}
j_{o,K}(x):=(2s-1)_+PV \int_{\R^N } y J_{o, K}(x;y)\, dy=(2s-1)_+\sum_{i=1}^N e_i  PV\int_{\R^N } y_i J_{o, K}(x;y)\, dy,
\ee
where $\ell_+:=\max(\ell,0)$ for all $\ell\in \R$.

We note that if $u\in C^{2s+\e}(\O)\cap \cL^1_s$, for some $\e>0$ and an open set $\O$,  then for every $\psi\in C^\infty_c(\O)$, we have 
  \begin{align}\label{eq:decomp-weak-sol}
\frac{1}{2} \int_{\R^{2N}}(u(x)-u(y))(&\psi(x)-\psi(y))K(x,y)\,dxdy \nonumber\\
&=  \int_{\R^{N}}\psi(x)\left [PV\int_{\R^N}(u(x)-u(x+y)) J_{e,K}(x;y)\, dy \right]dx \nonumber\\
&+   \int_{\R^{N}}\psi(x) \left[PV \int_{\R^N}(u(x)-u(x+y)) J_{o,K}(x;y)\,dy \right]dx.
\end{align}
Moreover  for every $x\in \O$, we have 
\be\label{eq:ev-part-cL_K-int}
PV\int_{\R^N}(u(x)-u(x+y)) J_{e,K}(x;y)\, dy  = \frac{1}{2} \int_{\R^N}(2u(x)-u(x+y)-u(x-y)) J_{e,K}(x;y)\, dy .
\ee

We consider the family of affine functions 
$$
q_{t,T}(x)= t +(2s-1)_+ T\cdot x  \qquad\textrm{ $ t\in \R$ and $T\in \R^N$.}
$$
For $z\in \R^N$, we define the following  finite dimensional  subspace of $L^2(B_r(z))$, given  by     
$$
\cH_{z}:=\{q_{t,T}(\cdot -z)\,:\, t\in \R,\, T \in\R^N\}. 
$$
For $u\in L^2_{loc}(\R^N)$,    $r>0$ and $z\in \R^N $, we let  $\textbf{P}_{r,z}(u)\in \cH_{z}$ its $L^2(B_r(z))$-projection on $\cH_{z}$. Then 
\be \label{eq:u-perp-poly}
\int_{B_r(z)}(u(x)-\textbf{P}_{r,z}(u)(x)) p(x)\, dx=0 \qquad\textrm{ for every $p\in \cH_{z}$}.
\ee
 \begin{lemma} \label{lem:bound-Morrey}
Let $s\in (1/2,1)$,     $\b\in (0,2s-1)$,   $\L,\k>0$ and $\d\in (0,2s-1)$.   Then    there  exist $C>0$ and $\e_0>0$ such that for every 
 \begin{itemize}
 \item  $a$ satisfying  \eqref{eq:def-a-anisotropi}, 
 \item  $\l: \R^N\times \R^N\to [0, k^{-1}]$ satisfying  $\|\l\|_{L^\infty(B_2\times B_2)}<\e_0$,
 \item ${K}\in \scrK(\l,a,\k) $ satisfying      $\| \vp_2 j_{o,K}\|_{\cM_{\b'}}\leq 1$ (see \eqref{eq:def-joK}), for some $\b'\in [0,2s-1-\d)$,
 \item    $f\in \cM_\b$ and $u\in \cS_{K,0,f}$ satisfying $\|u\|_{L^\infty(\R^N)}+ \|f\|_{\cM_\b}\leq 1$,   
\end{itemize}
we have 
\be\label{eq:L-infty-growth-u-Hint}
\sup_{r>0} r^{-(2s-\max(\b,\b'))}   \sup_{z\in B_{1}}    \|u-\textbf{P}_{r,z}(u)\|_{L^\infty(B_r(z))} \leq C  ,
\ee
provided $ 2s-\b>1$ if $2s>1$.
\end{lemma}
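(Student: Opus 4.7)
The plan is to prove Lemma \ref{lem:bound-Morrey} by a blow-up/compactness argument directly parallel to Lemma \ref{lem:boundary-reg-Morrey}, with the one-dimensional space $\R d_2^s$ replaced by the finite-dimensional space $\cH_z$ of affine functions (which collapses to the constants when $2s \leq 1$). If the conclusion fails, a standard Dini-envelope construction produces sequences $r_n \to 0$, $z_n \in B_1$, admissible $(a_n,\lambda_n,K_n=K_{a_n},\b'_n,f_n)$ with $\|u_n\|_{L^\infty(\R^N)}+\|f_n\|_{\cM_\b}\le 1$, together with a nonincreasing envelope $\Theta_n$ satisfying $\Theta_n(r_n)\to\infty$ and
$$\Theta_n(r)\geq r^{-\alpha_n}\sup_{z\in B_1}\|u_n-\mathbf{P}_{r,z}(u_n)\|_{L^\infty(B_r(z))},\qquad \alpha_n:=2s-\max(\b,\b'_n)\in(1,2s),$$
where $\alpha_n>1$ uses the standing hypothesis $\b<2s-1$ when $s>1/2$. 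Normalizing, one sets
$$v_n(x):=\Theta_n(r_n)^{-1/2}r_n^{-\alpha_n}\bigl[u_n(r_nx+z_n)-\mathbf{P}_{r_n,z_n}(u_n)(r_nx+z_n)\bigr],$$
so that $\|v_n\|_{L^\infty(B_1)}\geq 1/4$ and $v_n\perp\cH_0$ in $L^2(B_1)$ by \eqref{eq:u-perp-poly}.

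The first technical step is the polynomial growth $\|v_n\|_{L^\infty(B_R)}\leq CR^{\alpha_n}$ for $R\geq 1$, obtained by a dyadic telescoping of the affine projections $\mathbf{P}_{2^{i+1}r_n,z_n}(u_n)-\mathbf{P}_{2^ir_n,z_n}(u_n)$ controlled through the definition and monotonicity of $\Theta_n$, exactly as in the corresponding step for Lemma \ref{lem:boundary-reg-Morrey}. The PDE for $v_n$ follows from a direct computation based on \eqref{eq:decomp-weak-sol}--\eqref{eq:ev-part-cL_K-int}: since $\cL_{K_n}$ annihilates constants and sends $q(x)=t+(2s-1)_+T\cdot(x-z)$ to $-T\cdot j_{o,K_n}$ (the even part of the kernel gives no contribution on linear functions, and the linear part vanishes identically when $s\leq 1/2$), rescaling turns $\cL_{K_n}u_n=f_n$ into
$$\cL_{K_n^{sc}}v_n=\Theta_n(r_n)^{-1/2}r_n^{-\alpha_n}\bigl[\widetilde f_n+T_n\cdot\widetilde g_n\bigr]\qquad\text{in }B_{1/(2r_n)},$$
where $K_n^{sc}(x,y)=r_n^{N+2s}K_n(r_nx+z_n,r_ny+z_n)$, $T_n$ is the linear coefficient of $\mathbf{P}_{r_n,z_n}(u_n)$, and the Morrey-rescaled data obey $\eta_{\widetilde f_n}(1)\leq Cr_n^{2s-\b}$ and $\eta_{\widetilde g_n}(1)\leq Cr_n^{2s-\b'_n}$ by \eqref{eq:eta-f-scale-translate}.

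With these pieces in place, Lemma \ref{lem:from-caciopp-ok} combined with the polynomial growth gives uniform $H^s_{\loc}$ bounds, so up to a subsequence $v_n\to v$ in $L^2_{\loc}\cap\cL^1_s$; the small-$\lambda_n$ assumption and Lemma \ref{lem:Ds-a-n--to-La} let us pass to the weak-star limit $b$ of $a_n$ and obtain $L_bv=0$ in $\R^N$, with $\|v\|_{L^2(B_R)}\leq CR^{N/2+\alpha}$ for some $\alpha=\lim\alpha_n\in(1,2s)$. Lemma \ref{lem:Liouville} forces $v$ to be an affine function (a constant when $s\leq 1/2$), and $L^2(B_1)$-orthogonality to $\cH_0$ then collapses $v$ to zero, contradicting $\|v\|_{L^\infty(B_1)}\geq 1/4$. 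The main technical obstacle is the uniform control of $T_n$ so that the drift $\Theta_n(r_n)^{-1/2}r_n^{-\alpha_n}T_n\cdot\widetilde g_n$ disappears in the limit: the naive orthogonal-projection estimate only yields $|T_n|\leq Cr_n^{-1}\|u_n\|_{L^\infty}$, which is insufficient. The required sharper bound must be extracted from the polynomial $L^\infty$-growth of $v_n$ itself and from the interior H\"older bound of Corollary \ref{cor:int--Holder-reg-Morrey} applied on balls of radius $O(r_n)$, with constants carefully tracked to depend on $\d$ but not on $\b'_n$ separately—closing this step uniformly in $\b'_n$ is where the argument requires the most care.
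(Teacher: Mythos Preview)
Your overall contradiction/blow-up scheme is the same as the paper's, and the growth bound, the orthogonality, and the passage to the limit via Lemma~\ref{lem:Ds-a-n--to-La} and the Liouville theorem are all handled exactly as you outline. However, you have misidentified the ``main technical obstacle'': the control of the linear coefficient $T_n$ is in fact trivial here, and it is precisely the reason why the hypothesis of Lemma~\ref{lem:bound-Morrey} is stated with $\|u\|_{L^\infty(\R^N)}\leq 1$ rather than merely $\|u\|_{L^2(\R^N)}\leq 1$.

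Concretely, after the rescaling $x\mapsto r_nx+z_n$, the projection $\mathbf P_{r_n,z_n}(u_n)(r_nx+z_n)=t_n+(2s-1)_+(r_nT_n)\cdot x$ has gradient $r_nT_n$, so the equation for $v_n$ carries the drift $(r_nT_n)\cdot\bigl(r_n^{2s}j_{o,K_{a_n}}(r_nx+z_n)\bigr)$. Since $\mathbf P_{r_n,z_n}(u_n)$ is the $L^2(B_{r_n}(z_n))$-projection of a function bounded by $1$, a direct computation on affine functions gives $|t_n|+r_n|T_n|\leq C$, i.e.\ $|r_nT_n|\leq C$. Combined with $\|\vp_2 j_{o,K_{a_n}}\|_{\cM_{\b'_n}}\leq 1$ and \eqref{eq:eta-f-scale-translate}, the rescaled drift has $\cM_{\b'_n}$-norm $\leq Cr_n^{2s-\b'_n}\leq Cr_n^{\alpha_n}$, so the full right-hand side in your displayed equation is of order $\Theta_n(r_n)^{-1}$ and vanishes in the limit. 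The paper disposes of this in a single line; no appeal to Corollary~\ref{cor:int--Holder-reg-Morrey} on shrinking balls is needed, and in particular there is nothing delicate about uniformity in $\b'_n$ at this point. (A minor remark: since the oscillation quantity here is an $L^\infty$-norm, the natural normalization is $\Theta_n(r_n)^{-1}$, not $\Theta_n(r_n)^{-1/2}$.)
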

 \begin{proof}
Then as in the proof of Proposition \ref{prop:bound-Kato-abstract}, if \eqref{eq:L-infty-growth-u-Hint} does not hold, then we can find a sequence $r_n\to 0$,  points $z_n\in B_1 $ and sequences of functions, $a_n$ satisfying \eqref{eq:def-a-anisotropi}, $\l_n$ with $\|\l_n\|_{L^\infty(B_2\times B_2)}<\frac{1}{n}$, $K_{a_n}\in \scrK(\l_n,a_n,\k)$ with $\| \vp_2 j_{o,K_{a_n}}\|_{\cM_{\b'_n}}\leq 1$ and $\b'_n \in  (0, 2s-1-\d)$,      $f_n\in  \cM_\b$  and $u_n\in \cS_{K_{a_n}, 0, f_n}$, with $\|u_n\|_{L^\infty(\R^N)}+\|f_n\|_{\cM_\b}\leq 1 $,  such that 
\be\label{eq:resclae-situation-pwb-Morrey-hi-Int}
      r^{ -(2s-\max(\b,\b_n'))}_n  \|u_n -\textbf{P}_{r_n,z_n}(u_n) \|_{L^\infty(B_{r_n} (z_n))}\geq  \frac{1}{16} \Theta_n(r_n) \geq  \frac{n}{32} .
\ee
 Here also $\Theta_n$ is a nonincreasing function on $(0,\infty)$ satisfying
\be \label{eq:sup-r-ok-Morrey-hi-Int}
\Theta_n( r)\geq  r^{-(2s-\max(\b,\b_n'))} \sup_{z\in B_1}  \| u_n- \textbf{P}_{r,z}(u_n)\|_{L^\infty(B_{r}(z))}   \qquad\textrm{ for every $r>0$ and $n\geq 2$.}
\ee
To alleviate the notations, we put $\b_n:=\max(\b,\b'_n)\leq \max(\b, 2s-\d)<2s$, for every $n\geq 2$.
 We   define  
$$
 {w}_n(x)=  \Theta_n(r_n)  r^{-(2s-\b_n)}_n  [u_n(r_n x+ z_n) - \textbf{P}_{r_n,z_n}(u_n)(r_n x+ z_n)], 
$$
so that 
\be \label{eq:w-n-nonzero-Morrey-int-reg}
 \|w_n\|_{L^\infty(B_{1})}^2\geq \frac{1}{16}
\ee
and, thanks to \eqref{eq:u-perp-poly},   by a change of variable,
\be\label{eq:w-n-zero-mean}
\int_{B_1} w_n(x) p(x)\, dx=0 \qquad\textrm{ for every $p\in \cH_{0}$.}
\ee

\bigskip
\noindent
\textbf{Claim:} There exists $C= C(  s,\b,  N,\d)>0$     such that  
\be\label{eq:groht-w-n-Morrey-HI}
 \|w_n\|_{L^\infty(B_{R})}\leq  C   R^{2s-\b_n}  \qquad\textrm{ for every $R\geq 1$.}
\ee
To prove this claim, we   note that by a change of variable, we have
\begin{align*}
 \|w_n\|_{L^\infty(B_{R})}&=   \Theta_n(r_n)^{-1}   r_n^{-N} r^{-(2s-\b_n)}_n \|u- \textbf{P}_{r_n,z_n}(u_n)\|_{L^\infty(B_{r_nR}(z_n))}\\
& \leq   \Theta_n(r_n)^{-1}     r^{-(2s-\b_n)}_n \|u- \textbf{P}_{r_n R,z_n}(u_n)\|_{L^\infty(B_{r_nR}(z_n))}\\
&+   \Theta_n(r_n)^{-1}  r^{-(2s-\b_n)}_n  \|\textbf{P}_{r_n R,z_n}(u_n)- \textbf{P}_{r_n,z_n }(u_n)\|_{L^\infty(B_{r_nR}(z_n))}.
%
%
\end{align*}
We write $ \textbf{P}_{r,z }(u_n)(x)=t(r)+T(r)\cdot (x-z)$,      for $r>0$ and $z\in B_1$. Then,  we have 
\begin{align*}
\left(|t(2r)-t_z(r)|^2+ |T(2r)-T(r)|^2 r^2\right)|B_{r}|^{{2}} &= \|\textbf{P}_{2r,z }(u_n)- \textbf{P}_{r,z }(u_n)\|_{L^2(B_{ r}(z))}^2\\
& \leq2   \|\textbf{P}_{2r,z }(u_n)-  u_n\|_{L^2(B_{2r}(z) )}^2+2  \|\textbf{P}_{r ,z}(u_n)- u_n\|_{L^2(B_{r}(z) )}^2\\
&\leq  2  (2 r)^{2(2s-\b_n)}    \Theta_n(2 r)^{2}   (2 r) ^{N}+  2 r^{2(2s-\b_n)}    \Theta_n( r)^{2}    r ^{N}\\
&\leq C  r ^{N}  \Theta_n( r)^{2}   r^{2(2s-\b_n)}  ,
\end{align*}
where we have used the monotonicity of $\Theta_n$.
We then have, for every $r>0$,
$$
|t(2r)-t(r)|+ |T(2r)-T(r)| r \leq C     \Theta_n( r)   r^{2s-\b_n} .
$$
Hence, since $2s-\b_n\geq 1$ if $2s>1$, for every integer  $m\geq 1$, we get 
\begin{align*}
  |T(2^mr)-T(r)| &=   \sum_{i=1}^m |T(2^{i}r)-T(2^{i-1}r)|\\
  & \leq  C  \Theta_n(2^{i-1}  r )  r^{2s-\b_n-1} \sum_{i=1}^m 2^{(i-1)(2s-\b_n-1)}\leq  C     \Theta_n(  r )   (2^{m} r)^{2s-\b_n-1}  ,
\end{align*}
with $C>0$ a constant independent on   $m$  and  on $n\geq 2$, since $2s-\b_n-1\geq \min(2s-1-\b,d) >0$.
Similarly, we also have that $ |t(2^mr)-t(r)| \leq  C      \Theta_n(  r )  (2^{m} r)^{2s-\b_n}     $.

Now for $R\geq 1$, letting $m$ be the smallest integer such that  $2^{m-1}\leq R \leq 2^{m}$, we then get
\begin{align*}
 \|w_n\|_{L^\infty(B_{R})}^2&=   \Theta_n(r_n)^{-1}     r_n^{-(2s-\b_n)}  \|u- \textbf{P}_{r_n,z_n}(u_n)\|_{L^\infty(B_{r_nR}(z_n))}\\
& \leq   \Theta_n(r_n)^{-1}    r_n^{-(2s-\b_n)}  \|u- \textbf{P}_{r_n R,z_n}(u_n)\|_{L^\infty(B_{r_nR}(z_n))}\\
&+   \Theta_n(r_n)^{-1}     r_n^{-(2s-\b_n)}  \|\textbf{P}_{r_n R,z_n}(u_n)- \textbf{P}_{r_n,z_n }(u_n)\|_{L^\infty(B_{r_nR}(z_n))}\\
&\leq  C  \Theta_n(r_n)^{-1}    r_n ^{-(2s-\b_n)}  (r_nR)^{(2s-\b_n)}    \Theta_n(R r_n)     \\
&+    \Theta_n(r_n)^{-1}     r_n^{-(2s-\b_n)}   \left(   |t(Rr_n)-t(r_n)|+ |T(Rr_n)-T(r_n)| r_nR  \right)\\
&\leq   C  \Theta_n(r_n)^{-1}    r_n ^{-(2s-\b_n)}  (r_nR)^{ 2s-\b_n }    \Theta_n(R r_n).
\end{align*}
By the monotonicity of $\Theta_n$, we get  the  claim.\\
It follows from \eqref{eq:groht-w-n-Morrey-HI} that
\be \label{eq:good-growth-of-v_n-Kato-pw-Morrey-int}
\|w_n\|_{\cL ^1_s}\leq C \qquad \textrm{ for every $n\geq 2$.}
\ee
We define $K_n(x,y):=r_n^{N+2s}K_{a_n}(r_n x+z_n, r_n y+z_n)$,  and  we note that  
$$
J_{o,K_n}(x;y) =r_n^{N+2s}J_{o,K_{a_n}}(r_n x+ z_n ; r_n y).
$$
We put $\textbf{P}_n(x):=\textbf{P}_{r_n,z_n}(u_n)(r_n x+z_n)$ and let $\psi\in C^\infty_c(\R^N)$. We use \eqref{eq:ev-part-cL_K-int},  to get 
   \begin{align*}
\frac{1}{2} \int_{\R^{2N}}&(\textbf{P}_n(x)-\textbf{P}_n(y))(\psi(x)-\psi(y))K_n(x,y)\,dxdy\\
%
%
%
&=  0+ \int_{\R^{N}}\psi(x) \left[PV\int_{\R^N}(\textbf{P}_n(x)-\textbf{P}_n(x+y)) J_{o,K_n}(x;y)\,dy \right]dx.
&
\end{align*}
Therefore writing  $\textbf{P}_{r_n,z_n }(u_n)(x)=t_n+(2s-1)_+T_n\cdot (x-z_n)$, we see that 
   \begin{align*}
PV\int_{\R^N}(\textbf{P}_n(x)-\textbf{P}_n(x+y)) J_{o,K_n}(x;y)\,dy=(2s-1)_+  (r_n T_n)\cdot \left( r_n^{2s} j_{o,K_{a_n}}(r_n x+ z_n) \right).
&
\end{align*}
We then conclude that  
\be \label{eq:of-vn-Kato-Morrey}
\cL_{K_n} w_n  =  r_n ^{-(2s-\b_n)} \Theta_n(r_n)^{-1}\left(  \ov f_n+  (2s-1)_+  h_n \right)   \qquad\textrm{ in $B_{1/{2r_n}} $,}
\ee 
where, noting that $\vp_2\equiv 1$ on $B_2$ and recalling \eqref{eq:def-joK},     
$$
\ov f_{n} (x) :=    r_n^{2s} f_n(r_n x+z_n) \qquad\textrm{ and } \qquad h_n(x)= (r_n T_n)\cdot \left( r_n^{2s} j_{o,K_{a_n}}(r_n x+ z_n)  \right)\vp_{2}(r_n x+ z_n).
$$
   Since $\|u_n\|_{L^\infty(\R^N)}\leq 1$, then  $|T_n|\leq r_n ^{-1}$. Therefore, since by assumption, $ \| \vp_2 j_{o,K_{a_n}}\|_{\cM_{\b_n'}} \leq 1$, we deduce that 
\be\label{eq:ov-fn-r2s-b}
 \|\ov f_{n}\|_{\cM_\b}+  \|h_{n}\|_{\cM_{\b'_n}}\leq  2 r^{2s-\b_n}_n\leq 2.
\ee
Next, we note that $K_n\in \scrK(\ti \l_n, a_n,\k)$, with $\ti \l_n(x,y)=\l_n(r_n x+ z_n, r_n y+z_n)$. By assumption,   $\|\ti \l_n\|_{L^\infty(B_{1/(2r_n)}\times B_{1/(2r_n)})}\leq \frac{1}{n}$. 
 Now by Corollary \ref{cor:int--Holder-reg-Morrey}, \eqref{eq:good-growth-of-v_n-Kato-pw-Morrey-int} and   \eqref{eq:ov-fn-r2s-b},  we deduce that $w_n$ is bounded in $C^\d_{loc}(\R^N) $, for some $\d>0$. In addition  thanks to  \eqref{eq:groht-w-n-Morrey-HI}, up to a subsequence,  it converges in $\cL^1_s\cap C^{\d/2}_{loc}(\R^N)$  to some  $w\in C^\d_{loc}(\R^N)\cap\cL^1_s$.  
Moreover, by \eqref{eq:w-n-nonzero-Morrey-int-reg} and \eqref{eq:w-n-zero-mean}, we deduce that 
\be\label{eq:w-nonzero-Morrey-HI}
  \|w\|_{L^\infty(B_1)} \geq \frac{1}{16}
\ee
 and 
 \be\label{eq:w-zero-mean-Morrey-HI} 
 \int_{B_1} w(x) p(x)\,dx=0 \qquad\textrm{ for every $p\in \cH_{0}$}.
 \ee
 We apply   Lemma \ref{lem:from-caciopp-ok} (after a cut-off argument as in the proof of Proposition \ref{prop:bound-Kato-abstract}),  use \eqref{eq:groht-w-n-Morrey-HI} and  \eqref{eq:ov-fn-r2s-b}   to   get 
\begin{align*}
\left\{  \k - \e \ov C \right\} &[\vp _M w_n]_{H^s(B_{M/2})}^2\leq       C(M) \qquad\textrm{ whenever  $1<M<\frac{1}{2r_n}$,}
\end{align*}
where we used the fact that $ \Theta_n(r_n)^{-1}\leq 1$, for every $n\geq 2$.
Therefore, provided   $\e$ is small enough, we find that   $w_n$ is bounded in $H^s_{loc}(\R^N)$ and thus  $w\in H^s_{loc}(\R^N)$. Now by Lemma \ref{lem:convergence-very-weak} and \eqref{eq:ov-fn-r2s-b},      we have
 \begin{align*} 
\left|\int_{\R^{2N}}(v_n(x)    -v_n(y))(\psi(x)-\psi(y))K_n(x,y)\, dxdy \right| 
&\leq \Theta_n(r_n)^{-1}  C (M)  .
\end{align*}
Letting $n\to \infty$ in the above inequality and using Lemma \ref{lem:Ds-a-n--to-La}, we find that  $L_b w=0$ in $ \R^N$, with $b$ the limit of $a_n$ in the weak-star topology of $L^\infty(S^{N-1})$.  
By   \eqref{eq:groht-w-n-Morrey-HI} and   Lemma \ref{lem:Liouville},   $w\in \cH_0$, which   contradicts \eqref{eq:w-zero-mean-Morrey-HI}  and \eqref{eq:w-nonzero-Morrey-HI}.

\end{proof} 
We  now have the following  $C^{2s-\b}$ regularity estimates for $2s-\b>1$ --- understanding that    $C^{2s-\b}=C^{1, 2s-\b-1}$ if $2s-\b>1$ by an abuse of notation. 
\begin{corollary}\label{cor:int--Holder-reg-Morrey-fin}
Let  $s\in (1/2,1)$, $\b,\d\in (0,2s-1)$ and      $\k,\L>0$. Let  $a$ satisfy \eqref{eq:def-a-anisotropi} and   $K\in \scrK(\l,a,\k)$ satisfy   $\| \vp_2 j_{o,K}\|_{ \cM_{\b'}}\leq c_0$, for some $c_0\geq 0$ and $\b'\in [0, 2s-1-\d)$.
  Let $f\in \cM_\b$ and  $u\in H^s (B_2)\cap \cL^1_s$   satisfy
$$
\cL_{K} u= f \qquad\textrm{ in $B_2$}.
$$
Then there exists $\e_0>0$, only depending on $ N,s,\b,\k,c_0,\d$ and $\L$, such that   if $\|\l\|_{L^\infty(B_2\times B_2)}<\e_0$, then 
  $u\in C^{2s-\max(\b,\b')}(B_{1/2})$. Moreover, there exists $C=C(N,s,\b,\L,\k,c_0,\d )$ such that 
  $$
  \|u\|_{C^{2s-\max(\b,\b')}(B_{1/2})}\leq C(\|u\|_{L^2(B_2)}+ \|u\|_{\cL^1_s}+ \|f\|_{\cM_\b}).
  $$
\end{corollary}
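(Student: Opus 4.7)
The plan is to mirror the cut-off plus rescaling strategy used in Corollary \ref{cor:int--Holder-reg-Morrey}, but feeding the output of Lemma \ref{lem:bound-Morrey} into a Campanato-type argument at the level of affine projections. After dividing through by $\|u\|_{L^2(B_2)}+\|u\|_{\cL^1_s}+\|f\|_{\cM_\b}$, one may assume this quantity equals $1$. Since $\b<2s-1$ forces $2s-\b>1$, a first application of Corollary \ref{cor:int--Holder-reg-Morrey} (say with exponent $\a_0\in(0,2s-\b)$) yields $\|u\|_{L^\infty(B_{7/4})}\leq C$, provided the threshold on $\|\l\|_{L^\infty(B_2\times B_2)}$ is chosen small enough.

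Next, fix $x_0\in B_{1/2}$ and $\d\in(0,\d_0)$, with $\d_0$ to be determined. Set $u_\d(x)=u(\d x+x_0)$, $f_\d(x)=\d^{2s}f(\d x+x_0)$ and
$$K_\d(x,y)=\d^{N+2s}K(\d x+x_0,\d y+x_0),$$
so that $K_\d\in\scrK(\l_\d,a,\k)$ with $\|\l_\d\|_{L^\infty(B_2\times B_2)}\leq \|\l\|_{L^\infty(B_2\times B_2)}$ and $\cL_{K_\d}u_\d=f_\d$ on $B_{1/(2\d)}$. A direct computation from \eqref{eq:def-joK} shows
$$j_{o,K_\d}(x)=\d^{2s-1}\,j_{o,K}(\d x+x_0),$$
so by a change of variables and the fact that $\vp_2\equiv1$ on $B_{2\d}(x_0)\subset B_2$,
$$\|\vp_2 j_{o,K_\d}\|_{\cM_{\b'}}\leq \d^{2s-1-\b'}\|\vp_2 j_{o,K}\|_{\cM_{\b'}}\leq c_0\d^{\d}\leq 1,$$
once $\d_0$ is chosen small enough depending on $c_0$ and $\d$; note the exponent $\d>0$ is controlled below independently of $\b'$. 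Similarly $\|f_\d\|_{\cM_\b}\leq\d^{2s-\b}\leq 1$. Now apply the cut-off Lemma \ref{lem:cat-off-sol} to $v_\d:=\vp_4 u_\d$ to obtain $\cL_{K_\d}v_\d=\widetilde f_\d$ in $B_2$ with
$$\|\widetilde f_\d\|_{\cM_\b}\leq \|f_\d\|_{\cM_\b}+C_0\|u_\d\|_{\cL^1_s}\leq C_1,$$
while $v_\d\in L^\infty(\R^N)$ is compactly supported with $\|v_\d\|_{L^\infty(\R^N)}\leq\|u\|_{L^\infty(B_{8\d}(x_0))}\leq C$, using step one.

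With these ingredients in hand, Lemma \ref{lem:bound-Morrey} applied to a normalized multiple of $v_\d$ yields, for a possibly smaller threshold $\e_0$,
\be\label{eq:planLinf}
\sup_{z\in B_1}\ \|v_\d-\mathbf P_{r,z}(v_\d)\|_{L^\infty(B_r(z))}\leq C\,r^{2s-\max(\b,\b')}\qquad\text{for all }r>0,
\ee
with $C$ independent of $\b'$. Since $v_\d=u_\d$ on $B_2$, the same bound holds for $u_\d$ when $r\in(0,1/2]$ and $z\in B_1$. The concluding step is the standard passage from \eqref{eq:planLinf} to a pointwise $C^{2s-\max(\b,\b')}=C^{1,2s-\max(\b,\b')-1}$ estimate: writing $\mathbf P_{r,z}(u_\d)(x)=t_z(r)+(2s-1)T_z(r)\cdot(x-z)$, comparing projections at scales $r$ and $2r$ (via the triangle inequality and \eqref{eq:planLinf}) gives that $\{t_z(r),T_z(r)\}_{r>0}$ is Cauchy, with limits $t_z,T_z$ satisfying $|t_z(r)-t_z|+r|T_z(r)-T_z|\leq C r^{2s-\max(\b,\b')}$. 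Comparing projections based at two points $z_1,z_2$ on overlapping balls then yields $|T_{z_1}-T_{z_2}|\leq C|z_1-z_2|^{2s-\max(\b,\b')-1}$ and a matching estimate for $t$, giving $u_\d\in C^{1,2s-\max(\b,\b')-1}(B_{1/4})$ with the desired norm bound. Scaling back and covering $\ov B_{1/2}$ by finitely many balls $B_{\d/4}(x_0)$ concludes the proof.

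The main obstacle is the Campanato-type upgrade just described: one must extract Hölder continuity of both the value $t_z$ and the gradient coefficient $T_z$ from the single $L^\infty$ growth estimate \eqref{eq:planLinf}, taking care that the resulting constant does not depend on $\b'\in[0,2s-1-\d)$. The independence from $\b'$ is ensured because all summations that arise (over dyadic scales $2^i r_n$) are geometric with ratio $\d$-bounded away from $1$ thanks to $\b'\leq 2s-1-\d$, exactly as in the proof of Lemma \ref{lem:bound-Morrey}.
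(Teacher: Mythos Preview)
Your proposal is correct and follows essentially the same strategy as the paper: obtain an $L^\infty$ bound via Corollary~\ref{cor:int--Holder-reg-Morrey}, use cut-off and rescaling (including the scaling identity $j_{o,K_\d}(x)=\d^{2s-1}j_{o,K}(\d x+x_0)$ to normalize $\|\vp_2 j_{o,K_\d}\|_{\cM_{\b'}}\le 1$), apply Lemma~\ref{lem:bound-Morrey}, and then extract the $C^{1,2s-\max(\b,\b')-1}$ regularity via a Campanato-style dyadic iteration on the affine projections. The paper proceeds in the same order but is terser, simply citing ``a well known iteration argument (see e.g.\ \cite{Serra-OK} or the proof of Lemma~\ref{lem:unif-cont-f-Morrey})'' for the step you spell out; your explicit verification that the constants remain independent of $\b'\in[0,2s-1-\d)$ (via the $\d$-gap in the geometric sums) is a useful addition.
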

 \begin{proof}
We first assume that $\|u\|_{L^\infty(\R^N)}+  \|f\|_{\cM_\b} \leq 1$ and $\|j_{o,K}\|_{\cM_{\b'}}\leq 1$.  By a well known iteration argument (see e.g \cite{Serra-OK} or the proof of Lemma \ref{lem:unif-cont-f-Morrey}), we find that 
$$
|u(x)- u(z)-   (2s-1) T(z)\cdot( x-z) |\leq C |x-z|^{2s-\max(\b,\b')} \qquad\textrm{ for every $x,z\in B_1$,}
$$
with $\|T\|_{L^\infty(B_1)}\leq C$, 
provided  $\|\l\|_{L^\infty(B_2\times B_2)}<\e_0$, with $\e_0$ given by Lemma \ref{lem:bound-Morrey}. In particular, since $2s-\max(\b,\b')>1$ then $\n u(z)=(2s-1)T(z)$. Note that since $\b'\in [0, 2s-1-\d)$, the constant $C$ does not depend on $\b'$ but on $\d$ (see the proof of Lemma \ref{lem:unif-cont-f-Morrey}).
By   a classical extension theorem (see e.g. \cite{Stein}[Page 177], we deduce that 
  $u\in C^{2s-\max(\b,\b')}(\ov {B_{1/2}})$.    Moreover
$$
\|u \|_{C^{2s-\max(\b,\b')}(\ov {B_{1/2}})}\leq C.
$$
 Now  for the general case   $u\in {H^s(B_2)},   f\in \cM_\b$ and $ \|\vp_2 j_{o,K}\|_{\cM_\b}\leq c_0 $, we  use  cut-off and scaling  arguments as in the proof of Corollary \ref{cor:int--Holder-reg-Morrey} to get
$$
\|u \|_{C^{2s-\max(\b,\b')}(B_{1/4} )}\leq C\left( \|u\|_{L^\infty(B_{1/2})}+ \|u\|_{\cL^1_s}+ \|f\|_{\cM_\b} \right).
$$ 
Now, decreasing $\e_0$ if necessary,  by  Corollary \ref{cor:int--Holder-reg-Morrey} we have   
$$
\|u \|_{{L^\infty}(B_{1/2} )}\leq C\left( \|u\|_{L^2(B_{1})}+ \|u\|_{\cL^1_s}+ \|f\|_{\cM_\b} \right).
$$
 The proof of the corollary is thus finished.

\end{proof}

 \section{Proof of the main results}\label{s:proof-main-results}

\begin{proof}[Proof of Theorem \ref{th:mainth}]
Suppose that   $\O$ is domain of class $C^{1,\g}$, with $0\in \de\O$. We consider $\O'$ a bounded domain of class $C^{1,\g}$ which coincides with $\de\O$ in a neighborhood of $0$.  We let  $\ov r>0$ small so that the distance function $d=\textrm{dist}(\cdot, \R^N\setminus\O)$ is of class $C^{1,\g}$ in $ \O\cap B_{4 \ov r}$ and $d_{\O'}(x):=\textrm{dist}(x, \R^N\setminus\O')=d(x)$ for every $x\in \ov  \O\cap B_{4 \ov r}$. Now, for $x\in \O\cap B_{\ov r}$, we have 
$$
\Ds_a(\vp_{2\ov r} d^s)= \Ds_a(\vp_{2\ov r} (d^s-d^s_{\O'}))+ \Ds_a(\vp_{2\ov r} d^s_{\O'}).
$$ 
By   \cite[Proposition 2.3 and 2.6]{RS3} and Lemma \ref{lem:cat-off-sol}, for  $\g\not=s$,  there exists a constant $C=C(\O,N,s,\L)>0$, such that   
$$
|\Ds_a(\vp_{2\ov r} d^s_{\O'})(x)|\leq  C d^{(s-\g)_+}_{\O'}(x) \qquad\textrm{ for every $x\in \O'$}.
$$
Since  $d^s-d^s_{\O'}=0$ on  $\O\cap B_{4 \ov r}$, we get  $| \Ds_a(\vp_{2\ov r} (d^s-d^s_{\O'}))|\leq C$ on   $\O\cap B_{ \ov r}$. 
We define $g_{\O,\mu_a}(x)=\Ds_a (\vp_{2\ov r} d^s)(x)$ for $x\in \O\cap B_{\ov r}$ and $g_{\O,\mu_a}(x)=0$ for $x\in \R^N\setminus (\O\cap B_{\ov r})$. Then, there exists a constant $C=C(\O,N,s,\L)>0$ such that   
$$
|g_{\O,\mu_a} (x)|\leq C\max(d^{\g-s}(x), 1) \qquad\textrm{ for every $x\in \R^N$}. 
$$
Letting $\b'=(s-\g)_+$, for $\g\not=s$, we then deduce that  $\|g_{\O,\mu_a}\|_{\cM_{\b'}}\leq C_1(s,N,\O,\g,\L)$. When $\g=s$, we can let $\b'=\e$ with $\e<\b$.  Moreover, up to scaling $\O$ to  $\frac{1}{\d}\O$, for some small  $\d>0$ depending only on $C_1$,   we may assume that $\|g_{\O,\mu_a}\|_{\cM_{\b'}}\leq 1$ and that $\de\O $ seperates $B_2$ into two domains.\\
   By Theorem \ref{th:bound-reg-alpha},  there exists $C>0$,  only depending on $N,s,\O,\b,\L,\g$ and $\|V\|_{\cM_\b}$, such that       
$$
    \|u\|_{L^\infty(B_{\ov r/2})}\leq C(\|u\|_{L^2(B_{\ov r})}+ \|u\|_{\cL^1_s}+ \|f\|_{\cM_\b}).
$$
Then applying Corollary \ref{cor:unif-cont-f-Morrey}, we get the Theorem \ref{th:mainth}(ii).   Now Theorem \ref{th:mainth}(i)  follows immediately from Corollary \ref{cor:int--Holder-reg-Morrey} and Corollary \ref{cor:int--Holder-reg-Morrey-fin}.
\end{proof}
%
%
%
%
%
\begin{proof}[Proof of Theorem \ref{th:int-reg-Morrey-intro}]
It suffices to apply    Corollary \ref{cor:int--Holder-reg-Morrey} to get $L^\infty$-bound and then apply   Theorem \ref{th:int--Holder-reg-Morrey} to get the result for $2s-\max(\b,\b')>1$. If $2s\leq 1$, then the result follows from Corollary \ref{cor:int--Holder-reg-Morrey}.
\end{proof}

\begin{proof}[Proof of Corollary \ref{cor:int-reg-Morrey-intro}]
Using a scaling and a covering argument as in the proof of Theorem \ref{th:int--Holder-reg-Morrey}  together with Theorem \ref{th:int-reg-Morrey-intro}, we get the result.
\end{proof}

\begin{proof}[Proof of Theorem \ref{th:mainth-gen}]
It suffices to apply  Theorem \ref{th:bound-reg-alpha} to get $L^\infty$-bound  and then apply   Corollary \ref{cor:unif-cont-f-Morrey}.
\end{proof}

\begin{proof}[Proof of Corollary \ref{cor:mainth-gen}]
Using a scaling and a covering argument as in the proof of Theorem \ref{th:int--Holder-reg-Morrey} and applying Theorem \ref{th:mainth-gen}, we get the result.
\end{proof}

\begin{proof}[Proof of Theorem  \ref{th:Small-Shauder}]
%
%
By assumption,  for every  $x_0\in \ov{B_{1}}$ and  $\e>0$, there exists $r_\e=r(\e,x_0)\in (0,1/100)$ such that
$$
|\l_{K}(x,r,\th)-\l_{K}(x_0,0,\th)|<\e\qquad\textrm{ for all $x\in B_{16 r_\e}(x_0)$ and   all $r\in (0,16r_\e)$} .
$$
This implies that 
\be\label{eq:cont-l-K}
|K(x,y)-\mu_a(x,y)|<\e \mu_1(x,y)\qquad\textrm{ for $x\not= y\in B_{8r_\e}(x_0)$} ,
\ee
where $a(\th)= \l_K(x_0,0,\th)$, which is even, since $\l_{o,K}(x_0,0,\th)=0$.  
Letting $ K_\e(x,y)=r_\e^{N+2s}K(r_\e x+ x_0, r_\e y+ x_0)$, $v_\e(x)=v(r_\e x+x_0)$ and  $f_\e(x)=r_\e^{2s}f(r_\e x+ x_0)$, we then have 
\be
\cL_{ K_\e} v_\e=  f_\e  \qquad B_8.
\ee
We note that
\begin{align*}
\ti   \l_{ K_{\e}}(x,r,\th)=\ti  \l_{K}(r_\e x+x_0,r_\e r,\th) 
\end{align*}
and thus, by assumption, 
\begin{align*}
\ti   \l_{o, K_{\e}}(x,r,\th)=\ti  \l_{o, K}(r_\e x+x_0,r_\e r,\th)\leq C  r^{\a+(2s-1)_+}.
\end{align*}
Clearly,  by \eqref{eq:cont-l-K}, 
\be\label{eq:K-eps-close-to-mua}
|K_{\e}(x,y)-\mu_a(x,y)|<\e \mu_1(x,y) \qquad\textrm{ for $x\not= y\in B_{8 }$.} 
\ee
By  Corollary \ref{cor:int--Holder-reg-Morrey}, provided $\e$ is small,  for every $\varrho\in (0,s/2)$,  we have 
\be\label{eq:first-estim-v-CNM}
\|v_\e\|_{C^{2s-\varrho}(B_7)}\leq C  \left( \|v_\e \|_{L^{\infty}(\R^{N})}+ \|f_\e\|_{L^\infty(B_8) } \right) ,
\ee
provided $2s-\varrho\not=1$.
We let  $v_{1,\e} :=\vp_1 v_\e\in H^{s} (\R^{N})\cap C^{2s-\varrho}_c(B_2)$. Then, we have 
\be \label{eq:v-1-solves-CNMC}
\cL_{K _\e}v_{1,\e}=f_{1,\e} \qquad\textrm{in $B_2$},
\ee
with 
\begin{align*}
f_{1,\e} (x)&=   f_\e(x)+ G_{v_\e}(x), 
\end{align*}
where 
\begin{align*}
 G_{v_\e}(x)&=  \int_{\R^{N}}{v_\e}(y)(\vp_1(x)-\vp_1(y))  {K_\e}(x,y) \, dy\\
 &= \int_{\R^{N}}({v_\e}(y)-{v_\e}(x))(\vp_1(x)-\vp_1(y))  K_\e(x,y) \, dy+{v_\e}(x) \int_{\R^{N}} (\vp_1(x)-\vp_1(y))  {K_\e}(x,y) \, dy \\
 &=\ti G_\e(x)  + \frac{v_\e(x)}{2} \int_{S^{N-1}}\int_{0}^\infty (2\vp_1(x)- \vp_1(x+ r\th)- \vp_1(x- r\th) )r^{-1-2s} \ti \l_{e,{K_\e}}(x,r,\th) \, dr d\th \\
 &+ \frac{v_\e(x)}{2} \int_{S^{N-1}}\int_{0}^\infty (\vp_1(x+ r\th)- \vp_1(x- r\th) ) r^{-1-2s} \ti \l_{o,{K_\e}}(x,r,\th) \, dr d\th ,
\end{align*}
where 
\begin{align*}
\ti G_\e(x) &:=\int_{S^{N-1}}\int_{0}^\infty({v_\e}(x+ r\th )-{v_\e}(x))(\vp_1(x)-\vp_1(x+r\th ))   r^{-1-2s}\ti \l_{K_\e}(x,r,\th ) \, dr d\th .
\end{align*}
Since $v_\e\in  C^{2s-\varrho}(B_7)\cap C^\a(\R^N)$, and $\vp_1\in C^1(\R^N)$, while the map $x\mapsto \ti \l_{K_\e}(x,r,\th )\in C^{\a }(\R^N)$, direct computations show that
$$
\|\ti G_{v_\e}\|_{C^{\b}(B_3)}\leq C( \|{v_{\e}}\|_{C^{2s-\varrho}(B_4)}+  \|{v_\e}\|_{C^{\b}(\R^N)}   ) ,
$$
for every $\b\in(0, \min(\a, 2s-\varrho))$.
Now since $\vp_1\in C^2(\R^N)$ and  
\be \label{eq:til-o-x1-x-2}
|\ti\l_{o,K_\e}(x_1,r,\th)-\ti\l_{o,K_\e}(x_2,r,\th)|\leq C \min (r,|x_1-x_2|)^{\a+(2s-1)_+},
\ee
 then     we can find an  $\a_0>0$, only depending on $\a,s$ and $\varrho$, such that  for all $\b\in (0,\a_0)$, 
$$
\| G_{v_\e}\|_{C^{\b}(B_3)}\leq C( \|{v_{\e}}\|_{C^{2s-\varrho}(B_4)}+  \|{v_\e}\|_{C^{\b}(\R^N)}   ) .
$$
By this, we deduce that 
\begin{align} \label{eq:estime-f_1-epsilon}
\|f_{1,\e}\|_{C^{\b}(B_3)}& \leq C(\|f_{\e}\|_{C^{\b}(B_3)} +   \|{v_{\e}}\|_{C^{2s-\varrho}(B_4)}+  \|{v_\e}\|_{C^{\b}(\R^N)}   ) \nonumber\\
&\leq C(\|f_{\e}\|_{C^{\b}(B_8)} +  \|{v_\e}\|_{C^{\b}(\R^N)}  )  ,
\end{align}
where, we used   \eqref{eq:first-estim-v-CNM} for the last inequality, provided $\a_0<2s-\varrho.$ 

We consider a nonegative function $\eta\in C^\infty_c(\R)$  satisfying  $\eta(t)=1$ for $|t|\leq 1$ and $\eta(t)=0$ for $|t|\geq 2$. We put $\eta_\d(t)=\eta(t/\d)$.
We now define 
$$
K_{\e,\d}(x,y)=\eta_\d(|x-y|)\mu_a(x,y)+(1-\eta_\d(|x-y|))  K_\e(x,y),
$$
and we note that,  by \eqref{eq:K-eps-close-to-mua},  
\be\label{eq:K-e-delt-near-mu-a}
|K_{\e,\d}(x,y)-\mu_a(x,y)|<\e (1+\|\eta\|_{L^\infty(\R)} ) \qquad\textrm{ for $x\not= y\in B_{8 }$} .
\ee
In addition, 
\be\label{eq:ti-lambda-e-delta}
\ti\l_{K_{\e,\d}}(x,r,\th)=\eta_\d(r)a(\th)+   (1-\eta_\d(r))\ti\l_{K_\e}(x,r,\th) .
\ee
  Now for  $\e,\d> 0$,  we consider $w_{\e,\d}\in H^s (\R^{N}) $, the (unique) weak solution to 
\be\label{eq:w-eps-delt}
\begin{cases}
\cL_{K_{\e,\d}} w_{\e,\d}=f_{1,\e} &\qquad\textrm{in $B_2$}\\
w_{\e,\d}= v_{1,\e}=0 & \qquad\textrm{in $\R^{N}\setminus B_2$}.
\end{cases} 
\ee
Multiplying \eqref{eq:w-eps-delt} by $ w_{\e,\d}$, integrating, using the symmetry of $K_{\e,\d}$ and H\"older's inequality, we deduce that 
$$
 [w_{\e,\d} ]_{ {H}^{s}(\R^{N} ) }^2\leq   \|f_{1,\e}\|_{L^2(B_2)} \|w_{\e,\d}\|_{L^2(B_2)}.
$$
Hence by the Poincar\'e inequality, we find that 
\be \label{eq:w-e-delta-L-2-bound}
 \|w_{\e,\d} \|_{ {H}^{s}(\R^{N} ) }\leq C   \|f_{1,\e}\|_{L^2(B_2)} ,
\ee
where here and in the following,  the letter    $C$ denotes a constant, which may vary from line to line but   independent on $\d,f$ and $v$.
Thanks to  \eqref{eq:K-e-delt-near-mu-a},  we can apply Theorem \ref{th:bound-reg-alpha}   together with  \eqref{eq:w-e-delta-L-2-bound},  to get
\be \label{eq:-estim-2s--eps-NMC-R-N}
\|w_{\e,\d}\|_{C^{s-\varrho}(\R^N)}\leq C  (\|w_{\e,\d}\|_{L^2(\R^N)}+ \|f_{1,\e}\|_{L^{\infty}(B_2) })\leq C \|f_{1,\e}\|_{L^{\infty}(B_2) },
\ee
 provided $\e$ small, independent on $\d$.
Furthermore by  Corollary \ref{cor:int--Holder-reg-Morrey},    provided $\e$ small and  independent on $\d$, using  \eqref{eq:w-e-delta-L-2-bound},     we have  that 
\be \label{eq:-estim-2s--eps-NMC}
\|w_{\e,\d}\|_{C^{2s-\varrho}(B_{1})}\leq C  (\|w_{\e,\d}\|_{L^2(\R^N)}+ \|f_{1,\e}\|_{L^{\infty}(B_2) })\leq C \|f_{1,\e}\|_{L^{\infty}(B_2) }.
\ee
On the other hand,  multiplying \eqref{eq:w-eps-delt} by  $w_{\e,\d}-v_{1,\e}$, we see    that
\be \label{eq:w-delt-v-1}
 [w_{\e,\d}-v_{1,\e}]_{ {H}^{s}(\R^{N} ) }\leq  \int_{\R^N\times \R^N }(v_{1,\e}(x)-v_{1,\e}(y))^2|K_{\e,\d}(x,y)- K_\e(x,y)|\,dxdy .
\ee
 Hence,  by the Poincar\'e inequality,  the dominated convergence and \eqref{eq:-estim-2s--eps-NMC-R-N},  as  $\d\to 0$, we may assume that $w_{\e, \d}\to v_{1,\e}$  in $C^{s-2\varrho}(\R^N)\cap  C^{2s-2\varrho}(\ov{B_{1/2}})$.  
Now for $\d>0$,   we have  
\begin{align*}
\cL_{\mu_a} w_{\e,\d}&=f_{1,\e}+ H_{\e,\d} \qquad\textrm{in $B_2$,}
\end{align*}
where  $H_\e\in C^{\min(\a,s-\varrho)}(\R^N)$ is given by 
\begin{align*}
H_{\e,\d}(x)&:= \int_{S^{N-1}}\int_{0}^\infty (w_{\e,\d}(x)- w_{\e,\d}(x+r\th ) ) (1-\eta_\d(r)) r^{-1-2s}(\ti \l_{{\mu_a } }(x,r,\th)-\ti  \l_{{ K_{\e}} }(x,r,\th))\, dr d\th. 
\end{align*}
It follows from \cite[Theorem 1.1]{RS2} and \eqref{eq:estime-f_1-epsilon} that  $w_{\e,\d}\in C^{2s+\a_1}_{loc}(B_2)$, for  some $\a_1>0$, only depending on $\a,s$ and $\varrho$.  Now by \eqref{eq:decomp-weak-sol}, 
\be \label{eq:L-b-delt-w-d}
L_{K_{\e,\d}}(x, w_{\e,\d})=   F_{\e,\d} \qquad\textrm{in $B_{2}$},
\ee
where  the $x$-dependent  operator $L_{K_{\e,\d}}(x,\cdot)$ is given by 
\be
L_{K_{\e,\d}}(x, u)= \frac{1}{2}\int_{\R^N}(2u(x)-u(x+y)-u(x-y))J_{e,K_{\e,\d}}(x;y)\, dy
\ee
   and 
\begin{align*}
 F_{\e, \d}(x)&:=  f_{1,\e}(x)  + \frac{1}{2}\int_{S^{N-1}}\int_{0}^\infty(  w_{\e,\d}(x+r\th )- w_{\e,\d}(x-r\th )) r^{-1-2s}\ti\l_{o, {K_{\e,\d}} }(x,r,\th)\, dr d\th.
\end{align*}
We observe that  $\ti\l_{o, {K_{\e,\d}} }(x,r,\th)=(1-\eta_\d(r))\ti \l_{o, {K_{\e}} }(x,r,\th)$.  Hence by  \eqref{eq:til-o-x1-x-2} and the fact that $w_{\e,\d}\in C^{ 2s-\varrho}(B_{1})\cap C^{s-\varrho}(\R^N)$, then  provided $\varrho<\a$, there exists $\a_2>0$, depending only on $s,\a$ and $\varrho$  such that
$$
  \|  F_{\e, \d}\|_	{C^{ \b}(B_{1/2})}\leq C ( \| f_{1,\e}\|_	{C^{ \b}(B_2)} + \| w_{\e,\d}\|_{C^{ 2s-\varrho}(B_{1})}+   \| w_{\e,\d}\|_{C^{s-\varrho}(\R^N)}   ),
$$
for all $\b\in (0,\a_2)$.
Hence by    \eqref{eq:-estim-2s--eps-NMC-R-N} and \eqref{eq:-estim-2s--eps-NMC}, 
\be \label{eq:norm-F-eps-delt}
  \|  F_{\e, \d}\|_	{C^{ \b}(B_{1/2})}\leq C     \| f_{1,\e}\|_	{C^{ \b}(B_2)}  .
\ee
It is easy to see, from \eqref{eq:ti-lambda-e-delta} and our assumption  that,  for very $x_1,x_2\in \R^N$, 
\begin{align*}
\int_{B_{2\rho}\setminus B_\rho}&|J_{e, {K_{\e,\d}}  }(x_1;y)- J_{e, {K_{\e,\d}}  }(x_2;y)|\, dy=\\
&\int_{S^{N-1}}\int_{\rho}^{2\rho}  |\ti\l_{e, {K_{\e,\d}} }(x_1,r,\th)- \ti\l_{e, {K_{\e,\d}} }(x_2,r,\th)| r^{-1-2s}\, drd\th \leq C {|x_1-x_2|^\a}\rho^{-2s}.
\end{align*}
We now  apply  \cite[Theorem 1.1]{Serra} to the equation \eqref{eq:L-b-delt-w-d} and use \eqref{eq:norm-F-eps-delt} together with \eqref{eq:-estim-2s--eps-NMC-R-N},  to deduce that,  there exists  $\ov\a\in (0, \min(\a_0,\a_1,\a_2) )$, independent on $\d$, $v$ and $f$, such that   
$$
\|w_{\e,\d}\|_{C^{2s+ \b}(B_{1/4})}\leq C(   \|  F_{\e, \d}\|_	{C^{ \b}(B_1)}+   \|  w_{\e, \d}\|_	{C^\b(\R^N)}  )\leq  C   \| f_{1,\e}\|_	{C^{ \b}(B_2)} , 
$$
whenever $\b\in (0,\ov \a)$, with $2s+\b\not\in\N$.  
In view of \eqref{eq:estime-f_1-epsilon} and recalling that $w_{\e, \d}\to v_\e$  in $C^{s-2\varrho}(\R^N)\cap  C^{2s-2\varrho}(\ov{B_{1/2}})$, then   decreasing $\ov \a$ if necessary, we can send $\d\to 0$ and  get 
$$
\|v_\e \|_{C^{2s+ \b}(B_{1/4})}\leq    C(   \| f_\e \|_	{C^{ \b}(B_8)} +   \|  v_\e\|_	{C^\b(\R^N)}  ) ,
$$
provided $2s+\b<2$ ---noting that  if $2s>1$ then choosing $\varrho$ small, we have $\n w_{\e,\d} \to \n v_\e$ pointwise on $B_{1/4}$.
Scaling and translating back, we have thus proved  that for every $x_0\in \ov{B_{1}}$, there exists $\d_{x_0}\in (0,1)$ such that 
$$
\|v \|_{C^{2s+ \b}(B_{\d_{x_0}}(x_0))}\leq    C(x_0)(   \| f \|_	{C^{ \b}(B_2)} +   \|  v\|_	{C^\b(\R^N)}  ) ,
$$
with $C(x_0)$ is a constant depending only on $x_0,N,s,\a,\b$ and $ \k$.  Now by a  covering argument as in the proof of Theorem \ref{th:int--Holder-reg-Morrey}, we get the desired estimate.
%


\end{proof}

\subsection{Proof of Theorem \ref{th:bdr-reg-C11-domain}}\label{ss:proof-bdr-reg-change-var}
We start with the following result which provides a global diffeomorphism that locally flattens the boundary of $\de\O$ near the origin.
 \begin{lemma}\label{lem:glob-diff}
 Let $\O$ be an open set with boundary of class $C^{k,\g}$, for some $k\geq 1$ and $\g\in [0,1]$. Suppose that $0\in \de\O$ and that the interior unit normal of $\de \O$ at $0$  coincides with $e_N$. Then there exists $\rho_0>0$ such that for every $\rho\in (0,\rho_0)$, there exists   a (global) diffeomorphism $\Phi_\rho : \R^N\to \R^N$ with the following properties
 \begin{itemize}
  \item $D\Phi_\rho-id \in {C^{k,\g}_c(B_{2\rho};\R^N)}$,
 \item $\Phi_\rho(0)=0$ and   $D\Upsilon_\rho(0)=id $,
 \item  $\|D\Upsilon_\rho-id \|_{L^\infty(\R^N)}\to  0$, { as $\rho\to0$},
 \item  $\Phi_\rho(B_{r}',x_N)\subset \O$  if and only if  $x_N\in (0, \rho)$,
\item   $\Phi_\rho(B_{\rho}',0)\subset \de\O$,
 \item  the distance function to $\O^c$, satisfies $d(\Phi_\rho(x))=x_N,$ for all $x\in B_\rho^+$,
  \item  $\Phi_\rho$ is volume preserving in $\R^N$, i.e.   $\textrm{Det} D\Phi_\rho(x)=1$ for every $x\in \R^N$.
 \end{itemize}

 Here $B_\rho'$ denotes the ball in $\R^{N-1}$ centered at $0$, with radius $\rho>0$.
 \end{lemma}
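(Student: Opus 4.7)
The plan is to construct $\Phi_\rho$ in three stages: a Fermi-coordinate flattening near $\partial\Omega$, a tangential correction that enforces $\det D\Phi_\rho=1$ on the region where the distance property is required, and a final Moser-type interpolation to the identity outside $B_{2\rho}$.

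\medskip

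\textbf{Step 1: Local graph representation and Fermi-coordinate flattening.}
Since $\partial\Omega$ is $C^{k,\g}$ with interior unit normal $e_N$ at $0$, there exist $\rho_0>0$ and a function $G\in C^{k,\g}(B_{\rho_0}';\R)$ with $G(0)=0$, $\n G(0)=0$, and
$$
\partial\Omega\cap B_{2\rho_0}=\{(x',G(x'))\,:\,x'\in B_{2\rho_0}'\},
$$
with $\Omega$ locally equal to $\{x_N>G(x')\}$. Let $\sigma(x'):=(x',G(x'))$ be the corresponding chart, let $\nu$ denote the inward unit normal to $\partial\Omega$, and let $W$ denote the shape operator. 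For $\rho\in(0,\rho_0)$ sufficiently small, the Fermi map
$$
\Psi(x',x_N):=\sigma(x')+x_N\,\nu(\sigma(x'))
$$
is a $C^{k,\g}$ diffeomorphism from $B_\rho'\times(-\rho,\rho)$ onto a tubular neighborhood of $\sigma(B_\rho')$, with $\Psi(0)=0$, $D\Psi(0)=\mathrm{id}$, and $d(\Psi(x',x_N))=x_N$ for $(x',x_N)\in B_\rho^+$. Its Jacobian is
$$
J_\Psi(x',x_N)=J_\sigma(x')\,\det(I-x_N W(\sigma(x'))),
$$
and since $J_\sigma(0)=1$ and $W$ is bounded, one has $\|J_\Psi-1\|_{L^\infty(B_\rho^+)}\to 0$ as $\rho\to0$.

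\medskip

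\textbf{Step 2: Volume-preserving tangential correction on $B_\rho^+$.}
The key observation is that the distance condition $d(\Phi_\rho(x))=x_N$ is preserved by any diffeomorphism of the form $\Phi_\rho(x',x_N)=\Psi(\Xi'(x',x_N),x_N)$ with $\Xi'(\cdot,x_N):B_\rho'\to B_\rho'$, because such a map only reshuffles points along the level set $\{d=x_N\}$. I will therefore seek $\Xi'$ satisfying
$$
\det D_{x'}\Xi'(x',x_N)\;=\;\frac{1}{J_\Psi(\Xi'(x',x_N),x_N)}\qquad\text{for each }x_N\in(-\rho,\rho),
$$
so that $\det D\Phi_\rho\equiv 1$ in $B_\rho\times(-\rho,\rho)$. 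This is a Dacorogna--Moser problem in the $(N-1)$-dimensional slice $B_\rho'$ with parameter $x_N$; it is solvable as long as the prescribed positive Jacobian has integral equal to $|B_\rho'|$, which I enforce by rescaling the ambient neighborhood (or equivalently by choosing $\Xi'(\cdot,x_N)$ to map $B_\rho'$ onto a slightly different set of the same area). Since $J_\Psi\to 1$ uniformly as $\rho\to0$, the Dacorogna--Moser solution can be chosen with $\|D_{x'}\Xi'-\mathrm{id}\|_{L^\infty}\to 0$ and smoothly depending on $x_N$; imposing $\Xi'(0,x_N)=0$ fixes the normalization. The resulting map $\Phi_\rho^{(1)}:=\Psi\circ(\Xi',\mathrm{id}_N)$ is a $C^{k,\g}$ diffeomorphism of $B_\rho\times(-\rho,\rho)$ with Jacobian identically $1$, satisfying $\Phi_\rho^{(1)}(0)=0$, $D\Phi_\rho^{(1)}(0)=\mathrm{id}$, $d(\Phi_\rho^{(1)}(x))=x_N$ on $B_\rho^+$, and $\Phi_\rho^{(1)}(B_\rho'\times\{0\})\subset\partial\Omega$.

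\medskip

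\textbf{Step 3: Global extension via Moser's lemma.}
It remains to extend $\Phi_\rho^{(1)}$ to a global volume-preserving diffeomorphism of $\R^N$ equal to the identity outside $B_{2\rho}$. I will first use a smooth cutoff to define a candidate extension
$$
\widetilde\Phi_\rho(x):=x+\chi(x)\bigl(\Phi_\rho^{(1)}(x)-x\bigr),\qquad \chi\in C^\infty_c(B_{2\rho}),\ \chi\equiv 1\text{ on }B_\rho,
$$
which equals $\Phi_\rho^{(1)}$ on $B_\rho$, equals the identity outside $B_{2\rho}$, and is a $C^{k,\g}$ diffeomorphism provided $\rho$ is small enough (by the smallness of $D\Phi_\rho^{(1)}-\mathrm{id}$). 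Its Jacobian equals $1$ on $B_\rho$ and outside $B_{2\rho}$ but generally differs from $1$ on the annulus $B_{2\rho}\setminus B_\rho$. Applying the Dacorogna--Moser theorem (\emph{e.g.}\ in the form of \cite{}) to the annular region $B_{2\rho}\setminus\overline{B_\rho}$ with prescribed Jacobian $1/\det D\widetilde\Phi_\rho$ (whose integral over the annulus automatically equals the annulus's volume, since $\widetilde\Phi_\rho$ is a self-diffeomorphism of $B_{2\rho}$ agreeing with the identity on the boundary), I obtain a correction $\Theta\in C^{k,\g}$ with $\Theta=\mathrm{id}$ on $\partial B_\rho\cup\partial B_{2\rho}$ (extended by the identity) such that $\Phi_\rho:=\widetilde\Phi_\rho\circ\Theta$ has $\det D\Phi_\rho\equiv 1$ globally. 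Since $\Theta=\mathrm{id}$ on $B_\rho$, the inner region is untouched, so all distance, boundary, and normalization properties survive.

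\medskip

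\textbf{Main obstacle.} The substantive step is Step 2: combining volume preservation with the rigid constraint $d(\Phi_\rho(x))=x_N$ forces the correction to act \emph{only} tangentially, leaving no freedom in the normal direction. The compatibility condition for Dacorogna--Moser in each slice therefore has to be met by a careful choice of the target domain for $\Xi'(\cdot,x_N)$, and the resulting family must depend smoothly on $x_N$ and degenerate to the identity as $\rho\to0$, uniformly up to order $k$. The smallness estimates $\|J_\Psi-1\|_{L^\infty}=O(\rho)$ together with the quantitative version of Dacorogna--Moser are what deliver the required uniform control and the condition $\|D\Phi_\rho-\mathrm{id}\|_{L^\infty(\R^N)}\to 0$.
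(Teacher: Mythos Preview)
Your route is far more elaborate than the paper's. The paper simply takes
\[
\Phi_\rho(x',x_N)=(x',\,x_N+\eta_\rho(x')\phi(x')),
\]
where $\phi$ is the local graph function for $\partial\Omega$ and $\eta_\rho\in C^\infty_c(B_\rho')$ is a cutoff equal to $1$ on $B_{\rho/2}'$. Since $D\Phi_\rho$ is lower triangular with ones on the diagonal, $\det D\Phi_\rho\equiv 1$ holds \emph{for free}; no Dacorogna--Moser machinery is needed anywhere. Global invertibility then follows from Hadamard's global inversion theorem, and the smallness $\|D\Phi_\rho-\mathrm{id}\|_{L^\infty}\to 0$ is immediate from $\|\nabla\phi\|_{L^\infty(B_\rho')}\to 0$.

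Your Fermi-coordinate construction does buy one thing the paper's map does not: it gives $d(\Phi_\rho(x))=x_N$ \emph{exactly}, whereas for the vertical-shift map the height $x_N$ above the graph differs from the Euclidean distance by a factor involving $\nabla\phi$. So the paper's proof is, strictly speaking, imprecise on that bullet of the lemma; the discrepancy is $O(x_N|\nabla\phi|^2)$ and is harmless in the application to Theorem~\ref{th:bdr-reg-C11-domain}, but your concern is legitimate if one insists on the lemma as stated.

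The price you pay is the obstacle you yourself identify in Step~2, and it is a real gap in your plan. The slicewise Dacorogna--Moser problem requires $\int_{B_\rho'}J_\Psi(y',x_N)\,dy'=|B_\rho'|$ for each $x_N$, which fails in general; your proposed workarounds (rescale the neighborhood, or let $\Xi'(\cdot,x_N)$ have an $x_N$-dependent target) are not spelled out, and making them produce a map that is simultaneously $C^{k,\gamma}$ in all variables, reduces to the identity at $x_N=0$, and glues cleanly to the identity for Step~3 is nontrivial. As written, Step~2 is a sketch rather than an argument. If the exact distance identity matters to you, it is cheaper to accept the paper's $\Phi_\rho$ and separately record that $d(\Phi_\rho(x))/x_N$ is $C^{k-1,\gamma}$ and close to $1$, which is all the downstream estimates really use.
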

 \begin{proof}
We consider  $\rho_1>0$ and a $C^{k,\g}$-function $\phi: B_{\rho_1}'\to \R$, $\phi(0)=0$ such that its graph  $x'\mapsto (x',\phi(x))\in \de \O$ is a parameterization of   a neighborhood of $0$ in $\de\O$.   Since the normal of $\de\O$ at 0 coincides with $e_N$ and $\O$ is of class $C^1$, we get 
\be\label{eq:nabla-phi-global-param}
\|\n \phi\|_{L^\infty(B_\rho)}\to 0\qquad\textrm{as $\rho\to 0$}. 
\ee
Moreover decreasing $\rho_1$, if necessary,   we have
$$
 (x',x_N+\phi(x'))\in \O \qquad\textrm{ for all $x_N\in (0, \rho_1)$.}
 $$
 Let $\eta\in C^\infty_c(B_1')$, with $\eta\equiv 1 $ on $B_{1/2}'$. 
For $\rho\in (0, \rho_1)$, we consider $\eta_\rho\in C^\infty_c(B_{\rho}')$ given by $\eta_\rho(x)=\eta(x/\rho)$. We now define  $\Phi_\rho:\R^N\to \R^N$,  by 
$$
\Phi_\rho(x',x_N):= (x',x_N+\eta_\rho(x')\phi(x')).
$$
 By \eqref{eq:nabla-phi-global-param}, we have 
$$
 \| D \Phi_\rho-id\|_{L^\infty(\R^N)}\leq  C \rho^{-1} \|\phi\|_{L^\infty(B_\rho)}+ C \|\n \phi\|_{L^\infty(B_\rho)}\to 0\qquad\textrm{as $\rho\to 0$}.
$$
 This  implies that there exists $\rho_0>0$ such that for every $\rho\in (0, \rho_0)$ and  for every $x\in \R^N$, the  Jacboian   of $\Phi_{\rho}$ at $x$,  $\textrm{Det} D \Phi_{\rho}(x)=1$. Since,   $\lim_{|x|\to \infty} |\Phi_{\rho}(x)|\to +\infty $, it follows from  Hadamard's Global Inversion Theorem (see e.g. \cite{Gordon}) that $\Phi_{\rho}$  is a global diffeomorphism, for every $\rho\in (0, \rho_0)$. Clearly $\Phi_\rho$ satisfies all properties stated in the lemma.
 \end{proof}
 %
 %
 \begin{proof}[Proof of Theorem \ref{th:bdr-reg-C11-domain}  (completed)]

 We assume that the interior unit normal of $\de \O$ at $0$  coincides with $e_N$.
 Consider $\Phi_\rho\in C^{1,1}(\R^N;\R^N)$,    given by Lemma \ref{lem:glob-diff}. In the following, we fix   $\rho>0$  small, so that
\be\label{eq:estim-Upsilon-rho-14}
\sup_{x\in \R^N}| D\Phi_\rho(x  )-id| <\frac{1}{4}  .
\ee 
By Theorem \ref{th:bound-reg-alpha},  there exists $C>0$,  only depending on $N,s,\O,\b,\L,\g,\d$ and $\|V\|_{\cM_\b}$, such that       
\be \label{eq:L-infty-u-proof-bdr-reg}
    \|u\|_{L^\infty(B_{\rho })}\leq C(\|u\|_{L^2(B_{2 \rho})}+ \|u\|_{\cL^1_s}+ \|f\|_{\cM_\b}).
\ee
We then have that $\cL_K u=f-uV$ on $\O$. 
Letting  $U(x)=u(\Phi_\rho(x))$ and $F(x)= f(\Phi_\rho(x)) - U(x)V(\Phi_\rho(x))$, then by a change of variable,  we have 
\begin{align*}
\frac{1}{2}\int_{\R^{2N}}(U(x)-U(y))(\psi(x)-\psi(y))  K(\Phi_\rho(x), \Phi_\rho(y))dxdy=\int_{\R^N}F(x)\psi(x)\, dx,
\end{align*}
for every $\psi\in C^\infty_c(B_{\rho/2}^+)$. Therefore, $U\in H^s (B_{\rho})\cap \cL^1_s$, 
$$
\cL_{K_\rho}U=  F \qquad\textrm{ on  $B_{\rho/2}^+$}, \qquad\textrm{ and } \qquad U=0 \qquad\textrm{ on $B_{\rho/2}^-$},
$$
where $K_\rho(x,y):= K(\Phi_\rho(x), \Phi_\rho(y)) $,   $B_\rho^-:=B_\rho\cap \{x_N<0\}$ and  $B_\rho^+:=B_\rho\cap \{x_N>0\}$.
In particular, we have 
\be\label{eq:capital-U-solves}
\cL_{K_\rho}U=  \vp_{\rho/4 } F \qquad\textrm{ on  $B_{\rho/4}^+$}, \qquad\textrm{ and } \qquad U=0 \qquad\textrm{ on $B_{\rho/4}^-$},
\ee 
and we note that by \eqref{eq:L-infty-u-proof-bdr-reg}, $  \vp_{\rho/4 } F\in \cM_\b$. 
 We observe that 
\begin{align*}
K_\rho(x,x+y)&= K(\Phi_\rho(x), \Phi_\rho(x+y))\\
&= K\left(\Phi_\rho(x),\Phi_\rho(x)+ [ \Phi_\rho(x+y) -\Phi_\rho(x)] \right).
%
\end{align*}
 We   define  $A\in C^{0,1}(\R^N\times[0, \infty) \times S^{N-1};\R^N) $, by 
$$
 A(x,r,\th):=  \int_0^1D\Phi_\rho(x+ r \tau \th )\th \, d\tau,
$$
so that, for $r>0$, 
$$
K_\rho(x,x+r\th )=  K\left(\Phi_\rho(x),\Phi_\rho(x)+ r A(x,r,\th)\right).
$$
Now  by  \eqref{eq:estim-Upsilon-rho-14}, we can find  constants   $C,C'>0$ such that, for every $x_1,x_2\in B_2$, $r_1,r_2\in [0,2)$ and $\th\in S^{N-1}$, 
\be\label{eq:Holder-A-est-tiK-rho}
|A(x_1,r_1,\th)-A(x_2,r_2,\th) |\leq C (|x_1-x_2| + |r_1-r_2| )
\ee
and 
\be\label{eq:Holder-A-est-tiK-rho-invers}
\frac{1}{|A(x_1,r_1,\th)|}  \geq  C'  .
\ee
Since $ \ti \l_K\in C^{s+\d}( B_2\times[0, 2) \times S^{N-1} )$,  we  then have  that $ K_\rho\in \ti \scrK(\k')$ and satisfies  \eqref{eq:Kernel-satisf}, for some $\k'>0$, only depending on $\O,N,s$ and $\k$, with  
$$
\ti \l_{K_\rho}(x,r,\th)=  |  A(x,r,\th) |^{-N-2s}\ti \l_K\left(\Phi_\rho(x), r | A(x,r,\th) |,\frac{ A(x,r,\th) }{| A(x,r,\th) |} \right). 
$$
By \eqref{eq:Holder-A-est-tiK-rho} and \eqref{eq:Holder-A-est-tiK-rho-invers}, it is clear that  
\be\label{eq:est-tiK-rho}
|\ti \l_{K_\rho}(x_1,r_1,\th)-\ti \l_{K_\rho}(x_2,r_2,\th) |\leq C (|x_1-x_2|^{s+\d}+ |r_1-r_2|^{s+\d}).
\ee
We note that by assumption, 
\begin{align*}
\left| \ti \l_K\left(\Phi_\rho(x), r | A(x,r,\th) |,\frac{ A(x,r,\th) }{| A(x,r,\th) |} \right) \right.&\left.- \ti \l_K\left(\Phi_\rho(x), r | A(x,r,\th) |,-\frac{ A(x,r,\th) }{| A(x,r,\th) |} \right) \right|\\
&\leq c_0 r^{s+\d}   | A(x,r,\th) | \leq  C r^{s+\d} . 
\end{align*}
Next, we put $$G(x,r,\th):=\ti \l_K\left(\Phi_\rho(x), r | A(x,r,\th) |,\frac{A( x,r,\th) }{| A(x,r,\th) |} \right).$$  Then, using the above estimate, \eqref {eq:Holder-A-est-tiK-rho} and \eqref{eq:Holder-A-est-tiK-rho-invers}, we get 
\begin{align*}
&|G(x,r,\th)-G(x,r,-\th)|\\
&\leq \left| \ti \l_K\left(\Phi_\rho(x), r | A(x,r,\th) |,\frac{ A(x,r,\th) }{| A(x,r,\th) |} \right)- \ti \l_K\left(\Phi_\rho(x), r | A(x,r,\th) |,-\frac{ A(x,r,\th) }{| A(x,r,\th) |} \right) \right|\\
&+ \left| \ti \l_K\left(\Phi_\rho(x), r | A(x,r,\th) |,- \frac{ A(x,r,\th) }{| A(x,r,\th) |} \right)- \ti \l_K\left(\Phi_\rho(x), r | A(x,r,-\th) |,-\frac{ A(x,r,\th) }{| A(x,r,-\th) |} \right) \right|\\
&+  \left|\ti \l_K\left(\Phi_\rho(x), r | A(x,r,-\th) |,-\frac{ A(x,r,\th) }{| A(x,r,-\th) |}\right)-  \ti \l_K\left(\Phi_\rho(x), r | A(x,r,-\th) |,\frac{ A(x,r,-\th) }{| A(x,r,-\th) |} \right) \right|\\
%
%
&\leq C r^{s+\d} + C \left|  \frac{ 1 }{| A(x,r,\th) |}-  \frac{ 1}{| A(x,r,-\th) |}  \right|^{s+\d}  + C |  { A(x,r,\th) } +  { A(x,r,-\th) }  |^{s+\d}\\
&\leq C r^{s+\d},
\end{align*}
with $C$ depends only on $\O,N,s,$ and $\d$. From this, \eqref{eq:Holder-A-est-tiK-rho} and \eqref{eq:Holder-A-est-tiK-rho-invers}, we easily deduce that
\be\label{eq:est-tiK-rho-oo}
|\ti \l_{K_\rho}(x,r,\th)-\ti \l_{K_\rho}(x,r,-\th) |\leq C( r^{s+\d} +r).
\ee
We put  $d_+(x)=\max(x_N,0)$.  Using \eqref{eq:decomp-weak-sol}, for $\psi\in C^\infty_c(B_2^+)$, we then have
\begin{align*}
\frac{1}{2}\int_{\R^{2N}}& (d_+^s(x)-d_+^s(y)) (\psi(x)-\psi(y))K_\rho(x,y)\, dxdy\\
%
%
%
&=\int_{\R^N}\psi(x)\int_{S^{N-1}}\int_0^\infty (d_+^s(x)-d_+^s(x+r\th ))r^{-1-2s} \ti \l_{e, K_\rho}(x,0,\th)\,drd\theta\, dx\\
&+ \int_{\R^N}\psi(x)\int_{S^{N-1}}\int_0^\infty (d_+^s(x)-d_+^s(x+r\th ))r^{-1-2s} (\ti \l_{e, K_\rho}(x,r,\th)-\ti \l_{e, K_\rho}(x,0,\th))\,drd\theta\, dx\\
&+ \int_{\R^N}\psi(x)\int_{S^{N-1}}\int_0^\infty (d_+^s(x )-d_+^s(x+r\th )) r^{-1-2s} \ti \l_{o, K_\rho}(x,r,\th)\,drd\theta\, dx.
\end{align*}
Using the fact that $\Ds_1 d^s_+=0$  on  $\{x_N>0\}$, we see that  
$$
 \int_{S^{N-1}}\int_0^\infty (d_+^s(x)-d_+^s(x+r\th ))r^{-1-2s} \ti \l_{e, K_\rho}(x,0,\th)\,drd\theta=0.
$$
It follows that 
\be \label{eq:d-+-S-solves}
\cL_{K_\rho} d_+^s=F_e+F_o \qquad\textrm{ in $B_2^+$},
\ee
where 
$$
F_e(x):=\int_{S^{N-1}}\int_0^\infty (d_+^s(x)-d_+^s(x+r\th ))r^{-1-2s} (\ti \l_{e, K_\rho}(x,r,\th)-\ti \l_{e, K_\rho}(x,0,\th))\,drd\theta
$$
and
$$
F_o(x):=\int_{S^{N-1}}\int_0^\infty (d_+^s(x )-d_+^s(x+r\th )) r^{-1-2s} \ti \l_{o, K_\rho}(x,r,\th)\,drd\theta.
$$ 
Now from \eqref{eq:est-tiK-rho} and \eqref{eq:est-tiK-rho-oo}, we obtain   
$$
 | \ti \l_{e, K_\rho}(x,r,\th)-\ti \l_{e, K_\rho}(x,0,\th)|+|  \ti \l_{o, K_\rho}(x,r,\th)|\leq C \min (r^{s+\d}+r,1) .
 $$
This with the fact that $|d_+^s(x)-d_+^s(x+r\th )|\leq C r^s$ imply that 
$$
\|F_e\|_{L^\infty(B_{1})}+ \|F_o\|_{L^\infty(B_{1})}\leq C.
$$
In view of Lemma \ref{lem:cat-off-sol} and \eqref{eq:d-+-S-solves}, we find that 
$$
\cL_{K_\rho} (\vp_{1}d_+^s )= g_\rho \qquad\textrm{ in $B_{1/2}^+$},
$$
with $\|g_\rho\|_{L^\infty(B_{1})} \leq C.$  By this,   \eqref{eq:capital-U-solves}, \eqref{eq:est-tiK-rho} and  \eqref{eq:est-tiK-rho-oo},   we can thus apply Corollary \ref{cor:mainth-gen}, to get 
$$
 \|U/ d^s_+\|_{C^{s-\b} \left( \ov {B_{\varrho}^+}\right)}\leq C\left(\|U\|_{L^2(B_{\rho })}+ \|U\|_{\cL^1_s}+ \|F\|_{\cM_\b}  \right),
 $$
for some $  C>0$ and $\varrho\in (0, \rho/4)$, only depending on $N,s,\O,\d,\b,\k,\rho$.  Since $d(\Phi_\rho(x))=x_N$ on $B_{\rho}^+$, then by a change of variable and using \eqref{eq:L-infty-u-proof-bdr-reg},    we get the desired result.
  \end{proof}
 
\section{Appendix 1: Liouville theorems}\label{s:append-Liouville-them}
In this section we consider $\cH$ being either $\R^N$ or the half-space  $\R^N_+=\{x\in \R^N\,:\,x_N>0\}$.  We  prove a classification result for all functions $u\in H^s_{loc}(\R^N)\cap\cL^1_s$ satisfying  $L_b u=0$ in $\cH$ and $u=0$ in $\cH^c$, provided $b$ is a weak limit of $a_n$ satisying \eqref{eq:def-a-anisotropi} and $u$ satisfying some growth conditions. We note that in the case $u\in L^\infty_{loc}(\R^N)$ such classification results (for more general nonolcal operators $L_b$) are proved in \cite{RS2}. 
 We will need the following result for the proof of the Liouville theorems.
  \begin{lemma}\label{lem:L-infty-bounds-harmonic-coerciv}
  Let $\O$ be an open set with $0\in \de\O$ and   let $a$ satisfy \eqref{eq:def-a-anisotropi}.  We consider $u\in H^s_{loc}(\R^N)\cap \cL^1_s$  satisfying 
\begin{align*}
\Ds_a u=0  \qquad \textrm{ in $ B_2\cap\O$}, \qquad\qquad  u=0 \qquad\textrm{ in $B_2\cap  \O^c$}.
%
\end{align*}
Then  there exists $C=C(N,s,\L,\O)>0$ such that
$$
\|u\|_{L^\infty(B_1\cap\O)}\leq C\left(  \|u\|_{\cL^1_s}+ \|u\|_{L^2(B_2)}\right).
$$ 
  \end{lemma}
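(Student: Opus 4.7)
The plan is to carry out a classical De Giorgi / Moser iteration adapted to the nonlocal setting. By the linearity of the equation, it suffices to bound $u_+$ (applying the same argument to $-u$ gives the bound on $u_-$); hence I focus on establishing $\|u_+\|_{L^\infty(B_1\cap\Omega)}\leq C(\|u\|_{\cL^1_s}+\|u\|_{L^2(B_2)})$. Let $M>0$ be a level to be chosen. Define geometric sequences $r_n:=1+2^{-n}$ and $k_n:=M(1-2^{-n})$, set $w_n:=(u-k_n)_+$, and pick cutoffs $\eta_n\in C^\infty_c(B_{r_n})$ with $\eta_n\equiv 1$ on $B_{r_{n+1}}$ and $|\nabla\eta_n|\leq C\, 2^n$. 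The crucial observation is that $k_n\geq 0$ and $u\equiv 0$ on $B_2\cap \Omega^c$ force $w_n\equiv 0$ there, so that $\eta_n^{2}w_n$ is an admissible test function for the weak formulation of $\Ds_a u=0$ in $B_2\cap\Omega$.

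Testing the equation against $\eta_n^{2}w_n$ and using the standard nonlocal algebraic inequality
\[
(u(x)-u(y))\bigl(w_n(x)\eta_n(x)^2-w_n(y)\eta_n(y)^2\bigr)\geq \tfrac12\bigl(w_n(x)\eta_n(x)-w_n(y)\eta_n(y)\bigr)^2 - C\bigl(\eta_n(x)-\eta_n(y)\bigr)^2\bigl(w_n(x)^2+w_n(y)^2\bigr),
\]
together with the two-sided bound \eqref{eq:def-a-anisotropi} on $\mu_a$, I obtain a Caccioppoli-type inequality
\[
[\eta_n w_n]_{H^s(\R^N)}^2 \leq C\, 4^{ns}\,\|w_n\|_{L^2(B_{r_n})}^2 + C\,T_n,
\]
where the tail $T_n$ arises from the pairs $(x,y)\in\mathrm{supp}(\eta_n^2w_n)\times(\R^N\setminus B_2)$ and is bounded, via Cauchy–Schwarz and the definition of $\cL^1_s$, by $C\,\|u\|_{\cL^1_s}\,\|w_n\eta_n^2\|_{L^1(B_{r_n})}$.

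I next combine this with the Sobolev embedding $H^s(\R^N)\hookrightarrow L^{2^*_s}(\R^N)$ (when $N>2s$; for $N\leq 2s$, replace $2^*_s$ by any fixed $q>2$ and use the corresponding embedding), and use Chebyshev: since $w_n\geq k_{n+1}-k_n=M 2^{-n-1}$ on $\{w_{n+1}>0\}$, one has $|\{w_{n+1}>0\}\cap B_{r_{n+1}}|\leq C\,4^n M^{-2}\|w_n\|_{L^2(B_{r_n})}^2$. Putting these together via Hölder produces the recursion
\[
A_{n+1}:=\|w_{n+1}\|_{L^2(B_{r_{n+1}})}^{2}\leq C\,\gamma^{n}\,M^{-2\alpha}\bigl(A_n+M^{-1}\|u\|_{\cL^1_s}A_n^{1/2}\bigr)^{1+\alpha}
\]
for some $\gamma>1$ and $\alpha=2s/(N-2s)>0$ (or its analogue when $N\leq 2s$). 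The standard iteration lemma (e.g.\ Giusti, Lemma 7.1) then gives $A_n\to 0$ provided $M\geq C_0(\|u\|_{L^2(B_2)}+\|u\|_{\cL^1_s})$, and consequently $u\leq M$ a.e.\ on $B_1\cap\Omega$.

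The main obstacle is the nonlocal tail term in the Caccioppoli estimate: unlike the local case, the truncation $w_n$ interacts with the values of $u$ on all of $\R^N$, producing a contribution proportional to $\|u\|_{\cL^1_s}$. The key point that makes the iteration close is that this tail is integrated against $\eta_n^2 w_n$, whose $L^1$-norm is controlled via $|\{w_n>0\}\cap B_{r_n}|\cdot \|w_n\|_{L^2}$ — a superlinear quantity in $A_n$ once Chebyshev is applied — so the tail participates in the geometric decay rather than destroying it. Careful bookkeeping of the constants then shows that taking $M$ a fixed multiple of $\|u\|_{L^2(B_2)}+\|u\|_{\cL^1_s}$ triggers the convergence $A_n\to 0$, yielding the claimed bound.
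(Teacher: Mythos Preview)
Your proposal is correct and follows essentially the same approach as the paper: the paper's proof simply cites \cite{Dicastro} for the De Giorgi iteration and notes that the argument there, stated for $u\in H^s(\R^N)$, goes through under the weaker hypothesis $u\in H^s_{loc}(\R^N)\cap\cL^1_s$; you have spelled out that very iteration, including the handling of the nonlocal tail via $\|u\|_{\cL^1_s}$ and the boundary case via $w_n=0$ on $B_2\cap\Omega^c$.
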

  \begin{proof}
The interior $L^\infty_{loc}(B_2\cap\O)$ estimate follows from  \cite{Dicastro}, where the authors used the De Giorgi iteration argument.  We note that in \cite{Dicastro}, it is assumed that $u\in H^s(\R^N)$  but by carefully  looking at their arguments, we see that   this  can be weakened to $ u\in H^s_{loc}(\R^N)\cap \cL^1_s$. 
 For the $L^\infty(B_1\cap\O)$  estimate, the proof is precisely the same. 
  \end{proof}
 In the following,  for $b$ satisfying \eqref{eq:coerciv-RS} and $f\in H^s(\R^N)$, we put  
$$
[f]_{H^s_b(\R^N)}:=\left(\int_{\R^{2N}}(f(x)-f(y))^2\mu_b(x,y)\,dxdy \right)^{1/2}.
$$ 
Recall the Poincar\'e-type inequality related to this seminorm, see \cite{Ros-Real,Geisinger},  
\be \label{eq:Poincare-gen-stab-rpocess}
C \|f\|_{L^2(A )} \leq [f]_{H^s_b(\R^N)} \qquad\textrm{ for every $f\in H^s(\R^N)$, with $f=0$ on $A^c$,}
\ee
whenever $|A|<\infty$. Here  $C$ is positive constant, only depending on $N,s,b$ and $A$. We note that if $b$ satisfies \eqref{eq:def-a-anisotropi},  then the constant $C$ in \eqref{eq:Poincare-gen-stab-rpocess} can be chosen to depend only on $N,s,\L$ and $A$.
 \begin{lemma}\label{lem:L-infty-bounds-harmonic}
  Let $\O$ be an open set with $0\in \de\O$. Suppose that there exists a sequence of functions $a_n$ satisfying \eqref{eq:def-a-anisotropi} and $a_n\stackrel{*}{\rightharpoonup} b$ in $L^\infty(S^{N-1})$.
We consider $u\in H^s_{loc}(\R^N)\cap \cL^1_s$  satisfying 
\begin{align*}
L_b u=0  \qquad \textrm{ in $ B_2\cap\O$} \qquad \textrm{ and} \qquad  u=0 \qquad\textrm{ in $B_2\cap  \O^c$}.
%
\end{align*}
Then  there exists $C=C(N,s,\L,\O)>0$ such that
$$
\|u\|_{L^\infty(B_1\cap\O)}\leq C\left(  \|u\|_{\cL^1_s}+ \|u\|_{L^2(B_2)}\right).
$$ 
  \end{lemma}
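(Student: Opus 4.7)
The plan is to reduce to Lemma~\ref{lem:L-infty-bounds-harmonic-coerciv} by showing that $b$ itself satisfies \eqref{eq:def-a-anisotropi} (up to a null set) and that $L_b$ coincides with $\Ds_b$, so that the exterior Dirichlet problem in the lemma is exactly the one treated in the previous statement.

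First I would verify that the essential pointwise bounds and the evenness of the sequence $a_n$ are preserved under weak-$*$ convergence. Since each $a_n$ satisfies $\L\leq a_n\leq 1/\L$ on $S^{N-1}$, for every measurable $E\subset S^{N-1}$ with $|E|>0$ we have $\L|E|\leq \int_E a_n\,d\sigma\leq |E|/\L$. Testing the weak-$*$ convergence $a_n\stackrel{*}{\rightharpoonup} b$ in $L^\infty(S^{N-1})$ against $\mathbf{1}_E\in L^1(S^{N-1})$ yields $\L|E|\leq \int_E b\,d\sigma\leq|E|/\L$, and since $E$ is arbitrary this forces $\L\leq b\leq 1/\L$ a.e.\ on $S^{N-1}$. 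In the same way, testing against any odd $\phi\in L^1(S^{N-1})$ and using $\int_{S^{N-1}} a_n\phi\,d\sigma=0$ (from the evenness of each $a_n$) gives $\int_{S^{N-1}} b\phi\,d\sigma=0$ for every odd $\phi$, so $b(-\theta)=b(\theta)$ a.e. After modifying $b$ on a null set (which does not affect the operator $L_b$), the function $b$ therefore satisfies \eqref{eq:def-a-anisotropi} with the same constant $\L$.

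Next, I would observe that the substitution $y\mapsto x-y$ in the principal value integral of \eqref{eq:def-Lb}, together with the evenness of $b$, yields
\[
L_b\psi(x)=PV\int_{\R^N}\bigl(\psi(x)-\psi(x-y)\bigr)|y|^{-N-2s}b(y/|y|)\,dy=\Ds_b\psi(x)
\]
for every $\psi\in C^\infty_c(\R^N)$. Consequently, the weak formulation of $L_b u=0$ in $B_2\cap\O$ via the symmetric bilinear form attached to the kernel $\mu_b$, in the sense of \eqref{eq:nonition-of-sol}, coincides with the weak formulation of $\Ds_b u=0$; the exterior condition $u=0$ in $B_2\cap\O^c$ is the same in both formulations. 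Applying Lemma~\ref{lem:L-infty-bounds-harmonic-coerciv} with the admissible choice $a=b$ then delivers the desired $L^\infty$-bound, with a constant depending only on $N,s,\L,\O$.

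The only ingredient that is not purely formal is the passage of the pointwise essential bounds on $a_n$ to the weak-$*$ limit, and this is a classical duality fact; no approximation of the solution $u$ itself, nor any energy or Caccioppoli estimate beyond those already invoked in Lemma~\ref{lem:L-infty-bounds-harmonic-coerciv}, is needed.
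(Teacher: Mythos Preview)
Your argument is correct and is genuinely simpler than the paper's. The key observation you make --- that the closed order interval $\{f\in L^\infty(S^{N-1}):\L\le f\le 1/\L\ \text{a.e.}\}$ is weak-$*$ closed, so the limit $b$ again satisfies \eqref{eq:def-a-anisotropi} up to a null set --- is exactly what collapses the statement to Lemma~\ref{lem:L-infty-bounds-harmonic-coerciv}. The identification $L_b=\Ds_b$ is immediate from the evenness of $b$, and modifying $b$ on a null set of $S^{N-1}$ leaves the kernel $\mu_b$ (hence the weak formulation) unchanged, so the application of the previous lemma is legitimate.

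The paper proceeds differently: rather than passing the pointwise ellipticity to $b$, it keeps only the weaker integral condition \eqref{eq:coerciv-RS} for $b$, and instead builds approximate solutions. Concretely, for each cutoff scale $M$ it solves $\Ds_{a_n} v_{n,M}=0$ in $B_2\cap\O$ with exterior datum $\vp_M u$, applies Lemma~\ref{lem:L-infty-bounds-harmonic-coerciv} at the level of $a_n$, uses an energy estimate together with \eqref{eq:Poincare-gen-stab-rpocess} to extract a limit $v_M$ solving $L_b v_M=0$ with the same exterior datum, and finally shows $v_M\to u$ in $\cL^1_s$ and $L^2$ as $M\to\infty$ by estimating $L_b(v_M-\vp_M u)$. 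Your route is shorter and uses no approximation of $u$; the paper's route, on the other hand, would survive in settings where the limiting symbol $b$ satisfies only the degenerate ellipticity \eqref{eq:coerciv-RS} (e.g.\ weak-$*$ limits of $a_n$ that are merely nonnegative and satisfy a lower bound in average), which is the generality in which $L_b$ is introduced in Section~\ref{eq:s-Not-Prem}.
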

  \begin{proof}
%
Let $M\geq 1989$, so that   $\vp_M u\in H^s(\R^N)$.  We let $v_{n,M}\in H^s(\R^N) $ be the (unique) solution to 
\begin{align}\label{eq:u-stf-grow-harmmm}
\Ds_{a_n} v_{n,M}=0  \qquad \textrm{ in $ B_2\cap\O$} \qquad \textrm{ and} \qquad  v_{n,M}=\vp_M u \qquad\textrm{ in $(B_2\cap  \O)^c$}.
\end{align}
By Lemma \ref{lem:L-infty-bounds-harmonic}, 
\be \label{eq:L-infty-bound-vnk}
\|v_{n,M}\|_{L^\infty(B_1\cap\O)}\leq C\left(  \|v_{n,M}\|_{\cL^1_s}+ \|v_{n,M}\|_{L^2(B_2)}\right).
\ee
Moreover, 
$$
\Ds_{a_n} (v_{n,M}- \vp_M u)=-\Ds_{a_n}(\vp_M u)  \qquad \textrm{ in $ B_2\cap\O$} \qquad \textrm{ and} \qquad  v_{n,M}- \vp_M u=0 \qquad\textrm{ in $(B_2\cap  \O)^c$}.
$$
Multiplying this with $ v_{n,M}- \vp_M u$, integrating and using H\"older's inequality, we deduce that
\begin{align*}
[v_{n,M}- \vp_M u]^2_{H^s_{a_n} (\R^N)}\leq [v_{n,M}- \vp_M u]_{H^s_{a_n}(\R^N)}[  \vp_M u]_{H^s_{a_n}(\R^N)}.
\end{align*}
It follows that $ [v_{n,M}- \vp_M u]_{H^s_{a_n}(\R^N)}\leq   [  \vp_M u]_{H^s_{a_n}(\R^N)}$ and thus by \eqref{eq:Poincare-gen-stab-rpocess} and  \eqref{eq:def-a-anisotropi}, we deduce that the sequence  $ (v_{n,M})_n$ is bounded in $H^s(\R^N)$. Hence, up to a subsequence,  $(v_{n,M})_n$ converges in $L^2(B_2\cap \O)$ and  in $\cL ^1_s$ to some function $v_M$. Passing to the limit in \eqref{eq:u-stf-grow-harmmm} as $n\to\infty$ and using Lemma \ref{lem:Ds-a-n--to-La}, we find that 
\begin{align}\label{eq:u-stf-grow-harmmm0}
L_{b} v_{M}=0   \qquad \textrm{ in $ B_2\cap\O$} \qquad \textrm{ and} \qquad  v_{M}=\vp_M u \qquad\textrm{ in $(B_2\cap  \O)^c$},
\end{align}
  Moreover passing to the limit in \eqref{eq:L-infty-bound-vnk}, we get
\be \label{eq:L-infty-bound-vM}
\|v_{M}\|_{L^\infty(B_1\cap\O)}\leq C\left(  \|v_{M}\|_{\cL^1_s}+ \|v_{M}\|_{L^2(B_2)}\right).
\ee
Next, letting $w_M:=v_{M}- \vp_M u$, using  Lemma \ref{lem:cat-off-sol}, we find that    
\begin{align}\label{eq:u-stf-grow-harmmm01}
L_{b} w_M=- L_b( \vp_M u)= -G_M   \qquad \textrm{ in $ B_2\cap\O$} \qquad \textrm{ and }\qquad  w_{M}=0\qquad\textrm{ in $(B_2\cap  \O)^c$},
%
\end{align}
with  $\|G_M\|_{L^\infty(B_2)}\leq C\int_{|y|\geq M}|u(y)||y|^{-N-2s}\,dy$. Here, $C$ is a positive constant only depending on $N,s$ and $\L$. 
Multiplying the first equation in \eqref{eq:u-stf-grow-harmmm01} by $w_M$,  integrating and using \eqref{eq:Poincare-gen-stab-rpocess}, we obtain 
\begin{align*}
[w_M]_{H^s_b(\R^N)}^2&\leq  \|G_M\|_{L^\infty(B_2)}\int_{B_{2}\cap\O}  |w_M(x)|\, dx\\
& \leq  C\int_{|y|\geq M}|u(y)||y|^{-N-2s}\,dy     [w_M]_{H^s_b(\R^N)},
\end{align*}
where $C=C(s,N,\L,\O,b)$.
 This then implies that 
 \begin{align*}
[w_M]_{H^s_b(\R^N)}&\leq   C\int_{|y|\geq M}|u(y)||y|^{-N-2s}\,dy      .
\end{align*}
We then  deduce that $w_M\to 0 $ in $L^2(\R^N)$ as $M\to \infty$, by \eqref{eq:Poincare-gen-stab-rpocess}.   In addition, we have 
\begin{align*}
\|v_M-u\|_{\cL^1_s}\leq \|v_M-\vp_Mu\|_{\cL^1_s}+ \|(1-\vp_M) u\|_{\cL^1_s}=\int_{B_2\cap\O}\frac{|w_M(y)|}{1+|y|^{N+2s}} \,dy+ \|(1-\vp_M) u\|_{\cL^1_s}.
\end{align*}
We  conclude that $v_M\to u $ in $\cL^1_s$ as $M\to \infty$. Letting $M\to\infty$ in \eqref{eq:L-infty-bound-vM},  we get
$$
\|u\|_{L^\infty(B_1\cap\O)}\leq C\left(  \|u\|_{\cL^1_s}+ \|u\|_{L^2(B_2)}\right).
$$
  \end{proof}
\begin{lemma}\label{lem:Liouville}
Suppose that there exists a sequence of functions $a_n$ satisfying \eqref{eq:def-a-anisotropi} and $a_n\stackrel{*}{\rightharpoonup} b$ in $L^\infty(S^{N-1})$.
 We consider $u\in H^s_{loc}(\R^N)$  satisfying 
\begin{align}\label{eq:u-stf-grow-harm}
\begin{cases}
L_b u=0  \qquad \textrm{ in $\cH$}, \qquad\qquad  u=0 \qquad\textrm{ in $\R^N\setminus \cH$},\vspace{3mm}\\
\|u\|_{L^2(B_R)}^2 \leq  R^{N+2\g} \qquad \textrm{ for  some $\g<2s$ and for every $R\geq 1$}.
\end{cases}
\end{align}
Then $u$ is an affine function if  $\cH=\R^N$, while $u$ is proportional to $\max(x_N,0)^s$ if $\cH=\R^N_+$.
  \end{lemma}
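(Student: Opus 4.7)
The strategy is a blow-down argument: I first upgrade the $L^2$ growth to pointwise growth via the $L^\infty$ estimate in Lemma \ref{lem:L-infty-bounds-harmonic}, then couple the regularity theorems of Sections \ref{s:int-and-ndr-reg}--\ref{s:Higher-reg-int} with an incremental-quotient iteration to pin down $u$.

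First I verify that $u\in\cL^1_s$: dyadic decomposition together with Cauchy--Schwarz yields
\[
\int_{|x|\geq 1}\frac{|u(x)|}{|x|^{N+2s}}\,dx\leq \sum_{k\geq 0}2^{-k(N+2s)}\|u\|_{L^1(B_{2^{k+1}})}\leq C\sum_{k\geq 0}2^{k(\gamma-2s)}<\infty,
\]
since $\gamma<2s$. Because $\cH$ is scale-invariant and $L_b$ is $2s$-homogeneous, the rescaled function $u_R(x):=R^{-\gamma}u(Rx)$ solves $L_b u_R=0$ in $\cH$ with zero exterior data, and satisfies $\|u_R\|_{L^2(B_\rho)}^2\leq \rho^{N+2\gamma}$ for $\rho\geq 1$ with $\|u_R\|_{\cL^1_s}\leq C$ uniformly in $R\geq1$. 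Applying Lemma \ref{lem:L-infty-bounds-harmonic} to $u_R$ on $B_2$ gives $\|u_R\|_{L^\infty(B_1\cap\cH)}\leq C$ independently of $R$, which, after scaling back, is the pointwise bound $|u(x)|\leq C(1+|x|^\gamma)$ on $\R^N$.

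For $\cH=\R^N$: the kernel $K=\mu_b$ lies in $\scrK(0,b,\kappa)$, and evenness of $b$ gives $j_{o,K}\equiv 0$, so Corollary \ref{cor:int--Holder-reg-Morrey} (and, when $2s>1$, Corollary \ref{cor:int--Holder-reg-Morrey-fin} with $\beta,\beta'\to 0^+$) applies to each $u_R$. Unscaling produces $[u]_{C^\alpha(B_R)}\leq C R^{\gamma-\alpha}$ for every admissible $\alpha<\min(2,2s)$. Now for any $h\in\R^N$ consider the second increment $w_h(x):=u(x+h)+u(x-h)-2u(x)$ (and $w_h(x):=u(x+h)-u(x)$ if $2s\leq 1$). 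Translation invariance gives $L_b w_h=0$ in $\R^N$, while the Hölder bound yields $|w_h(x)|\leq C|h|^\alpha(1+|x|^{\gamma-\alpha})$, so $w_h$ enters the present lemma with growth exponent $\gamma-\alpha<\gamma$. Iterating the differencing finitely many times drives the exponent below $0$; applying Lemma \ref{lem:L-infty-bounds-harmonic} after scaling then forces the iterated function, and hence $w_h$ itself, to vanish for every $h$. Thus $u$ is affine (a constant if $2s\leq 1$).

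For $\cH=\R^N_+$: by a direct computation using the evenness of $b$, the function $x_N^s$ satisfies $L_b(\vp_2 x_N^s)=0$ in $B_{r_0}\cap\R^N_+$, so $K=\mu_b$ belongs to the class $\scrK(0,b,\kappa,\R^N_+)$ with $g_{\R^N_+,K}=0$ and $\beta'=0$. Corollary \ref{cor:unif-cont-f-Morrey} (applied to each $u_R$ with $f=V=0$, $\beta\to 0^+$) then delivers $u/x_N^s\in C^{s-\epsilon}_{loc}(\overline{\R^N_+})$ with the scaling bound $[u/x_N^s]_{C^{s-\epsilon}(B_R^+)}\leq C R^{\gamma-s-\epsilon}$. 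For any tangential vector $h=(h',0)$ the half-space is invariant under $\tau_h$, so $v_h:=u(\cdot+h)-u(\cdot)$ satisfies the same Dirichlet problem with the same growth, and the boundary estimate applied to $v_h$ gives the improved bound $|v_h(x)|\leq C|h|^{s-\epsilon}x_N^s(1+|x|^{\gamma-s-\epsilon})$. Iterating tangential differences drops the growth exponent below $0$; Lemma \ref{lem:L-infty-bounds-harmonic} applied to the iterate forces $v_h\equiv 0$ for every tangential $h$. Therefore $u$ depends only on $x_N$, and the one-dimensional restriction satisfies a $1$D fractional equation $L_b u=0$ on $\R_+$ with $u=0$ on $\R_-$ and $|u(x_N)|\leq C x_N^\gamma$, $\gamma<2s$; the only such solutions are $c\, x_N^s$.

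The main obstacle is the identity $L_b(x_N^s)=0$ in $\R^N_+$ for a general even $b$, which is required both to ensure $\beta'=0$ in the boundary-regularity application and to identify the $1$D family of admissible solutions. Evenness of $b$ and Fubini reduce this to the $1$D computation $\int_0^\infty((x_N+t)^s+(x_N-t)_+^s-2x_N^s)\,t^{-1-2s}\,dt=0$ for $x_N>0$, which is the classical Riesz relation. A second delicate point is the tangential-differencing iteration: to close it one must show that the boundary Hölder estimate on $v_h/x_N^s$ scales with the right power of $|h|$, which follows from the uniform constants in Corollary \ref{cor:unif-cont-f-Morrey} together with the translation invariance of $L_b$.
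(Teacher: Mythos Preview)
Your first step---upgrading the $L^2$ growth to the pointwise bound $|u(x)|\leq C(1+|x|)^\gamma$ via Lemma \ref{lem:L-infty-bounds-harmonic} applied to the rescalings $u_R$---is exactly what the paper does. The divergence comes afterward: the paper stops there and simply invokes the Liouville theorems of Ros-Oton and Serra \cite{RS3}, which already classify solutions with polynomial $L^\infty$ growth. You instead try to make the argument self-contained by feeding $u_R$ back into the paper's own regularity machinery (Corollaries \ref{cor:int--Holder-reg-Morrey}, \ref{cor:int--Holder-reg-Morrey-fin}, \ref{cor:unif-cont-f-Morrey}) and then running an incremental-quotient iteration.

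The gap is circularity. Each of those corollaries is built on Proposition \ref{prop:bound-Kato-abstract}, Proposition \ref{prop:bdr-estim-Kato}, Lemma \ref{lem:boundary-reg-Morrey}, or Lemma \ref{lem:bound-Morrey}, and the blow-up argument in every one of those proofs terminates by invoking Lemma \ref{lem:Liouville} to classify the limiting profile. So your proof of Lemma \ref{lem:Liouville} relies on results that themselves require Lemma \ref{lem:Liouville}. This is precisely why the paper places this lemma in an appendix and proves it from the external reference \cite{RS3}, making it logically prior to all of Sections \ref{s:L2-growths}--\ref{s:Higher-reg-int}. If you want to avoid citing \cite{RS3}, you would have to reproduce a classification argument for $L_b$ directly (e.g.\ along the lines of \cite{RS2,RS3}), not route through the corollaries of the present paper.
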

  \begin{proof}
%
%
%
 We put $v_R(z)=  u(R z )$, for $R\geq 1$ and $z\in \R^N$. Since $L_b v_R=0$ in $\cH$ and $v_R=0$ on $\cH^c$,   by Lemma \ref{lem:L-infty-bounds-harmonic},    
$$
\|v_R\|_{L^\infty(B_1)}\leq C\left(  \|v_R\|_{\cL^1_s}+ \|v_R\|_{L^2(B_2)}\right).
$$ 
Scaling back, we get
$$
\|u\|_{L^\infty(B_R)}\leq C\left(  R^{2s} \int_{\R^N}\frac{|u(x)|}{R^{N+2s}+|x|^{N+2s}}\, dx + 2^\g  R^\g \right).
$$
 Now, using H\"older's inequality and   \eqref{eq:u-stf-grow-harm}, we get $\|u\|_{L^1(B_R)} \leq  C R^{N+\g} $. We then  have
 \begin{align*}
 \int_{\R^N}\frac{|u(x)|}{R^{N+2s}+|x|^{N+2s}}\, dx&\leq  R^{-N-2s} \int_{B_R}{|u(x)|} \, dx+  \int_{|x|\geq R}{|u(x)|}{ |x|^{-N-2s}}\, dx\\
 &\leq  C R^{-N-2s} \int_{B_R}{|u(x)|} \, dx+  \sum_{i=0}^\infty  \int_{2^i R \leq |x|\leq 2^{i+1}R}{|u(x)|}{ |x|^{-N-2s}}\, dx\\
 &\leq C R^{-2s+\g} + C  R^{-2s+\g}    \sum_{i=0}^\infty 2^{-(N+2s)i} 2^{(i+1)(N+\g)}  \\
 &\leq C(1+  \sum_{i=0}^\infty2^{-i(2s-\g)} )  R^{-2s+\g}\leq C  R^{-2s+\g}    .
 \end{align*}
 We  deduce that 
$$
\|u\|_{L^\infty(B_R)} \leq C R^{\g}\qquad\textrm{ for all $R\geq 1 $}.
$$
It follows from the Liouville theorems in   \cite{RS3},  that $u$ is an affine function if $\cH=\R^N$, while $u$ is proportional to $\max(x_N,0)^s$ when $\cH=\R^N_+$.
\end{proof}
\section{Appendix 2: Some technical results}\label{s:appnd-2-techinacality}
The following result is a Caccioppoli type inequality, see e.g. \cite{Dicastro,Kassmann,Franz} for other versions. Note in the following lemma that $K$ could be any nonegative and nontrivial  symmetric function on $\R^N\times\R^N$.
\begin{lemma} \label{lem:caciopp}
Let   $R>0$ and $K$ satisfy \eqref{eq:Kernel-satisf}. Let  $v\in H^s_{loc}(B_{2R})\cap \cL^1_s $ and $f\in L^1_{loc}(\R^N)$ satisfy  
\be\label{eq:Dsv-eq-f-Liou}
\cL_{K}  v= f \qquad\textrm{ in $  B_{2R}\cap\O$},\qquad\textrm{ and }\qquad v=0 \qquad\textrm{ in  $  B_{2R}\cap\O^c$. }
\ee
 Then for every $\e>0$ and   for every $\vp \in C^\infty_c( B_{2R})$, we have
\begin{align*}
(1-\e)\int_{\R^{2N}}(v(x)-v(y))^2\vp^2(y) K(x,y)\,dy dx\leq & \int_{\R^N} |f (  x)| |v(x)|  \vp^2(x)\, dx\\
&+   \e^{-1}   \int_{\R^{2N}}v^2(x)  (\vp(x)-\vp(y))^2K(x,y)\, dy dx.  
\end{align*}
\end{lemma}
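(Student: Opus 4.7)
The plan is to test the weak formulation of $\cL_K v = f$ with the function $\psi = v\vp^2$ and then combine a pointwise algebraic identity, the symmetry of $K$, and Young's inequality. First I would verify that $v\vp^2$ is an admissible test function. Since $\vp\in C^\infty_c(B_{2R})$ and $v\equiv 0$ on $B_{2R}\cap\O^c$, the product $v\vp^2$ has compact support contained in $\O\cap B_{2R}$. Combined with $v\in H^s_{loc}(B_{2R})\cap\cL^1_s$, one checks that $v\vp^2\in H^s(\R^N)$ with a bound of the form $[v\vp^2]_{H^s(\R^N)}\lesssim \|\vp\|_{C^1}(\|v\|_{H^s(B_{2R})}+\|v\|_{\cL^1_s})$, which in particular makes the nonlocal bilinear form absolutely convergent. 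A standard mollification/truncation argument, approximating $v\vp^2$ in $H^s(\R^N)$ by elements of $C^\infty_c(\O\cap B_{2R})$, extends the weak formulation \eqref{eq:nonition-of-sol} (with $V\equiv 0$) to this test function, giving
\[
\tfrac{1}{2}\int_{\R^{2N}}(v(x)-v(y))(v(x)\vp^2(x)-v(y)\vp^2(y))K(x,y)\,dxdy=\int_{\R^N}f(x)v(x)\vp^2(x)\,dx.
\]

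Next I would exploit the algebraic identity
\[
(a-b)(a\alpha^2-b\beta^2)=\tfrac{1}{2}(a-b)^2(\alpha^2+\beta^2)+\tfrac{1}{2}(a^2-b^2)(\alpha^2-\beta^2),
\]
applied pointwise with $a=v(x),b=v(y),\alpha=\vp(x),\beta=\vp(y)$. Substituting this into the tested equation and invoking the symmetry of $K$, which gives $\int_{\R^{2N}}(v(x)-v(y))^2\vp^2(x)K\,dxdy=\int_{\R^{2N}}(v(x)-v(y))^2\vp^2(y)K\,dxdy$, the equation rewrites as
\[
\int_{\R^{2N}}(v(x)-v(y))^2\vp^2(y)K\,dxdy+\tfrac{1}{2}\int_{\R^{2N}}(v^2(x)-v^2(y))(\vp^2(x)-\vp^2(y))K\,dxdy=2\int_{\R^N}fv\vp^2\,dx.
\]
I would factor the cross term as $(v(x)-v(y))(v(x)+v(y))\cdot(\vp(x)-\vp(y))(\vp(x)+\vp(y))$ and apply Young's inequality with $X=(v(x)-v(y))(\vp(x)+\vp(y))$ and $Y=(v(x)+v(y))(\vp(x)-\vp(y))$. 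The elementary inequalities $(\vp(x)+\vp(y))^2\leq 2(\vp^2(x)+\vp^2(y))$ and $(v(x)+v(y))^2\leq 2(v^2(x)+v^2(y))$, coupled once more with the symmetry of $K$, dominate the cross term by $\e\int_{\R^{2N}}(v(x)-v(y))^2\vp^2(y)K+\e^{-1}\int_{\R^{2N}}v^2(x)(\vp(x)-\vp(y))^2 K$. Absorbing the $\e$-term on the left and bounding $fv\leq|f||v|$ yields the claimed inequality.

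The single delicate point is the admissibility of $v\vp^2$ as a test function in the weak formulation, since $v$ is assumed only in $H^s_{loc}(B_{2R})\cap\cL^1_s$ rather than in $H^s(\R^N)$. The $\cL^1_s$ hypothesis, however, is precisely what controls the tail contribution to $[v\vp^2]_{H^s(\R^N)}$ via the same cutoff mechanism used elsewhere (cf.\ Lemma \ref{lem:cat-off-sol}), so the density argument goes through without modification. Everything else is a routine symmetrization and Young-inequality manipulation.
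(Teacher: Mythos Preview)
Your argument is correct and follows essentially the same route as the paper: test the equation with $v\vp^2$, expand $(v(x)-v(y))(v(x)\vp^2(x)-v(y)\vp^2(y))$ via an algebraic identity, and control the cross term by Young's inequality plus the symmetry of $K$. The paper uses the asymmetric decomposition $(v(x)-v(y))^2\vp^2(y)+v(x)(v(x)-v(y))(\vp^2(x)-\vp^2(y))$ and then splits $\vp^2(x)-\vp^2(y)=(\vp(x)+\vp(y))(\vp(x)-\vp(y))$, whereas you start from the symmetric identity $\tfrac12(a-b)^2(\alpha^2+\beta^2)+\tfrac12(a^2-b^2)(\alpha^2-\beta^2)$; after invoking $K(x,y)=K(y,x)$ these are equivalent, and the Young step is the same in spirit. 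Your explicit remark on the admissibility of $v\vp^2$ as a test function (using the $\cL^1_s$ tail bound) is a point the paper passes over silently. The only harmless discrepancy is a factor $2$ in front of $\int|f||v|\vp^2$, which also appears if one tracks constants carefully through the paper's own proof and is irrelevant for all subsequent applications.
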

\begin{proof}
Direct computations give 
\begin{align*}
(v(x)-v(y))[v(x)\vp^2(x)-v(y)\vp^2(y)]&=  (v(x)-v(y))^2\vp^2(y)+ v(x)(v(x)-v(y))[\vp^2(x)-\vp^2(y)]\\
&=(v(x)-v(y))^2\vp^2(y)+ \vp(x)v(x)(v(x)-v(y))(\vp(x)-\vp(y))\\
& + \vp(y)v(x)(v(x)-v(y))(\vp(x)-\vp(y)).
\end{align*}
Testing the equation \eqref{eq:Dsv-eq-f-Liou} with $v\vp^2$,   and use the identity above together with the symmetry of $K$,  to  get
\begin{align}\label{eq:before-cacciop}
\frac12\int_{\R^{2N}}(v(x)-v(y))^2\vp^2(y) K(x,y)\,dy dx&=-\frac12 \int_{\R^{2N}} \vp(x)v(x)(v(x)-v(y))(\vp(x)-\vp(y))K(x,y)\, dydx   \nonumber\\
&\quad- \frac12\int_{\R^{2N}} \vp(y)v(x)(v(x)-v(y))(\vp(x)-\vp(y))K(x,y)\, dxdy\nonumber\\
& \quad +\int_{\R^N} f (  x) v(x)  \vp^2(x)\, dx. 
\end{align}
By H\"older and Young's inequalities, we get 
\begin{align*}
&\left|\int_{\R^{2N}} \vp(y)v(x)(v(x)-v(y))(\vp(x)-\vp(y)) K(x,y)dydx  \right|\\
&\leq \int_{\R^N} |v(x)|\left( \int_{\R^N} \vp^2(y) (v(x)-v(y))^2 K(x,y)\, dy\right)^{1/2}\left( \int_{\R^N}  (\vp(x)-\vp(y))^2 K(x,y)\, dy\right)^{1/2}\,dx\\
&\leq  \left(  \int_{\R^{2N}} \vp^2(y) (v(x)-v(y))^2 K(x,y)\, dydx\right)^{1/2}\left( \int_{\R^{2N}}v^2(x)  (\vp(x)-\vp(y))^2 K(x,y)\, dy dx\right)^{1/2}\\
&\leq \e   \int_{\R^{2N}} \vp^2(y) (v(x)-v(y))^2K(x,y)\, dy  dx +\e^{-1}   \int_{\R^{2N}}v^2(x)  (\vp(x)-\vp(y))^2 K(x,y)\, dy dx .
\end{align*}
By similar arguments, we also have 
\begin{align*}
&\left|\int_{\R^{2N}} \vp(x)v(x)(v(x)-v(y))(\vp(x)-\vp(y)) K(x,y)dydx  \right|\\
%
%
%
&\leq \e   \int_{\R^{2N}} \vp^2(x) (v(x)-v(y))^2K(x,y)\, dy  dx +\e^{-1}    \int_{\R^{2N}}v^2(x)  (\vp(x)-\vp(y))^2 K(x,y)\, dy dx .
\end{align*}
Using the above two estimates above in \eqref{eq:before-cacciop}, we get the result.
%
\end{proof}

The following result provides a localization of solutions for nonlocal equations.
\begin{lemma} \label{lem:cat-off-sol}
Let $K $ satisfy \eqref{eq:Kernel-satisf} or $K=\mu_b$, for some $ b\in L^\infty(S^{N-1})$ with $b>0$.
Let $v\in H^s_{loc}(B_{2R})\cap \cL^1_s $  and $f\in L^1_{loc}(\R^N)$ satisfy
\be\label{eq:Dsv-eq-f}
\cL_{K}  v= f \qquad\textrm{ in $  B_{2R}\cap\O$}\qquad\textrm{ and }\qquad v=0 \qquad\textrm{ in  $  B_{2R}\cap\O^c$, }
\ee
for $R>0$. We let $v_R:=\vp_R v$. Then 
$$
\cL_{K} v_R  =  f+     G_{v,R} \qquad\textrm{in $B_{R/2}\cap \O$},
$$
where
$
G_{v,R}(x)=\vp_{R/2}(x)  \int_{\R^N} v(y)  (\vp_R(x)-\vp_R(y)) K(x,y) \, dy.
$
\end{lemma}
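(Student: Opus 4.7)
The statement is in weak form, so I would fix an arbitrary test function $\psi\in C^\infty_c(B_{R/2}\cap\Omega)$ and aim to prove
$$\frac12\int_{\R^{2N}}(v_R(x)-v_R(y))(\psi(x)-\psi(y))K(x,y)\,dxdy \;=\; \int_{\R^N}\psi(x)\bigl(f(x)+G_{v,R}(x)\bigr)dx.$$
Since $\supp\psi\subset B_{2R}\cap\Omega$, the hypothesis $\cL_K v=f$ in $B_{2R}\cap\Omega$ already gives the identity
$$\frac12\int_{\R^{2N}}(v(x)-v(y))(\psi(x)-\psi(y))K(x,y)\,dxdy = \int_{\R^N}\psi(x)f(x)\,dx,$$
so the task reduces to evaluating the defect
$$I:=\frac12\int_{\R^{2N}}(w(x)-w(y))(\psi(x)-\psi(y))K(x,y)\,dxdy,\qquad w:=v_R-v=(\varphi_R-1)v,$$
and showing $I=\int \psi\, G_{v,R}\,dx$.

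The decisive observation is that $w\equiv 0$ on $B_R$, while $\supp\psi\subset B_{R/2}\subset B_R$. I would expand $(w(x)-w(y))(\psi(x)-\psi(y))=w(x)(\psi(x)-\psi(y))+w(y)(\psi(y)-\psi(x))$, then interchange $x\leftrightarrow y$ in the second piece and exploit the symmetry $K(x,y)=K(y,x)$ to collapse $I$ into
$$I=\int_{\R^{2N}}w(x)(\psi(x)-\psi(y))K(x,y)\,dxdy.$$
Now $w(x)\psi(x)\equiv 0$ on $\R^N$, because either $x\in B_R$ (so $w(x)=0$) or $x\notin B_{R/2}$ (so $\psi(x)=0$). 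Hence the $\psi(x)$-piece drops out, and after a further relabelling plus another use of the symmetry of $K$, I obtain
$$I=\int_{\R^N}\psi(x)\left[\int_{\R^N}(1-\varphi_R(y))v(y)K(x,y)\,dy\right]dx.$$
For $x\in\supp\psi\subset B_{R/2}$ one has $\varphi_{R/2}(x)=\varphi_R(x)=1$, so the bracket equals
$\varphi_{R/2}(x)\int_{\R^N} v(y)(\varphi_R(x)-\varphi_R(y))K(x,y)\,dy = G_{v,R}(x)$, which gives the claim.

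The only genuinely non-formal point is to justify Fubini and the symmetry manipulations by absolute convergence of the double integrals. I would handle this by splitting the domain into a near-diagonal piece (both $x$ and $y$ in a neighbourhood of $\supp\psi$), where $v\in H^s_{\mathrm{loc}}(B_{2R})$ combined with the upper bound $K\leq \kappa^{-1}\mu_1$ gives integrability as in the standard $H^s$-pairing, and a far-field piece (one variable in $B_R^c$), where the factor $\varphi_R-1$ forces $|x-y|\geq R/2$, so $K(x,y)\leq C_R$ uniformly and the remaining $y$-integral is controlled by $\|v\|_{\cL^1_s}$. Once this is checked, the algebraic manipulations above go through without change, and the proof applies verbatim to the case $K=\mu_b$ since the required upper bound is the same. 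This integrability/cut-off bookkeeping is the main (and essentially only) obstacle.
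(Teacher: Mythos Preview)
Your argument is correct and is essentially the same computation as the paper's, organized slightly differently: the paper invokes the pointwise product rule $\cL_K(v\vp_R)=v\,\cL_K\vp_R+\vp_R\,\cL_K v-I(v,\vp_R)$ with $I(v,\vp_R)(x)=\int_{\R^N}(v(x)-v(y))(\vp_R(x)-\vp_R(y))K(x,y)\,dy$, and then simplifies each term using $\vp_R\equiv 1$ on $B_R$, whereas you bypass the product rule by writing $v_R=v+w$ with $w=(\vp_R-1)v$ and exploiting the disjoint supports of $w$ and $\psi$ directly in the weak formulation. Both routes land on the same tail integral $\int_{\R^N}(1-\vp_R(y))v(y)K(x,y)\,dy$; your version has the minor advantage of staying rigorously in the weak setting throughout, while the paper's product-rule identity is written pointwise (and is stated only for $K=\mu_b$ ``for simplicity'').
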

\begin{proof}
For simplicity, we assume that $K=\mu_b$ for some even function $b\in L^\infty(S^{N-1})$.
Recall  the identity, which follows from the symmetry of $K$,
$$
\cL_{K}  (v \vp_R)= v \cL_{K}  \vp_R+\vp_R\cL_{K} v- I(v,\vp_R), 
$$
with $I(v,\vp)(x) =\int_{\R^N}(v(x)-v(y))(\vp_R(x)-\vp_R(y)) K(x,y) \, dy$. Since $\vp_R\equiv 1$ on $B_R$,    we have 
\begin{align*}
I(v,\vp)(x)& =  \int_{|y|\geq R} (v(x)-v(y))(1-\vp_R(y)) K(x,y) \, dy\\
&=v(x) \int_{|y|\geq R}   (1-\vp_R(y)) K(x,y) \, dy-  \int_{|y|\geq R} v(y)  (1-\vp_R(y)) K(x,y) \, dy.
\end{align*}
 We note that 
$$
 - \vp_{R/2}(x)\cL_{K} \vp_R(x)+  \vp_{R/2}(x) \int_{|y|\geq R}   (1-\vp_R(y)) K(x,y) \, dy=0.
$$
Therefore letting
$$
  G_{v,R}(x):= \vp_{R/2}(x)  \int_{|y|\geq R} v(y)  (1-\vp_R(y)) K(x,y) \, dy,
$$
it follows that  
$$
\cL_{K}(\vp_R v)= f + G_{v,R} \qquad\textrm{in  $  B_{R/2}\cap\O$}.
$$
 The proof is thus finished.
\end{proof}
 We close this section with the following result.
 \begin{lemma}\label{lem:ds-in-Hs-loc}
Let $\O$ be $C^1$ open set with $0\in \de\O$ and such that the interior normal of $\de\O$ at $0$ coincides with $e_N$. Then $d^s\in H^s(B_r)$, for some $r>0$.
\end{lemma}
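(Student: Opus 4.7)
The plan is to prove the slightly stronger statement that $d^s\in H^s(B_r)$ for \emph{every} $r>0$, by a direct computation using only the $1$-Lipschitz property of $d$ on $\R^N$ and the fact that $d\equiv 0$ on $\Omega^c$ (so the $C^1$ hypothesis will not actually be used in the estimates). Since $d(x)\le |x|$, the function $d^s$ is bounded on $B_r$, so $\|d^s\|_{L^2(B_r)}$ is trivially finite, and the task reduces to bounding the Gagliardo seminorm
\[
[d^s]_{H^s(B_r)}^2=\int_{B_r}\!\int_{B_r}\frac{(d^s(x)-d^s(y))^2}{|x-y|^{N+2s}}\,dxdy.
\]

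I would first split this integral according to whether $x,y$ lie in $\Omega$ or $\Omega^c$; since $d\equiv 0$ on $\Omega^c$, the $(\Omega^c\cap B_r)\times(\Omega^c\cap B_r)$ contribution vanishes, and the two mixed terms are symmetric. For the mixed term, the key observation is that whenever $x\in\Omega$ and $y\in\Omega^c$ one has $|x-y|\ge d(x)$, so
\[
\int_{B_r\setminus\Omega}\frac{dy}{|x-y|^{N+2s}}\le\int_{|x-y|\ge d(x)}\frac{dy}{|x-y|^{N+2s}}\le C\,d(x)^{-2s}.
\]
Multiplying by $d^s(x)^2$, the factor $d(x)^{2s}$ exactly cancels, and integration over $x\in B_r\cap\Omega$ yields a bound by $C|B_r|$.

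For the $(B_r\cap\Omega)\times(B_r\cap\Omega)$ piece, by symmetry I restrict to $d(y)\le d(x)$ and split according to whether $|x-y|\le d(x)/2$ or $|x-y|>d(x)/2$. In the first range one has $d(y)\ge d(x)/2$, so the mean value theorem applied to $t\mapsto t^s$ on $[d(x)/2,d(x)]$ gives $|d^s(x)-d^s(y)|\le C\,d(x)^{s-1}|d(x)-d(y)|\le C\,d(x)^{s-1}|x-y|$, and the resulting $y$-integral is
\[
C\,d(x)^{2s-2}\int_{0}^{d(x)/2}r^{1-2s}\,dr=C,
\]
independent of $x$ (using $s<1$). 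In the complementary range I use the crude bound $(d^s(x)-d^s(y))^2\le 2\,d(x)^{2s}$ (since $d(y)\le d(x)$) together with $\int_{|x-y|>d(x)/2}|x-y|^{-N-2s}\,dy\le C\,d(x)^{-2s}$; again this produces a constant independent of $x$. Integrating over $x\in B_r\cap\Omega$ gives $[d^s]_{H^s(B_r)}^2\le C|B_r|$, which completes the proof.

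The only real subtlety is the case split near the boundary where $d(x)\to 0$; the two scale-matched estimates above are exactly sharp enough to absorb the would-be singularity into an $x$-independent constant, which is why no flattening of $\partial\Omega$ or more delicate potential-theoretic machinery is needed. An alternative, equally valid, route is to apply Lemma \ref{lem:glob-diff} to reduce via a bi-Lipschitz change of variables to the model case $d(x)=(x_N)_+$ and then prove $(x_N)_+^s\in H^s(B_r)$ by integrating out the transverse directions and verifying the scale-invariant one-dimensional integral $\int_0^1\!\int_0^1 (1-u^s)^2(1-u)^{-1-2s}\,du\,dt<\infty$; I would prefer the direct approach since it is shorter and self-contained.
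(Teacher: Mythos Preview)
Your proof is correct and takes a genuinely different, more elementary route than the paper. The paper argues in two steps: first it shows $d_+^s:=\max(x_N,0)^s\in H^s_{loc}(\R^N)$ by exploiting the exact identity $\Ds_1 d_+^s=0$ in $\R^N_+$, cutting off via Lemma~\ref{lem:cat-off-sol}, testing against $v_R^+\vp_{R/2}^2$ and invoking the Caccioppoli inequality (Lemma~\ref{lem:caciopp}) to bound the seminorm; then it transports this to the curved domain via the boundary-flattening diffeomorphism $\Phi_\rho$ of Lemma~\ref{lem:glob-diff}, using $d(\Phi_\rho(x))=(x_N)_+$ on $B_\rho$ and a bi-Lipschitz change of variables. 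Your argument bypasses both the PDE identity and the diffeomorphism, relying only on the scale-matched splitting $|x-y|\lessgtr d(x)/2$ and the $1$-Lipschitz property of $d$; as you observe, this in fact yields the stronger conclusion that $d^s\in H^s(B_r)$ for every $r>0$ and for an arbitrary open set, with no regularity on $\partial\Omega$ required. The paper's route is natural in context since all the ingredients (Caccioppoli, cut-off lemma, $\Phi_\rho$) are already developed for other purposes, but yours is shorter, self-contained, and strictly more general. Incidentally, the alternative you sketch at the end---reducing to the half-space via $\Phi_\rho$---is essentially the paper's strategy, though the paper treats the half-space step via Caccioppoli rather than the direct one-dimensional computation you propose.
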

\begin{proof}
Let $d_+(x):=\max(x_N,0)$. Then since $\Ds_1 d_+^s(x)=0$ for every $x\in \R^N_+$, by  Lemma \ref{lem:cat-off-sol}, we have 
$$
\Ds_1 v^+_R(x)=G_R(x) \qquad\textrm{ for every $x\in B_R^+$},
$$
with  $v^+_R=\vp_{2R} d_+^s $ and  $G_R\in L^\infty(\R^N)$. Now multiplying the above equation by $  v^+_R \vp_{R/2}^2$ (supported in $B_R^+$), integrating on $B_R^+$ and using the symmetry of $\mu_1$, we see that 
$$
\frac{1}{2}\int_{\R^{2N}}(v^+_R(x)-v^+_R(x))(v^+_R \vp_{R/2}^2(x)-v^+_R \vp_{R/2}^2(y) )\mu_1(x,y)\,dxdy=\int_{\R^N}v^+_R(x) \vp_{R/2}^2(x) G_R(x)\,dx.
$$
We may now apply  Lemma  \ref{lem:caciopp} (or eventually following its proof),   to  deduce that 
$$
\int_{\R^{2N}}(v^+_R (x)- v^+_R(y))^2\vp^2_{R/2}(y) \mu_1(x,y)\,dy dx<\infty.
$$
This implies that $d^s_+\in H^s_{loc}(\R^N)$. To conclude, we  use the  parameterization $\Phi_\rho$ given by Lemma \ref{lem:glob-diff} and make changes of variables, to get 
\begin{align*}
\int_{\Phi_\rho( B_{\rho})} \int_{\Phi_\rho( B_{\rho})}  \frac{(d^s(x)-d^s(y))^2}{|x-y|^{N+2s}}\, dxdy& =\int_{ B_{\rho}}\int_{ B_{\rho}}    \frac{(d_+^s(x)-d_+^s(y))^2}{|\Phi_\rho( x)-\Phi_\rho( y)|^{N+2s}}\, dxdy\\
& \leq C \int_{ B_{\rho}}\int_{ B_{\rho}}    \frac{(d_+^s(x)-d_+^s(y))^2}{|x-y|^{N+2s}}\, dxdy<\infty ,
\end{align*}
provided $\rho$ is small enough.
The proof is thus finished.
\end{proof}


\begin{thebibliography}{99}


%



\bibitem{BC} G. Barles, E. Chasseigne, C. Imbert,  H\"older continuity of solutions of second-order elliptic  integro-differential  equations, J. Eur. Math. Soc. 13 (2011), 1-26.

\bibitem{BCI} G. Barles, E. Chasseigne, C. Imbert, The Dirichlet problem for second-order elliptic   integro-differential  equations, Indiana Univ. Math. J. 57 (2008), 213-146.

\bibitem{Barrios} 
 B. Barrios, A. Figalli, E. Valdinoci,
  Bootstrap regularity for integro-differential operators and its application to nonlocal minimal surfaces,
  Ann. Sc. Norm. Super. Pisa Cl. Sci. (5) 13 (2014), 609--639. 
  
  \bibitem{BL} R. Bass, D. Levin, Harnack inequalities for jump processes, Potential Anal. 17 (2002), 375-388.

\bibitem{BKK} K. Bogdan, T. Kumagai, M. Kwasnicki, Boundary Harnack inequality for Markov processes
with jumps.   Trans. Amer. Math. Soc. 367 (2015), no. 1, 477-517. 

\bibitem{Bogdan} K. Bogdan, T. Byczkowski, Potential theory for the $\a$-stable Schr\"odinger
operator on bounded Lipschitz domains, Studia Math. 133 (1999) no. 1, 53-92.

\bibitem{Bogdan-bdr} K. Bogdan, The boundary Harnack principle for the fractional Laplacian, Studia Math. 123
(1997), 43-80.

\bibitem{CRS} L. Caffarelli, X. Ros-Oton, J. Serra,  Obstacle problems for integro-differential operators: regularity of solutions and free boundaries. Invent. Math. 208 (2017), 1155-1211.

\bibitem{CS1} L. Caffarelli, L. Silvestre, Regularity theory for fully nonlinear integro-differential equations,
Comm. Pure Appl. Math. 62 (2009), 597-638.

\bibitem{CS2} L. Caffarelli, L. Silvestre, Regularity results for nonlocal equations by approximation, Arch.
Rat. Mech. Anal. 200 (2011), 59-88.

\bibitem{CS3} L. Caffarelli, L. Silvestre, The Evans-Krylov theorem for nonlocal fully nonlinear equations,
Ann. of Math. 174 (2011), 1163-1187.

\bibitem{Caffarelli} L.   Caffarelli, Interior a priori estimates for solutions of fully nonlinear equations, Ann. of Math.
(2) 130 (1989), 189-213.

\bibitem{Chen-Song} Z.-Q. Chen, R. Song, Estimates on Green functions and Poisson kernels for symmetric stable
processes, Math. Ann. 312 (1998), 465-501.

\bibitem{Chen} Y.  Chen, Regularity of the solution to the Dirichlet problem in Morrey spaces. J. Partial Differential Equations 15 (2002), no. 2, 37-46. 

\bibitem{Costabel} M. Costabel, M. Dauge,  R. Duduchava,  Asymptotics without logarithmic terms for crack problems. Comm. Partial Differential Equations, 28, (2003) 5-6.

\bibitem{Cozzi} M. Cozzi, Regularity results and Harnack inequalities for minimizers and solutions of nonlocal
problems: a unified approach via fractional De Giorgi classes, J. Funct. Anal. 272 (2017), no. 11,
4762-4837.

\bibitem{Dicastro} A. Di Castro, T. Kuusi and  G. Palatucci,  Local behavior of fractional $p$-minimizers. Ann. Inst. H. Poincar\'e Anal. Non Lin\'eaire.    V.  33,   5,   2016,   1279-1299.



\bibitem{Fazio} G. Di Fazio, H\"older continuity of solutions for some Schr\"odinger equations, Rend. Sem. Mat.  Univ. di Padova, 79 (1988), pp 173-183.


\bibitem{Fall-Oton} M. M. Fall and X. Ros-Oton, Nonlocal Schr\"odinger equations with potentials in Kato class.    \textit{Forthcoming}.

\bibitem{Fall-Reg-Appl} M. M. Fall, Regularity results for nonlocal equations and applications. \textit{Forthcoming}.

 \bibitem{FW} M. M. Fall and T. Weth,  Liouville theorems for a general class of nonlocal operators. Potential Anal. 45 (2016), no. 1, 187-200.
 
 \bibitem{FK} M. Felsinger, M. Kassmann, P. Voigt, The Dirichlet problem for nonlocal operators. Math. Z. 279 (2015), no. 3-4, 779-809.
 
 \bibitem{Ros-Real-2} X.  Fern\'andez-Real, X. Ros-Oton,  Regularity theory for general stable operators: parabolic equations. J. Funct. Anal. 272 (2017), no. 10, 4165-4221.
 
 \bibitem{Ros-Real} X. Fern\'andez-Real and   X. Ros-Oton, Boundary regularity for the fractional heat equation, Rev. Acad. Cienc. Ser. A Math. 110 (2016), 49-64.
 
\bibitem{Franz} G. Franzina, G. Palatucci,  Fractional $p$-eigenvalues. Riv. Mat. Univ. Parma 5 (2014), no. 2,
315-328. 

\bibitem{Gordon} W. B. Gordon. On the diffeomorphisms of Euclidean space. Amer. Math. Monthly, 79:755-759, 1972.




 


\bibitem{Geisinger} L. Geisinger, A short proof of Weyl's law for fractional differential operators. J. Math. Phys. 55 (2014), 011504.

\bibitem{GT} D. Gilbarg,  N. Trudinger,  l S. Elliptic partial differential equations of second order. Reprint of the 1998 edition. Classics in Mathematics. Springer-Verlag, Berlin, 2001. xiv+517 pp. 

\bibitem{Grafakos} L. Grafakos,   Modern Fourier analysis, Graduate Texts in Mathematics, (2009)  250 (2nd ed.), Berlin, New York: Springer-Verlag

 
 \bibitem{Grubb1} G. Grubb, Fractional Laplacians on domains, a development of Hormander's theory of $\mu$-transmission pseudodifferential operators, Adv. Math. 268 (2015, 478-528).

 \bibitem{Grubb2}  G. Grubb, Local and nonlocal boundary conditions for $\mu$-transmission  and fractional elliptic
pseudodifferential operators, Anal. PDE 7 (2014), 1649-1682. 


 \bibitem{Grubb3} G. Grubb, Integration by parts and Pohozaev identities for space-dependent fractional-order
operators, J. Differential Equations 261 (2016), 1835-1879.

\bibitem{Hoh} W. Hoh, N. Jacob, On the Dirichlet problem for pseudodifferential operators generating Feller
semigroups, J. Funct. Anal. 137 (1996), 19-48.

 \bibitem{Jin-Xiong} T. Jin, J. Xiong, Schauder estimates for nonlocal fully nonlinear equations.  Ann. Inst. H. Poincar\'e Abal. Non Linn\'{e}aire. V.  33,  (5),  2016,  1375-1407.  

\bibitem{Kassm} M. Kassmann, A priori estimates for integro-differential operators with measurable kernels. Calc. Var. Partial Differential Equations 34 (2009), 1-21.

 \bibitem{KM2} M. Kassmann, A. Mimica, Intrinsic scaling properties for nonlocal operators II, preprint https://arxiv.org/abs/1412.7566
 
\bibitem{KM1} M. Kassmann,  A.   Mimica,   Intrinsic scaling properties for nonlocal operators. J. Eur. Math. Soc. (JEMS) 19 (2017), no. 4, 983-1011. 
 

\bibitem{KRS} M. Kassmann,  M. Rang,  R. W. Schwab,  Integro-differential equations with nonlinear directional dependence. Indiana Univ. Math. J. 63 (2014), no. 5, 1467-1498. 

\bibitem{KS} M. Kassmann,  R. W. Schwab,   Regularity results for nonlocal parabolic equations. Riv. Math. Univ. Parma (N.S.) 5 (2014), no. 1, 183-212.

\bibitem{Kriventsov} D.  Kriventsov, $C^{1,\a}$ interior regularity for nonlinear nonlocal elliptic equations with rough kernels. Comm. Partial Differential Equations 38 (2013), no. 12, 2081-2106. 



\bibitem{Kassmann} M. Kassmann: Analysis of symmetric Markov processes. A localization technique for non-local
operators. Universit\"at Bonn, Habilitation Thesis, 2007.

\bibitem{KMS} T. Kuusi, G. Mingione, Y. Sire: nonlocal equations with measure data.  2015, v. 337,  3, pp 1317-1368. 

\bibitem{Morrey}  C. B. Morrey. On the solutions of quasi-linear elliptic partial differential equations. Trans. Amer. Math. Soc.,
1:126-166, 1938.





\bibitem{Monneau} R. Monneau,   Pointwise Estimates for Laplace Equation. Applications to the Free Boundary of the Obstacle Problem with Dini Coefficients. Journal of Fourier Analysis and Applications, (2009) 15:279.


\bibitem{RS3} X. Ros-Oton, J. Serra, Boundary regularity estimates for nonlocal elliptic equations in $C^1$ and
 $C^{1,\a}$ domains.  Ann. Mat. Pura Appl. (4) 196 (2017), no. 5, 1637-1668.

\bibitem{RS-survey} X. Ros-Oton, Nonlocal elliptic equations in bounded domains: a survey, Publ. Mat. 60 (2016), 3-26.



\bibitem{RS2}  X. Ros-Oton, J. Serra, Regularity theory for general stable operators, J. Differential Equations 260 (2016), 8675-8715.




\bibitem{RS4} 
X. Ros-Oton, J. Serra,  Boundary regularity for fully nonlinear
  integro-differential equations,
 Duke Math. J. 165 (2016), no. 11, 2079--2154.
 
\bibitem{RS1} X. Ros-Oton, J. Serra,  The Dirichlet problem for the fractional Laplacian: regularity up to the boundary, J. Math. Pures Appl. (9) 101 (2014), no. 3, 275--302. 
 

\bibitem{SS} R. W. Schwab, L. Silvestre, Regularity for parabolic integro-differential equations with very
irregular kernels.  Anal. PDE 9 (2016), no. 3, 727-772.

\bibitem{Serra} J. Serra, $C^{\s+\a}$ regularity for concave nonlocal fully nonlinear elliptic equations with rough
kernels, Calc. Var. Partial Differential Equations 54 (2015), 3571-3601.

\bibitem{Serra-OK}  J. Serra,   Regularity for fully nonlinear nonlocal parabolic equations with rough kernels. Calc. Var. Partial Differential Equations 54 (2015), no. 1, 615-629. 




\bibitem{Song} R. Song,  J-M. Wu,   Boundary Harnack Principle for  Symmetric Stable Processes,  J. Func. Anal., 168, 403-427 (1999).

\bibitem{Stein} E. Stein, Singular integrals and differentiability properties of functions. Princeton, University Press, 1970.
 
 

\end{thebibliography}
\end{document}